\newtheorem{theorem}{Theorem}
\newtheorem{lem}[theorem]{Lemma}
\newtheorem{kor}[theorem]{Corollary}
\newtheorem{prop}[theorem]{Proposition}
\theoremstyle{definition}
\newtheorem{rem}[theorem]{Remark}
\numberwithin{equation}{section}
\numberwithin{theorem}{section}
\numberwithin{definition}{section}
\DeclareMathOperator\supp{supp}
\DeclareMathOperator\sgn{sgn}
\newcommand{\rev}{\color{black}}
\newcommand{\revv}{\color{black}}
\newcommand{\revvv}{\color{black}}
\newcommand{\revtwo}{\color{black}}
\newcommand{\revvtwo}{\color{black}}
\begin{document}
	
	\title{\normalsize Weighted Sums and Berry-Esseen Type Estimates in Free Probability Theory}
	\author{Leonie Neufeld}
	\thanks{Fakult\"at f\"ur Mathematik,
		Universit\"at Bielefeld, 33501 Bielefeld, 
		Germany;  lneufeld@math.uni-bielefeld.de}
		\thanks{Funded by the Deutsche Forschungsgemeinschaft (DFG, German Research Foundation) -- IRTG 2235 -- 282638148.}
	\subjclass
	{46L54, 60E05} 
	\keywords  {free probability, weighted sums,
		central limit theorem, Berry–Esseen theorem, superconvergence} 
	
	\begin{abstract}
	We study weighted sums of free identically distributed self-adjoint random variables with weights chosen randomly from the unit sphere and show that the Kolmogorov distance between the distribution of such a weighted sum and Wigner's semicircle law is of order \rev $n^{-\nicefrac{1}{2}}$ \revv with high probability. Replacing the Kolmogorov distance by a weaker pseudometric, we obtain a rate of convergence of order  \revtwo $n^{-1}$, thus providing a free analog of the Klartag-Sodin result in classical probability theory. \revvtwo Moreover, we show  that our ideas generalize to the setting of sums of free non-identically distributed bounded self-adjoint random variables \revtwo leading to \revvtwo a new rate of convergence in the free central limit theorem.
	\end{abstract}
	
	\maketitle

	\section{Introduction and main results}
	\subsection{Weighted sums}
	Given a sequence $(X_i)_{i \in \mathbb{N}}$ of free identically distributed self-adjoint random variables, we consider so-called \textit{weighted sums}
	\begin{align*}
		S_\theta := \theta_1X_1 + \dots + \theta_nX_n, \qquad \theta_1^2 + \dots + \theta_n^2 = 1,
	\end{align*}
	where the vector $\theta=(\theta_1, \dots, \theta_n)$ is taken from the unit sphere $\mathbb{S}^{n-1}$ in $\mathbb{R}^n.$  The main goal of this paper is to estimate the rate of convergence of the distribution $\mu_\theta$ of $S_\theta$ to Wigner's semicircle law $\omega.$ \\

	The general motivation behind the analysis of weighted sums and the speed of convergence to their limits comes from classical probability theory. Klartag and Sodin \cite{Klartag2012} were able to prove that weighted sums of classical independent identically distributed (i.i.d.) random variables exhibit a rate of convergence to the standard normal distribution $\gamma$  of order $n^{-1}$  -- at least for most vectors $\theta \in \mathbb{S}^{n-1}$. More precisely, they have shown: Given a sequence of i.i.d.\@ random variables with mean zero, unit variance, and finite fourth moment $m_4$, let $S_\theta$ and $\mu_\theta$ be defined as above. Then, for any fixed $\rho \in (0,1)$, there exist a set $\mathcal{F} \subset \mathbb{S}^{n-1}$ with $\sigma_{n-1}(\mathcal{F}) \geq 1- \rho$ and a constant $C_\rho >0$ such that for all $\theta \in \mathcal{F}$ the Kolmogorov distance $\Delta(\mu_\theta, \gamma)$ between  $\mu_\theta$ and $\gamma$ admits the estimate 
	\begin{align*}
		\Delta(\mu_\theta, \gamma) \leq \frac{C_\rho m_4}{n}.
	\end{align*}
	Here, $\sigma_{n-1}$ denotes the \rev uniform probability measure \revv on the unit sphere.
	Recall that the classical Berry-Esseen theorem asserts that $\Delta(\mu_\theta, \gamma)$ is of order $\sum_{i=1}^n \vert \theta_i \vert^3.$ Since $\sum_{i=1}^n \vert \theta_i \vert^3 \geq n^{-\nicefrac{1}{2}}$ holds for all $\theta \in \mathbb{S}^{n-1}$, Klartag and Sodin's result provides an improved rate of convergence.  For more recent progress on weighted sums in classical probability theory, we refer to works by Bobkov, Chistyakov, and Götze; see \cite{Bobkov2020a} and the references mentioned therein. \\
	
	This paper is mainly concerned with establishing a free analog of the above-mentioned Klartag-Sodin result. In order to be able to evaluate our rates of convergence properly, we briefly outline already existing Berry-Esseen type estimates in free probability theory. Chistyakov and Götze \cite{Chistyakov2013, Chistyakov2008a} studied the rate of convergence in the free central limit theorem under the usual finite third absolute moment \revtwo assumption. \revvtwo More precisely, in \revtwo \cite[Corollary 2.2]{Chistyakov2013}, \revvtwo they established the standard rate of order $n^{-\nicefrac{1}{2}}$ for free identically distributed self-adjoint random variables. In the case of free non-identically distributed self-adjoint random variables, they obtained a rate of convergence given by the square root of the third Lyapunov fraction; compare to \cite[Theorem 2.6]{Chistyakov2008a}. \rev Recently, Maejima and Sakuma \cite{Maejima2022} provided Berry-Esseen type estimates under weaker moment assumptions. \revv Banna and Mai \cite{Banna2021} and Mai and Speicher \cite{Mai2013} considered the corresponding rates in the setting of operator-valued free probability theory.
	
\revtwo Let us now turn to weighted sums in free probability theory. Given $\theta \in \mathbb{S}^{n-1}$ and a sequence of free identically distributed self-adjoint random variables with distribution having mean zero, unit variance, and finite third absolute moment $\beta_3$, we can apply Chistyakov and Götze's result \cite[Theorem 2.6]{Chistyakov2008a} to the distribution $\mu_\theta$ of the corresponding weighted sum $S_\theta$ leading to
\begin{align*}
		\Delta(\mu_\theta, \omega) \leq c\left(\beta_3\sum_{i=1}^n \vert \theta_i \vert^3 \right)^{\nicefrac{1}{2}}
	\end{align*}
	for some absolute constant $c>0$. Together with $\smash{\sum_{i=1}^n \vert \theta_i \vert^3  \geq n^{-\nicefrac{1}{2}}}$, \revvtwo we obtain an a-priori rate for $\Delta(\mu_\theta, \omega)$ which is larger than $\smash{n^{-\nicefrac{1}{4}}}$. Our first and most general result improves this rate to \rev $\smash{n^{-\nicefrac{1}{2}}}$, at least for most choices of $\theta \revtwo \in \mathbb{S}^{n-1}$. 
	\begin{theorem} \label{main theorem} Let $(X_i)_{i \in \mathbb{N}}$ be a sequence of free identically distributed self-adjoint random variables with distribution $\mu$ for some probability measure $\mu$ on $\mathbb{R}$. Assume that $\mu$ has mean zero, unit variance, and finite fourth moment. 
		For $\theta \in \mathbb{S}^{n-1}$ denote the distribution of $S_\theta = \theta_1X_1 + \dots + \theta_nX_n$ by $\mu_\theta$. 
		Let $\rho \in (0,1)$. Then, \revtwo there exist a set $\mathcal{F} \subset \mathbb{S}^{n-1}$ with $\sigma_{n-1}(\mathcal{F}) \geq 1- \rho$ and constants $n_{\rho, \mu} \in \mathbb{N}$, $C_{\rho, \mu}>0$ such that for all $\theta \in \mathcal{F}$ and all $n \geq n_{\rho, \mu}$ we have \revvtwo
		\begin{align*}
			\Delta(\mu_\theta, \omega) \leq  \rev \frac{C_{\rho, \mu}}{\sqrt{n}} \revv.
		\end{align*}
		The constant $C_{\rho, \mu}$ depends on $\rho$ and \revtwo on $\mu$ through its third and fourth absolute moment. 
	\end{theorem}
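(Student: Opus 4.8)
The plan is to run the Cauchy-transform route to free Berry--Esseen estimates (as used by Chistyakov and Götze \cite{Chistyakov2008a}) at the critical scale $v_n\asymp_{\rho,\mu}\sqrt{(\log n)/n}$, converting the smallness of the individual weights $\theta_i$ into a small error term via additivity of the $R$-transform. Since $\mu$ has mean $0$, variance $1$ and $\sum_i\theta_i^2=1$, the measure $\mu_\theta$ also has mean $0$ and variance $1$, so $\omega$ is the standard semicircle law with $\supp\omega=[-2,2]$ and bounded density; here $G_\omega(z)=\tfrac12\bigl(z-\sqrt{z^2-4}\,\bigr)$ maps $\mathbb{C}^+$ into the unit disc, satisfies $1/G_\omega(z)=z-G_\omega(z)$ and $1-G_\omega(z)^2=G_\omega(z)\sqrt{z^2-4}$, hence $|1-G_\omega(z)^2|\asymp|z^2-4|^{1/2}$, which degenerates like $\operatorname{dist}(z,\{\pm2\})^{1/2}$ at the spectral edges. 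First I would apply a smoothing inequality of Bai / Chistyakov--Götze type to reduce the Kolmogorov distance to
$$\Delta(\mu_\theta,\omega)\ \le\ C\int_{\mathbb{R}}\bigl|G_{\mu_\theta}(x+iv_n)-G_\omega(x+iv_n)\bigr|\,dx\ +\ C\,v_n,$$
so that, once the integral is shown to be of order $n^{-1/2}$, the rate is dictated by $C\,v_n$; as explained below, the method requires $v_n$ to be of order at least $\sqrt{(\log n)/n}$.

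By additivity of the $R$-transform (equivalently Biane's subordination), $R_{\mu_\theta}(w)=\sum_{i=1}^n\theta_i R_\mu(\theta_i w)$, and since $\sum_i\theta_i^2=1$ the semicircular part $\sum_i\theta_i\cdot(\theta_i w)=w$ splits off, so $F_{\mu_\theta}=1/G_{\mu_\theta}$ satisfies
$$\frac{1}{G_{\mu_\theta}(z)}=z-G_{\mu_\theta}(z)-E_\theta(z),\qquad E_\theta(z):=\sum_{i=1}^n\theta_i\Bigl(R_\mu\bigl(\theta_i G_{\mu_\theta}(z)\bigr)-\theta_i G_{\mu_\theta}(z)\Bigr)$$
(I suppress the harmless bookkeeping of the signs of the $\theta_i$, i.e.\ the reflected measures $\check\mu$). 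This identity holds wherever every $\theta_i G_{\mu_\theta}(z)$ lies in the domain $\{|w|<\eta_\mu\}$ of $R_\mu$; since $|G_{\mu_\theta}(z)|\le(\operatorname{Im}z)^{-1}$, it holds in particular on $\{\operatorname{Im}z\ge\|\theta\|_\infty/\eta_\mu\}$ (by analytic continuation from a neighbourhood of $\infty$), and it is exactly this restriction, together with $\|\theta\|_\infty$ being of order $\sqrt{(\log n)/n}$ for typical $\theta$, that forces the critical height $v_n\asymp_{\rho,\mu}\sqrt{(\log n)/n}$. The finite fourth moment gives, on the fixed disc $|w|\le\eta_\mu$, the Nevanlinna-type expansion $R_\mu(w)=w+\kappa_3(\mu)w^2+\kappa_4(\mu)w^3+O\bigl(m_4(\mu)|w|^4\bigr)$, hence $|R_\mu(w)-w|\le C_\mu|w|^2$, whence $|E_\theta(z)|\le C_\mu\,|G_{\mu_\theta}(z)|^2\sum_i|\theta_i|^3$ wherever the equation holds.

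The core is a bootstrap. Subtracting $1/G_\omega=z-G_\omega$ and writing $g:=G_{\mu_\theta}-G_\omega$ yields $g\,(1-G_{\mu_\theta}G_\omega)=G_{\mu_\theta}G_\omega\,E_\theta$, so as long as $|g|\le\tfrac14|z^2-4|^{1/2}$ one gets $|g|\lesssim|G_{\mu_\theta}|\,|E_\theta|\,|z^2-4|^{-1/2}$. The delicate point --- which I expect to be the main obstacle --- is the square-root degeneracy of $|z^2-4|^{1/2}$ at $\pm2$: the estimate has to be propagated, not applied directly. Starting from large $\operatorname{Im}z$, where $|G_{\mu_\theta}(z)|\le1$ trivially, I would descend along vertical segments down to $\operatorname{Im}z=v_n:=c_{\rho,\mu}\sqrt{(\log n)/n}$, keeping the invariant $|g(z)|\le\tfrac14|z^2-4|^{1/2}$; this forces $|G_{\mu_\theta}(z)|<2$, so $\theta_i G_{\mu_\theta}(z)\in\{|w|<\eta_\mu\}$ and, using the good bound $\sum_i|\theta_i|^3\lesssim_\rho n^{-1/2}$ from the next step, $|E_\theta(z)|\lesssim_\rho n^{-1/2}$; combining with $|z^2-4|^{1/2}\gtrsim(\operatorname{Im}z)^{1/2}\ge v_n^{1/2}$ near the edges and a large enough $c_{\rho,\mu}$ returns a strictly better bound and closes the induction. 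One then reads off $|g(z)|\lesssim_{\rho,\mu}n^{-1/2}|z^2-4|^{-1/2}$ on the line $\operatorname{Im}z=v_n$ for $|\operatorname{Re}z|\le3$, and for $|\operatorname{Re}z|>3$ the bootstrapped bound $|G_{\mu_\theta}(z)|\lesssim|z|^{-1}$ improves this to $|g(z)|\lesssim_{\rho,\mu}n^{-1/2}|z|^{-4}$, so that $\int_{\mathbb{R}}|g(x+iv_n)|\,dx\lesssim_{\rho,\mu}n^{-1/2}$, of smaller order than $v_n$.

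Finally, write $\theta=Z/\|Z\|_2$ with $Z$ a standard Gaussian vector in $\mathbb{R}^n$. The usual tail bound for $\max_i|Z_i|$ and a concentration estimate for $\sum_i|Z_i|^3$ show that for a suitable $C_\rho$ the set
$$\mathcal{F}=\Bigl\{\theta\in\mathbb{S}^{n-1}:\ \|\theta\|_\infty\le C_\rho\sqrt{(\log n)/n}\ \text{ and }\ \textstyle\sum_i|\theta_i|^3\le C_\rho\,n^{-1/2}\Bigr\}$$
has $\sigma_{n-1}(\mathcal{F})\ge 1-\rho$. For $\theta\in\mathcal{F}$ we can choose $c_{\rho,\mu}$ large enough (depending on $\eta_\mu$, $|\kappa_3(\mu)|$, $|\kappa_4(\mu)|$, $m_4(\mu)$ and $C_\rho$) that $\{\operatorname{Im}z\ge v_n\}\subset\{\operatorname{Im}z\ge\|\theta\|_\infty/\eta_\mu\}$ and the bootstrap closes; then the integral in the smoothing inequality is $\lesssim_{\rho,\mu}n^{-1/2}$, and choosing $n_{\rho,\mu}$ so that $v_n\le\min(\eta_\mu,1)$ and all local expansions are valid we obtain $\Delta(\mu_\theta,\omega)\le C\,v_n\le C_{\rho,\mu}\sqrt{(\log n)/n}$, with $C_{\rho,\mu}$ depending only on $\rho$, $m_4(\mu)$ and $\beta_3(\mu)$ (the dependence on $\beta_3(\mu)$ being absorbable into that on $m_4(\mu)$ via $\beta_3(\mu)\le m_4(\mu)^{3/4}$).
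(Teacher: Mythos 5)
Your plan correctly identifies the mechanism by which the $\sqrt{(\log n)/n}$ rate arises (the vertical scale is dictated by $\|\theta\|_\infty$, and the integral term is $O(n^{-1/2})$), and the measure-of-$\mathcal F$ bookkeeping is right. However, there is a genuine gap that affects the whole argument: you treat the $R$-transform of $\mu$ as analytic on a disc $\{|w|<\eta_\mu\}$ around the origin, with a power-series expansion $R_\mu(w)=w+\kappa_3 w^2+\kappa_4 w^3+O(m_4(\mu)|w|^4)$ and a bound $|R_\mu(w)-w|\le C_\mu|w|^2$ on that disc. This is valid when $\mu$ is compactly supported (as in Theorem~\ref{main compact}), but Theorem~\ref{main theorem} only assumes a finite fourth moment. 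For a general probability measure with finite $m_4$, the Voiculescu $R$-transform is defined and has an asymptotic (not convergent) expansion only in a \emph{truncated Stolz cone} near $0$, and the error in that expansion is $o(|w|^3)$ non-tangentially, not $O(m_4|w|^4)$ on a disc. In particular, the analytic-continuation-from-$\infty$ step relies on $R_\mu$ being single-valued and analytic on a full disc, which fails here.

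This failure is not just technical: the point where your bootstrap most needs the expansion is exactly where it breaks down. Near the spectral edges $z\approx\pm2+iv_n$ one has $G_{\mu_\theta}(z)\approx\pm1$, so $\theta_iG_{\mu_\theta}(z)$ is a small, essentially \emph{real} number, sitting near the boundary of $\mathbb C^-$ and hence outside the cone on which the asymptotic expansion of $R_\mu$ is available for a non-compactly-supported $\mu$; the quantity $E_\theta(z)$ is then not controlled by your estimates. The paper sidesteps this entirely by never substituting $G_{\mu_\theta}$ into $R_\mu$: it works with the subordination functions $Z_1,\dots,Z_n:\mathbb C^+\to\mathbb C^+$ of Theorem~\ref{subordination functions}, which are defined on all of $\mathbb C^+$, and it expands $Z_i(z)G_i(Z_i(z))$ directly from the integral representation of $G_i$ (equation \eqref{subordination times cauchy expansion}); the needed error terms are then integrals of $u^k/(Z_i(z)-u)$ against $\mu_i$, bounded purely by moments and $\Im Z_i\ge\Im z$, with no analyticity-domain issue. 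This is also why the paper derives \emph{two} functional equations for $Z_1$ -- a cubic one (used for the short vertical integrals near $\pm2$, where the better error $r(z)\lesssim n^{-1/2}$ is available) and a quadratic one (used for the long horizontal integral at height~$1$, where $z$ leaves the region in which the cubic analysis was carried out). Your single bootstrap at height $v_n$ would have to control $G_{\mu_\theta}$ for all real parts simultaneously at that tiny height, which is a further complication the paper's split avoids. Your method as written does establish (the gist of) the compactly-supported Theorem~\ref{main compact}, but not Theorem~\ref{main theorem}.
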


	On average the Kolmogorov distance $\Delta(\mu_\theta, \omega)$ admits the same rate of convergence as established in the above theorem. We denote the expectation with respect to $\sigma_{n-1}$ by $\mathbb{E}_\theta$.
	\begin{kor} \label{Corollary unbounded} 
		Let $(X_i)_{i \in \mathbb{N}}$, $\mu$, and $\mu_\theta$ be as in Theorem \ref{main theorem}. Then, there exist constants $n_\mu \in \mathbb{N}$ and $C>0$ such that \revtwo for all $n \geq n_\mu$ we have 
		\end{kor}
		\begin{align*}
			\mathbb{E}_\theta \left(\Delta(\mu_\theta, \omega) \right) \leq \rev \frac{C\beta_3(\mu)^2m_4(\mu)}{\sqrt{n}} \revv.
		\end{align*}
	
	\rev The proofs of Theorem \ref{main theorem} and Corollary \ref{Corollary unbounded} are based on ideas introduced by Chistyakov and Götze in their works \cite{Chistyakov2013, Chistyakov2008a} and heavily rely on the method of subordination. 
	
\revvv 
Let us note that the rate of convergence of order $n^{-\nicefrac{1}{2}}$ in the last two statements does not seem to be improvable without imposing further conditions \revtwo on the underlying probability measure \revvtwo such as a vanishing third moment -- at least not with our approach.
We believe that this is due to the fact that the density of Wigner's semicircle law is non-differentiable at $-2$ and $2$, thus making a good approximation at these points more difficult. 
A replacement of Wigner's semicircle law by a non-linear modification of itself (belonging to the \revtwo class \revvtwo of so-called \textit{Meixner measures}) might lead to a rate of convergence going beyond $n^{-\nicefrac{1}{2}}$. \revtwo The density of this modification \revvtwo still has non-differentiable points, but these are shifted in accordance to the skewness of the approximating free additive convolution. We refer to Chistyakov and Götze \cite{Chistyakov2013} for an extensive analysis of approximations to Meixner measures. \\

\revtwo In view of the last observation, \cref{main theorem} does not yield a satisfying free analog of the Klartag-Sodin result. In order to achieve a more appropriate analog, we do not pursue the above-mentioned idea including the class of Meixner measures, but instead \revvtwo focus on deriving an improved rate of convergence of the distribution of a weighted sum to Wigner's semicircle law by replacing the Kolmogorov distance by a new distance-measuring quantity. For this purpose, fix $\varepsilon>0$ and define the pseudometric $\Delta_\varepsilon$ by 
\begin{align} \label{def Kolmogorov epsilon}
	\Delta_\varepsilon(\nu_1, \nu_2) :=	\sup_{x \in [-2+\varepsilon, 2-\varepsilon]} \big\vert \nu_1((-2+\varepsilon, x])  - \nu_2((-2+ \varepsilon, x]) \big\vert
\end{align}
for any two probability measures $\nu_1, \nu_2$ on $\mathbb{R}$. 
Non-rigorously speaking, $\Delta_\varepsilon$ \revtwo only takes into account the behavior of the considered measures inside $(-2,2)$, \revvtwo thus avoiding the points $-2$ and $2$.  \revtwo Finally, using the quantity $\Delta_\varepsilon$ instead of the Kolmogorov distance, we obtain the following theorem:  \revvtwo
\revv

\begin{theorem}  \label{main Kolmogorov epsilon unbounded} 
\revtwo	Let $(X_i)_{i \in \mathbb{N}}$ be a sequence of free identically distributed self-adjoint random variables with distribution $\mu$ for some probability measure $\mu$ on $\mathbb{R}$. Assume that $\mu$ has mean zero, unit variance, and finite sixth moment. \revtwo For $\theta \in \mathbb{S}^{n-1}, $ \revvtwo define $S_\theta$ and $\mu_\theta$ as in Theorem \ref{main theorem}. Let $\varepsilon, \rho \in (0,1).$ Then, there \revtwo exist a set $\mathcal{F} \subset \mathbb{S}^{n-1}$ with $\sigma_{n-1}(\mathcal{F}) \geq 1- \rho$  and constants $n_{\varepsilon, \rho, \mu} \in \mathbb{N}$, $C_{\varepsilon, \rho, \mu}>0$ \revvtwo such that for all $\theta \in \mathcal{F}$ and all $n \geq n_{\varepsilon, \rho, \mu}$ we have 
\begin{align*}
	\Delta_\varepsilon(\mu_\theta, \omega) \leq \revtwo \frac{C_{\varepsilon, \rho, \mu}}{n}.
\end{align*}
The constant $C_{\varepsilon, \rho, \mu}$ depends on $\varepsilon$, $\rho$, and on $\mu$ through its first six absolute moments. \revvtwo
\end{theorem} 

\revtwo We comment on the above moment assumptions later; see \cref{main Kolmogorov unbounded moment assumption}. Instead, let us now explain why Theorem \ref{main Kolmogorov epsilon unbounded}  can be interpreted as a free analog of the Klartag-Sodin result. If we choose $\theta \in \mathbb{S}^{n-1}$ to be the vector $\smash{\theta = (n^{-\nicefrac{1}{2}}, \dots,n^{-\nicefrac{1}{2}})}$ corresponding to the standard normalization, one can show that an upper bound of order $n^{-\nicefrac{1}{2}}$ is optimal for $\Delta_\varepsilon(\mu_\theta, \omega)$, $\varepsilon>0$, among all underlying probability measures with mean zero, unit variance, and finite \revtwo third absolute moment; \revvtwo compare to \cref{counterexample binomial}. Hence, we conclude that the randomization of the weights as in the above theorem has a \revtwo significant \revvtwo improving effect on the rate of convergence \revtwo to Wigner's semicircle law \revvtwo measured with respect to the pseudometric $\Delta_\varepsilon$. \\

\revtwo
Having found a free analog of Klartag and Sodin's result, let us come back to the analysis of the rate of convergence of the distribution of a weighted sum to Wigner's semicircle law in terms of the Kolmogorov distance. 
Below, we define the quantity $\tilde{\Delta}(\mu_\theta, \omega)$ and observe that it is closely related to $\Delta(\mu_\theta, \omega).$ We show that $\tilde{\Delta}(\mu_\theta, \omega)$  is of order $\smash{n^{-\nicefrac{3}{4}}\sqrt{\log n}}$, while the corresponding proof can be interpreted as half of the proof for a bound on $\Delta(\mu_\theta, \omega)$ which is better than the one established in \cref{main theorem}. 

We define \revvtwo
\begin{align} \label{def Delta tilde}
	\tilde{\Delta}(\mu_\theta, \omega) := \sup_{u \in [-2+\nicefrac{\varepsilon}{2}, 2-\nicefrac{\varepsilon}{2}]} \int_{a}^1 \!\! \vert G_{\theta}(u+iv) - G_{\omega}(u+iv) \vert dv + a + \varepsilon^{\nicefrac{3}{2}}
\end{align}
for $a \in (0,1)$ and $\varepsilon \geq 0$, both possibly depending on $n$. Here, $G_\theta$ and $G_\omega$ denote the \textit{Cauchy transforms} of $\mu_\theta$ and $\omega$; see \eqref{def CT} for the definition. According to a version of Bai's inequality, compare to Proposition \ref{Bai - Götze's Version}, the quantities $\smash{\tilde{\Delta}(\mu_\theta, \omega)}$ and $\Delta(\mu_\theta, \omega)$ are \revtwo  connected via \revvtwo
\begin{align} \label{Bai non rigorous}
		\Delta(\mu_\theta, \omega) \leq  C_1\int_{-\infty}^{\infty} \!\! \left\vert G_\theta(u+i) - G_\omega(u+i) \right\vert du   
		+ C_2\tilde{\Delta}(\mu_\theta, \omega),
\end{align}
where $C_1, C_2>0$ are numerical constants. \revtwo As indicated above, we have the following bound on $\tilde{\Delta}(\mu_\theta, \omega)$:\revvtwo

\begin{theorem} \label{main Delta tilde}
	Let $(X_i)_{i \in \mathbb{N}}$, $\mu$, and $\mu_\theta$ be as in \revtwo  Theorem \ref{main Kolmogorov epsilon unbounded}. Choose $\rho \in (0,1).$  \revvtwo Then, there \revtwo exist a set $\mathcal{F} \subset \mathbb{S}^{n-1}$ with $\sigma_{n-1}(\mathcal{F}) \geq 1 -\rho$ and constants $n_{\rho, \mu} \in \mathbb{N}$, $C_{\rho, \mu}>0$ \revvtwo such that for all $\theta \in \mathcal{F}$ and all $n \geq n_{\rho, \mu}$ we have 
	\begin{align*}
		\tilde{\Delta}(\mu_\theta, \omega)   \leq \revtwo C_{\rho, \mu}\frac{\sqrt{\log n}}{n^{\nicefrac{3}{4}}}.
	\end{align*}
	The constant $C_{\rho, \mu}$ depends on \revtwo $\rho$ and on $\mu$ through its first six absolute moments. \revvtwo
\end{theorem}

\revtwo For completeness, let us mention that the choice of the parameter $\varepsilon$ in \eqref{def Delta tilde}, i.e.\@ the distance to the points $-2$ and $2$, limits the final decay rate for $\tilde{\Delta}(\mu_\theta, \omega)$ to the claimed rate. Moreover, as we will see in \cref{counterexample binomial 2}, the order of the upper bound on $\tilde{\Delta}(\mu_\theta, \omega)$ for $\theta = (n^{-\nicefrac{1}{2}}, \dots, n^{-\nicefrac{1}{2}})$ cannot be better than $n^{-\nicefrac{1}{2}}$ in general.

Previously, we indicated that $\Delta(\mu_\theta, \omega)$ might decay faster than $n^{-\nicefrac{1}{2}}$ in the special case that the underlying probability measure has vanishing third moment. Indeed, by proving that the integral in \eqref{Bai non rigorous} is of order $n^{-1}$ in that case and combining this with  \cref{main Delta tilde}, we arrive at the following result:  \revvtwo

\begin{kor} \label{main Delta vanishing third moment} 
	Let $(X_i)_{i \in \mathbb{N}}$, $\mu$, and $\mu_\theta$ be as in \revtwo Theorem \ref{main Kolmogorov epsilon unbounded} and assume that $\mu$ has vanishing third moment. \revvtwo Let $\rho \in (0,1)$. Then, there \revtwo exist a set $\mathcal{F} \subset \mathbb{S}^{n-1}$ with $\sigma_{n-1}(\mathcal{F}) \geq 1 -\rho$ and constants $n_{\rho, \mu} \in \mathbb{N}$, $C_{\rho, \mu}>0$ \revvtwo such that for all $\theta \in \mathcal{F}$ and all $n \geq n_{\rho, \mu}$ we have 
	\begin{align*}
	\Delta(\mu_\theta, \omega)   \leq \revtwo C_{\rho, \mu}\frac{\sqrt{\log n}}{n^{\nicefrac{3}{4}}}.
	\end{align*}
	The constant $C_{\rho, \mu}$ depends on \revtwo $\rho$ and on $\mu$ through its first six absolute moments. \revvtwo
\end{kor}

 \revtwo In our last result on weighted sums, we restrict to the special case that the underlying probability measure has compact support. Our goal is to analyze the resulting bounded weighted sum in view of the so-called concept of \textit{superconvergence} \revvtwo  -- a type of convergence related to the free central limit theorem, which does not have a direct counterpart in classical probability theory. Given a sequence $(X_i)_{i \in \mathbb{N}}$ of free (not necessarily identically distributed) bounded self-adjoint random variables satisfying certain moment constraints, Bercovici and Voiculescu \cite{Bercovici1995} proved that the distribution $\mu_n$ of the sum $n^{-\nicefrac{1}{2}}(X_1 + \dots + X_n)$ has compact support in an interval $[a_n, b_n]$ with $\lim_{n \rightarrow \infty} a_n = -2$, $\lim_{n \rightarrow \infty} b_n = 2$ and is absolutely continuous for sufficiently large $n$. Moreover, the density $\nicefrac{d\mu_n}{dx}$ converges uniformly to the density of Wigner's semicircle law as $n \rightarrow \infty.$ Kargin \cite{Kargin2007} was able to estimate the rate of convergence of the support of $\mu_n$ to $[-2,2]$. Applying his result in the setting of \revtwo Theorem \ref{compact superconvergence result} (see below), \revvtwo we obtain 
	\begin{align*}
		\supp \mu_\theta \subset \left( -2 -5L^3\sum_{i=1}^n \vert \theta_i \vert^3, \, 2 + 5L^3\sum_{i=1}^n \vert \theta_i \vert^3 \right) \subset  \left( -2 - \frac{165L^3}{\sqrt{n}}, \, 2  + \frac{165L^3}{\sqrt{n}}\right)
	\end{align*}
	for sufficiently large $n$ and most $\theta \in \mathbb{S}^{n-1}$.  By careful modifications of Kargin's proof, this rate of convergence can be improved \revtwo to $n^{-1}$ \revvtwo as formulated in the next theorem:

	\begin{theorem} \label{compact superconvergence result}
		\revtwo 
			Let $(X_i)_{i \in \mathbb{N}}$ be a sequence of free identically distributed bounded self-adjoint random variables with distribution $\mu$ for some probability measure $\mu$ on $\mathbb{R}$. Assume that $\mu$ has mean zero, unit variance, and compact support in $[-L,L]$ for some $L>0.$ \revtwo For $\theta \in \mathbb{S}^{n-1}$, \revvtwo define $S_\theta$ and $\mu_\theta$ as in \cref{main theorem}. \revvtwo Let $\rho \in (0,1).$ Then, there \revtwo exist a set $\mathcal{F} \subset \mathbb{S}^{n-1}$ with $\sigma_{n-1}(\mathcal{F}) \geq 1 - \rho$ and constants $n_{\rho, \mu} \in \mathbb{N}$, $C_\rho>0$ \revvtwo such that for all $\theta \in \mathcal{F}$ and all $n \geq n_{\rho, \mu}$ we have
		\begin{align*}
			\supp \mu_\theta \subset \left( -2 - \frac{C_\rho \left(768L^4 + 6\vert m_3(\mu) \vert\right)}{n}, \, 2+  \frac{C_\rho\left(768L^4 + 6\vert m_3(\mu) \vert\right)}{n} \right).
		\end{align*}
		\revtwo Above, $m_3(\mu)$ denotes the third moment of $\mu.$ \revvtwo
	\end{theorem}
	
\rev
	For completeness, let us mention that it is possible to obtain decay rates holding on average (such as in Corollary \ref{Corollary unbounded}) in the settings of Theorems \ref{main Kolmogorov epsilon unbounded}, \ref{main Delta tilde}, and \ref{compact superconvergence result} and in the one of \cref{main Delta vanishing third moment}. Since the exact formulations are immediate, we omit them at this point. \revv

\subsection{Berry-Esseen type estimate in the free central limit theorem}
Before we end the introduction, let us leave the setting of weighted sums and consider sums of free non-identically distributed bounded \revtwo self-adjoint \revvtwo random variables. Our approach can be generalized to such sums providing \revtwo the following \revvtwo new Berry-Esseen type estimate:
	\begin{theorem} \label{Berry esseen bounded non id}
		Let $(X_i)_{i \in \mathbb{N}}$ be a sequence of free -- not necessarily identically distributed -- bounded self-adjoint random variables with distributions $(\nu_i)_{i \in \mathbb{N}}$ for probability measures $\nu_i$ on $\mathbb{R}$. \revtwo Assume that $\nu_i$ has mean zero and  variance $\smash{\sigma_i^2 \in (0, \infty)}$,  $i  \in \mathbb{N}$.  Moreover, suppose that $\supp \nu_i \subset [-T_i, T_i]$ holds for some $T_i >0$. \revvtwo Define
		\begin{align*}
			B_n:= \left(\sum_{i=1}^n \sigma_i^2\right)^{\nicefrac{1}{2}}, \qquad S_n := \frac{1}{B_n} \sum_{i=1}^{n} X_i, \qquad L_n := \frac{\sum_{i=1}^{n} T_i^3}{B_n^3},
		\end{align*}
		and let $\mu_{\boxplus n}$ denote the distribution of $S_n$. Then, we have
		\begin{align*}
			\Delta(\mu_{\boxplus n}, \omega) \leq cL_n
		\end{align*}
		for some absolute constant $c>0.$
	\end{theorem}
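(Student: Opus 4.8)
The plan is to carry over the subordination argument used for Theorems~\ref{main compact}--\ref{theorem improvement inner 2} to the non-identically-distributed setting, the only genuinely new point being that the individual variances $\sigma_i^2$ and radii $T_i$ must be tracked through the Lyapunov ratio $L_n$. First I would dispose of the trivial case: since $\Delta(\mu_{\boxplus n},\omega)\le 1$ always, it suffices to treat $L_n\le\delta_0$ for a small absolute constant $\delta_0$, and then $\tau_i:=T_i/B_n\le L_n^{\nicefrac13}\le 1$ for every $i$. Writing $\mu_i$ for the law of $X_i/B_n$, we have $\mu_{\boxplus n}=\mu_1\boxplus\cdots\boxplus\mu_n$, each $\mu_i$ has mean zero, variance $t_i:=\sigma_i^2/B_n^2$ and support in $[-\tau_i,\tau_i]$, and $\sum_{i=1}^n t_i=1$. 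The smallness of the $\tau_i$ yields the elementary input bounds $\sum_i t_i^2\le\sum_i\tau_i^3=L_n$, $\sum_i t_i\tau_i^3\le L_n$ and $\sum_i\beta_3(\mu_i)\le L_n$, which is all that is needed on the input side.

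Next I would invoke the subordination machinery (additivity of the Voiculescu transform; cf.\ \cite{Chistyakov2013,Chistyakov2008a}): there are analytic maps $\omega_i\colon\mathbb{C}^+\to\mathbb{C}^+$ with $\operatorname{Im}\omega_i(z)\ge\operatorname{Im}z$, $G_{\mu_{\boxplus n}}(z)=G_{\mu_i}(\omega_i(z))$ for all $i$, and $\sum_{i=1}^n\omega_i(z)=z+(n-1)F_{\mu_{\boxplus n}}(z)$ with $F_{\mu_{\boxplus n}}=1/G_{\mu_{\boxplus n}}$. Put $G:=G_{\mu_{\boxplus n}}$, $F:=1/G$, $\omega_*:=1/G_\omega$ and $D:=G-G_\omega$. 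Using $F(z)=F_{\mu_i}(\omega_i(z))$ together with the expansion $F_{\mu_i}(w)=w-t_i/w+r_i(w)$, where $|r_i(w)|\le C\tau_i^3/|w|^2$ for $|w|\ge 2\tau_i$, and summing over $i$ eliminates the $\omega_i$ and leaves the approximate self-consistent equation
\begin{align*}
	z-\frac{1}{G(z)}=\sum_{i=1}^n\frac{t_i}{\omega_i(z)}-\sum_{i=1}^n r_i(\omega_i(z)).
\end{align*}
Subtracting the semicircle identity $z-1/G_\omega=G_\omega$ and rearranging with the help of $G_\omega^2-zG_\omega+1=0$ gives $D=\bigl(\sum_i t_i(1/\omega_i-1/\omega_*)-\sum_i r_i(\omega_i)\bigr)GG_\omega$. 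A stability estimate for the subordination functions is then obtained by subtracting $F_{\mu_i}(\omega_*)$ from $F_{\mu_i}(\omega_i)=F$ and using $F_{\mu_i}'=1+O(\tau_i^2/|w|^2)$: this yields $\omega_i-\omega_*=(1+O(\tau_i^2))\bigl(-D/(GG_\omega)+t_iG_\omega-r_i(\omega_*)\bigr)$, hence $|\omega_i-\omega_*|\lesssim|D|/|GG_\omega|+t_i|G_\omega|+\tau_i^3|G_\omega|^2$.

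Feeding the stability estimate back into the self-consistent equation, the sum $\sum_i t_i(1/\omega_i-1/\omega_*)$ produces a main term proportional to $DG_\omega/G$ plus a remainder of size $O\bigl(|D|(L_n+|D|)+L_n\bigr)$, once the bounds $\sum_i t_i=1$, $\sum_i t_i^2\le L_n$, $\sum_i t_i\tau_i^3\le L_n$ are used and $|G|,|G_\omega|$ are bounded on the relevant region. Since $1-G_\omega^2=G_\omega\sqrt{z^2-4}$, the equation for $D$ collapses, on the bounded part of $\mathbb{C}^+$, to
\begin{align*}
	|D(z)|\,\bigl|\sqrt{z^2-4}\bigr|\lesssim|D(z)|\bigl(L_n+|D(z)|\bigr)+L_n ,
\end{align*}
from which one reads off $|G(z)-G_\omega(z)|\lesssim L_n/|\sqrt{z^2-4}|$ there, while the same computation, keeping track of the powers of $1/|\omega_*|=|G_\omega|$ and using that $\mu_{\boxplus n}$ has mean $0$, variance $1$ and third moment $\sum_i m_3(\mu_i)$ of modulus $\le L_n$, gives $|G(z)-G_\omega(z)|\lesssim L_n/|z|^4$ for large $|z|$. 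Inserting these two estimates into the version of Bai's inequality in Proposition~\ref{Bai - Götze's Version} with $a\asymp L_n$ and $\varepsilon\asymp L_n^{\nicefrac23}$ (so that $\varepsilon>6a$ for $n$ large), the first integral is $O(L_n)$ because the $|z|^{-4}$ decay makes its tail integrable and $|\sqrt{z^2-4}|$ is bounded below on $\{\operatorname{Im}z=1\}$, the second integral is $O(L_n)$ with worst case $\operatorname{Re}z$ near $\pm2$, where $|\sqrt{z^2-4}|\gtrsim\sqrt v$ and $\int_a^1 L_n\,v^{-\nicefrac12}\,dv\lesssim L_n$, and the remaining terms $Ca+C\varepsilon^{\nicefrac32}$ are likewise $O(L_n)$; this yields $\Delta(\mu_{\boxplus n},\omega)\le cL_n$ with an absolute $c$ (and $c=\max(1/\delta_0,\dots)$ to cover the trivial case).

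The step that needs the most care — exactly as in the compactly supported weighted-sum case — is justifying the above a posteriori. One must show, uniformly for $z=u+iv$ with $v\ge a\asymp L_n$ and all $u\in\mathbb{R}$, that $|\omega_i(z)|$ stays bounded below by an absolute constant, so that the expansions of $F_{\mu_i}$, $F_{\mu_i}'$ and $r_i$ are legitimate and the segment from $\omega_*(z)$ to $\omega_i(z)$ stays in their domain, and simultaneously that $|D(z)|$ and $|\omega_i(z)-\omega_*(z)|$ are small enough for the bootstrap inequality above to close. I would establish this by a continuity argument: the estimates are immediate for $\operatorname{Im}z$ of order one (or $|z|$ large), where $G\approx 1/z$ and $\omega_i\approx z$, and they propagate down to $v\asymp L_n$ along the vertical lines $\operatorname{Re}z=\mathrm{const}$; the delicate region is $\operatorname{Re}z$ near $\pm2$, where $\sqrt{z^2-4}$ degenerates and it is precisely the stopping height $v\asymp L_n$, together with the cutoff $\operatorname{Re}z\in[-2+\nicefrac\varepsilon2,2-\nicefrac\varepsilon2]$ built into Bai's inequality, that keeps the argument under control.
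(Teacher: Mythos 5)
Your proposal reaches the same destination but by a different road.  The paper's proof of this theorem has a very specific architecture: it first establishes Proposition~\ref{preliminary rate with power 2/3}, namely $\Delta(\mu_{\boxplus n},\omega)\le CL_n^{\nicefrac23}$, feeding the externally known Chistyakov--G\"otze rate $\Delta\le c\sqrt{L_n}$ into the $K$-transform machinery (Lemma~\ref{estimates K_i as in Kargin} and the inversion $Z_i=K_i\circ G_{\boxplus n}$) to get a lower bound $\vert Z_i(z)\vert \gtrsim L_n^{\nicefrac16}$ valid for $\Im z\gtrsim L_n^{\nicefrac23}$, then derives and solves the cubic functional equation $P(z,Z_1(z))=0$ via Rouch\'e's theorem (and the quadratic one $Q(z,Z_1(z))=0$ on $\Im z=1$), and finally applies Bai's inequality with $a\asymp L_n^{\nicefrac23}$.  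It then \emph{iterates}: the improved rate $L_n^{\nicefrac23}$ pushes the domain of the lower bound on $\vert Z_i\vert$ down to $\Im z\gtrsim L_n$, and Bai's inequality with $a\asymp L_n$ gives the claimed rate.  You instead write down a direct self-consistent equation from the subordination identities, a stability estimate $\vert\omega_i-\omega_*\vert\lesssim\vert D\vert+t_i+\tau_i^3$, and a bootstrap inequality $\vert D\vert\,\vert\sqrt{z^2-4}\vert\lesssim L_n+\text{(small)}\cdot\vert D\vert+\vert D\vert^2$, closed by a continuity argument in $\Im z$.  Conceptually this is the ``local law'' style of argument (\`a la Bao--Erd\H{o}s--Schnelli) rather than Chistyakov--G\"otze's explicit polynomial/Rouch\'e style; both can work.

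The genuine gap is precisely the step you flag yourself: the continuity argument that propagates the coupled estimates on $\vert D(z)\vert$ and $\vert\omega_i(z)\vert$ from $\Im z\asymp 1$ down to $\Im z\asymp L_n$ is asserted, not carried out, and it is the crux.  It is a \emph{coupled} bootstrap: the stability estimate needs $\vert\omega_i\vert$ to be bounded away from the threshold where the Laurent expansion of $F_{\mu_i}$ degenerates, while the lower bound on $\vert\omega_i\vert$ you propose ($\vert\omega_i\vert\ge\vert\omega_*\vert-\vert\omega_i-\omega_*\vert\ge 1-O(\vert D\vert+L_n^{\nicefrac23})$) itself uses the stability estimate.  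At the worst corner $\Re z\approx 2-\varepsilon/2$, $\Im z\approx a\asymp L_n$ with $\varepsilon\asymp L_n^{\nicefrac23}$, one has $\vert\sqrt{z^2-4}\vert\asymp L_n^{\nicefrac13}$ and the bootstrap closes only with a margin of order $L_n^{\nicefrac13}$; a careful stopping-time or Rouch\'e-type argument is needed to make this rigorous, and without it the proof is incomplete.  The paper's two-step iteration through the a~priori Kolmogorov bound resolves exactly this circularity in a cleaner, constant-explicit way, and your proposal does not mention it (nor the Kargin cumulant bound $\vert\kappa_m(\nu)\vert\le \frac{2L}{m-1}(4L)^{m-1}$ that the paper uses to control the tails of the $K$-transform expansion).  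If you want to keep your bootstrap route, you should at minimum mimic the paper's iteration to supply the a~priori input that seeds it; otherwise the continuity argument has to be written out in full, which is essentially a proof of its own.

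Two smaller points.  First, your remainder bound $O(\vert D\vert(L_n+\vert D\vert)+L_n)$ overstates the linear-in-$\vert D\vert$ error; the correct size is $O(L_n^{\nicefrac23}\vert D\vert+\vert D\vert^2+L_n)$ (coming from $\sum_i t_i\tau_i^2\le\max_i\tau_i^2\le L_n^{\nicefrac23}$), which is harmless for the closure but should be stated correctly.  Second, on the line $\Im z=1$ the naive moment-matching decay $\vert D(u+i)\vert\lesssim L_n/\vert u\vert^4$ requires the power-series expansion of $G_{\boxplus n}$, hence an a~priori bound on the support radius of $\mu_{\boxplus n}$; your self-consistent equation does supply the same decay once one tracks the powers of $\vert\omega_*\vert$, but this computation (which the paper replaces with the explicit quadratic-equation estimates and the lower bound $\vert Z_1(u+i)\vert\gtrsim 1+(\vert u\vert-4)_+$) needs to be made explicit to close the first Bai integral.
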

	Let us comment on the \revtwo above \revvtwo theorem: Firstly, recall that in the setting of Theorem \ref{Berry esseen bounded non id} the best result known so far was established by Chistyakov and Götze \revtwo \cite[Theorem 2.6]{Chistyakov2008a} \revvtwo providing a rate of convergence of order $\smash{(B_n^{-3}\sum_{i=1}^n \beta_3(\nu_i))^{\nicefrac{1}{2}}}$, where $\beta_3(\nu_i)$ denotes the third absolute moment of $\nu_i$. We were able to remove the square root at the cost of an increase in the \revtwo numerator \revvtwo from $\sum_{i=1}^n \beta_3(\nu_i)$ to $\sum_{i=1}^n T_i^3$. Note that in the case of free identically distributed bounded summands, Theorem \ref{Berry esseen bounded non id} yields the standard Berry-Esseen rate. 
	\rev Secondly, observe that Theorem \ref{Berry esseen bounded non id} implies Theorem \ref{main theorem} in the special case of bounded random variables; compare to Lemma \ref{5.3.3 Bobkov}. \revv
	

\subsection*{Organization} In Section \ref{Section: Preliminaries} we briefly recall the basics of free probability theory and collect some concentration inequalities on the sphere. Section \ref{Section: unbounded} is devoted to the proofs of Theorem \ref{main theorem} and Corollary \ref{Corollary unbounded}, whereas the \revtwo proof of \cref{main Kolmogorov epsilon unbounded} is \revvtwo carried out in Section \ref{Section: Delta epsilon}. \rev The results of Theorem \ref{main Delta tilde} and \cref{main Delta vanishing third moment} \revtwo are verified \revvtwo  in Section \ref{Section: Delta tilde}. Section \ref{Section: Superconvergence} is concerned with the proof of the superconvergence result in Theorem \ref{compact superconvergence result}. Lastly, the proof of Theorem \ref{Berry esseen bounded non id} is outlined in Section \ref{sec: Berry esseen bounded non id}.

\subsection*{Acknowledgments} I would like to thank Friedrich Götze for numerous discussions and helpful feedback on this project. Moreover, I thank the anonymous referee for a number of valuable suggestions, \revtwo which helped to improve some of the results of this work significantly. \revvtwo

	\section{Preliminaries} \label{Section: Preliminaries}
	Before we start with the basics of free probability theory, we briefly introduce some notation concerning the (complex) square root. 
	When writing $\sqrt{x}$ for some $x \in \mathbb{R}, x \geq 0$, we always refer to the principal (real) square root. In contrast to this, we will not work with the principal branch of the complex square root and instead place the branch cut on the non-negative real axis. In more detail, for any $z=re^{i \varphi}$ with $r \geq 0$, $\varphi = \arg z \in (0, 2\pi)$, we set $\sqrt{z} := \sqrt{r} e^{i\nicefrac{\varphi}{2}}$. A simple calculation involving the half angle formula shows that
	\begin{align} \label{square root real and im formula}
		\Re\sqrt{z} = \sgn(v)\sqrt{\frac{1}{2} \left(\sqrt{u^2+v^2} + u\right)}, \qquad \Im\sqrt{z} =  \sqrt{\frac{1}{2}\left( \sqrt{u^2+v^2} - u\right)}
	\end{align}
	hold for $z \in \mathbb{C}\setminus[0, \infty)$ with $z = u+iv$. Here, $\sgn(v)$ denotes the sign of $v$ under the convention $\sgn(0)=1$. Clearly, the imaginary part of $\sqrt{z}$ is always positive and $z \mapsto \sqrt{z}$ is holomorphic in $\mathbb{C} \setminus[0, \infty)$. Last but not least, we remark that the usual multiplication rule for square roots does not hold, i.e.\@ $\sqrt{z_1 \cdot z_2} \neq \sqrt{z_1}\sqrt{z_2}$ in general for $z_1, z_2 \in \mathbb{C}\setminus [0, \infty).$

	\subsection{\texorpdfstring{Basics of free probability theory}{Basics of free probability theory}}  \label{section: basics in FPT}
	In the statements of our main theorems, we chose the operator-theoretic approach to the free additive convolution, which non-rigorously is defined as follows: Let $X$ and $Y$ be free random variables with analytic distributions $\nu_1$ and $\nu_2$. Then, the analytic distribution of the sum $X+Y$ is called the free additive convolution and is denoted by $\nu_1 \boxplus \nu_2$; see for instance \cite{Bercovici1993}. However, in order to prove our results, we will need an analytic characterization of the free additive convolution. In the past, two analytic approaches turned out to be very helpful: the machinery of so-called $R$-transforms and the concept of subordination. Since both approaches will be relevant in our proofs, the main part of this section is devoted to a brief outline of each. After that, we continue by quoting a few results that are related to the Kolmogorov and L\'{e}vy distance. 
	
	The following facts can be found in \cite{Mingo2017}. Choose a probability measure $\nu$ on $\mathbb{R}$ and let
	\begin{align*}
		m_k(\nu) := \int_{\mathbb{R}} x^k \nu(dx), \qquad \beta_k(\nu) := \int_{\mathbb{R}} \vert x \vert ^k \nu(dx), \qquad k \in \mathbb{N},
	\end{align*}
	denote its $k$-th (absolute) moments.
	We define the Cauchy transform of $\nu$ by
	\begin{align} \label{def CT}
		G_\nu(z) := \int_{\mathbb{R}} \frac{1}{z-t} \nu(dt), \qquad z \in \mathbb{C}^+,
	\end{align}
	where $\mathbb{C}^+$ denotes the complex upper half-plane. The Cauchy transform $G_\omega$ of Wigner's semicircle law is given by 
	\begin{align*}
		G_\omega(z) = \frac{1}{2}\left( z - \sqrt{z^2-4} \right), \qquad z \in \mathbb{C}^+.
	\end{align*}
	The measure $\nu$ can be recovered from its Cauchy transform with the help of the Stieltjes-Perron inversion formula: For all $a,b \in \mathbb{R}$ with $a<b$, we have
	\begin{align*}
		\nu ((a,b)) + \frac{1}{2} \nu(\{a,b\}) = -\lim_{\varepsilon \downarrow 0}  \frac{1}{\pi} \int_{a}^b \Im G_\nu (x+i\varepsilon) dx. 
	\end{align*}
	If $\nu$ has compact support in $[-L,L]$ for some $L>0$, then $G_\nu$ is holomorphic in $\smash{\{ z \in \mathbb{C}: \vert z \vert >L \}}$ with power series expansion given by
	\begin{align*}
		G_\nu(z) = \sum_{k=0}^{\infty} \frac{m_k(\nu)}{z^{k+1}}, \qquad \vert z \vert >L.
	\end{align*}
	Using that expansion, it is easy to prove that  $G_{\nu}$ is univalent in $\{ z \in \mathbb{C}: \vert z \vert > 4L\}$. Moreover, we have 
	\begin{align*}
	\big	\{ z \in \mathbb{C}: 0< \vert z \vert < (6L)^{-1} \big\} \subset	G_\nu \big(\{  z \in \mathbb{C}: \vert z \vert > 4L\} \big).
	\end{align*}
	Hence, the functional inverse
	\begin{align*}
		K_\nu(z) := G_\nu^{-1}(z),
	\end{align*}
	also known as the $K$-transform of $\nu$, is well-defined in the punctured disk $\smash{\{ z \in \mathbb{C}: 0< \vert z \vert < (6L)^{-1}\}}$ and satisfies $G_\nu(K_\nu(z)) = z$ for all complex $z$ with $\smash{0<\vert z \vert < (6L)^{-1}}$. Additionally, one can prove that $K_\nu (G_\nu(z)) = z$ holds for all $z$ with $\vert z \vert > 7L$. The $R$-transform $R_\nu$ of $\nu$ is given by 
	\begin{align*}
		R_\nu(z) := K_\nu(z) - \frac{1}{z},
	\end{align*}
	whenever it is defined. It is known that $R_\nu$ is analytic in $\{ z \in \mathbb{C}: \vert z \vert < (6L)^{-1}\}$ with power series expansion given by
	\begin{align*}
		R_\nu(z) = \sum_{m=1}^\infty \kappa_m(\nu) z^{m-1}, \qquad  \vert z \vert < (6L)^{-1}.
	\end{align*}
	Here, $\kappa_m(\nu)$ denotes the $m$-th free cumulant of $\nu$. We obtain that the $K$-transform $K_\nu$ is analytic in $\{ z \in \mathbb{C}: 0 < \vert z \vert < (6L)^{-1}\}$ with Laurent series given by 
	\begin{align*}
		K_\nu(z) = \frac{1}{z} + \sum_{m=1}^\infty \kappa_m(\nu) z^{m-1}, \qquad  0<\vert z \vert < (6L)^{-1}.
	\end{align*}
	By making use of a version of Lagrange's inversion theorem, Kargin was able to establish the following bound on the cumulants
	\begin{align*}
		\vert \kappa_m(\nu) \vert \leq \frac{2L}{m-1} (4L)^{m-1}, \qquad m \geq 2;
	\end{align*}
	see \cite[Lemma 5, Lemma 6]{Kargin2007a} for the details. 
	A crucial benefit of the $R$- and $K$-transform is the fact that both linearize the free additive convolution in the following sense: Letting $\nu_1$ and $\nu_2$ denote two compactly supported probability measures on $\mathbb{R},$ we have 
	\begin{align*}
		R_{\nu_1 \boxplus \nu_2}(z) = R_{\nu_1}(z) + R_{\nu_2}(z), \qquad K_{\nu_1 \boxplus \nu_2}(z) = K_{\nu_1}(z) + K_{\nu_2}(z) - \frac{1}{z}
	\end{align*}
	on the intersection of the corresponding domains. Since cumulants and moments uniquely determine compactly supported probability measures, it is possible to define the free additive convolution of such measures by these linearization identities. Hence, the free additive convolution can be characterized not only as the distribution of a sum of free (bounded) random variables, but also as a binary operation on the space of all compactly supported probability measures. This change of perspective was extended to probability measures with finite variance by Maassen \cite{Maassen1992} and to arbitrary probability measures on the real line by Belinschi and Bercovici \cite{Belinschi2007} and Chistyakov and Götze \cite{Chistyakov2011} by complex analytic methods. In the case of arbitrary probability measures, one introduces a new transform, the reciprocal Cauchy transform, which is given by 
	\begin{align*}
		F_{\nu}(z) := \frac{1}{G_{\nu}(z)}, \qquad z \in \mathbb{C}^+.
	\end{align*}
	Now, the free additive convolution of arbitrary probability measures on $\mathbb{R}$  can be defined solely by the use of the corresponding reciprocal Cauchy transforms as shown in the next theorem. We refer to \cite[Theorem 2.1, Corollary 2.2]{Chistyakov2011} for a proof.
	
	\begin{theorem} \label{subordination functions}
		Let $\nu_1, \dots, \nu_n$ be probability measures on $\mathbb{R}$. There exist unique holomorphic functions $Z_1, \dots, Z_n: \mathbb{C}^+ \rightarrow \mathbb{C}^+$ such that for any $z \in \mathbb{C}^+$ the equations
		\begin{align*}
			Z_1(z) + Z_2(z) + \cdots + Z_n(z) - z = (n-1)F_{\nu_1}(Z_1(z)), \,\,\,\, F_{\nu_1}(Z_1(z)) = \cdots = F_{\nu_n}(Z_n(z))
		\end{align*}
		hold. The so-called subordination functions $Z_1, \dots, Z_n$ satisfy $\Im Z_i(z) \geq \Im z$ for all $z \in \mathbb{C}^+, i \in \{1, \dots, n\}.$  Moreover, there exists a probability measure $\nu$ such that $F_\nu(z) = F_{\nu_1}(Z_1(z))$ holds for all $z \in \mathbb{C}^+$. We define $\nu_1 \boxplus \cdots \boxplus \nu_n := \nu$.
	\end{theorem}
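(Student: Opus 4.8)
The plan is to recast the two equations of the statement as a single fixed-point problem on the polydomain $(\mathbb{C}^+)^n$, to solve it by a holomorphic fixed-point argument in the spirit of Belinschi--Bercovici and Chistyakov--G\"otze, and then to recover $\nu$ from a Nevanlinna-type characterisation of reciprocal Cauchy transforms. Set $H_i := F_{\nu_i} - \mathrm{id}$; recalling that $\Im F_{\nu_i}(z) \ge \Im z$ on $\mathbb{C}^+$ (a general property of reciprocal Cauchy transforms, e.g.\ from the Nevanlinna representation), each $H_i$ is holomorphic on $\mathbb{C}^+$ with $\Im H_i \ge 0$. The first step is the elementary equivalence: for fixed $z \in \mathbb{C}^+$, a tuple $(\zeta_1, \dots, \zeta_n) \in (\mathbb{C}^+)^n$ satisfies $\zeta_1 + \dots + \zeta_n - z = (n-1)F_{\nu_1}(\zeta_1)$ and $F_{\nu_1}(\zeta_1) = \dots = F_{\nu_n}(\zeta_n)$, with common value $\Phi$, if and only if
\begin{align*}
	\zeta_i = z + \sum_{j \neq i} H_j(\zeta_j), \qquad i = 1, \dots, n,
\end{align*}
in which case $\Phi = z + \sum_{j=1}^n H_j(\zeta_j)$. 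Indeed, writing $\zeta_i = F_{\nu_i}(\zeta_i) - H_i(\zeta_i) = \Phi - H_i(\zeta_i)$ and using $\sum_i \zeta_i - z = (n-1)\Phi$ to solve for $\Phi$, then substituting back, gives the displayed system, and the converse is the same computation read backwards.

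Next I would fix $z \in \mathbb{C}^+$ and consider the holomorphic self-map $T_z$ of $(\mathbb{C}^+)^n$ given by $T_z(w_1, \dots, w_n) = \big(z + \sum_{j \neq i} H_j(w_j)\big)_{i=1}^n$. Its orbits stay in $\{w : \Im w \ge \Im z\}^n$, since $\Im T_z(w)_i = \Im z + \sum_{j \neq i}\Im H_j(w_j) \ge \Im z$. A holomorphic fixed-point argument -- combining the Denjoy--Wolff theorem (in one variable for $n=2$ after composing to a self-map of $\mathbb{C}^+$, and for the polydisc in general) with estimates that rule out boundary fixed points -- then yields a fixed point $(Z_1(z), \dots, Z_n(z))$ in the interior $(\mathbb{C}^+)^n$: a finite boundary point is impossible because imaginary parts of iterates stay $\ge \Im z > 0$, and the point at infinity is excluded by the sublinear behaviour $F_{\nu_i}(w) = w + o(|w|)$ of reciprocal Cauchy transforms as $w \to \infty$, which keeps orbits from escaping to infinity. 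One obtains along the way uniqueness of the fixed point -- hence of the functions $Z_1, \dots, Z_n$ -- together with $\Im Z_i(z) = \Im z + \sum_{j \neq i}\Im H_j(Z_j(z)) \ge \Im z$, and, by the equivalence of the first step, the two equations of the statement with $F_{\nu_1}(Z_1(z)) = \dots = F_{\nu_n}(Z_n(z))$. Holomorphy of each $Z_i$ in $z$ follows from a normal-families argument: on any compact $K \subset \mathbb{C}^+$ one has $\inf_K \Im z > 0$, so the iterates $T_z^{\circ k}$ of a fixed base point form a normal family and converge (pointwise, hence locally uniformly) to $(Z_1, \dots, Z_n)$.

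It remains to produce the measure $\nu$. I would set $F(z) := F_{\nu_1}(Z_1(z)) = z + \sum_{j=1}^n H_j(Z_j(z))$. Then $\Im F(z) \ge \Im z$, so $F : \mathbb{C}^+ \to \mathbb{C}^+$ is holomorphic, and by the Nevanlinna-type characterisation of reciprocal Cauchy transforms -- a holomorphic $F : \mathbb{C}^+ \to \mathbb{C}^+$ equals $F_\nu$ for a (necessarily unique) probability measure $\nu$ on $\mathbb{R}$ precisely when $F(iy)/(iy) \to 1$ as $y \to \infty$ -- it suffices to verify this normalisation. From $\Im Z_i(iy) \ge y$ one gets $Z_i(iy) \to \infty$; checking that this approach is nontangential and using $F_{\nu_i}(w) = w(1 + o(1))$ as $w \to \infty$, one bootstraps the fixed-point relation to $Z_i(iy) = iy(1 + o(1))$ and hence $F(iy) = iy + \sum_j H_j(Z_j(iy)) = iy(1 + o(1))$. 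Thus $F = F_\nu$ for a unique probability measure $\nu$, and we define $\nu_1 \boxplus \dots \boxplus \nu_n := \nu$.

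The main obstacle is the fixed-point argument of the second paragraph on the \emph{non-compact} domain $(\mathbb{C}^+)^n$: the naive hypothesis of an Earle--Hamilton-type theorem -- that $T_z$ maps the domain into a subset lying strictly inside it -- fails precisely at the point at infinity of $\partial\mathbb{C}^+$, and repairing this (either by a Denjoy--Wolff analysis or by first confining orbits to a bounded subdomain via a priori estimates, and then establishing uniqueness there) is exactly where the behaviour of the reciprocal Cauchy transforms near infinity enters; carrying this out cleanly for general $n$ is the technical heart of the argument. A secondary point is the asymptotic analysis of the last paragraph, in particular checking that $Z_i(iy) \to \infty$ nontangentially so that the asymptotics of $F_{\nu_i}$ may be invoked to conclude $F(iy)/(iy) \to 1$.
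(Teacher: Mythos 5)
The paper does not prove this statement; it is quoted verbatim from Chistyakov and G\"otze (Theorem~2.1 and Corollary~2.2 of \cite{Chistyakov2011}), so there is no in-paper argument to compare against. Your outline -- the fixed-point reformulation on $(\mathbb{C}^+)^n$, a holomorphic fixed-point argument, and the Nevanlinna-type characterisation of $F_\nu$ -- is recognisably the route taken there and by Belinschi--Bercovici for $n=2$, and the algebraic equivalence in your first paragraph is correct.

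The places where you defer the work are exactly where the proof lives, and they are harder than you indicate. First, a Denjoy--Wolff theorem for the polydisc (Herv\'e's theorem and its refinements) does not deliver what you want: iterates of a fixed-point-free holomorphic self-map need not converge, cluster sets can be large, and the clean interior-fixed-point versus single-boundary-point dichotomy of the one-variable theory fails in several variables. For $n>2$ one either confines the orbit a priori to a bounded subdomain on which an Earle--Hamilton argument runs, or reduces to a one-variable iteration, for instance in the common value $\Phi=F_{\nu_1}(\zeta_1)$ rather than in the tuple $(\zeta_1,\dots,\zeta_n)$. Second, your exclusion of the point at infinity invokes $F_{\nu_i}(w)=w+o(|w|)$, which holds only as $w\to\infty$ non-tangentially; since the theorem imposes no moment conditions, $H_i(w)=F_{\nu_i}(w)-w$ need not even stay bounded as $w\to\infty$ tangentially, so quoting these asymptotics cannot by itself rule out tangential escape -- one must first show the iterates remain in a non-tangential region, which is a separate estimate (e.g.\ via the Nevanlinna representation of the $H_i$). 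Third, the final check $F(iy)/(iy)\to 1$ is circular as written: the a priori bound $\Im Z_i(iy)\ge y$ controls neither $\Re Z_i(iy)$ nor the non-tangential approach needed to invoke $F_{\nu_i}(w)\sim w$. The usual repair is to observe that each $Z_i$, being a holomorphic self-map of $\mathbb{C}^+$, has a Nevanlinna representation $Z_i(z)=a_iz+\beta_i+\int\frac{1+tz}{t-z}\,d\tau_i(t)$ with $a_i\ge 0$; the bound $\Im Z_i\ge\Im z$ forces $a_i\ge 1$, and the fixed-point relations together with $F_{\nu_i}(iy)/(iy)\to 1$ then force $a_1=\cdots=a_n=1$. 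Until these three points are carried out you have a correct plan, not a proof.
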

The proof of Theorem \ref{subordination functions} shows that the subordination functions $Z_1, \dots, Z_n$ are \textit{Nevanlinna} functions that satisfy $\nicefrac{Z_i(z)}{z} \rightarrow 1$ as $z \rightarrow \infty$ non-tangentially to $\mathbb{R}$. With that knowledge, the following lemma can be derived from the theory of Nevanlinna functions; see \cite[Chapter 3]{Chistyakov2011} and the references mentioned therein.
	\begin{lem} \label{reciprocal subordination function as Cauchy transform}
		Let $Z$ be a subordination function with respect to some free additive convolution. Then, there exists a probability measure $\sigma$ on $\mathbb{R}$ such that $G_{\sigma}(z) = \frac{1}{Z(z)}$ holds for all $z \in \mathbb{C}^+.$
	\end{lem}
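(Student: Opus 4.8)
\emph{Proof plan.} The plan is to identify $1/Z$ with the Cauchy transform of a probability measure by appealing to the classical description of (reciprocal) Cauchy transforms inside the class of Nevanlinna functions. Recall the standard fact that a holomorphic function $\varphi\colon\mathbb{C}^+\to\mathbb{C}^+$ equals the reciprocal Cauchy transform $F_\sigma = 1/G_\sigma$ of some probability measure $\sigma$ on $\mathbb{R}$ if and only if $\varphi(z)/z\to 1$ as $z\to\infty$ non-tangentially; see the discussion in \cite{Chistyakov2011} and the references therein. As was already noted right after Theorem \ref{subordination functions}, every subordination function $Z$ is a Nevanlinna function with $Z(z)/z\to 1$ non-tangentially, and moreover $\Im Z(z)\geq\Im z>0$, so $Z$ genuinely takes values in the \emph{open} upper half-plane (in particular, $Z$ is non-constant). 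Applying the quoted characterisation, $Z=F_\sigma$ for a (unique) probability measure $\sigma$, and hence $G_\sigma(z)=1/F_\sigma(z)=1/Z(z)$ for all $z\in\mathbb{C}^+$, which is the assertion.

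Should one prefer a self-contained argument, I would instead run the Nevanlinna representation directly. Since $Z$ maps $\mathbb{C}^+$ into itself and is non-constant, there are $a\in\mathbb{R}$, $b\geq 0$ and a finite positive Borel measure $\rho$ on $\mathbb{R}$ with
\[
Z(z)=a+bz+\int_{\mathbb{R}}\frac{1+tz}{t-z}\,d\rho(t),\qquad z\in\mathbb{C}^+.
\]
Dividing by $z$, setting $z=iy$ and letting $y\to\infty$, one checks that the integrand of $\frac{1}{z}\int\frac{1+tz}{t-z}\,d\rho(t)$ is bounded by $1$ uniformly in $y\geq 1$ and tends to $0$ pointwise, so dominated convergence (using finiteness of $\rho$) kills that term together with $a/z$, forcing $b=\lim_{y\to\infty}Z(iy)/(iy)=1$. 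Then $G:=1/Z$ maps $\mathbb{C}^+$ into the lower half-plane and tends to $0$ at infinity, so $-G$ again admits a Nevanlinna representation whose linear coefficient vanishes; rearranging it in the usual way yields $G(z)=\int_{\mathbb{R}}(z-t)^{-1}\,d\sigma(t)$ with $d\sigma(t)=(1+t^2)\,d\rho'(t)\geq 0$, where $\rho'$ is the representing measure of $-G$. Finally $\sigma(\mathbb{R})=1$ follows from $iy\,G(iy)=\bigl(Z(iy)/(iy)\bigr)^{-1}\to 1$ as $y\to\infty$, and the Stieltjes inversion formula shows $\sigma$ is a genuine probability measure with $G_\sigma=G=1/Z$.

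The only point needing a bit of care — and it is entirely routine — is tracking these boundary asymptotics through the Nevanlinna representations: one must justify the interchange of limit and integral in order to pin down the linear coefficient $b=1$ and, subsequently, the total mass $\sigma(\mathbb{R})=1$. All of this is covered by the standard Nevanlinna/Pick function toolbox, which is why we content ourselves with recording the statement as a lemma rather than reproducing the proof in detail.
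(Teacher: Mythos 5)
Your proposal is correct and takes essentially the same route as the paper. The paper records, directly after Theorem \ref{subordination functions}, that the subordination functions are Nevanlinna functions with $Z(z)/z \to 1$ non-tangentially, and then simply cites \cite[Chapter 3]{Chistyakov2011} for the standard characterisation of reciprocal Cauchy transforms of probability measures; your first paragraph is exactly this argument, and your self-contained Nevanlinna-representation sketch is a sound elaboration of the material the paper leaves to the references.
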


	For the last results of this section, we need a few definitions: For two probability measures $\nu_1$ and $\nu_2$ on $\mathbb{R}$, the Kolmogorov distance $\Delta$ is defined by 
	\begin{align*}
		\Delta(\nu_1, \nu_2) := \sup_{x \in \mathbb{R}} \big \vert \nu_1((-\infty,x]) - \nu_2((-\infty,x]) \big\vert, 
	\end{align*}			
	whereas the L\'{e}vy distance $d_L$ is given by 
	\begin{align*}
		d_L(\nu_1, \nu_2) := \inf \left\{ s > 0: \nu_1((-\infty, x-s]) - s \leq \nu_2((-\infty, x]) \leq \nu_1((-\infty, x+s])+s \,\, \forall x \in \mathbb{R} \right\}.
	\end{align*}
	It is well-known that $d_L(\nu_1, \nu_2) \leq \Delta(\nu_1, \nu_2)$ holds for $\nu_1$ and $\nu_2$ as above.
	The dilation $D_c \nu$ of $\nu$ by the factor $c>0$ is given by $D_c \nu (A) = \nu(c^{-1}A)$ for any measurable set $A \subset \mathbb{R}$, where $c^{-1}A$ is defined by $c^{-1}A := \{ x \in \mathbb{R}:  cx \in A \}$. Clearly, we have $\Delta(D_c\nu_1, D_c\nu_2) = \Delta(\nu_1, \nu_2)$ for any $c>0$. The following lemma analyzes the L\'{e}vy distance of two different dilations of a compactly supported probability measure. 
	\begin{lem} \label{Diliatation Levy}
		Let $\nu$ be a probability measure with compact support in $[-L, L]$ for some $L>0$ and let $\varepsilon_1, \varepsilon_2 >0$. Then, we have
		\begin{align*}
			d_L\left(D_{\varepsilon_1}\nu, D_{\varepsilon_2}\nu\right) \leq L\vert \varepsilon_1 - \varepsilon_2 \vert. 
		\end{align*}
		
	\end{lem}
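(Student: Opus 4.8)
The plan is to bound the Lévy distance directly from the definition by exhibiting an explicit shift $s$ that works, namely $s = L|\varepsilon_1 - \varepsilon_2|$. Without loss of generality assume $\varepsilon_1 \geq \varepsilon_2 > 0$. Since $\nu$ is supported in $[-L,L]$, the dilated measure $D_{\varepsilon_j}\nu$ is supported in $[-\varepsilon_j L, \varepsilon_j L]$, and more importantly, for each point $t$ in the support of $\nu$, dilation moves it from $\varepsilon_2 t$ to $\varepsilon_1 t$, a displacement of $|\varepsilon_1 - \varepsilon_2|\,|t| \leq L|\varepsilon_1-\varepsilon_2| = s$. So $D_{\varepsilon_1}\nu$ is, in a precise sense, a ``pushforward of $D_{\varepsilon_2}\nu$ under a map that moves every point by at most $s$''.

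First I would make this rigorous via cumulative distribution functions. Write $F_j(x) = D_{\varepsilon_j}\nu((-\infty,x]) = \nu((-\infty, x/\varepsilon_j])$ for $j=1,2$. The claim $d_L(D_{\varepsilon_1}\nu, D_{\varepsilon_2}\nu) \leq s$ amounts to showing
\begin{align*}
	F_2(x-s) - s \leq F_1(x) \leq F_2(x+s)+s \qquad \text{for all } x \in \mathbb{R}.
\end{align*}
Actually one can even drop the additive $-s$ and $+s$ terms: I claim $F_2(x-s) \leq F_1(x) \leq F_2(x+s)$ for all $x$, which is stronger. To see the right-hand inequality, note $F_1(x) = \nu((-\infty, x/\varepsilon_1])$. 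If $x/\varepsilon_1 \leq -L$ then $F_1(x)=0$ and there is nothing to prove; if $x/\varepsilon_1 \geq L$ then also $(x+s)/\varepsilon_2 \geq x/\varepsilon_1 \geq L$ (check: this uses $\varepsilon_1 \geq \varepsilon_2$ and $s = L(\varepsilon_1-\varepsilon_2)$, so $(x+s)/\varepsilon_2 - x/\varepsilon_1 = x(\varepsilon_1-\varepsilon_2)/(\varepsilon_1\varepsilon_2) + L(\varepsilon_1-\varepsilon_2)/\varepsilon_2 \geq 0$ when $x \geq L\varepsilon_1 \geq 0$), so $F_2(x+s)=1$. The remaining case is $x/\varepsilon_1 \in (-L, L)$, where the same computation gives $(x+s)/\varepsilon_2 \geq x/\varepsilon_1$, hence $F_2(x+s) = \nu((-\infty,(x+s)/\varepsilon_2]) \geq \nu((-\infty, x/\varepsilon_1]) = F_1(x)$ by monotonicity of $\nu$. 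The left-hand inequality $F_2(x-s) \leq F_1(x)$ is handled symmetrically: one shows $(x-s)/\varepsilon_2 \leq x/\varepsilon_1$ whenever $x/\varepsilon_1 \geq -L$ (and the case $x/\varepsilon_1 < -L$ forces $F_1(x) = 0 = F_2(x-s)$ since $(x-s)/\varepsilon_2 \leq x/\varepsilon_1 < -L$).

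The main bookkeeping obstacle is organising the case analysis on the sign of $x$ and on whether $x/\varepsilon_1$ lands inside, below, or above $[-L,L]$ cleanly — the key elementary inequality to verify in each branch is that $|x/\varepsilon_1 - x/\varepsilon_2| \le L|\varepsilon_1 - \varepsilon_2|/\min(\varepsilon_1,\varepsilon_2)$ is \emph{not} quite what is needed; rather the correct statement is the one-sided comparison above that takes the shift by $s$ in the \emph{argument before} dividing by $\varepsilon_2$. Once the four sign/support cases are dispatched, taking the infimum over admissible shifts in the definition of $d_L$ yields $d_L(D_{\varepsilon_1}\nu, D_{\varepsilon_2}\nu) \leq s = L|\varepsilon_1-\varepsilon_2|$, which is the assertion. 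Finally, the case $\varepsilon_2 > \varepsilon_1$ follows by symmetry of $d_L$ in its two arguments.
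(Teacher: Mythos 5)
The paper states Lemma \ref{Diliatation Levy} without proof, so there is no reference argument to compare against; I am judging your proof on its own merits.

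Your overall strategy is sound: parametrise both dilations by the CDF $F_j(x)=\nu((-\infty,x/\varepsilon_j])$, exhibit the explicit shift $s=L(\varepsilon_1-\varepsilon_2)$ (WLOG $\varepsilon_1\geq\varepsilon_2$), and prove the \emph{stronger} two-sided domination $F_2(x-s)\leq F_1(x)\leq F_2(x+s)$, which after substituting $x\mapsto x\pm s$ gives the pair of inequalities in the paper's definition of $d_L$. The right-hand case is carried out correctly: the identity
\begin{align*}
\frac{x+s}{\varepsilon_2}-\frac{x}{\varepsilon_1}=\frac{\varepsilon_1-\varepsilon_2}{\varepsilon_2}\left(\frac{x}{\varepsilon_1}+L\right)
\end{align*}
is nonnegative precisely when $x/\varepsilon_1\geq -L$, and the remaining regime $x/\varepsilon_1<-L$ is degenerate because $F_1(x)=0$.

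However, the symmetric left-hand case contains a sign slip. The analogous identity is
\begin{align*}
\frac{x-s}{\varepsilon_2}-\frac{x}{\varepsilon_1}=\frac{\varepsilon_1-\varepsilon_2}{\varepsilon_2}\left(\frac{x}{\varepsilon_1}-L\right),
\end{align*}
which is nonpositive precisely when $x/\varepsilon_1\leq L$, \emph{not} when $x/\varepsilon_1\geq -L$ as you wrote; for $x/\varepsilon_1>L$ the claimed inequality $(x-s)/\varepsilon_2\leq x/\varepsilon_1$ is simply false. Correspondingly, the fallback regime for the left-hand side should be $x/\varepsilon_1>L$, where $F_1(x)=1$ trivially dominates $F_2(x-s)$, rather than the $x/\varepsilon_1<-L$ case you single out (which is already subsumed by the algebraic inequality once the condition is corrected). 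This is a local bookkeeping error rather than a conceptual one — the mirror image of your right-hand argument is exactly what is needed — but as written the left-hand branch does not cover all of $\mathbb{R}$. With the case condition flipped to $x/\varepsilon_1\leq L$ and the fallback replaced by $x/\varepsilon_1>L$, the proof is complete and correct.
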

	Let $\omega_{c}$ denote Wigner's semicircle law with mean zero and variance $c$. We have $\omega_c = D_{\sqrt{c}} \omega$ as well as $\omega_{\nicefrac{1}{2}} \boxplus \omega_{\nicefrac{1}{2}} = \omega$. The following theorem is a special case of a result proven by Bao, Erd\H{o}s, and Schnelli \cite[Theorem 2.7]{Bao2016}. The proof of the theorem heavily relies on the uniqueness of subordination functions and the Newton-Kantorovich theorem. 
	
	\begin{theorem} \label{Thm 2.7 Bao, Erdos, Schnelli}
		Let $\mathcal{I} \subset (-2,2)$ be a compact non-empty interval and fix $\eta \in (0, \infty)$. Define 
		\begin{align*}
			S_{\mathcal{I}}(0, \eta) := \{ x +iy \in \mathbb{C}^+: x \in \mathcal{I}, y \in [0, \eta]\}. 
		\end{align*}
		Then, there exist constants $b=b(\omega_{\nicefrac{1}{2}}, \mathcal{I}, \eta)>0$ and $Z=Z(\omega_{\nicefrac{1}{2}}, \mathcal{I}, \eta)< \infty$ such that whenever two probability measures $\nu_1$ and $\nu_2$ on $\mathbb{R}$ satisfy the condition 
		\begin{align*}
			d_L(\omega_{\nicefrac{1}{2}}, \nu_1) + d_L(\omega_{\nicefrac{1}{2}}, \nu_2) \leq b, 
		\end{align*}
		we have 
		\begin{align*}
			\max_{z \in S_{\mathcal{I}}(0, \eta) } \left \vert G_{\omega}(z) - G_{\nu_1 \boxplus \nu_2}(z)\right \vert  \leq Z \left(	d_L(\omega_{\nicefrac{1}{2}}, \nu_1) + d_L(\omega_{\nicefrac{1}{2}}, \nu_2) \right). 
		\end{align*} 
	\end{theorem}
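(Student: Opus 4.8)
The plan is to use the splitting $\omega = \omega_{\nicefrac{1}{2}} \boxplus \omega_{\nicefrac{1}{2}}$ and to compare the subordination functions of $\nu_1 \boxplus \nu_2$ with those of $\omega_{\nicefrac{1}{2}} \boxplus \omega_{\nicefrac{1}{2}}$ by a quantitative implicit‑function argument. By Theorem~\ref{subordination functions} (with $n=2$) there are subordination functions $Z_1, Z_2 \colon \mathbb{C}^+ \to \mathbb{C}^+$ for $\nu_1 \boxplus \nu_2$ satisfying
\begin{align*}
	G_{\nu_1 \boxplus \nu_2}(z) = G_{\nu_1}(Z_1(z)), \qquad Z_1(z) + Z_2(z) - z = F_{\nu_1}(Z_1(z)), \qquad F_{\nu_1}(Z_1(z)) = F_{\nu_2}(Z_2(z)),
\end{align*}
while, by symmetry and uniqueness, $\omega_{\nicefrac{1}{2}} \boxplus \omega_{\nicefrac{1}{2}}$ has a single subordination function $\Omega$ with $2\Omega(z) - z = F_{\omega_{\nicefrac{1}{2}}}(\Omega(z))$ and $G_\omega(z) = G_{\omega_{\nicefrac{1}{2}}}(\Omega(z))$. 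It therefore suffices to prove that, uniformly for $z \in S_{\mathcal{I}}(0,\eta)$, the pair $(Z_1(z), Z_2(z))$ stays within distance of order $d_L(\omega_{\nicefrac{1}{2}},\nu_1) + d_L(\omega_{\nicefrac{1}{2}},\nu_2)$ of $(\Omega(z),\Omega(z))$, and then to transfer this to the Cauchy transforms.

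First I would record two preliminary facts, both uniform in $z \in S_{\mathcal{I}}(0,\eta)$. Since the density of $\omega$ is real‑analytic and strictly positive on $(-2,2) \supset \mathcal{I}$, the function $\Omega$ extends continuously to $\mathcal{I}$ with $\Im \Omega > 0$ there; together with $\Im \Omega(z) \ge \Im z$ and compactness of $S_{\mathcal{I}}(0,\eta)$, this shows that $\Omega(S_{\mathcal{I}}(0,\eta))$ lies in a compact set $\mathcal{K} \subset \mathbb{C}^+$ bounded away from $\mathbb{R}$, on a neighbourhood of which $G_{\omega_{\nicefrac{1}{2}}}$ and $F_{\omega_{\nicefrac{1}{2}}}$ are Lipschitz. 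Secondly, if $d_L(\omega_{\nicefrac{1}{2}},\nu_i) \le b$ then almost all the mass of $\nu_i$ sits in a fixed bounded set, and since $t \mapsto (w-t)^{-1}$ is Lipschitz in $t$ uniformly for $w$ in any compact subset of $\mathbb{C}^+$, one gets $\sup_{w \in \mathcal{K}'} |G_{\nu_i}(w) - G_{\omega_{\nicefrac{1}{2}}}(w)| \lesssim_{\mathcal{K}'} d_L(\omega_{\nicefrac{1}{2}},\nu_i)$ for every fixed compact $\mathcal{K}' \subset \mathbb{C}^+$; on such $\mathcal{K}'$ the $G_{\nu_i}$ are also bounded below, so the same bound holds for $F_{\nu_i} = 1/G_{\nu_i}$, and $F_{\nu_i}, F_{\nu_i}'$ are bounded there uniformly in $\nu_i$.

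The heart of the argument is then a Newton-Kantorovich scheme, as in \cite[Theorem~2.7]{Bao2016}. Write the subordination system for $(\nu_1,\nu_2)$ as $\Phi_{\nu_1,\nu_2}(w_1,w_2;z) = 0$ with $\Phi$ assembled from $F_{\nu_1}, F_{\nu_2}$; the semicircle system $\Phi_{\omega_{\nicefrac{1}{2}},\omega_{\nicefrac{1}{2}}}(\cdot;z)=0$ is solved by $(\Omega(z),\Omega(z))$. I would verify, uniformly in $z \in S_{\mathcal{I}}(0,\eta)$, that: (i) the Jacobian $D_w\Phi_{\omega_{\nicefrac{1}{2}},\omega_{\nicefrac{1}{2}}}$ at $(\Omega,\Omega)$ is invertible with inverse bounded by a constant $C_0 = C_0(\mathcal{I},\eta)$; (ii) the residual $\Phi_{\nu_1,\nu_2}(\Omega,\Omega;z)$, which equals $\Phi_{\nu_1,\nu_2}(\Omega,\Omega;z) - \Phi_{\omega_{\nicefrac{1}{2}},\omega_{\nicefrac{1}{2}}}(\Omega,\Omega;z)$ because the latter vanishes, has size $\lesssim d_L(\omega_{\nicefrac{1}{2}},\nu_1) + d_L(\omega_{\nicefrac{1}{2}},\nu_2)$ by the second preliminary fact, while the second derivatives of $\Phi_{\nu_1,\nu_2}$ stay bounded on a fixed neighbourhood of $(\Omega,\Omega)$. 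Granting (i) and (ii), for $b$ small enough the Newton-Kantorovich theorem produces a zero $(Z_1(z),Z_2(z))$ of $\Phi_{\nu_1,\nu_2}(\cdot;z)$ within distance $\lesssim C_0\big(d_L(\omega_{\nicefrac{1}{2}},\nu_1)+d_L(\omega_{\nicefrac{1}{2}},\nu_2)\big)$ of $(\Omega(z),\Omega(z))$; a standard analytic‑continuation argument (starting from large $\Im z$, where $Z_i(z)\approx z\approx\Omega(z)$) together with the uniqueness statement of Newton-Kantorovich identifies this zero with the genuine subordination functions of $\nu_1\boxplus\nu_2$ from Theorem~\ref{subordination functions}. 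Finally, since $Z_1(z)$ then lies in a fixed compact subset of $\mathbb{C}^+$,
\begin{align*}
	\big| G_\omega(z) - G_{\nu_1\boxplus\nu_2}(z) \big| \le \big| G_{\omega_{\nicefrac{1}{2}}}(\Omega(z)) - G_{\omega_{\nicefrac{1}{2}}}(Z_1(z)) \big| + \big| G_{\omega_{\nicefrac{1}{2}}}(Z_1(z)) - G_{\nu_1}(Z_1(z)) \big|,
\end{align*}
where the first term is $\lesssim |Z_1(z) - \Omega(z)|$ by the Lipschitz bound for $G_{\omega_{\nicefrac{1}{2}}}$ and the second is $\lesssim d_L(\omega_{\nicefrac{1}{2}},\nu_1)$ by the comparison estimate; adding the contributions gives the bound with $Z = Z(\omega_{\nicefrac{1}{2}},\mathcal{I},\eta)$.

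The hard part will be step (i): the uniform invertibility of the linearised subordination system at the semicircle on all of $S_{\mathcal{I}}(0,\eta)$, including the boundary piece $\{y=0\}$. For $y$ bounded away from $0$ this is a soft perturbation statement, but letting $y \downarrow 0$ forces one to use the boundary regularity of $\Omega$ and to produce a quantitative lower bound on the relevant $2\times 2$ determinant in terms of $\operatorname{dist}(\mathcal{I},\{-2,2\})$ — the bound degenerates as the real part of $z$ approaches the edges $\pm 2$, which is exactly why $\mathcal{I}$ must be a compact subinterval of the \emph{open} interval $(-2,2)$. This stability estimate is the technical core of \cite{Bao2016}. A secondary, more routine, point is making the Lévy‑to‑Cauchy comparison effective uniformly up to (but not touching) the real axis, i.e.\ controlling where the mass of $\nu_i$ can concentrate once $d_L(\omega_{\nicefrac{1}{2}},\nu_i) \le b$.
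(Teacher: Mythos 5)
The paper does not prove this theorem itself; it quotes it as a special case of \cite[Theorem 2.7]{Bao2016}, remarking only that the proof there relies on the uniqueness of subordination functions and the Newton--Kantorovich theorem. Your sketch reproduces exactly that strategy (write the subordination system for $\nu_1\boxplus\nu_2$ as a perturbation of the one for $\omega_{\nicefrac{1}{2}}\boxplus\omega_{\nicefrac{1}{2}}$, control the residual by the L\'evy distances via the Lipschitz density of $\omega_{\nicefrac{1}{2}}$, invoke Newton--Kantorovich with uniform invertibility of the linearisation on the compact region away from the spectral edges, and identify the resulting zero with the true subordination functions by uniqueness and continuation from large $\Im z$), so it is essentially the same approach the paper attributes to the cited source.
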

	
	We end this section with a well-known smoothing inequality provided by Bai \cite{Bai1993}. In its full generality, it establishes an upper bound on the Kolmogorov distance of two probability measures on $\mathbb{R}$ in terms of their Cauchy transforms. Since we are interested in the Kolmogorov distance between free additive convolutions and Wigner's semicircle law only, we use the following version of Bai's inequality proved in \cite[Corollary 2.3]{Goetze2003}.
	\begin{prop} \label{Bai - Götze's Version}
		Let $\nu$ be a probability measure on $\mathbb{R}$ with Cauchy transform $G_\nu.$ Let $G_\omega$ denote the corresponding analog of Wigner's semicircle law and assume that 
		\begin{align} \label{assumption Bai finite integral}
			\int_{-\infty}^\infty \left\vert \nu((-\infty, x]) - \omega((-\infty, x]) \right\vert dx < \infty 
		\end{align}
		holds. Choose $a \in (0,1)$ and $\varepsilon, \tau, \gamma >0$ in such a way that 
		\begin{align*}
		\rev	\gamma \revv:= \frac{1}{\pi} \int_{\vert x \vert < \tau} \frac{1}{x^2+1} dx \rev> \frac{1}{2} \revv \qquad \text{and} \qquad \varepsilon > 2a\tau
		\end{align*}
are satisfied. Define $I_\varepsilon :=  [-2 + \nicefrac{\varepsilon}{2},2 - \nicefrac{\varepsilon}{2}]$. Then, we have
		\begin{align*}
		\!	\Delta(\nu, \omega) \leq \rev C_{\gamma} \revv \left( \int_{-\infty}^{\infty} \!\!\!\!\! \left\vert G_\nu(u+i) - G_\omega(u+i) \right\vert du   +  \sup_{u \in I_\varepsilon} \int_{a}^1 \!\! \vert G_\nu(u+iv) - G_\omega(u+iv) \vert dv + \rev \frac{4\tau^2a}{\pi} + \gamma \varepsilon^{\nicefrac{3}{2}} \revv \right),
		\end{align*}
		\rev where $C_\gamma>0$ is given by $C_\gamma := (\pi (2\gamma -1))^{-1}$.
	\end{prop}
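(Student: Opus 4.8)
The plan is to run Bai's smoothing strategy (following \cite{Bai1993,Goetze2003}). Write $F_\mu,F_\omega$ for the distribution functions of $\mu$ and $\omega$ and, for $v>0$, let $P_v(x):=\tfrac1\pi\tfrac{v}{x^2+v^2}$ be the Cauchy kernel at scale $v$. I will use throughout the two elementary identities $\Im G_\nu(u+iv)=-\pi\,(\nu*P_v)(u)$, so that $(\nu*P_v)((-\infty,x])=-\tfrac1\pi\int_{-\infty}^x\Im G_\nu(u+iv)\,du$, and $\partial_v\Im G_\nu(u+iv)=\partial_u\Re G_\nu(u+iv)$ (Cauchy--Riemann), valid for every probability measure $\nu$. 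Hypothesis \eqref{assumption Bai finite integral} is used only to keep the right-hand side finite: integrating by parts, $G_\mu(z)-G_\omega(z)=\int_{\mathbb R}\frac{F_\mu(x)-F_\omega(x)}{(x-z)^2}\,dx$, so $\int_{\mathbb R}|G_\mu(u+iv)-G_\omega(u+iv)|\,du\le\frac{\pi}{v}\int_{\mathbb R}|F_\mu-F_\omega|\,dx<\infty$.

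\emph{Step 1 (smoothing).} I would first prove $\Delta(\mu,\omega)\le c_1\|F_{\mu*P_a}-F_{\omega*P_a}\|_\infty+c_2a$ with absolute constants $c_1,c_2$. Pick a near-maximiser $x_0$ of $|F_\mu-F_\omega|$; using that $F_\mu$ is monotone, that a Cauchy variable at scale $a$ lies in $[-a\tau,a\tau]$ with probability $\tfrac34$ (this is the role of $\tau$), and that the semicircle density is bounded by $\tfrac1\pi$, one evaluates $F_{\mu*P_a}-F_{\omega*P_a}$ at a suitably displaced point and splits the Cauchy convolution into its part on $[-a\tau,a\tau]$ and the complement; the strict inequality $2\cdot\tfrac34-1=\tfrac12>0$ is what makes this go through, and the oscillation of $F_\omega$ over a window of length $O(a\tau)$ produces the term $c_2a$. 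It remains to control $\|F_{\mu*P_a}-F_{\omega*P_a}\|_\infty$.

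\emph{Step 2 (descent from height one).} For any probability measure $\nu$, integrating $\partial_v\Im G_\nu(u+iv)=\partial_u\Re G_\nu(u+iv)$ in $u$ over $(-\infty,x]$ and in $v$ over $[a,1]$, and using $\Re G_\nu(u+iv)\to0$ as $u\to-\infty$, gives
\[
(\nu*P_a)\big((-\infty,x]\big)=(\nu*P_1)\big((-\infty,x]\big)+\tfrac1\pi\int_a^1\Re G_\nu(x+iv)\,dv,
\]
the exchange of the two integrations being justified by the decays $\Im G_\nu(u+iv)=O(u^{-2})$ and $\Re G_\nu(u+iv)=O(|u|^{-1})$, uniform for $v\in[a,1]$. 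Subtracting this for $\nu=\mu$ and $\nu=\omega$,
\[
\big(F_{\mu*P_a}-F_{\omega*P_a}\big)(x)=-\tfrac1\pi\int_{-\infty}^x\Im\big(G_\mu-G_\omega\big)(u+i)\,du+\tfrac1\pi\int_a^1\Re\big(G_\mu-G_\omega\big)(x+iv)\,dv.
\]
The first term is bounded, uniformly in $x$, by $\tfrac1\pi\int_{\mathbb R}|G_\mu(u+i)-G_\omega(u+i)|\,du$, which is the first summand of the claim; and for $x\in I_\varepsilon$ the second term is bounded by $\tfrac1\pi\sup_{u\in I_\varepsilon}\int_a^1|G_\mu(u+iv)-G_\omega(u+iv)|\,dv$, the second summand.

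\emph{Step 3 (from $I_\varepsilon$ to all of $\mathbb R$; the obstacle).} It remains to upgrade the estimate on $(F_{\mu*P_a}-F_{\omega*P_a})(x)$ from $x\in I_\varepsilon$ to all $x\in\mathbb R$. For $x>2-\nicefrac{\varepsilon}{2}$ write $(F_{\mu*P_a}-F_{\omega*P_a})(x)=(F_{\mu*P_a}-F_{\omega*P_a})(2-\nicefrac{\varepsilon}{2})+\big((\mu*P_a)-(\omega*P_a)\big)\big((2-\nicefrac{\varepsilon}{2},x]\big)$; since both measures have total mass $1$, $(\mu*P_a)\big((2-\nicefrac{\varepsilon}{2},\infty)\big)\le|(F_{\mu*P_a}-F_{\omega*P_a})(2-\nicefrac{\varepsilon}{2})|+(\omega*P_a)\big((2-\nicefrac{\varepsilon}{2},\infty)\big)$, so that $|(F_{\mu*P_a}-F_{\omega*P_a})(x)|\le 2|(F_{\mu*P_a}-F_{\omega*P_a})(2-\nicefrac{\varepsilon}{2})|+(\omega*P_a)\big((2-\nicefrac{\varepsilon}{2},\infty)\big)$, and symmetrically for $x<-2+\nicefrac{\varepsilon}{2}$. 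The delicate point — which I expect to be the main obstacle — is to show $(\omega*P_a)\big((2-\nicefrac{\varepsilon}{2},\infty)\big)\le c\,(a+\varepsilon^{\nicefrac32})$: the spill of $\omega*P_a$ past $2$ equals $\int_{-2}^2\tfrac1\pi\arctan\tfrac{a}{2-t}\,\omega(dt)\le\tfrac a\pi\int_{-2}^2\tfrac{\omega(dt)}{2-t}=\tfrac a\pi G_\omega(2)=\tfrac a\pi$, while the mass of $\omega*P_a$ inside $(2-\nicefrac{\varepsilon}{2},2]$ is $O(\varepsilon^{\nicefrac32})$ thanks to the square-root vanishing $\tfrac{d\omega}{dt}\asymp\sqrt{2-|t|}$ of the semicircle density at $\pm2$; it is here that the hypothesis $\varepsilon>2a\tau$ enters (it forces $\varepsilon\gtrsim a$, which is exactly what keeps the two error sources separated into the clean $D_2a$ and $D_3\varepsilon^{\nicefrac32}$ terms rather than collapsing them into a coarser $O(\varepsilon)$). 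Combining Steps 1--3 and relabelling the constants yields the asserted inequality.
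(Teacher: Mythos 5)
The paper does not prove this proposition; it is quoted from Corollary 2.3 of Götze--Tikhomirov \cite{Goetze2003}. Your proposal reconstructs exactly the Bai-type smoothing argument that underlies that result: smoothing inequality at scale $a$ using the Cauchy median quantile $\tau$, descent from height $1$ to height $a$ via the Cauchy--Riemann identity $\partial_v \Im G = \partial_u \Re G$, and an edge estimate producing the $a$ and $\varepsilon^{3/2}$ corrections. Steps 1 and 2 are correct as written (in Step 2 it is cleaner to differentiate under the integral sign in $v$ than to invoke Fubini, but the justification you give works).

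One point in Step 3 is stated a bit too loosely, though the conclusion survives. You attribute the bound $(\omega*P_a)\bigl((2-\nicefrac{\varepsilon}{2},2]\bigr)=O(\varepsilon^{3/2})$ to the square-root vanishing of the semicircle density. But the density of $\omega*P_a$ does \emph{not} vanish at $x=2$: the Cauchy kernel at scale $a$ pulls in nearby semicircle mass and gives density $\asymp\sqrt a$ at the edge, while the heavy Cauchy tails contribute a further $\asymp a$ from mass deep inside $[-2,2]$. So on $(2-\nicefrac{\varepsilon}{2},2]$ the smoothed density is only $O(\sqrt a+\sqrt{\varepsilon}+a)$, and the mass there is $O(\varepsilon\sqrt{a}+\varepsilon^{3/2}+a\varepsilon)$, not $O(\varepsilon^{3/2})$ in general. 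The estimate is rescued precisely because the hypothesis $\varepsilon>2a\tau$ (with $\tau=\tan(3\pi/8)>1$) forces $a\lesssim\varepsilon$, so $\sqrt a\lesssim\sqrt\varepsilon$ and $a\varepsilon\lesssim a$, and one indeed obtains $(\omega*P_a)\bigl((2-\nicefrac{\varepsilon}{2},\infty)\bigr)=O(a+\varepsilon^{3/2})$. You should spell out this computation (split the kernel at scales $a$, $\varepsilon/2$, and larger, and use $\arctan u\le\min\{\pi/2,u\}$ together with $\tfrac{d\omega}{dt}\lesssim\sqrt{2-t}$); as written, the ``square-root vanishing'' justification applies to $\omega$, not to $\omega*P_a$, and the contribution of the long Cauchy tails, which is in fact the source of the $O(a)$ term besides the spill past $2$, is silently dropped.
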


	\subsection{Concentration inequalities on the sphere}  \label{section: concentration inequalities}

	In this section we formulate some concentration inequalities on the sphere which will help to define the set $\mathcal{F} \subset \mathbb{S}^{n-1}$ in our results. 
	
	\begin{lem} \label{concentration inequalities F_3}
		Let $n \geq 4$ and $A \geq 4.$ We have
		\begin{align*}
			\sigma_{n-1}\left(  \max_{i \in \{1, \dots, n\}}  \vert \theta_i \vert > A\sqrt{ \frac{ \log n }{n}} \right) \leq \frac{8}{A\sqrt{2\pi}} \frac{1}{n}.
		\end{align*}
		\begin{proof} 
			The proof heavily relies on identities shown in  \cite[Section 2]{Bobkov2020a}. The distribution of the normalized first component $\sqrt{n} \theta_1$ with respect to $\sigma_{n-1}$ has a symmetric density given by 
			\begin{align*}
				f_n(x) := c_n'\left(1-\frac{x^2}{n}\right)_+^{\frac{n-3}{2}}, \qquad c_n' := \frac{\Gamma\left( \frac{n}{2}\right)}{\sqrt{\pi n} \Gamma \left( \frac{n-1}{2} \right)}.
			\end{align*}
			Note that we have $c_n' <(2\pi)^{-\nicefrac{1}{2}}$ for all $n \geq 2.$ Together with the inequality $\log (1-x)<x$, $x \in (0,1)$, it is easy to see that 
			\begin{align*}
				f_n(x) \leq c_n'e^{-\nicefrac{x^2}{8}}
			\end{align*}
			holds true for all $x \in \mathbb{R}$ and $n \geq 4.$ Combining the last observations with the well-known tail inequality for the standard normal distribution, we arrive at
			\begin{align*}
				\sigma_{n-1}\left( \vert \theta_1 \vert > A \sqrt{\frac{\log n }{n}} \right) = 2\sigma_{n-1}\left( \sqrt{n}\theta_1  > A \sqrt{\log n} \right)    \leq 2\int_{A \sqrt{\log n}}^\infty \frac{e^{-\nicefrac{x^2}{8}}}{\sqrt{2 \pi}}  dx \leq  \frac{8}{A\sqrt{2\pi}} n^{-\frac{A^2}{8}}.
			\end{align*}
			Since the components $\theta_1, \dots, \theta_n$ viewed as random variables are equally distributed with respect to $\sigma_{n-1}$, we immediately obtain the claimed inequality by
			\begin{align*}
				\sigma_{n-1}\left(  \max_{i \in \{1, \dots, n\}}  \vert \theta_i \vert >  A \sqrt{\frac{\log n }{n}}\right) \leq n \sigma_{n-1}\left( \vert \theta_1 \vert  > A \sqrt{\frac{\log n }{n}} \right) \leq  \frac{8}{A\sqrt{2\pi}} n^{-\frac{A^2}{8}+1} \leq  \frac{8}{A\sqrt{2\pi}} n^{-1}.
			\end{align*}
		\end{proof}
	\end{lem}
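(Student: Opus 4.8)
The plan is to reduce the maximum over all $n$ coordinates to a single coordinate via a union bound, and then to estimate the tail of one coordinate by comparison with a Gaussian tail. First I would use the rotational invariance of $\sigma_{n-1}$: the components $\theta_1, \dots, \theta_n$, viewed as random variables on $(\mathbb{S}^{n-1}, \sigma_{n-1})$, are identically distributed, so
\begin{align*}
	\sigma_{n-1}\left(\max_{i \in [n]} |\theta_i| > A\sqrt{\tfrac{\log n}{n}}\right) \leq n\, \sigma_{n-1}\left(|\theta_1| > A\sqrt{\tfrac{\log n}{n}}\right).
\end{align*}
Thus it suffices to show that the one-coordinate tail is at most $\tfrac{8}{A\sqrt{2\pi}} n^{-A^2/8}$, since then the right-hand side is at most $\tfrac{8}{A\sqrt{2\pi}} n^{1-A^2/8} \leq \tfrac{8}{A\sqrt{2\pi}} n^{-1}$, where the last step uses the hypothesis $A \geq 4$, hence $A^2/8 \geq 2$.

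Next I would invoke the classical formula for the density of the rescaled first coordinate (see the identities collected in \cite[Section 2]{Bobkov2020a}): $\sqrt{n}\,\theta_1$ has the symmetric density $f_n(x) = c_n'(1 - x^2/n)_+^{(n-3)/2}$ on $\mathbb{R}$, with $c_n' = \Gamma(n/2)/(\sqrt{\pi n}\,\Gamma((n-1)/2))$. I would then record two elementary facts: $c_n' < (2\pi)^{-1/2}$ for $n \geq 2$, and, using $\log(1-t) \leq -t$ for $t \in [0,1)$ together with $(n-3)/(2n) \geq 1/8$ for $n \geq 4$, the pointwise domination $f_n(x) \leq c_n' e^{-x^2/8} \leq (2\pi)^{-1/2} e^{-x^2/8}$ for all $x \in \mathbb{R}$.

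Finally, by symmetry of $f_n$ and the substitution $x = 2s$ reducing $e^{-x^2/8}$ to a standard Gaussian density,
\begin{align*}
	\sigma_{n-1}\left(|\theta_1| > A\sqrt{\tfrac{\log n}{n}}\right) = 2\,\sigma_{n-1}\left(\sqrt{n}\,\theta_1 > A\sqrt{\log n}\right) \leq 2\int_{A\sqrt{\log n}}^{\infty} \frac{e^{-x^2/8}}{\sqrt{2\pi}}\, dx,
\end{align*}
and I would close the estimate with the standard Mills-ratio bound $\int_t^\infty e^{-s^2/2}\,ds \leq t^{-1}e^{-t^2/2}$, applied with $t = \tfrac12 A\sqrt{\log n}$, using $\log n \geq 1$ to discard the resulting $(\log n)^{-1/2}$ factor; this gives precisely the bound $\tfrac{8}{A\sqrt{2\pi}} n^{-A^2/8}$ needed above.

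The argument is essentially routine; the only points requiring a little care are the bookkeeping of the exponent of $n$ — ensuring the union-bound factor $n$ is absorbed, which is exactly where the assumption $A \geq 4$ is used — and tracking the constants through the rescaling so as to land on the stated constant $8/(A\sqrt{2\pi})$. The sole external input is the marginal density formula for a uniform point on the sphere, which is standard in the literature on spherical concentration.
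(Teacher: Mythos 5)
Your proposal is correct and follows essentially the same route as the paper: union bound over coordinates, the marginal density formula for $\sqrt{n}\,\theta_1$, the pointwise domination $f_n(x) \le c_n' e^{-x^2/8}$ for $n\ge 4$, and the Gaussian tail estimate. One minor remark: the paper states the auxiliary inequality as ``$\log(1-x) < x$'', which appears to be a typo for $\log(1-x)\le -x$; your version $\log(1-t)\le -t$ is the one actually needed for the pointwise bound.
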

	
	The following lemma was proven in \cite[Lemma 5.3.3]{Bobkov2020b}.
	\begin{lem} \label{5.3.3 Bobkov}
		For all real $r \geq 1$ and all $k \in \mathbb{N}, k>2$, we have 
		\begin{align*}
			\sigma_{n-1}\left( \sum_{i=1}^n \vert \theta_i \vert^k \geq \frac{B_kr}{n^{\nicefrac{(k-2)}{2}}} \right) \leq \exp\left( -(rn)^{\nicefrac{2}{k}}\right)
		\end{align*}
		for $B_3 = 33, B_4 = 121$, and  $B_k = (\sqrt{k} +2)^{k}$ for $k>4.$
	\end{lem}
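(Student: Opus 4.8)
The plan is to realize the uniform measure on $\mathbb{S}^{n-1}$ through a standard Gaussian vector: if $g=(g_1,\dots,g_n)$ has i.i.d.\ $N(0,1)$ coordinates, then $\theta:=g/\vert g\vert$ has law $\sigma_{n-1}$ and $\sum_{i=1}^n\vert\theta_i\vert^k=\vert g\vert^{-k}\sum_{i=1}^n\vert g_i\vert^k$, so the event to be estimated becomes
\[
\Big\{\textstyle\sum_{i=1}^n\vert g_i\vert^k\ \geq\ B_k r\,n^{-(k-2)/2}\,\vert g\vert^{k}\Big\}.
\]
Before anything else I would record the elementary bound $\sum_{i=1}^n\vert\theta_i\vert^k\leq\sum_{i=1}^n\theta_i^2=1$: if $B_k r\,n^{-(k-2)/2}>1$ the event is empty and there is nothing to prove, so one may assume $r\leq n^{(k-2)/2}/B_k$. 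This has two consequences used throughout: the event is non-trivial only when $n\geq B_k^{2/(k-2)}$ (so small $n$ never needs separate treatment), and $(rn)^{2/k}\leq n\,B_k^{-2/k}\leq n$.

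Next I would decouple numerator and denominator. For a fixed constant $\lambda\in(0,1)$ (a concrete value such as $\lambda=2/5$ works),
\[
\sigma_{n-1}\Big(\textstyle\sum_i\vert\theta_i\vert^k\geq\tfrac{B_kr}{n^{(k-2)/2}}\Big)\ \leq\ \mathbb{P}\big(\vert g\vert^2<\lambda n\big)\ +\ \mathbb{P}\Big(\textstyle\sum_{i=1}^n\vert g_i\vert^k\geq B_k\lambda^{k/2}\,rn\Big).
\]
The first summand is a lower-tail bound for a $\chi^2_n$ variable, $\mathbb{P}(\vert g\vert^2<\lambda n)\leq\exp(-\tfrac n2(\lambda-1-\log\lambda))$; since $(rn)^{2/k}\leq nB_k^{-2/k}$, this is $\leq\tfrac13\exp(-(rn)^{2/k})$ as soon as $\tfrac12(\lambda-1-\log\lambda)>B_k^{-2/k}$, which holds for all admissible $k$ with the chosen $\lambda$ (one uses $B_k^{-2/k}=(\sqrt k+2)^{-2}\to0$ for $k\geq5$, and checks $k=3,4$ by hand).

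The main work is the second summand, the upper tail of $T_n:=\sum_{i=1}^n\vert g_i\vert^k$, a sum of i.i.d.\ nonnegative variables with mean $\mu_k:=\mathbb{E}\vert g_1\vert^k$ and stretched-exponential tails $\mathbb{P}(\vert g_1\vert^k>s)\leq2e^{-s^{2/k}/2}$. Because $2/k<1$ the deviation is of "one big jump" type, so I would truncate at the target level $M:=B_k\lambda^{k/2}rn$: a union bound gives $\mathbb{P}(\max_i\vert g_i\vert^k>M)\leq2n\,e^{-M^{2/k}/2}$, and on the complementary event the truncated sum has mean $\leq n\mu_k\ll M$ (since $B_k\lambda^{k/2}\gg\mu_k$), so Hoeffding's inequality for $[0,M]$-valued summands yields $\mathbb{P}(\sum_i(\vert g_i\vert^k\wedge M)\geq M)\leq\exp(-cn)$ with $c$ bounded below by a constant close to $2$. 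The Hoeffding term is $\leq\tfrac13e^{-(rn)^{2/k}}$ because $cn\geq(rn)^{2/k}$ (again using $(rn)^{2/k}\leq n$), and the union-bound term equals $2n\exp(-\tfrac12(B_k\lambda^{k/2})^{2/k}(rn)^{2/k})$, which is $\leq\tfrac13e^{-(rn)^{2/k}}$ precisely because the explicit choices $B_3=33$, $B_4=121$, $B_k=(\sqrt k+2)^k$ make $\tfrac12(B_k\lambda^{k/2})^{2/k}$ exceed $1$ by a margin that also swallows the factor $2n$ (via $(rn)^{2/k}\geq n^{2/k}$ and $n\geq B_k^{2/(k-2)}$). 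Summing the three pieces gives the asserted bound $\exp(-(rn)^{2/k})$.

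The step I expect to be the real obstacle is not the \emph{form} of the estimate — essentially any concentration inequality for sums of sub-Weibull$(2/k)$ random variables delivers $\exp(-c_k(rn)^{2/k})$ — but forcing the constant in the exponent to be exactly $1$, uniformly in $n$ and in $k>2$. The leverage for this is exactly the bookkeeping above: since $k>2$ and $r\geq1$, $(rn)^{2/k}$ is genuinely smaller than both $n$ and $rn$, so the $\chi^2$ term and the truncated-sum term are massively over-killed and only the "single large coordinate" term is binding; its rate $\tfrac12(B_k\lambda^{k/2})^{2/k}$ is what the prescribed values of $B_k$ are calibrated to push above $1$. Verifying the handful of numerical inequalities for $B_3$, $B_4$ and for the family $B_k=(\sqrt k+2)^k$ — in particular that $(B_k\lambda^{k/2})^{2/k}\sim(\sqrt k+2)^2\lambda$ stays above, say, $\tfrac2e\cdot k$, comparably to $\mu_k^{2/k}\sim k/e$ — is the one place where explicit rather than soft arguments are needed.
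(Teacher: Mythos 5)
The paper does not prove this lemma; it is quoted directly from Bobkov (\cite[Lemma 5.3.3]{Bobkov2020b}), so there is no in-paper argument to compare against, and your proposal must stand on its own.

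Your reduction to the Gaussian picture, the observation that the event is vacuous unless $r\leq n^{(k-2)/2}/B_k$ (hence $n\geq B_k^{2/(k-2)}$ and $(rn)^{2/k}\leq n$), the split into a $\chi^2_n$ lower tail and an upper tail for $T_n=\sum_i\vert g_i\vert^k$, and the Chernoff bound for the $\chi^2_n$ part are all fine.

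The argument breaks at the Hoeffding step. You truncate at the \emph{target} level $M=B_k\lambda^{k/2}rn$ and assert that Hoeffding for $[0,M]$-valued summands gives $\mathbb{P}\big(\sum_i(\vert g_i\vert^k\wedge M)\geq M\big)\leq\exp(-cn)$. With $\mu':=\mathbb{E}(\vert g_1\vert^k\wedge M)$, Hoeffding actually gives
\begin{align*}
\mathbb{P}\Big(\sum_i(\vert g_i\vert^k\wedge M)-n\mu'\geq M-n\mu'\Big)\leq\exp\Big(-\tfrac{2(M-n\mu')^2}{nM^2}\Big)=\exp\Big(-\tfrac{2}{n}\big(1-n\mu'/M\big)^2\Big),
\end{align*}
which is at least $e^{-2/n}$ and tends to $1$; your exponent is off by a factor $n^2$. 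This is structural, not a constant issue: once each summand is permitted to take values up to $M$, a single nearly-maximal coordinate already realises the event, so no bounded-difference inequality at scale $M$ can push $\mathbb{P}(\sum(\vert g_i\vert^k\wedge M)\geq M)$ below a fixed constant (Bernstein with the true variance gives roughly $e^{-3/2}$). Truncating at a lower level $L$ does not rescue the scheme either: to make the union bound on $\{\max_i\vert g_i\vert^k>L\}$ of order $\exp(-(rn)^{2/k})$ you need $L\gtrsim rn$, but Hoeffding for the $[0,L]$-truncated sum then has exponent of order $r^2n/L^2\lesssim1$, so no single $L$ balances the two pieces.

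The natural fix is to work intrinsically on the sphere rather than through the Gaussian sum. The map $\theta\mapsto\Vert\theta\Vert_k=(\sum_i\vert\theta_i\vert^k)^{1/k}$ is a norm dominated by $\Vert\cdot\Vert_2$ for $k\geq2$, hence $1$-Lipschitz with respect to the Euclidean metric, with median on $\mathbb{S}^{n-1}$ of order $n^{-(k-2)/(2k)}$. L\'evy's concentration inequality then gives, for $t>0$,
\begin{align*}
\sigma_{n-1}\big(\Vert\theta\Vert_k\geq m+t\big)\leq\exp\Big(-\tfrac{(n-1)t^2}{2}\Big),
\end{align*}
and substituting $t=(B_kr)^{1/k}n^{-(k-2)/(2k)}-m$ yields an exponent of order $B_k^{2/k}(rn)^{2/k}$; the stated values $B_3=33$, $B_4=121$, $B_k=(\sqrt{k}+2)^k$ are calibrated exactly to absorb the median and drive the prefactor above $1$ uniformly in $r\geq1$. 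This route also makes the uniformity in $r$ automatic, which in your scheme required separate bookkeeping.
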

	
\revtwo	The next lemma will be helpful in the proofs of the improved rates in \cref{main Kolmogorov epsilon unbounded,main Delta tilde}. \revvtwo
	\begin{lem} \label{concentration for I_5}
		For any $t > 0$ we have
		\begin{align*}
			\sigma_{n-1}\left( \left \vert \sum_{i=1}^n \theta_i^3 \right \vert  \geq \frac{t}{n}\right) \leq 2\exp\left( -\frac{1}{23}t^{\nicefrac{2}{3}}\right).
		\end{align*}
		Moreover, we have 
		\begin{align*}
			\sigma_{n-1}\left( \left \vert \sum_{i=1}^{n} \theta_i^3 \right \vert \geq \frac{10}{\sqrt{n} \log n} \right) < \frac{2}{\sqrt{n}}
		\end{align*}
		for any $n \geq 2.$
		\begin{proof}
			The first claim is proven in \cite[Lemma 5.3.2]{Bobkov2020b}. The second claim follows by easy modifications of that proof. We briefly sketch the idea: Define $f: \mathbb{S}^{n-1} \rightarrow \mathbb{R}$ by $f(\theta_1, \dots, \theta_n) := n \sum_{i=1}^n \theta_i^3$. By symmetry, we have  $\mathbb{E}_\theta(f) = 0$. The logarithmic Sobolev inequality on the unit sphere yields
			\begin{align*}
				\int_{\mathbb{S}^{n-1}} \vert f\vert^2 d\sigma_{n-1}  \leq  \frac{1}{n-1} \sum_{i=1}^{n}	\int_{\mathbb{S}^{n-1}} \left \vert \frac{\partial  f}{\partial \theta_i}  \right\vert^2 d\sigma_{n-1}  \leq 18n \sum_{i=1}^n  \int_{\mathbb{S}^{n-1}} \theta_i^4 d\sigma_{n-1} < 54;
			\end{align*}
			compare to \cite[equation (5.3.2)]{Bobkov2020b}. Note that the last equation is true due to 
			\begin{align*}
			\mathbb{E}_\theta \theta_1^4 < \frac{4\Gamma\left(\frac{5}{2}\right)}{\sqrt{\pi}n^2}  = \frac{3}{n^2}.
			\end{align*}
			Now, the Markov inequality implies 
			\begin{align*}
				\sigma_{n-1}\left( \left \vert \sum_{i=1}^{n} \theta_i^3 \right \vert \geq \frac{10}{\sqrt{n} \log n} \right) = 	\sigma_{n-1}\left( \left \vert n \sum_{i=1}^{n} \theta_i^3 \right \vert \geq \frac{10\sqrt{n}}{\log n} \right) < \frac{2}{\sqrt{n}}.
			\end{align*}
		\end{proof}
	\end{lem}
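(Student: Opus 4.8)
The plan is to prove the two claims by two essentially unrelated concentration arguments for the map $g(\theta):=\sum_{i=1}^n\theta_i^3$ on $\mathbb{S}^{n-1}$; in both cases the symmetry $\theta\mapsto-\theta$ of $\sigma_{n-1}$ gives $\mathbb{E}_\theta g=0$ for free, since $g$ is an odd polynomial, so only the size of the fluctuations has to be controlled.

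For the first (sharp, "for all $t>0$") claim I would simply invoke \cite[Lemma 5.3.2]{Bobkov2020b}, but the underlying mechanism is worth recording. The gradient satisfies $\|\nabla g\|^2=9\sum_i\theta_i^4\le 9\max_i\theta_i^2$, so $g$ is only mildly Lipschitz once all coordinates of $\theta$ are small. Accordingly, I would fix a level $m>0$ and split $\mathbb{S}^{n-1}$ into $\{\max_i|\theta_i|\le m\}$ and its complement: on the first set Lévy's concentration inequality with the improved Lipschitz constant $3m$ produces a subgaussian tail of order $\exp(-cn(t/n)^2m^{-2})$, while the complement has $\sigma_{n-1}$-measure at most $n\,\sigma_{n-1}(|\theta_1|>m)$, which is controlled by the Gaussian-type density bound $f_n(x)\le c_n'e^{-x^2/8}$ already used in the proof of Lemma~\ref{concentration inequalities F_3}. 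Choosing $m$ of order $(t/n)^{1/3}$ balances the two contributions, and the resulting exponent is forced to be of order $t^{2/3}$: a single coordinate of size $s$ moves $\sum_i\theta_i^3$ by $\sim s^3$, and $s\ge(t/n)^{1/3}$ costs probability $\approx e^{-cns^2}=e^{-cn^{1/3}t^{2/3}}$. For small $t$ the inequality is either trivial ($2e^{-t^{2/3}/23}\ge1$) or follows from the variance estimate below; the constant $23$ merely absorbs the bookkeeping.

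For the second claim a crude second-moment estimate suffices, exactly as the excerpt sketches. I would set $g(\theta):=n\sum_{i=1}^n\theta_i^3$, so that $\mathbb{E}_\theta g=0$, and apply the Poincaré inequality on $\mathbb{S}^{n-1}$ (spectral gap $n-1$), equivalently the logarithmic Sobolev inequality, to get $\mathbb{E}_\theta(g^2)\le\frac{1}{n-1}\sum_{i=1}^n\int_{\mathbb{S}^{n-1}}|\partial g/\partial\theta_i|^2\,d\sigma_{n-1}$. Since $\partial g/\partial\theta_i=3n\theta_i^2$, the right-hand side equals $\frac{9n^2}{n-1}\sum_i\int\theta_i^4\,d\sigma_{n-1}\le 18n^2\,\mathbb{E}_\theta\theta_1^4$ (using $\frac{9n^2}{n-1}\le 18n$ for $n\ge2$ together with exchangeability of the coordinates), and the elementary bound $\mathbb{E}_\theta\theta_1^4<4\Gamma(5/2)/(\sqrt{\pi}n^2)=3/n^2$, read off from the density $f_n$ of $\sqrt{n}\,\theta_1$ recalled in Lemma~\ref{concentration inequalities F_3}, gives $\mathbb{E}_\theta(g^2)<54$. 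Markov's inequality then yields $\sigma_{n-1}(|\sum_i\theta_i^3|\ge 10/(\sqrt{n}\log n))=\sigma_{n-1}(|g|\ge 10\sqrt{n}/\log n)\le 54(\log n)^2/(100n)<2/\sqrt{n}$, the last step holding for every $n\ge2$ because $(\log n)^2n^{-1/2}$ stays below a small absolute constant on $[2,\infty)$.

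The only genuinely delicate point is the first claim: one must apply Lévy's inequality on a measurable subset of the sphere rather than on all of $\mathbb{S}^{n-1}$ (or, alternatively, run a Herbst-type Laplace-transform argument directly with $\|\nabla g\|^2\le 9\sum_i\theta_i^4$ inside the log-Sobolev inequality and truncate afterwards), and the optimisation over the split level $m$ must be carried out carefully enough to give a clean universal constant valid for all $n$. Since the sharp bound is anyway quoted verbatim from \cite{Bobkov2020b}, the actual work in this lemma reduces to the self-contained verification of the second claim, which is routine.
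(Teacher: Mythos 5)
Your proof of the second claim is correct and follows the same route the paper takes: cite \cite[Lemma~5.3.2]{Bobkov2020b} for the first inequality, then bound $\mathbb{E}_\theta(g^2)$ for $g=n\sum_i\theta_i^3$ via the spectral-gap inequality with constant $n-1$, the identity $\partial g/\partial\theta_i=3n\theta_i^2$, exchangeability, the fourth-moment bound $\mathbb{E}_\theta\theta_1^4<3/n^2$, and finish with Chebyshev/Markov; your arithmetic $54(\log n)^2/(100n)<2/\sqrt{n}$ for all $n\ge2$ checks out. The additional sketch you give of why Bobkov's first inequality holds (truncating $\max_i|\theta_i|$ and balancing a Lipschitz-Lévy bound against the coordinate tail) is plausible but plays no role in the lemma as stated, since that bound is quoted; the paper does not include such a sketch either.
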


	\section{\texorpdfstring{Rate of convergence with respect to $\Delta$: Proofs of Theorem \ref{main theorem} and Corollary \ref{Corollary unbounded}}{Proofs of Theorem 1.1 and Corollary 1.2}} \label{Section: unbounded}
	
In this section we prove Theorem \ref{main theorem} and Corollary \ref{Corollary unbounded} \revtwo establishing rates of convergence of a weighted sum to Wigner's semicircle law measured with respect to the (expectation of the) Kolmogorov distance. \revvtwo Both proofs are based on an idea introduced by Chistyakov and Götze in  \cite[Theorem 2.6]{Chistyakov2008a} and \cite[Theorem 2.1]{Chistyakov2013}.  We begin with Theorem \ref{main theorem}.

\subsection{\texorpdfstring{\rev}{} Proof of Theorem \texorpdfstring{\ref{main theorem}}{1.1}}
For better comprehensibility, the proof is divided into nine parts, each of which is handled in a separate section for later reference. We start with the construction of the set $\mathcal{F}$ in \cref{part 1 - construction of F}. After that, in \cref{part 2 - 3rd order functional eq}, we derive a cubic functional equation for one of the subordination functions of the convolution $\mu_\theta$. Solving that equation, we obtain a concrete formula for the subordination function in \cref{part 3 - roots 3rd oder}. \Cref{part 4 - 2nd order functional eq,part 5 - roots of 2nd} are concerned with establishing and solving a quadratic functional equation for the same subordination function. In \cref{part 6 - application Bai} we explain how Proposition \ref{Bai - Götze's Version} can be applied to our setting. We see that it remains to bound two integrals in order to finish the proof. The results obtained from the cubic functional equation can be used to bound one of these integrals in \cref{part 7 - bounding integral wrt im }, whereas the other integral can be handled with the help of the quadratic functional equation in \cref{part 8 - bounding integrals wrt re}. We end the proof of \cref{main theorem} in \cref{part 9 - final conclusion}.
		
		\subsubsection{Construction of \texorpdfstring{$\mathcal{F}$}{F}}  \label{part 1 - construction of F}

		Fix $\rho \in (0,1)$ and let $n \geq 4.$ To ease the notation, we write $[n] := \{1, \dots, n\}$.
		Define 
		\begin{align*}
			\mathcal{F}_1 := \left\{ \theta \in \mathbb{S}^{n-1}: \max_{i \in [n]} \vert \theta_i \vert \leq A_\rho\sqrt{\frac{\log n}{n}} \right\}  
		\end{align*}
		for $A_\rho := \frac{12}{\sqrt{2\pi} \rho} \geq 4$ and note that $\sigma_{n-1}\left( \mathcal{F}_1^{\mathsf{c}}\right)  \leq \nicefrac{\rho}{2}$ holds according to Lemma \ref{concentration inequalities F_3}. Moreover, let 
		\begin{align*}
			\mathcal{F}_2 := \left\{ \theta \in \mathbb{S}^{n-1}: \sum_{i=1}^n \vert \theta_i \vert^{7} \leq \frac{B_{7, \rho}}{n^{\nicefrac{5}{2}}} \right\}
		\end{align*}
		for $B_{7, \rho} := B_7c_{7}\left( \log \nicefrac{2}{\rho} \right)^{\nicefrac{7}{2}} $, where $B_7$ is as in Lemma \ref{5.3.3 Bobkov} and $c_{7} := (\log 2 )^{-\nicefrac{7}{2}} >1$. Lemma \ref{5.3.3 Bobkov} yields 
		$\sigma_{n-1}\left( \mathcal{F}_2^{\mathsf{c}}\right) \leq \nicefrac{\rho}{2}$. Using Hölder's inequality, we obtain  
		\begin{align*}
			\mathcal{F}_2 \subset \left\{ \theta \in \mathbb{S}^{n-1}: \sum_{i=1}^n \vert \theta_i \vert^{k} \leq \frac{B_{7, \rho}}{n^{\frac{k-2}{2}}}  \right\}
		\end{align*}
		for all $k \in \mathbb{N}$ with $2 < k  \leq 7$. 
		Finally, let 
		\begin{align} \label{definition F}
			\mathcal{F} := \mathcal{F}_1 \cap \mathcal{F}_2.
		\end{align}
		It is clear that $\sigma_{n-1}\left( \mathcal{F}^{\mathsf{c}} \right) \leq \rho$ holds. 

		\subsubsection{Cubic functional equation for \texorpdfstring{\textbf{$Z_1$}}{Z1}} \label{part 2 - 3rd order functional eq}  
		Let us start with introducing some notation which we will use repeatedly in the course of this paper. Define $\mu_i := D_{\theta_i} \mu $, $i \in [n]$, and let $\mu_\theta = \mu_1 \boxplus \dots \boxplus \mu_n$. It is clear that $\mu_\theta$ is the analytic distribution of the weighted sum $S_\theta$ defined in Theorem \ref{main theorem}. Let $G_i$ denote the Cauchy transform of $\mu_i$ and let $F_i$ be the corresponding reciprocal Cauchy transform. The Cauchy transform of $\mu_\theta$ will be denoted by $G_\theta.$ Lastly, let the subordination functions belonging to $\mu_\theta$ be given by $Z_1, \dots, Z_n$. 
		
		For the rest of this proof, fix some arbitrarily chosen $\theta \in \mathcal{F}$. Without loss of generality, assume that $\theta_1^2 = \min_{i \in [n]} \theta_i^2$ holds. The aim of this section is to establish a cubic functional equation for $Z_1$.
		
		According to Theorem \ref{subordination functions}, we have
		\begin{align*}
			Z_1(z) - z = \sum_{i=2}^n F_i(Z_i(z)) - Z_i(z), \qquad z \in \mathbb{C}^+,
		\end{align*}
		from which we obtain 
		\begin{align} \label{$Z_1(z) - z + (Z_1(z))^{-1}$}
			Z_1(z) - z + \frac{1}{Z_1(z)} = Z_1(z) - z + \frac{\sum_{i=1}^n \theta_i^2}{Z_1(z)}  = \sum_{k=1}^5 J_k(z),
		\end{align}
		where 
		\begin{align*}
			J_1(z) := \sum_{i=2}^n F_i(Z_i(z)) - Z_i(z) + \frac{\theta_i^2}{Z_i(z)}  + \frac{\theta_i^3m_3(\mu)}{Z_i^2(z)}, \qquad	\quad J_2(z) :=  \sum_{i=2}^n \frac{\theta_i^2}{Z_1(z)} - \frac{\theta_i^2}{Z_i(z)}, \\
			J_3(z) :=   \frac{\theta_1^2}{Z_1(z)}, \qquad \quad 
			J_4(z) :=  m_3(\mu)\sum_{i=2}^n \frac{\theta_i^3}{Z_1^2(z)} - \frac{\theta_i^3}{Z_i^2(z)}, \qquad \quad 
			J_5(z) := -\frac{m_3(\mu)\sum_{i=2}^n \theta_i^3}{Z_1^2(z)}.
		\end{align*}
		With $I_3 := \theta_1^2$, $I_5 := - m_3(\mu)\sum_{i=2}^n \theta_i^3 $, and $I_k(z) := Z_1^2(z)J_k(z)$ for $k \in \{1,2,4\}$, we can write
		\begin{align} \label{functional eq Z_1 }
			Z_1(z) - z + \frac{1}{Z_1(z)} = \frac{I_1(z) + I_2(z) + I_4(z) + I_5}{Z_1^2(z)} + \frac{I_3}{Z_1(z)}, \qquad  z \in \mathbb{C}^+.
		\end{align}
		Define $r(z):= I_1(z) + I_2(z) + I_4(z) + I_5$ as well as 
		\begin{align} \label{def P}
			P(z, \omega) := \omega^3 - z\omega^2 +(1 - I_3)\omega - r(z)
		\end{align}
	for $z$ as above. 
		Then, the equation in \eqref{functional eq Z_1 } is equivalent to $P(z, Z_1(z)) = 0$ for all $z \in \mathbb{C}^+$. 
		
		Later, we need to find the roots of $P$. In order to be able to do that, we have to bound the coefficients $I_3$ and $r(z)$. We start with estimating $I_1(z)$ appearing in $r(z)$. A simple calculation shows that
		\begin{align} \label{subordination times cauchy expansion}
			\begin{split}
			Z_i(z)G_i(Z_i(z)) & = \rev 1 + \int_{\mathbb{R}} \frac{u}{Z_i(z) - u} \mu_i(du) \\ \revv  &= 1 + \frac{1}{Z_i(z)} \int_{\mathbb{R}} \frac{u^2}{Z_i(z) - u} \mu_i(du) \\
			& =  1 + \frac{\theta_i^2}{Z_i^2(z)} + \frac{1}{Z_i^2(z)} \int_{\mathbb{R}} \frac{u^3}{Z_i(z) - u} \mu_i(du)   \\
			&  = 1 +  \frac{\theta_i^2}{Z_i^2(z)} + \frac{\theta_i^3m_3(\mu)}{Z_i^3(z)} +  \frac{1}{Z_i^3(z)}  \int_{\mathbb{R}} \frac{u^4}{Z_i(z) - u}\mu_i(du) 
			\end{split}
		\end{align}
		holds for all $i \in [n]$, $z \in \mathbb{C}^+.$ For later reference and under the additional assumption that $\mu$ has finite absolute moments up to the sixth order, we can continue the expansion above by 
		\begin{align} \label{subordination times cauchy expansion more moments}
			\begin{split}
				Z_i(z)G_i(Z_i(z)) 
				&=1 +  \frac{\theta_i^2}{Z_i^2(z)} + \frac{\theta_i^3m_3(\mu)}{Z_i^3(z)} +  \frac{\theta_i^4m_4(\mu)}{Z_i^4(z)} + \frac{1}{Z_i^4(z)}\int_{\mathbb{R}} \frac{u^5}{Z_i(z) - u}\mu_i(du) \\
				& = 1 +  \frac{\theta_i^2}{Z_i^2(z)} + \frac{\theta_i^3m_3(\mu)}{Z_i^3(z)} +  \frac{\theta_i^4m_4(\mu)}{Z_i^4(z)} + \frac{\theta_i^5m_5(\mu)}{Z_i^5(z)} + \frac{1}{Z_i^5(z)}\int_{\mathbb{R}} \frac{u^6}{Z_i(z) - u}\mu_i(du).
			\end{split}
		\end{align}
		Note that the integrals appearing in \eqref{subordination times cauchy expansion} and \eqref{subordination times cauchy expansion more moments} can be bounded by 
		\begin{align*}
			\left \vert \int_{\mathbb{R}} \frac{u^k}{Z_i(z) - u} \mu_i(du) \right \vert \leq \frac{\beta_k(\mu_i)}{\Im Z_i(z)} \leq \frac{ \vert \theta_i \vert ^k \beta_k(\mu)}{\Im z}, \qquad k \in [6].
		\end{align*}
		Now, let us return to our original assumption $m_4(\mu) < \infty$, i.e.\@ we forget about \eqref{subordination times cauchy expansion more moments}. We can write
		\begin{align} \label{relation F_i - Id and $r_{n,i}$}
			F_i(Z_i(z)) - Z_i(z) = \frac{1}{G_i(Z_i(z))} - Z_i(z) = \frac{1 - G_i(Z_i(z))Z_i(z)}{G_i(Z_i(z))Z_i(z)}Z_i(z) = -\frac{r_{n,i}(z)}{1 + r_{n,i}(z)}Z_i(z)
		\end{align}
		with $r_{n,i}(z) := G_i(Z_i(z))Z_i(z) -1$, $z \in \mathbb{C}^+.$
		\rev We continue by bounding $\vert r_{n,i}(z) \vert$ for certain $z$. Making use of \eqref{subordination times cauchy expansion} and Cauchy's inequality, we obtain 
		\begin{align*}
			\vert r_{n,i}(z) \vert \leq \int_{\mathbb{R}} \left \vert \frac{u}{Z_i(z)- u} \right \vert \mu_i(du)  \leq \left( \int_{\mathbb{R}} u^2 \mu_i(du) \right)^{\nicefrac{1}{2}}\left( \int_{\mathbb{R}} \frac{1}{\vert Z_i(z) - u \vert^2} \mu_i(du) \right)^{\nicefrac{1}{2}}
		\end{align*}
		for all $z \in \mathbb{C}^+$ and $i \in [n]$. Observe that
		\begin{align*}
		\Im \left( \frac{1}{Z_i(z) - u} \right) = \frac{- \Im Z_i(z)}{\vert Z_i(z) - u \vert^2} <0
		\end{align*}
		holds for any $u \in \mathbb{R}$ and $z, i$ as above. Together with \cref{subordination functions}, it follows 
		\begin{align} \label{bound $r_n,i$ mit Cauchy und integration by parts}
				\vert r_{n,i}(z) \vert \leq  \frac{\vert \theta_i \vert}{\sqrt{\Im z}} \left( \int_{\mathbb{R}} -\Im \left( \frac{1}{Z_i(z) - u} \right) \mu_i(du) \right)^{\nicefrac{1}{2}} \leq \frac{\vert \theta_i \vert}{\sqrt{\Im z}} \vert G_{i}(Z_i(z)) \vert^{\nicefrac{1}{2}} = \frac{\vert \theta_i \vert}{\sqrt{\Im z}} \vert G_{\theta}(z) \vert^{\nicefrac{1}{2}}
		\end{align}
		for $z \in \mathbb{C}^+$, $i \in [n]$. By integration by parts, \cite[Theorem 2.6]{Chistyakov2008a}, and $\theta \in \mathcal{F}_2$, we get
		\begin{align*}
			\vert G_\theta(z) - G_\omega(z) \vert \leq \frac{\pi \Delta(\mu_\theta, \omega)}{\Im z} \leq \frac{\pi c_0 \left(\beta_3(\mu)\sum_{i=1}^n \vert \theta_i \vert^3 \right)^{\nicefrac{1}{2}}}{\Im z} \leq \frac{\pi c_0 (\beta_3(\mu)B_{7, \rho})^{\nicefrac{1}{2}}}{n^{\nicefrac{1}{4}}} \cdot \frac{1}{\Im z}
		\end{align*}
	for all $z \in \mathbb{C}^+$ with $c_0 > 0$ being the numerical constant from \cite[Theorem 2.6]{Chistyakov2008a}. Combining the last two estimates with $\theta \in \mathcal{F}_1$ and $\vert G_\omega \vert \leq 1$ holding in $\mathbb{C^+}$, see \cite[Lemma 8]{Kargin2007a}, we arrive at 
	\begin{align*}
		\vert r_{n,i}(z) \vert & \leq  A_\rho\sqrt{\frac{\log n}{n}}\frac{1}{\sqrt{\Im z}}\left( 1 + \frac{\pi c_0 (\beta_3(\mu)B_{7, \rho})^{\nicefrac{1}{2}}}{n^{\nicefrac{1}{4}}} \cdot \frac{1}{\Im z}  \right)^{\nicefrac{1}{2}} \\ &  \leq A_\rho\sqrt{\frac{\log n}{n}}\frac{1}{\sqrt{\Im z}} + A_\rho (\pi c_0)^{\nicefrac{1}{2}}(\beta_3(\mu)B_{7, \rho})^{\nicefrac{1}{4}} \frac{\sqrt{\log n}}{n^{\nicefrac{5}{8}}} \frac{1}{\Im z } \\
	& 	\leq A_\rho \frac{\sqrt{\log n}}{n^{\nicefrac{1}{4}}} + A_\rho (\pi c_0)^{\nicefrac{1}{2}}(\beta_3(\mu)B_{7, \rho})^{\nicefrac{1}{4}} \frac{\sqrt{\log n}}{n^{\nicefrac{1}{8}}}
	\end{align*}
		for all $z \in \mathbb{C}^+$ with $\Im z \geq n^{-\nicefrac{1}{2}}$ and $i \in [n]$. Hence, there exists a threshold $n_1 \geq 4$ -- depending on $\mu$ and $\rho$ -- such that the inequality 
		\begin{align}  \label{estimate r_{n,i}}
			\vert r_{n,i}(z) \vert < \frac{1}{10}
		\end{align}
		is valid for all choices of $z \in \mathbb{C}^+$ satisfying $\Im z \geq n^{-\nicefrac{1}{2}}$ and all $i \in [n], n \geq n_1.$
		From now on, assume that $n \geq n_1$ holds. Define \begin{align} \label{D_1}
			D_1 :=\left \{ z \in \mathbb{C}^+ : \Im z \geq  \rev \frac{1}{\sqrt{n}} \right\}
		\end{align}
	and observe 
		\begin{align*}
			\vert 1 + r_{n,i}(z) \vert > \frac{9}{10}, \qquad z \in D_1, i \in [n].
		\end{align*} \revv
Together with \eqref{relation F_i - Id and $r_{n,i}$}, we get 
		\begin{align} \label{first estimate I_1}
			\!\!\!\!\! \left \vert  F_i(Z_i(z)) \!- \! Z_i(z) \!  + \! \frac{\theta_i^2}{Z_i(z)} \!+\! \frac{\theta_i^3m_3(\mu)}{Z_i^2(z)}  \right \vert \leq  \frac{10}{9}
			\left \vert r_{n,i}(z)Z_i(z) \!-\! \frac{\theta_i^2(1 + r_{n,i}(z))}{Z_i(z)} \!-\! \frac{\theta_i^3 m_3(\mu)(1 + r_{n,i}(z))}{Z_i^2(z)} \right \vert
		\end{align}
		for all $z \in D_1, i \in [n].$
		Using the definition of $r_{n,i}(z)$ and the identities in \eqref{subordination times cauchy expansion}, we calculate
		\begin{align} \label{r_{n,i}(z)Z_i(z)  - ...}
			r_{n,i}(z)Z_i(z) - \frac{\theta_i^2(1 + r_{n,i}(z))}{Z_i(z)} - \frac{\theta_i^3 m_3(\mu)(1 + r_{n,i}(z))}{Z_i^2(z)}  = \frac{s_i(z)}{Z_i^2(z)},  \qquad z \in \mathbb{C}^+,
		\end{align}
		where $s_i(z)$ is defined by
		\begin{align} \label{def s_i(z)}
			s_i(z) :=  \left(1-\frac{\theta_i^2}{Z_i^2(z)}-\frac{\theta_i^3m_3(\mu)}{Z_i^3(z)}\right)\int_{\mathbb{R}} \frac{u^4}{Z_i(z) - u} \mu_i(du) -\frac{\theta_i^4}{Z_i(z)} - \frac{2\theta_i^5m_3(\mu)}{Z_i^2(z)} - \frac{\theta_i^6m_3(\mu)^2}{Z_i^3(z)}.
		\end{align}
		With the help of \cref{subordination functions}, we obtain\revv 
		\begin{align*}
			\frac{Z_1(z)}{Z_i(z)} = \frac{Z_1(z)G_{1}(Z_1(z))}{Z_i(z)G_i(Z_i(z))} = \frac{1 + r_{n,1}(z)}{1 + r_{n,i}(z)}, \qquad  z \in \mathbb{C}^+,i \in [n].
		\end{align*}
		The inequality in \eqref{estimate r_{n,i}} yields 
		\begin{align} \label{estimate $Z_1/Z_i -1$ by constant}
			\left \vert \frac{Z_1(z)}{Z_i(z)} - 1 \right \vert  \leq \frac{10}{9}\left( \vert r_{n,1}(z) \vert  + \vert r_{n,i}(z) \vert \right)
			< \frac{2}{9}
		\end{align}
		for all $z \in D_1$.  Combining the last observation with the assumption $\theta_1^2 = \min_{i \in [n]} \theta_i^2$, we derive
		\begin{align} \label{estimate $Z_1/Z_i -1$}
			\left \vert \frac{Z_1(z)}{Z_i(z)} - 1 \right \vert	\leq \frac{10}{9} \frac{1}{\Im z}\left( \frac{\theta_1^2}{\vert Z_1(z) \vert} + \frac{\theta_i^2}{\vert Z_i(z) \vert}\right) \leq \frac{10}{9} \frac{\theta_i^2}{\Im z  \vert Z_1(z) \vert} \left( 1 + \left \vert \frac{Z_1(z) }{ Z_i (z)} \right\vert \right) \leq 3 \frac{\theta_i^2}{\Im z \vert Z_1(z) \vert}
		\end{align}
for $z \in D_1$ and $i \in [n]$.
		Similarly, we deduce
		\begin{align*}
			\left \vert \frac{Z_1^2(z)}{Z_i^2(z)} - 1 \right \vert  &\leq \frac{100}{81}\left( 2\vert r_{n,1}(z) \vert + 2 \vert r_{n,i}(z) \vert + \left\vert r^2_{n,1}(z) \right\vert + \left\vert r_{n,i}^2(z) \right\vert \right)  
			< \frac{3}{5}
		\end{align*}
		and arrive at
		\begin{align} \label{estimate $Z_1^2/Z_i^2 -1$}
			\begin{split}
				\left \vert \frac{Z_1^2(z)}{Z_i^2(z)} - 1 \right \vert & \leq  \frac{100}{81} \left(  \frac{2\theta_1^2}{\Im z \vert Z_1(z) \vert} + \frac{2\theta_i^2}{\Im z \vert Z_i(z) \vert } + \frac{\theta_1^4}{ (\Im z)^2 \vert Z_1^2(z) \vert} + \frac{\theta_i^4}{ (\Im z)^2\vert Z_i^2(z) \vert}  \right) \\ &\leq
				\frac{100}{81} \frac{2\theta_i^2}{\Im z \vert Z_1(z) \vert} \left( \left \vert \frac{Z_1(z)}{ Z_i(z)} \right\vert  + 1\right) +  \frac{100}{81} \frac{\theta_i^4}{(\Im z)^2 \vert Z_1^2(z) \vert} \left( \left \vert \frac{Z_1^2(z)}{ Z_i^2(z)} \right \vert+ 1 \right)   \\
				&\leq  6\left( \frac{\theta_i^2}{\Im z \vert Z_1(z) \vert} +   \frac{\theta_i^4}{(\Im z)^2 \vert Z_1^2(z) \vert}\right) 
			\end{split}
		\end{align}
		for all $z \in D_1, i \in [n].$
Together with \eqref{first estimate I_1} and \eqref{r_{n,i}(z)Z_i(z)  - ...}, it follows 
		\begin{align} \label{second estimate I_1 unbounded main}
			\left \vert Z_1^2(z) \left( F_i(Z_i(z)) - Z_i(z)  + \frac{\theta_i^2}{Z_i(z)} + \frac{\theta_i^3m_3(\mu)}{Z_i^2(z)}  \right)\right \vert \leq \frac{10}{9} \left\vert \frac{Z_1^2(z)}{Z_i^2(z)}s_i(z) \right \vert  \leq \frac{20}{9} \vert s_i(z) \vert
		\end{align}
	for $z$ and $i$ as before. 
	From \eqref{def s_i(z)}, we directly obtain 
		\begin{align*}
			\vert s_i(z) \vert \leq  \frac{\theta_i^4m_4(\mu) + \theta_i^4}{\Im z}  + \frac{2\vert \theta_i \vert^5 \beta_3(\mu)}{(\Im z)^2} + \frac{\theta_i^6 m_4(\mu)+ \theta_i^6 \beta_3(\mu)^2}{(\Im z)^3} + \frac{\vert\theta_i\vert^{7}\beta_3(\mu)m_4(\mu)}{(\Im z)^4}
		\end{align*}
		for any $z \in \mathbb{C}^+.$
		Now, note that each summand of the upper bound for $\sum_{i=2}^n \vert s_i(z)\vert$ is up to constants of the form $(\Im z)^{-l}\sum_{i=2}^n \vert \theta_i \vert^k$ 
		with $k,l \in \mathbb{N}, k - l = 3$, $1 \leq l \leq 4 \leq k \leq 7$. Together with the fact that $\theta \in \mathcal{F}_2$ holds, we can estimate
	\rev	\begin{align} \label{estimate difference more than 3}
			\frac{\sum_{i=2}^n \vert \theta_i \vert^k}{(\Im z)^l} \leq \frac{B_{7,\rho}}{n^{\frac{k-2}{2}}} n^{\frac{l}{2}} = \frac{B_{7,\rho}}{\sqrt{n}} 
		\end{align}
	\revv	for all $z \in D_1$ and $k,l$ as above. Due to  $m_2(\mu) = 1$, we have $\beta_3(\mu), m_4(\mu) \geq 1$. By \eqref{second estimate I_1 unbounded main} and \eqref{estimate difference more than 3}, we conclude 
		\begin{align} \label{estimate I_1}
			\begin{split}
				\vert I_1(z) \vert &\leq \sum_{i=2}^n \left \vert Z_1^2(z) \left( F_i(Z_i(z)) - Z_i(z)  + \frac{\theta_i^2}{Z_i(z)} + \frac{\theta_i^3m_3(\mu)}{Z_i^2(z)}  \right)\right \vert \leq \frac{20}{9} \sum_{i=2}^n\vert s_i(z) \vert  \\ & \leq \rev  \frac{16B_{7, \rho}\beta_3(\mu)^2m_4(\mu)}{\sqrt{n}}  \revv
			\end{split}
		\end{align}
	for all $z \in D_1$. Now, let us bound $I_2(z).$ Making use of \eqref{estimate $Z_1/Z_i -1$}, we immediately obtain
		\begin{align*}
			\vert J_2 (z)\vert \leq  \sum_{i=2}^n \left \vert  \frac{\theta_i^2}{Z_1(z)} \right \vert  \left \vert \frac{Z_1(z)}{Z_i(z)} -1\right \vert \leq 3 \sum_{i=2}^n \frac{\theta_i^4}{\vert Z_1^2(z) \vert \Im z}, \qquad z \in D_1.
		\end{align*}
		Applying \eqref{estimate difference more than 3} once again, we arrive at
		\begin{align*}
			\vert I_2(z) \vert = \big \vert Z_1^2(z) J_2(z) \big \vert \leq \frac{3}{\Im z} \sum_{i=1}^n \theta_i^4 \leq \rev \frac{3B_{7,\rho}}{\sqrt{n}}, \revv \qquad z \in D_1. 
		\end{align*}
		Similarly, we can handle $I_4(z)$. By \eqref{estimate $Z_1^2/Z_i^2 -1$} and \eqref{estimate difference more than 3}, we have
		\begin{align*}
			\vert I_4(z) \vert \leq \vert m_3(\mu) \vert \sum_{i=2}^n \left \vert \theta_i \right \vert^3 \left \vert \frac{Z_1^2(z)}{Z_i^2(z)} -1\right \vert \leq  6\vert m_3(\mu) \vert \sum_{i=2}^n \left( \frac{\vert \theta_i \vert^5}{\Im z \vert Z_1(z) \vert} +   \frac{\vert \theta_i \vert^7}{(\Im z)^2 \vert Z_1^2(z) \vert}\right) \leq \rev  \frac{12B_{7, \rho}\beta_3(\mu)}{\sqrt{n}} \revv
		\end{align*}
		for all $z \in D_1.$
		Last but not least, we can bound $I_5$ by
		\begin{align*}
			\vert I_5\vert = \left \vert m_3(\mu) \sum_{i=2}^n \theta_i^3 \right \vert \leq \beta_3(\mu)\sum_{i=2}^n \vert \theta_i \vert^3 \leq \frac{B_{7, \rho}\beta_3(\mu)}{\sqrt{n}}.
		\end{align*}
		It follows
		\begin{align*}
			\vert r(z) \vert \leq \rev \frac{16B_{7,\rho}\beta_3(\mu)^2m_4(\mu) + 3B_{7,\rho} + 13B_{7,\rho}\beta_3(\mu)}{\sqrt{n}} \leq
			\frac{32B_{7,\rho} \beta_3(\mu)^2 m_4(\mu)}{\sqrt{n}}  \leq \frac{C_{0, \rho, \mu}}{\sqrt{n}}
		\end{align*}
		for all $z \in D_1, n \geq n_1$, and $\rev C_{0,\rho, \mu} := \max\{A_\rho^2, 32B_{7,\rho} \beta_3(\mu)^2 m_4(\mu)\}$.  \revv The estimate for $I_3$ is obvious: 
		\begin{align*}
			\vert I_3\vert = \big \vert \theta_1^2 \big \vert \leq A_\rho^2\frac{\log n}{n} \leq \frac{A^2_\rho}{\sqrt{n}}.
		\end{align*}

		\subsubsection{Analyzing the roots of \texorpdfstring{$P$}{P}} \label{part 3 - roots 3rd oder} 
		\rev The aim of this section is to find the roots of $P(z, \omega)$ for $z$ contained in some suitably chosen subset of $D_1$ and to prove that $Z_1$ is equal to one of these roots in that set. 
		
		Define  \revv
		\begin{align} \label{D_2} \rev
			D_2 := \left \{ z \in \mathbb{C}^+ : \vert \Re z \vert \leq 2-\varepsilon_n, 3 \geq \Im z \geq  \frac{50C_{0, \rho, \mu}}{\sqrt{n}} \right\} \subset D_1, \qquad \varepsilon_n := 50C_{0, \rho, \mu}\sqrt{\frac{\log n}{n}}.
		\end{align} 
		\rev By increasing $n$ to $n \geq n_2 \geq n_1$ for some sufficiently large $n_2 \in \mathbb{N}$, we can assume that $\varepsilon_n < 1$ is valid. Clearly, this implies $\smash{50C_{0, \rho, \mu}n^{-\nicefrac{1}{2}} < 1}$ for those $n$.
		
		\revv  Let $\omega_j = \omega_j(z)$, $j=1,2,3$, denote the roots of $P(z, \omega)$. We firstly prove that for any $z \in D_2$ and sufficiently large $n$ the polynomial $P(z,\omega)$ has a root, \rev say $\omega_1 = \omega_1(z)$, \revv such that 
		\begin{align} \label{estimate omega_1}
			\vert \omega_1 \vert < \frac{12C_{0,\rho, \mu}}{\sqrt{n}}, \qquad \vert \omega_j \vert \geq \frac{12C_{0,\rho, \mu}}{\sqrt{n}}, \qquad j=2,3,
		\end{align}
		hold.
		Let $P_1(z, \omega) := \omega^3 -z\omega^2$, $P_2(z, \omega) := (1- I_3)\omega - r(z) = (1-I_3)(\omega - r_1(z))$
		with 
		\begin{align*}
			r_1(z) := \frac{r(z)}{1-I_3}.
		\end{align*}
	\rev	Note that we have $\vert I_3 \vert \leq \nicefrac{1}{10}$ as well as $\vert r_1(z) \vert \leq \frac{10}{9} \vert r(z) \vert$ for all $n \geq  n_2$. \revv Define \rev $\smash{n_3 := \max\left\{n_2, 75^2C_{0,\rho, \mu}^2\right\}}$ and, from now on, assume that $n \geq n_3$ holds. \revv
		On the circle $\smash{\vert \omega \vert = 12C_{0,\rho, \mu}n^{-\nicefrac{1}{2}}}$, we can estimate
		\begin{align*}
			\vert P_1(z, \omega) \vert = \vert \omega \vert^2 \vert \omega - z \vert	\leq \frac{144C^2_{0, \rho, \mu}}{n}\left( \frac{12C_{0,\rho, \mu}}{\sqrt{n}} + \sqrt{13} \right) \leq \frac{8C_{0,\rho,\mu}}{\sqrt{n}}
		\end{align*}
		as well as
		\begin{align*}
			\vert P_2(z, \omega) \vert \geq \left( 1 - \vert I_3 \vert \right) \vert \omega - r_1(z)\vert  \geq \frac{9}{10}\left(\frac{12C_{0,\rho, \mu}}{\sqrt{n}} -  \vert r_1(z) \vert \right) \geq \frac{9}{10}\left(\frac{12C_{0,\rho, \mu}}{\sqrt{n}} -  \frac{10}{9}\frac{C_{0,\rho, \mu}}{\sqrt{n}} \right) >  \frac{9C_{0,\rho, \mu}}{\sqrt{n}}
		\end{align*}
		for any $z \in D_2.$
		Hence, we have $\vert P_1(z,\omega) \vert < \vert P_2(z,\omega) \vert$ on $\vert \omega \vert = 12C_{0,\rho, \mu}n^{-\nicefrac{1}{2}}$. Rouch\'{e}'s theorem implies that $P(z,w) = P_1(z,w) + P_2(z,w)$ has as many roots as $P_2(z, w)$ in the disk $\vert \omega \vert < 12C_{0,\rho, \mu}n^{-\nicefrac{1}{2}}$. Due to $\vert r_1(z) \vert \leq 2C_{0,\rho, \mu} n^{-\nicefrac{1}{2}}$, we conclude that $P(z,w)$ has exactly one root -- say $\omega_1$ -- in that disk. It is immediate that $\vert \omega_j \vert \geq 12C_{0,\rho, \mu}n^{-\nicefrac{1}{2}}$, $j=2,3$, holds. 
		
		It is easy to see that we have $Z_1(z) \neq \omega_1(z)$ for all $z \in D_2$ and   \rev $n \geq n_3$. \revv Assuming the contrary, we obtain the following contradiction:
	\rev	\begin{align*}
			\frac{50C_{0, \rho, \mu}}{\sqrt{n}} \leq \Im z \leq \Im Z_1(z) = \Im \omega_1(z) \leq \vert \omega_1(z) \vert < \frac{12C_{0,\rho,\mu}}{\sqrt{n}}.
		\end{align*}  \revv
		
		Before we continue with the analysis of the remaining roots $\omega_2$ and $\omega_3$, we will prove that the (restricted) function $\omega_1 : D_2 \rightarrow \mathbb{C}, z \mapsto \omega_1(z)$ is continuous for $n \geq \rev n_3 \revv$. Fix $z_0 \in D_2$ and let $\varepsilon>0, n \geq \rev n_3 \revv$. Without loss of generality, we can assume that 
		\begin{align*}
			\varepsilon < \min\left\{\frac{M}{2}, \frac{12C_{0, \rho, \mu}}{\sqrt{n}} - \vert \omega_1(z_0) \vert \right\}
		\end{align*}
		holds, where $M$ is given by $M :=  \min\left \{ \left\vert \omega_j(z_0) - \omega_1(z_0) \right\vert: j=2,3 \right\} >0$. We obtain  $P(z_0, \omega) \neq 0$ on the circle $\left \vert \omega - \omega_1(z_0) \right \vert = \varepsilon$ due to 
		\begin{align} \label{aux w1 continuous}
		\rev	\vert \omega_j(z_0) - \omega_1(z_0) \vert \geq M > \varepsilon, \qquad j=2,3.
		\end{align}
		Hence, we find $m>0$ with $m \leq  \inf \left \{ \left\vert P(z_0, \omega)\right \vert :\left \vert \omega - \omega_1(z_0) \right \vert = \varepsilon \right\}$.
		Since $z \mapsto r(z)$ is holomorphic and thus continuous in $\mathbb{C}^+$, there exists $\delta_0>0$ such that $\vert r(z_0) - r(z) \vert < \nicefrac{m}{2}$ holds true for all $z \in \mathbb{C}^+$ satisfying $\vert z_0 - z \vert < \delta_0.$ Define $\delta_1 := \min\{ \delta_0, \frac{m}{2}(\varepsilon + \vert \omega_1(z_0) \vert)^{-2}\}>0$. For all $z \in \mathbb{C}^+$ with $\vert z_0 - z \vert < \delta_1$ and for all $\omega$ on the circle $\left\vert \omega -  \omega_1(z_0) \right\vert = \varepsilon$, we conclude
		\begin{align*}
			\vert P(z, \omega) - P(z_0, \omega) \vert = \left\vert (\rev z_0 - z \revv)\omega^2 - r(z) + r(z_0)\right \vert < \delta_1 \left( \varepsilon + \vert \omega_1(z_0) \vert\right)^2 + \frac{m}{2} \leq \vert P(z_0, \omega) \vert.
		\end{align*}
		Making use of Rouch\'{e}'s theorem once again, we obtain that for each $z \in \mathbb{C}^+$ with $\vert z_0 - z \vert < \delta_1$  the polynomials $P(z, \omega)$ and $P(z_0, \omega)$ have the same number of roots in the disk $\vert \omega - \omega_1(z_0) \vert < \varepsilon.$ \rev Because of \eqref{aux w1 continuous}, $P(z, \omega)$ has exactly one root in the disk $\vert \omega - \omega_1(z_0) \vert < \varepsilon.$ \revv
		For $z \in D_2 \cap \{ s \in \mathbb{C}^+: \vert z_0- s \vert < \delta_1 \}$ this root must be $\omega_1(z)$ due to the following observations: Any $\omega \in \mathbb{C}$ with $\vert \omega - \omega_1(z_0) \vert < \varepsilon$ satisfies
		\begin{align*}
			\vert \omega \vert \leq \vert \omega - \omega_1(z_0) \vert + \vert \omega_1(z_0) \vert < \varepsilon + \vert \omega_1(z_0) \vert < \frac{12C_{0, \rho, \mu}}{\sqrt{n}}.
		\end{align*}
		Applying \eqref{estimate omega_1} to the polynomial $P(z,\omega)$ with $z$ as before, we obtain that the corresponding root is $\omega_1(z)$ as claimed above. Finally, we conclude $\vert \omega_1(z) - \omega_1(z_0) \vert < \varepsilon$ for all $z \in D_2$ with $\vert z_0- z \vert < \delta_1$. 
		
		The next step consists of proving that $\omega_2 \neq \omega_3$ holds in $ D_2$ for sufficiently large $n$. First of all, we write $P(z, \omega) = (\omega - \omega_1)P_3(z, \omega)$ with 
		\begin{align*}
			P_3(z, \omega) := \omega^2 - (z - \omega_1)\omega +1 - I_3 - \omega_1(z-\omega_1).
		\end{align*}
		Assume that $\omega_2(z) = \omega_3(z)$ holds for some $z \in D_2$. Then, solving the equation $P_3(z, \omega) = 0$, we obtain $(z-\omega_1)^2 = 4\left( 1-I_3 - \omega_1(z-\omega_1)\right)$, which leads to
		\begin{align*}
			z = -\omega_1 \pm 2\sqrt{1-I_3 + \omega_1^2}.
		\end{align*}
		Observe that
		\begin{align*}
			\Re \big(1-I_3 + \omega_1^2\big)  \geq 1-I_3 - \big\vert \omega_1^2 \big\vert >0
		\end{align*}
		holds for all $n \geq \rev n_3. \revv$
		For a (complex) number $u+iv$ with $u \geq 0$ we have $\left \vert \Re \sqrt{u+iv} \right \vert \geq \sqrt{u}.$
		Combining the last two observations with $\smash{\sqrt{1-x} \geq 1- \sqrt{x}}$, $x \in [0,1]$, and  $\smash{\sqrt{x+y} \leq \sqrt{x} + \sqrt{y}}$, $x,y \geq 0$, we get
		\begin{align*}
			\left \vert \Re \sqrt{1-I_3 + \omega_1^2} \right \vert \geq \sqrt{1-I_3 - \big\vert \omega_1^2 \big\vert}  \geq 1- \left( A_\rho \sqrt{\frac{\log n}{n}} + \frac{12C_{0, \rho, \mu}}{\sqrt{n}} \right)  \geq 1 - 13C_{0, \rho, \mu}\sqrt{\frac{\log n}{n}} > 0
		\end{align*}
		for all $n \geq \rev n_4 := \max\{n_3, 13^4C_{0, \rho, \mu}^4\}$. \revv	It follows 
		\begin{align*}
			\vert \Re z \vert \geq 	2\left \vert \Re \sqrt{1-I_3 + \omega_1^2} \right \vert  - \vert \omega_1\vert \geq 2 - 26C_{0, \rho, \mu}\sqrt{\frac{\log n}{n}} - \frac{12C_{0, \rho, \mu}}{\sqrt{n}} \geq 2 - 38C_{0, \rho, \mu} \sqrt{\frac{\log n}{n}} > 2-\varepsilon_n,
		\end{align*}
		which contradicts the fact that we chose $z$ to be in $D_2.$ Hence, we have $\omega_2 \neq \omega_3$ in $D_2$ for $n \geq \rev n_4.$
		
		Now, we can calculate $\omega_2$ and $\omega_3$ explicitly: Solving the equation $P_3(z, \omega) = 0$, we obtain 
		\begin{align*}
			\omega_j= \frac{1}{2} \left(z + (-1)^{j-1}\sqrt{g(z)} \right) - \frac{\omega_1}{2}, \qquad j=2,3
		\end{align*}
		with 
		\begin{align}  \label{g(z) neq 0}
			g(z) := (z-\omega_1)^2 - 4 +4I_3 + 4\omega_1(z-\omega_1) \neq 0, \qquad z \in D_2, n \geq \rev n_4.
		\end{align}
		Letting $r_2(z):= 4I_3+ (2z - 3\omega_1)\omega_1$, we can write
		\begin{align} \label{w_2, w_3}
			\omega_j = \frac{1}{2}\left( z + (-1)^{j-1}\sqrt{z^2-4+r_2(z)}  \right) - \frac{\omega_1}{2}.
		\end{align}
		For later use, we note that
		\begin{align} \label{estimate r2}
			\vert r_2(z) \vert \leq 4\vert I_3\vert + 2\sqrt{13}\vert \omega_1 \vert + 3\big\vert \omega_1^2 \big \vert \leq \frac{4A_\rho^2\log n}{n} + \frac{24\sqrt{13}C_{0,\rho, \mu}}{\sqrt{n}} + \frac{432C_{0,\rho, \mu}^2}{n} < \frac{94C_{0,\rho, \mu}}{\sqrt{n}} < \frac{3}{5}
		\end{align}
		holds for all $z \in D_2, n \geq \rev n_4. \revv$
		
		The last part of the analysis of the roots of $P$ is devoted to the proof that $Z_1(z) = \omega_3(z)$ holds for all $z \in D_2$ and sufficiently large $n$. We firstly show that $Z_1$ cannot jump between the solutions $\omega_2$ and $\omega_3$ in a sufficiently small neighborhood of any point in $D_2$ for  large $n$. Let $n \geq \rev n_5$ for some threshold $ \rev n_5$ which will be defined later. We prove the following: 
		\begin{align} \label{no jumps}
			\forall z_0 \in D_2 : \exists r_0 = r_0(z_0)>0: \exists! j \in \{2,3\}: \forall z \in D_2 \cap \{\revtwo s \revvtwo \in \mathbb{C} :  \vert \revtwo s \revvtwo - z_0 \vert < r_0 \}: Z_1(z) =  \omega_j(z).
		\end{align}
		Again, we argue by assuming the contrary: Then, there exists $z_0 \in D_2$ such that for all $r_0>0$ we find $z', z'' \in D_2 \cap \{ \revtwo s \revvtwo \in \mathbb{C}: \vert \revtwo s \revvtwo - z_0 \vert < r_0 \}$ with $Z_1(z') = \omega_2(z')$ and $Z_1(z'') = \omega_3(z'')$. Hence, we can construct a sequence $(z_k)_{k \in \mathbb{N}} \subset D_2$ satisfying $\lim_{k \rightarrow \infty} z_k = z_0$ and $Z_1(z_k) = \omega_3(z_k)$. Assume that $Z_1(z_0)=\omega_2(z_0)$ holds; otherwise we just switch the roles of $\omega_2$ and $\omega_3$. Together with the fact that $Z_1$ is continuous, we arrive at $\omega_2(z_0) = Z_1(z_0) = \lim_{k \rightarrow \infty}\omega_3(z_k)$. Using \eqref{w_2, w_3}, this translates into
		\begin{align*}
			\frac{1}{2}\left( z_0 - \sqrt{z_0^2-4 + r_2(z_0)} \right) - \frac{ \omega_1(z_0)}{2} = \lim_{k \rightarrow \infty} 	\frac{1}{2}\left( z_k + \sqrt{z_k^2-4 + r_2(z_k)} \right) - \frac{\omega_1(z_k)}{2}.
		\end{align*}
		Since we will use a continuity argument below, we need to prove that the complex square root function is continuous at complex numbers of the form $\smash{z^2 - 4 + r_2(z)}$ for $z \in D_2.$ We observe
		\begin{align*}
			\Re\big(z^2 - 4 + r_2(z)\big)& \leq (2-\varepsilon_n)^2 -4 -(\Im z)^2 + \vert r_2(z) \vert < -4\varepsilon_n + \varepsilon_n^2 + \frac{94C_{0, \rho, \mu}}{\sqrt{n}}  \\ &= \rev  -200C_{0, \rho, \mu} \sqrt{\frac{\log n}{n}} + 50^2C_{0, \rho, \mu}^2\frac{\log n}{n}  \revv + \frac{94C_{0, \rho, \mu}}{\sqrt{n}} < 0
		\end{align*}
		for all $z \in D_2$ and $n \geq \rev n_5 :=\max \{n_4, N_1\}$ with
	\rev $
			N_1 := \exp(( 12.5C_{0, \rho, \mu} + 0.47)^2). $
	 \revv
		Thus, it follows $z^2 -4 + r_2(z) \in \mathbb{C} \setminus [0, \infty)$  for $z \in D_2$, $n \geq \rev n_5$. We have already proven that $\omega_1\vert_{D_2}$ is continuous. By definition, the same conclusion holds for $r_2\vert_{D_2}$. It follows
		\begin{align*}
			\lim_{k \rightarrow \infty}	\frac{1}{2}\left( z_k + \sqrt{z_k^2-4 + r_2(z_k)} \right) - \frac{\omega_1(z_k)}{2} = 	\frac{1}{2}\left( z_0 + \sqrt{z_0^2-4 + r_2(z_0)}\right) - \frac{\omega_1(z_0)}{2}.
		\end{align*}
		Hence, we must have 
		$\sqrt{z_0^2-4 + r_2(z_0)} = 0.$ Recalling that $z_0^2-4 + r_2(z_0) = g(z_0) \neq 0 $ holds by \eqref{g(z) neq 0},  we arrive at the promised contradiction for all $n \geq \rev n_5.$
		
		The statement in \eqref{no jumps} yields that either $Z_1 = \omega_2$ or $Z_1 = \omega_3$ holds in $D_2$ for $n \geq \rev n_5.$ This can be seen as follows: By compactness of $D_2$, there exist points $z_1, \dots, z_m \in D_2$ and radii $r_1, \dots, r_m$ chosen as in \eqref{no jumps} \rev for some $m \in \mathbb{N}$, \revv such that 
		\begin{align*}
			D_2 \subset \bigcup_{i \in [m]} B_{r_i}(z_i)
		\end{align*}
		holds. In order to ease the notation, write $B_i = B_{r_i}(z_i)$, $i \in [m]$. Now, assume that there exist $x,y \in D_2$ with $Z_1(x) = \omega_2(x)$ and $Z_1(y) = \omega_3(y)$. Then, we find $k,l \in [m]$, $k \neq l$, such that $x \in B_k, y \in B_l$ hold true. We must have $B_k \cap B_l = \emptyset$. It is clear that there exist indices $i_1, \dots, i_j \in [m] \setminus \{k,l\}$, $j \leq m-2$, such that $B_{i_h} \cap B_{i_{h-1}} \neq \emptyset$ holds for all $2\leq h \leq j$ as well as $B_k \cap B_{i_1} \neq \emptyset, B_l \cap B_{i_j} \neq \emptyset$. Due to the choice of the radii, we must have either $Z_1 = \omega_2$ or $Z_1 = \omega_3$ in $B_k \cup B_{i_1} \cup \dots \cup B_{i_j} \cup B_l.$ Clearly, this provides a contradiction and we obtain that $Z_1$ equals either $\omega_2$ or $\omega_3$ in $D_2$ for $n \geq \rev n_5.$

		It remains to exclude the case $Z_1 = \omega_2$ in $D_2$. We will do this by proving that $\vert \omega_2(2i) \vert<  1$ holds for sufficiently large $n$. Together with $\vert Z_1(2i) \vert \geq \Im Z_1(2i) \geq 2$, this yields $Z_1(2i) \neq \omega_2(2i)$. 
		Observe that 
		\begin{align*}
			\vert \omega_2(2i) \vert \leq \frac{1}{2} \left \vert 2i - \sqrt{-8 + r_2(2i)} \right \vert + \frac{1}{2} \vert \omega_1(2i) \vert
		\end{align*}
		as well as  $\vert 2i- \sqrt{-8}  \vert = \vert 2i  - \sqrt{8}i  \vert < 1$ hold. Using the  bound on $r_2$ given in \eqref{estimate r2}, it is easy to see that we have $-8 + r_2(2i) \in \mathbb{C} \setminus [0, \infty)$ for $n \geq \rev n_5$. The continuity of the square root on $ \mathbb{C} \setminus [0, \infty)$ implies the existence of a threshold $\rev N_2 \in \mathbb{N}$ such that for all $\rev n \geq n_6:=\max\{n_5, N_2\}$ we obtain
		\begin{align*}
			\left \vert 2i - \sqrt{-8 + r_2(2i)} - \left(2i - \sqrt{-8}\right) \right \vert \leq \frac{1}{2}.
		\end{align*}
		Combining this with \eqref{estimate omega_1}, we conclude $\vert \omega_2(2i) \vert < 1$ as claimed. It follows $Z_1 = \omega_3$ in $D_2$ for $n \geq \rev n_6$.

		\subsubsection{Quadratic functional equation for \texorpdfstring{\textbf{$Z_1$}}{Z1}} \label{part 4 - 2nd order functional eq}

		In this part we derive a quadratic functional equation for $Z_1$. Since the procedure will be similar to that for the cubic functional equation, we omit some of the details. 
		
		As in \eqref{$Z_1(z) - z + (Z_1(z))^{-1}$}, we can write
		\begin{align} \label{quadratic functional eq.}
			Z_1(z) - z + \frac{1}{Z_1(z)} = \frac{M_1(z) + M_2(z) + M_3}{Z_1(z)}, \qquad z \in \mathbb{C}^+,
		\end{align}
		with 
		\begin{align*}
			M_1(z) :=  Z_1(z) \left( \sum_{i=2}^n F_i(Z_i(z))\,\, - \right. & \left.  Z_i(z) + \frac{\theta_i^2}{Z_i(z)}\right), \qquad
			M_2(z) :=  Z_1(z) \left( \sum_{i=2}^n \frac{\theta_i^2}{Z_1(z)} - \frac{\theta_i^2}{Z_i(z)}\right), \\
			M_3 := &\, Z_1(z) \left( \frac{\sum_{i=1}^n \theta_i^2}{Z_1(z)} - \frac{\sum_{i=2}^n \theta_i^2}{Z_1(z)} \right) = \theta_1^2.
		\end{align*}
		Define 
		\begin{align} \label{def Q}
			q(z) := M_1(z) + M_2(z) + M_3, \qquad Q(z, \omega) := \omega^2 - z\omega +1-q(z)
		\end{align}
		and note that $Q(z, Z_1(z)) = 0$ holds for all $z \in \mathbb{C}^+. $ 
		
	We continue by bounding the term $q(z)$. Using  \eqref{subordination times cauchy expansion}, \eqref{relation F_i - Id and $r_{n,i}$}, \eqref{estimate r_{n,i}}, and \eqref{estimate $Z_1/Z_i -1$ by constant}, we obtain
		\begin{align*}
			\vert M_1(z) \vert &\leq \frac{10}{9}\left \vert Z_1(z)  \right \vert   \sum_{i=2}^n\left \vert r_{n,i}(z)Z_i(z)- \frac{\theta_i^2(1+r_{n,i}(z))}{Z_i(z)} \right \vert  \\ &=
			\frac{10}{9}\left \vert Z_1(z)  \right \vert  \sum_{i=2}^n \left \vert  \frac{1}{Z_i(z)}\int_{\mathbb{R}} \frac{u^3}{Z_i(z) - u} \mu_i(du) - \frac{\theta_i^4}{Z_i^3(z)} - \frac{\theta_i^2}{Z_i^3(z)}\int_{\mathbb{R}} \frac{u^3}{Z_i(z)-u} \mu_i(du)   \right \vert   \\ 
			&\leq   \frac{10}{9} \sum_{i=2}^n \left \vert \frac{Z_1(z)}{Z_i(z)} \right \vert   \vert t_i(z)\vert  \leq \frac{110}{81} \sum_{i=2}^n \vert t_i(z) \vert 
		\end{align*}
		with 
		\begin{align*}
			t_i(z):= \left( 1 - \frac{\theta_i^2}{Z_i^2(z)} \right) \int_{\mathbb{R}} \frac{u^3}{Z_i(z)-u} \mu_i(du) - \frac{\theta_i^4}{Z_i^2(z)} 
		\end{align*}
		for all $z \in D_1$ -- see \eqref{D_1} for the definition of $D_1$ -- \rev and all $n \geq n_6 \geq n_1.$ \revv  The inequality in \eqref{estimate difference more than 3} yields
		\begin{align*} 
			\sum_{i=2}^n\vert t_i(z)\vert \leq  \frac{1}{\Im z}\sum_{i=1}^n\left( \vert \theta_i \vert^3\beta_3(\mu) + \frac{\theta_i^4}{\Im z} + \frac{\vert \theta_i\vert^5\beta_3(\mu)}{(\Im z)^2}\right) \leq
			\frac{1}{\Im z} \frac{3B_{7, \rho}\beta_3(\mu)}{\sqrt{n}}, \qquad z \in D_1,
		\end{align*}
		which results in 
		\begin{align*}
			\vert M_1(z) \vert \leq \frac{1}{\Im z} \frac{5B_{7, \rho}\beta_3(\mu)}{\sqrt{n}}
		\end{align*}
		for all $z \in D_1$ and $n \geq n_6.$
		Making use of \eqref{estimate $Z_1/Z_i -1$} and \eqref{estimate difference more than 3}, we get
		\begin{align*}
			\left \vert M_2(z) \right \vert = \left \vert \sum_{i=2}^n \theta_i^2 \left( 1 - \frac{Z_1(z)}{Z_i(z)}\right)  \right \vert \leq 3\sum_{i=2}^n\frac{\theta_i^4}{\Im z \vert Z_1(z) \vert} \leq \frac{1}{\Im z} \frac{\rev 3 \revv B_{7, \rho}}{\sqrt{n}}, \qquad z \in D_1, n \geq n_6.
		\end{align*}
		Define $D_3 := \left\{ z \in D_1:   \Im z \leq 3\right\}.$
		 Together with
		\begin{align*}
			\vert M_3 \vert \leq A_\rho^2\frac{\log n}{n} \leq \frac{1}{\Im z} \frac{3A_\rho^2}{\sqrt{n}}
		\end{align*}
		for all $z \in D_3$, we finally arrive at \rev
		\begin{align} \label{estimate q}
			\vert q(z) \vert \leq \frac{1}{\Im z} \frac{5B_{7,\rho}\beta_3(\mu)  + 3B_{7,\rho} + 3A_p^2}{\sqrt{n}} \leq \frac{1}{\Im z}\frac{4C_{0, \rho, \mu}}{\sqrt{n}},  \qquad z \in D_3, n \geq n_6, 
		\end{align} 
		where $C_{0, \rho, \mu}$ is defined at the end of Section \ref{part 2 - 3rd order functional eq}. Note that $\vert q(u+i) \vert < \nicefrac{1}{10}$ is valid for all $u \in \mathbb{R}$ and $n$ as before. \revv

		\subsubsection{Analyzing the roots of \texorpdfstring{$Q$}{Q}} \label{part 5 - roots of 2nd}
		The aim of this section consists of calculating the roots of $Q$ and proving that $Z_1$ corresponds to exactly one of these roots \rev in $\mathbb{C}_1 := \{ u+i : u \in \mathbb{R}\}$. \revv
		
		Let $\tilde{\omega}_1 = \tilde{\omega}_1(z)$ and $\tilde{\omega}_2 = \tilde{\omega}_2(z)$ denote the roots of $Q(z, \omega).$ Then, we have 
		\begin{align} \label{tilde omega_1 and tilde omega_2 - roots 2nd order}
			\tilde{\omega}_j = \tilde{\omega}_j(z) = \frac{1}{2} \left( z + (-1)^{j} \sqrt{q_1(z)} \right), \qquad q_1(z) := z^2 - 4 + 4q(z) 
		\end{align}
		for $z \in \mathbb{C}^+.$
		Assume that $q_1(z) = 0$ holds for some $z \in \mathbb{C}^+$. Combining this with \eqref{quadratic functional eq.}, we arrive at $Z_1^2(z) - zZ_1(z) = -1 + q(z) = \nicefrac{-z^2}{4}$, which is equivalent to $Z_1(z) = \nicefrac{z}{2}$. Hence, we obtain the contradiction $\nicefrac{\Im z }{2}= \Im Z_1(z) \geq \Im z$. Thus, we conclude $q_1 \neq 0$ and $\tilde{\omega}_1 \neq \tilde{\omega}_2$ in $\mathbb{C}^+$. 
		
		From now on, we restrict the analysis of the roots to the set $\mathbb{C}_1\subset D_3$.
		We claim that $Z_1$ equals $\tilde{\omega}_2$ in $\mathbb{C}_1$. In order to prove this, assume that $Z_1(u+i) = \tilde{\omega}_1(u+i)$ holds for some $u \in \mathbb{R}.$ Then, we get $\Im \tilde{\omega}_1(u+i) = \Im Z_1(u+i) \geq 1$. However, we have $\Im \tilde{\omega}_1(u+i) \leq \nicefrac{1}{2}$, which can be seen as follows: Suppose that $\Im ((u+i)^2 - 4 + 4q(u+i)) = 0$ holds. Then, we must have $2u + 4\Im q(u+i) = 0$. \rev Together with $\vert q(u+i) \vert < \nicefrac{1}{10}$, this implies \revv
		\begin{align*}
			\Re \left( (u+i)^2 - 4 + 4q(u+i) \right) =  \left( - 2\Im q(u+i)\right)^2  -5 + 4\Re q(u+i) < \frac{4}{100} - 5 + \frac{4}{10} < 0
		\end{align*}
		for all $n \geq n_6.$ Thus, we have $(u+i)^2 -4 + 4q(u+i)\in \mathbb{C}\setminus [0, \infty)$ for all $u \in \mathbb{R}, n \geq n_6.$ In particular, with the help of \eqref{square root real and im formula}, we obtain 
		\begin{align*}
			\Im  \sqrt{(u+i)^2 -4 + 4q(u+i)} \geq 0
		\end{align*}
		and conclude
		\begin{align*}
			\Im \tilde{\omega}_1(u+i) = \frac{1}{2}\Im \left( (u+i) - \sqrt{(u+i)^2 -4 + 4q(u+i)}\right) \leq  \frac{1}{2}.
		\end{align*}
		Hence, it follows $Z_1 = \tilde{\omega}_2$ in $\mathbb{C}_1$ for $n \geq n_6$ as claimed.

		\subsubsection{Application of Proposition  \texorpdfstring{\ref{Bai - Götze's Version}}{2.5}} \label{part 6 - application Bai}

		Let $n \geq n_6$. Later, we will apply Proposition \ref{Bai - Götze's Version} with the following parameters:
		\begin{align} \label{choices Bai}
		\rev 	\tau \in (1,3], \qquad  \rev \gamma < 1, \qquad a := \frac{50C_{0, \rho, \mu}}{\sqrt{n}} \in (0,1), \qquad \varepsilon := 2\varepsilon_n = 100C_{0, \rho, \mu} \sqrt{\frac{\log n}{n}} > 6a.
		\end{align}
		\rev 	Note that $a \in (0,1)$ and $\varepsilon > 6a$ are true because of $\smash{n_6 \geq  \max\{n_2, N_1\} \geq \max\{50^2C_{0, \rho, \mu}^2, e^9\}}$. \revv Moreover, the condition in \eqref{assumption Bai finite integral} is satisfied, which can be seen as follows: We know that $m_2(\mu_\theta) = 1$ holds. The layer cake representation, see \cite[Theorem 1.13]{Lieb2001}, yields 
		\begin{align*}
			1= m_2(\mu_\theta) = 2 \int_0^{\infty} x \mu_\theta([x, \infty)) dx = -2 \int_{-\infty}^0 x \mu_\theta((-\infty, x])dx.
		\end{align*}
		Clearly, the equations above remain valid if we replace $\mu_\theta$ by $\omega$.
		We conclude 
		\begin{align*}
			&\int_{-\infty}^{\infty} \left \vert \mu_\theta((-\infty, x]) - \omega((-\infty, x])  \right \vert dx  \\ \leq & \int_{-1}^1 2dx + \int_{1}^\infty x  \left \vert \mu_\theta((-\infty, x]) - \omega((-\infty, x])  \right \vert dx + \int_{-\infty}^{-1} -x  \left \vert \mu_\theta((-\infty, x]) - \omega((-\infty, x])  \right \vert dx \\  
			\leq &\, 4 + \int_{0}^\infty x  \left \vert \mu_\theta((x, \infty)) - \omega((x, \infty))  \right \vert dx + \int_{-\infty}^{0} -x  \left \vert \mu_\theta((-\infty, x]) - \omega((-\infty, x])  \right \vert dx  \leq 6 < \infty.
		\end{align*}
		Let $S(z) := \frac{1}{2}\left( z + \sqrt{z^2-4} \right)$, $z \in \mathbb{C}^+$, and note that $\nicefrac{1}{S(z)} = G_\omega(z)$  holds for all  $z \in \mathbb{C}^+$. Due to Lemma \ref{reciprocal subordination function as Cauchy transform}, we can find a probability measure $\nu$ such that $G_\nu = \nicefrac{1}{Z_1}$ holds in $\mathbb{C}^+$. It remains to estimate the integrals 
		\begin{align} \label{integrals wrt re}
		\!\!\!\!	\int_{-\infty}^{\infty} \!\! \! \vert G_\theta(u+i) - G_\omega(u+i) \vert du \leq \!\! \int_{-\infty}^{\infty} \!\! \! \vert G_\theta(u+i) - G_\nu(u+i) \vert du + \! \int_{-\infty}^{\infty} \!\! \! \vert G_{\nu}(u+i) - G_\omega(u+i) \vert du 
		\end{align}
		and
		\begin{align} \label{integrals wrt im}
			\!\! \int_a^1\! \vert G_\theta(u\! +\! iv) - G_\omega(u+\! iv) \vert dv \leq \! \! 	\int_a^1 \!   \vert G_{\theta}(u\! +\! iv) - G_\nu(u\! +\! iv) \vert dv + \! 	\int_a^1 \! \vert G_\nu(u\! +\! iv) - G_\omega(u\! +\! iv) \vert  dv 
		\end{align}
		for all $u \in I_\varepsilon := [-2 + \nicefrac{\varepsilon}{2}, 2 - \nicefrac{\varepsilon}{2}] $.  For the integrals in \eqref{integrals wrt re}, we need the quadratic functional equation for $Z_1$ from the last two sections, whereas the integrals in \eqref{integrals wrt im} can be bounded with the help of the cubic functional equation from Sections \ref{part 2 - 3rd order functional eq} and \ref{part 3 - roots 3rd oder}.

		\subsubsection{Bounding the integrals in \texorpdfstring{\eqref{integrals wrt im}}{(3.34)}}  \label{part 7 - bounding integral wrt im }

		Let $n \geq \rev n_6$. 	Knowing that $Z_1 = \omega_3$ holds in $D_2$ (see \eqref{w_2, w_3} for the formula for $\omega_3$), we can calculate
		\begin{align} \label{difference 1/Z_1 - 1/S, cubic}
			\frac{1}{Z_1(z)}  - \frac{1}{S(z)} = \frac{1}{2S(z)Z_1(z)}\left( \omega_1(z) - \frac{r_2(z)}{\sqrt{z^2-4} + \sqrt{z^2-4+r_2(z)}}\right), \qquad z \in D_2.
		\end{align}
		
		\rev Let us analyze the sum of the square roots appearing above in more detail. Together with \eqref{square root real and im formula}, we obtain
		\begin{align} \label{sign real parts}
			\sgn \left(\Re \sqrt{z^2-4}\right) = \sgn(2\Re z \Im z) = \sgn(\Re z), \,\,\,\,\, \sgn\left(\Re \sqrt{z^2-4 + r_2(z)}\right) = \sgn(2\Re z \Im z + \Im r_2(z))
		\end{align}
		for any $z \in D_2.$
		We differentiate two cases: Firstly, choose $z \in D_2$ with $\vert \Re z \vert \geq \rev 1$. \revv 
	Due to $\eqref{estimate r2}$, $\Im z \geq a$, and our assumption on $\vert \Re z \vert$, we conclude that both real parts have the same sign. In more detail, we have
		\begin{align*}
			2\Re z \Im z + \Im r_2(z) \geq \rev 2a \revv  - \vert r_2(z) \vert \geq \frac{100C_{0, \rho, \mu}}{\sqrt{n}} - \frac{94C_{0, \rho, \mu}}{\sqrt{n}} >0
		\end{align*}
		if $\Re z \geq \rev 1 $ and similarly, $	2\Re z \Im z + \Im r_2(z)  <0$	if $\Re z \rev \leq -1.$
		Moreover, we already know that both square roots have positive imaginary part. Hence, if $\vert \Re z \vert \geq \rev 1$, we get
		\begin{align*}
			\left 	\vert \sqrt{z^2-4} + \sqrt{z^2-4+r_2(z)}  \right \vert \geq \left \vert \sqrt{z^2-4} \right \vert 
		\end{align*}
		and thus arrive at 
		\begin{align} \label{estimate difference 1/Z_1 - 1/S for big real parts}
			\left \vert 	\frac{1}{Z_1(z)}  - \frac{1}{S(z)}  \right \vert \leq \frac{1}{2\vert S(z) \vert \vert Z_1(z) \vert}\left( \left \vert \omega_1(z) \right \vert + \left \vert  \frac{r_2(z)}{\sqrt{z^2-4}} \right \vert \right), \qquad z \in D_2, \vert \Re z \vert \geq  \rev 1, \revv n \geq n_6.
		\end{align}
		Now, let us consider $z \in D_2$ with $\vert \Re z \vert < \rev 1.$ It is clear that $u^2-v^2-2  \leq 0$ holds for all $u, v \in \mathbb{R}$ with $\vert u\vert \leq \sqrt{2}$. From this and \eqref{square root real and im formula}, we obtain 
		\begin{align} \label{im sq z2-4 larger 1}
			\left(\Im \sqrt{(u+iv)^2-4} \right)^2 = \frac{1}{2}\left( \sqrt{(u^2 -v^2 -4)^2 + 4u^2v^2} -\left(u^2 -v^2 -4 \right)\right) \geq 1
		\end{align}
		for all $u,v$ as above. In particular, it follows 
		\begin{align*} 
			\left 	\vert \sqrt{z^2-4} + \sqrt{z^2-4+r_2(z)}  \right \vert \geq  \Im \left( \sqrt{z^2-4} + \sqrt{z^2-4+r_2(z)}\right) \geq  \Im \sqrt{z^2-4} \geq 1
		\end{align*}
		for any $z \in D_2$ with $\vert \Re z \vert < 1$ and $n \geq n_6.$ \revv
		Consequently, we arrive at 
		\begin{align} \label{estimate difference 1/Z_1 - 1/S for small real parts}
			\left \vert 	\frac{1}{Z_1(z)}  - \frac{1}{S(z)}  \right \vert \leq \frac{ \left \vert \omega_1(z) \right \vert + \left \vert  r_2(z) \right \vert}{2\vert S(z) \vert \vert Z_1(z) \vert}, \qquad z \in D_2, \vert \Re z \vert < \rev 1 \revv,  n \geq \rev n_6.		\end{align}
			
	\rev 	We continue by proving that 
	\begin{align}  \label{bounds Z_1 by 10 and 1/10}
	\vert Z_1(z) \vert > \frac{1}{10}
	\end{align}
	holds for $z \in D_2$ and $n \geq n_{6}$.  For this purpose, assume that we find $z \in D_2$ with $\vert Z_1(z) \vert \leq \nicefrac{1}{10}$. Then, it follows $\vert Z_1(z) - z \vert \leq \nicefrac{1}{10} + \sqrt{13} < 4$. With the help of the identity $P(z, Z_1(z)) = 0$, we arrive at  
		\begin{align*}
			\frac{1}{10} \geq \vert Z_1(z) \vert > \frac{1}{4}\vert Z_1(z)\vert \vert Z_1(z) - z\vert = \frac{1}{4} \left \vert 1 - I_3 - \frac{r(z)}{Z_1(z)}\right \vert \geq \frac{1}{4} - \frac{1}{4} \left\vert I_3 + \frac{r(z)}{Z_1(z)} \right \vert.
		\end{align*}
		Using the upper bounds on $\vert r(z)\vert$ and $I_3$, one obtains
		\begin{align*}
			\left \vert I_3 + \frac{r(z)}{Z_1(z)} \right \vert \leq A_\rho^2 \frac{\log n}{n} + \frac{1}{\Im z} \cdot \frac{C_{0, \rho, \mu}}{\sqrt{n}} \leq \frac{2}{50}
		\end{align*}
		for $n \geq n_{6}.$ Clearly, this provides the needed contradiction. 
		\revv
		
				Note that we have $\vert \nicefrac{1}{S(z)} \vert = \vert G_\omega(z) \vert \leq 1$ for all $z \in \mathbb{C}^+$. Moreover, a simple calculation shows 
		\begin{align} \label{estimate z^2 -4}
			\left \vert z^2 -4 \right \vert \geq \max \left\{\Im z, \big((\Re z)^2 -5\big)_+\right\}
		\end{align}
		for all $z \in \mathbb{C}^+$.
		Making use of the estimates in \eqref{estimate difference 1/Z_1 - 1/S for big real parts}, \eqref{bounds Z_1 by 10 and 1/10}, \eqref{estimate z^2 -4}, \eqref{estimate omega_1}, and \eqref{estimate r2}, it follows
		\begin{align*}
			\int_a^1 \vert G_\nu(\rev u+iv \revv) - G_\omega(\rev u+iv \revv) \vert \rev dv \revv & = \int_a^1 	\left \vert 	\frac{1}{Z_1(u+iv)}  - \frac{1}{S(u+iv)}  \right \vert dv \\ &  \leq 5\left( \int_a^1 \vert \omega_1(u+iv) \vert dv + \int_a^1  \left \vert  \frac{r_2(u+iv)}{\sqrt{(u+iv)^2-4}} \right \vert dv \right) \\ & \leq  5\left( (1-a)\frac{12C_{0,\rho, \mu}}{\sqrt{n}} + \int_{a}^1 \frac{\left \vert r_2(u+iv) \right \vert }{\sqrt{v}} dv \right) \\ &\leq 5\left( \frac{12C_{0,\rho, \mu}}{\sqrt{n}}+ \left(2-2\sqrt{a} \right)\frac{94C_{0,\rho, \mu}}{\sqrt{n}} \right) \\  & \leq  \frac{1000C_{0,\rho, \mu}}{\sqrt{n}}
		\end{align*}
		for all $u \in I_\varepsilon = [-2 + \nicefrac{\varepsilon}{2}, 2 - \nicefrac{\varepsilon}{2}]$ with $\vert u \vert \geq \rev 1$ and $n \geq \rev n_6$.
		Similarly, together with \eqref{estimate difference 1/Z_1 - 1/S for small real parts}, we obtain 
		\begin{align*}
			\int_a^1 \vert G_\nu(u+iv) - G_\omega(u+iv) \vert dv  \leq 5\left( \int_a^1 \vert \omega_1(u+iv) \vert dv + \int_a^1   \vert r_2(u+iv)  \vert dv \right) \leq  \frac{530C_{0,\rho, \mu}}{\sqrt{n}}
		\end{align*}
		for all $u \in I_\varepsilon$ with $\vert u \vert < \rev 1$ and $n \geq \rev n_6$.
		We conclude 
		\begin{align} \label{integral 1}
			\sup_{u \in I_\varepsilon} \int_a^1 \vert G_\nu(u+iv) - G_\omega(u+iv) \vert dv \leq \frac{1000C_{0,\rho, \mu}}{\sqrt{n}}, \qquad n\geq \rev n_6.
		\end{align}
		
		The other integral appearing on the right-hand side in \eqref{integrals wrt im} can be handled in an easier way: We write
		\begin{align} \label{difference G_mu_theta - G_nu, cubic}
			G_\theta(z) - G_\nu(z) = G_{1}(Z_1(z)) - \frac{1}{Z_1(z)} = \frac{G_{1}(Z_1(z))Z_1(z) - 1 }{Z_1(z)} = \frac{r_{n,1}(z)}{Z_1(z)}
		\end{align}
		for all $z \in \mathbb{C}^+.$
		Using \eqref{subordination times cauchy expansion}, \eqref{bounds Z_1 by 10 and 1/10}, \rev and the inequality 
		\begin{align*}
			\vert \! \log a \vert \leq  \log (50C_{0, \rho, \mu})  + \frac{\log n}{2} \leq \log n
		\end{align*}
	holding for $n \geq \rev n_6 \geq 50^2C_{0, \rho, \mu}^2$, \revv
		 we arrive at 
		\begin{align} \label{integral 2}
			\begin{split}
				\sup_{u \in I_\varepsilon}\int_{a}^1 \left \vert 	G_\theta(u+iv) - G_\nu(u+iv) \right \vert dv & \leq 	\sup_{u \in I_\varepsilon}\int_{a}^1 \frac{1}{\vert Z_1(u+iv) \vert^3} \left( \theta_1^2 + \frac{\vert \theta_1\vert^3\beta_3(\mu)}{v} \right) dv  \\ & \leq 1000 \left((1-a)\theta_1^2 + \vert \theta_1 \vert^3\beta_3(\mu)  \vert \! \log a \vert\right) \\
				& \leq 1000  \left( A_\rho^2 \frac{\log n}{n} + \rev  A_\rho^{3}\beta_3(\mu) \frac{(\log n)^{\nicefrac{5}{2}}}{n^{\nicefrac{3}{2}}}\revv  \right) \\  & \leq 2000 \rev A_\rho^{3} \revv \beta_3(\mu) \frac{\log n}{n}
			\end{split}
		\end{align}
		for all $n \geq \rev n_6$. By \eqref{integral 1} and \eqref{integral 2}, we conclude
		\begin{align} \label{integral 5}
			\sup_{u \in I_\varepsilon}\int_{a}^1 \left \vert 	G_\theta(u+iv) - G_\omega(u+iv) \right \vert dv \leq  \frac{1000C_{0,\rho, \mu}}{\sqrt{n}} +  2000\rev  A_\rho^{3}\beta_3(\mu) \frac{\log n}{n} \leq \frac{C_{1, \rho, \mu}}{\sqrt{n}}
		\end{align}
		for all $n \geq \rev n_6$, where $\rev C_{1, \rho, \mu}$ can be defined by  $\rev C_{1, \rho, \mu} := 1000C_{0,\rho, \mu} + 2000  A_\rho^{3}\beta_3(\mu).$

		\subsubsection{Bounding the integrals in \texorpdfstring{\eqref{integrals wrt re}}{(3.33)}} \label{part 8 - bounding integrals wrt re}

		Let $n \geq \rev n_6$. \rev According to the results in Section \ref{part 5 - roots of 2nd}, we have $Z_1 = \tilde{\omega}_2$ in $\mathbb{C}_1 = \{u+i: u \in \mathbb{R}\}$. As in \eqref{difference 1/Z_1 - 1/S, cubic} \rev and with the help of \eqref{tilde omega_1 and tilde omega_2 - roots 2nd order}, \revv we derive
		\begin{align}  \label{estimate 1/Z_1 - 1/S, quadratic case }
			\frac{1}{Z_1(\rev u+i \revv)} - \frac{1}{S(u+i)} = \frac{1}{Z_1(u+i)S(u+i)} \frac{-2q(u+i)}{\sqrt{(u+i)^2-4} + \sqrt{(u+i)^2-4+4q(u+i)}}
		\end{align}
		for all $u \in \mathbb{R}.$
		
		Similarly to the last section, we can prove
		\begin{align*}
			\left \vert \sqrt{(u+i)^2-4} + \sqrt{(u+i)^2-4+4q(u+i)} \right \vert \geq \left \vert \sqrt{(u+i)^2-4} \right \vert, 
		\end{align*}
		whenever $\vert u \vert \geq \nicefrac{1}{5}$ holds. In more detail, for any $u \in \mathbb{R}$, we have 
		\begin{align*}
			\sgn \left(\Re \sqrt{(u+i)^2-4}\right) = \sgn u, \qquad \sgn \left( \Re\sqrt{(u+i)^2-4 + 4q(u+i)}\right) = \sgn(2u  +4\Im q(u+i)).
		\end{align*}
		Using the inequality $\vert q \vert < \nicefrac{1}{10}$ holding in $\mathbb{C}_1$, compare to \eqref{estimate q}, we see that $\sgn u = \sgn(2u +4\Im q( u+i))$ holds if $u$ is as above. We conclude
		\begin{align} \label{estimate 1/Z_1 - 1/S for big real parts, quadratic case }
			\left \vert \frac{1}{Z_1(u+i)} - \frac{1}{S(u+i)} \right \vert \leq  \frac{1}{\vert Z_1(u+i) \vert \vert S(u+i) \vert} \left \vert \frac{ 2q(u+i)}{\sqrt{(u+i)^2-4}} \right \vert, \qquad \vert u\vert \geq \frac{1}{5}, n \geq n_6.		\end{align}
		
		Moreover, we have
		\begin{align} \label{estimate $G_omega(u+i)$}
			\left \vert \frac{1}{S(u+i)}\right \vert =	\left \vert G_\omega(u+i) \right \vert = \left \vert \frac{1}{2}\left((u+i) - \sqrt{(u+i)^2 -4} \right) \right \vert \leq \frac{2}{ \sqrt{1 + ((\vert u \vert -4)_+)^2}}
		\end{align}
		for all $u \in \mathbb{R},$ which can be proven as follows: Note that the claimed inequality is equivalent to 
		\begin{align*}
			\left \vert \left( (u+i) + \sqrt{(u+i)^2 - 4} \right) \right \vert^2 \geq 
			\begin{cases} 1+(\vert u \vert -4)^2  & \vert  u \vert  \geq 4 \\ 1 & \vert  u \vert <4
			\end{cases}.
		\end{align*}
		The identities in \eqref{square root real and im formula} yield
		\begin{align*}
			\left \vert (u+i) + \sqrt{(u+i)^2 - 4} \right \vert^2 &=\left (u + \Re \sqrt{(u+i)^2-4} \right)^2 + \left(1+ \Im \sqrt{(u+i)^2 -4}\right)^2
			\\ & \geq   1+ \left(\Re \sqrt{(u+i)^2 -4}\right)^2 +  \left(\Im \sqrt{(u+i)^2 -4}\right)^2 \\ & = 1 + \sqrt{(u^2 -5)^2 + 4u^2}  \geq 1
		\end{align*}
		for all $u \in \mathbb{R}$. Note that the first inequality above is true due to  $\smash{\sgn\Re \sqrt{(u+i)^2 -4}= \sgn(u)}$, which implies
		$\smash{u \Re \sqrt{(u+i)^2-4} \geq 0}$. 
		Now, noting that $(u^2-5)^2 + 4u^2 \geq (u-4)^4$ holds for all $u \geq 4$, whereas $(u^2-5)^2 + 4u^2 \geq (-u-4)^4$ is valid for all $u \leq -4$, we get the claim for $\vert u \vert  \geq 4.$  
		
		Combining the inequalities in \eqref{estimate z^2 -4}, \eqref{estimate 1/Z_1 - 1/S for big real parts, quadratic case }, and \eqref{estimate $G_omega(u+i)$} with the fact that $\vert Z_1(u+i) \vert \geq 1$ holds for all $u \in \mathbb{R}$, it follows
		\begin{align} \label{estimate 1/Z_1 - 1/S for big real, quadratic, final}
			\left \vert \frac{1}{Z_1(u+i)} - \frac{1}{S(u+i)} \right \vert \leq \frac{4 \vert q(u+i) \vert}{ \sqrt{1 + ((\vert u \vert -4)_+)^2} \sqrt{\max\{1, (u^2-5)_+\}}}, \qquad \vert u \vert \geq \frac{1}{5}, n \geq n_6.
		\end{align}
		Since we have
		\begin{align} \label{estimate denominator quadratic case 0.01(1+u^2)^2}
			\left(1+( \vert u \vert-4)^2\right)\left(u^2 -5\right)\geq \frac{1}{100}\left(1+u^2\right)^2
		\end{align}
		for all $u \in \mathbb{R}$ with $\vert u \vert \geq 4$, the estimates in \eqref{estimate q} and \eqref{estimate 1/Z_1 - 1/S for big real, quadratic, final} yield
		\begin{align*}
			\int_4^\infty \left \vert G_\nu(u+i) - G_\omega(u+i) \right \vert du
			\leq \rev 40\int_4^\infty \frac{\vert q(u+i) \vert}{1+u^2} du \leq  \frac{\rev 40C_{0, \rho, \mu}}{\sqrt{n}}
		\end{align*}
		and 
		\begin{align*}
			\int_{-\infty}^{-4} \left \vert G_\nu(u+i) - G_\omega(u+i) \right \vert du \leq  \frac{\rev 40C_{0, \rho, \mu}}{\sqrt{n}}.
		\end{align*}
		In the same manner, we derive 
		\begin{align*}
			\int_{\sqrt{6}}^{4} \left \vert G_\nu(u+i) -  G_\omega(u+i) \right \vert &du \leq \rev 4\int_{\sqrt{6}}^{4}  \frac{\vert q(u+i) \vert}{\sqrt{u^2-5}} du \leq  \frac{\rev 16C_{0, \rho, \mu}}{\sqrt{n}}, \\
			\int_{-4}^{-\sqrt{6}} \left \vert G_\nu(u+i) \,\, -  \right. & \left. G_\omega(u+i) \right \vert du \leq  \frac{\rev 16C_{0, \rho, \mu}}{\sqrt{n}}
		\end{align*}
		as well as
		\begin{align*}
			\int_{\nicefrac{1}{5}}^{\sqrt{6}} \left \vert G_\nu(u+i) - G_\omega(u+i) \right \vert & du \leq \rev 4 \int_{\nicefrac{1}{5}}^{\sqrt{6}}  \vert q(u+i) \vert du \leq  \frac{\rev 36 C_{0, \rho, \mu}}{\sqrt{n}}, \\
			\int_{-\sqrt{6}}^{-\nicefrac{1}{5}}\left \vert G_\nu(u+i) \,\, - \right. & \left. G_\omega(u+i) \right \vert du \leq \frac{\rev 36C_{0, \rho, \mu}}{\sqrt{n}}. 
		\end{align*}
		It remains to fill the gap in the integration from $-\nicefrac{1}{5}$ to $\nicefrac{1}{5}$. For this purpose, note that we have $\smash{\Im \sqrt{(u+i)^2 -4} \geq 1}$ for all $u \in \mathbb{R}$. Hence, it follows
		\begin{align*}
			\left \vert \sqrt{(u+i)^2 -4} + \sqrt{(u+i)^2 - 4 +4q(u+i)}   \right \vert \geq \Im  \sqrt{(u+i)^2 -4}  \geq 1, \qquad u \in \mathbb{R}.
		\end{align*}
	Combining this with $\vert Z_1(u+i) \vert \geq 1$, $ \vert S(u+i) \vert \geq 1$ holding for all $u \in \mathbb{R}$, \eqref{estimate q},  and \eqref{estimate 1/Z_1 - 1/S, quadratic case }, we arrive at
		\begin{align*}
			\int_{-\nicefrac{1}{5}}^{\nicefrac{1}{5}} \left \vert G_\nu(u+i) - G_\omega(u+i) \right \vert du \leq 	\int_{-\nicefrac{1}{5}}^{\nicefrac{1}{5}}  2\vert q(u+i) \vert du \leq 	 \frac{\rev 4C_{0, \rho, \mu}}{\sqrt{n}}.
		\end{align*}
		As a conclusion, we obtain
		\begin{align} \label{integral 3}
			\int_{-\infty}^{\infty} \left \vert G_\nu(u+i) - G_\omega(u+i) \right \vert du \leq  \frac{\rev 188C_{0, \rho, \mu}}{\sqrt{n}}, \qquad \qquad n \geq \rev n_6.
		\end{align}
		
		We end this section by bounding the other integral on the right-hand side in \eqref{integrals wrt re}. Using  \eqref{difference G_mu_theta - G_nu, cubic} as well as \eqref{subordination times cauchy expansion}, we get 
		\begin{align*} \rev
			\int_{-4}^{4} \vert G_\theta(u+i) - G_\nu(u+i) \vert du = 	\int_{-4}^{4} \left \vert \frac{r_{n,1}(u+i)}{Z_1(u+i)} \right \vert du \leq 8\left( \theta_1^2 + \vert \theta_1 \vert^3 \beta_3(\mu) \right) \leq  16A^3_\rho \beta_3(\mu)\frac{\log n}{n}.
		\end{align*}
		For the integral with domain of integration outside of $[-4,4]$, we need the estimate
		\begin{align} \label{bounds for abs Z_1 for integral wrt Re of 1/Z_1 - 1/S}
			\vert Z_1(u+i) \vert \geq 	\frac{1}{5}\left( 1 + (\vert u \vert -4)_+\right) 
		\end{align}
		holding	for all $u \in \mathbb{R}$ with $\vert u \vert \geq  \nicefrac{1}{5}$ and $n \geq n_6$. We can prove \eqref{bounds for abs Z_1 for integral wrt Re of 1/Z_1 - 1/S} as follows: From \eqref{estimate q}, \eqref{estimate z^2 -4}, \eqref{estimate 1/Z_1 - 1/S for big real parts, quadratic case }, and \eqref{estimate $G_omega(u+i)$}, we derive
		\begin{align*}
			\vert Z_1(u+i) \vert \geq 	\frac{\sqrt{1+((\vert u \vert -4)_+)^2}}{2}- \frac{\rev 8C_{0, \rho, \mu}}{\sqrt{n}}   \frac{1}{\sqrt{\max\{1, (u^2 -5)_+\}}}
		\end{align*}
		for all $u$ with $\vert u \vert \geq \nicefrac{1}{5}$. We claim that 
		\begin{align*}
			\frac{\sqrt{1+((\vert u \vert -4)_+)^2} }{2}-\! \frac{\rev 8C_{0, \rho, \mu}}{\sqrt{n}}   \frac{1}{\sqrt{\max\{1, (u^2 -5)_+\}}} \geq 	\frac{1}{5}\left( 1 + (\vert u \vert -4)_+\right)
		\end{align*}
holds for all $u \in \mathbb{R}$. \rev Note that we have $\rev 8C_{0, \rho, \mu}n^{-\nicefrac{1}{2}}< \nicefrac{1}{5}$ and that this immediately implies the inequality above in the case $\vert u \vert <4$. Now, \revv consider $u \in \mathbb{R}$ with $\vert u \vert  \geq  4$. Then, we have
		\begin{align*}
			\left \vert \frac{\rev 8C_{0, \rho, \mu} \revv}{\sqrt{n}}   \frac{1}{\sqrt{u^2 -5}} \right \vert \leq \frac{\rev 8}{3}\frac{\rev C_{0, \rho, \mu}}{\sqrt{n}} < \frac{1}{10}, \qquad n \geq \rev n_6.
		\end{align*}
		Together with $\sqrt{1+ (\vert u \vert -4)^2} \geq 2^{-\nicefrac{1}{2}} \left( 1 + (\vert u \vert -4) \right)$ for $\vert u \vert \geq 4$, we conclude 
		\begin{align*}
			\frac{\sqrt{1+(\vert u \vert -4)^2}}{2}  -\! \frac{\rev 8C_{0, \rho, \mu}}{\sqrt{n}}  \frac{1}{\sqrt{u^2 -5}} \geq \frac{1}{5}(1 + (\vert u \vert -4)).
		\end{align*}
	Now, we can use \eqref{bounds for abs Z_1 for integral wrt Re of 1/Z_1 - 1/S} and obtain
		\begin{align*}
			\int_{4}^\infty  \vert G_\theta(u+i) - G_\nu(u+i) \vert du & =  \int_{4}^\infty \left \vert \frac{r_{n,1}(u+i)}{Z_1(u+i)}  \right\vert du \leq  \int_{4}^\infty  \frac{\theta_1^2 +\vert  \theta_1 \vert^3 \beta_3(\mu)}{\vert Z_1(u+i) \vert^2}  du 
			\\ & \leq    25\left(\theta_1^2 + \vert \theta_1 \vert^3\beta_3(\mu)\right)\int_{4}^\infty \frac{1}{(u-3)^2}du \leq 50A^3_\rho \beta_3(\mu)\frac{\log n}{n}
		\end{align*}
		as well as
		\begin{align*}
			\int_{-\infty}^{-4}  \vert G_\theta(u+i) - G_\nu(u+i) \vert du \leq 50A^3_\rho \beta_3(\mu)\frac{\log n}{n}
		\end{align*}
		for all $n \geq \rev n_{6}$. Hence, we derive
		\begin{align*} 
			\int_{-\infty}^{\infty}  \vert G_\theta(u+i) - G_\nu(u+i) \vert du \leq \rev 116 \revv A^3_\rho \beta_3(\mu)\frac{\log n}{n}.
		\end{align*}
		Combining this with \eqref{integral 3}, it follows
		\begin{align} \label{integral 6}
			\int_{-\infty}^{\infty} \vert G_\theta(u + i) - G_\omega(u+i) \vert du  \leq \frac{\rev 188C_{0, \rho, \mu}}{\sqrt{n}} +   \rev 116 \revv A^3_\rho \beta_3(\mu)\frac{\log n}{n} \leq \frac{\rev C_{2,\rho, \mu}}{\sqrt{n}}
		\end{align}
		for all $n \geq \rev n_{6}$ with $\rev C_{2, \rho, \mu}:= 188C_{0, \rho, \mu} + 116A^3_\rho \beta_3(\mu).$	
		
		\subsubsection{Final conclusion} \label{part 9 - final conclusion}
		In this section we end the proof of Theorem \ref{main theorem}. For completeness, let us add a short summary:  Let $\rho \in (0,1)$ and define $\mathcal{F} \subset \mathbb{S}^{n-1}$ as in \eqref{definition F}. Recall that $\sigma_{n-1}(\mathcal{F}) \geq 1- \rho$ holds. Set $n_{\rho, \mu} := \lceil \rev n_{6} \rceil$, where $\lceil \cdot \rceil$ denotes the ceiling function, i.e.\@ $\lceil x \rceil := \min\{ k \in \mathbb{N}: k \geq x\}$ for $x \in \mathbb{R}$. Choose $\theta \in \mathcal{F}$ arbitrarily and let $n \geq n_{\rho, \mu}$. 
		Combining Proposition \ref{Bai - Götze's Version} applied to the choices made in \eqref{choices Bai} with the estimates in \eqref{integral 5} and \eqref{integral 6}, we obtain 
		\begin{align*} \rev
			\Delta(\mu_\theta, \omega) \leq C_\gamma \left( \frac{C_{1, \rho, \mu} + C_{2, \rho, \mu}}{\sqrt{n}} +    \frac{600C_{0, \rho, \mu}}{\sqrt{n}}  +   \left(100C_{0, \rho, \mu} \sqrt{\frac{\log n}{n}} 	\right)^{\nicefrac{3}{2}} \right) \leq \frac{C_{\rho, \mu}}{\sqrt{n}}
		\end{align*}
		for \rev $C_\gamma$ as in Proposition \ref{Bai - Götze's Version} \revv and a suitably chosen constant $C_{\rho, \mu}>0$. Lastly, we note that $C_{\rho, \mu}$ depends solely on $\rho$ through $A_\rho$ and $B_{7, \rho}$ and on $\mu$ via $\beta_3(\mu)$ and $m_4(\mu)$. \\
	
	We would like to comment on some aspects of the proof of Theorem \ref{main theorem} as well as on possible improvements in the next remark.
	\begin{rem}  \phantomsection \label{remark main theorem}
		\begin{enumerate}
			\item[(a)] Our choices of $C_{\rho, \mu}$ and $n_{\rho, \mu}$ are not optimal.
			\item[(b)] By increasing the constant $C_{\rho, \mu}$ if necessary, it is possible to derive a bound on the Kolmogorov distance as proven above holding for all $n$ and not only for sufficiently large $n$.
			\rev\item[(c)]  The use of the cubic functional equation offers a certain degree of flexibility in the sense that small adjustments to the calculations done in Section \ref{part 2 - 3rd order functional eq} lead to an improved upper bound on the coefficient $r(z)$ and thus to a better decay rate for the integrals in \eqref{integrals wrt im}. We refer to the proofs of \revtwo Theorems \ref{main Kolmogorov epsilon unbounded} and \ref{main Delta tilde} \revvtwo for concrete examples of such adjustments. 
			\rev \item[(d)] In contrast to the last statement, the use of the quadratic functional equation leaves no room for improvements. This is due to the fact that for $z \in \mathbb{C}^+$ with $\Im z \in (0,1]$ the order of the term $M_1(z)$ appearing in the polynomial $Q$ is not smaller than $n^{-\nicefrac{1}{2}}$ in general. Unfortunately, in the process of bounding the integrals in \eqref{integrals wrt re}, we cannot replace the quadratic polynomial $Q$ by the cubic polynomial $P$ since we are not able to control the roots of $P$ in $\mathbb{C}_1$, but only in $D_2$; see \eqref{D_2} for the definition of $D_2.$ Hence, we are forced to work with $Q$ and, consequently, the final rate of convergence for the integrals in \eqref{integrals wrt re} and thus for $\Delta(\mu_\theta, \omega)$ is limited to the order $n^{-\nicefrac{1}{2}}$. For completeness, let us mention that the situation changes whenever we assume that the underlying probability measure has \revtwo finite sixth moment \revvtwo and vanishing third moment; compare to the proof of \cref{main Delta vanishing third moment} for more details.
			\item[(e)] Another approach of deriving rates of convergence of the distribution of sums of free bounded random variables to Wigner's semicircle law was introduced by Kargin \cite{Kargin2007a}. However, the application of his method did not improve the previously established rate of convergence for $\Delta(\mu_\theta, \omega)$ since we were not able to show that the Taylor expansion in \cite[equation (10)]{Kargin2007a} holds up to the point $t=1$.
		\end{enumerate}
	\end{rem}		
	
	\subsection{Proof of Corollary \ref{Corollary unbounded}}
 \rev
 We will mainly argue as done in the proof of \cref{main theorem} with some minor changes which do not affect any decay rates but only constants. Hence, the steps of the proof below roughly coincide with the ones of \cref{main theorem}; a few parts of the proof of the theorem are simply summarized into a single step. We use the notation introduced in the last section.  \revv
 
\subsubsection*{Step 1: Construction of $\mathcal{F}$}
		Let $n \geq 4.$ Similarly to \cref{part 1 - construction of F}, we define 
		\begin{align*}
			\mathcal{F}_1 :=  \left\{ \theta \in \mathbb{S}^{n-1}: \max_{i \in [n]} \vert \theta_i \vert \leq 4\sqrt{\frac{\log n}{n}} \right\}, \,\,\,\,
			\mathcal{F}_2 := \left\{ \theta \in \mathbb{S}^{n-1}: \sum_{i=1}^n \vert \theta_i \vert^{7} \leq \frac{B_7}{n^{\nicefrac{5}{2}}} \right\}, \,\,\,\, \mathcal{F} := \mathcal{F}_1 \cap \mathcal{F}_2.
		\end{align*}
		Then, we have
		\begin{align*}
			\sigma_{n-1}\left( \mathcal{F}_1^{\mathsf{c}}\right) \leq \sqrt{\frac{2}{\pi}} \frac{1}{n},  \qquad  	\sigma_{n-1}\left( \mathcal{F}_2^{\mathsf{c}}\right) \leq \exp\left(-n^{\nicefrac{2}{7}}\right) \leq \frac{1}{\sqrt{n}}, \qquad 	\sigma_{n-1}\left( \mathcal{F}^{\mathsf{c}}\right) \leq \frac{2}{\sqrt{n}}.
		\end{align*}
		As before, we observe that \rev $\sum_{i=1}^n \vert \theta_i \vert^{k} \leq B_7 n^{-\nicefrac{(k-2)}{2}}$ holds for any $\theta \in \mathcal{F}_2$ and \revv all $k \in \mathbb{N}$ with $2 < k  \leq 7$. From now on, fix $\theta \in \mathcal{F}$ and assume that $\theta_1^2 = \min_{i \in [n]} \theta_i^2$ holds. 
		
		\subsubsection*{\rev Steps 2+3: Analysis of the cubic functional equation for $Z_1$}
		\rev We proceed analogously to Sections \ref{part 2 - 3rd order functional eq} and \ref{part 3 - roots 3rd oder}. \revv Recall that $P(z, Z_1(z)) =0$ holds for any $z \in \mathbb{C}^+.$ We have to adapt our estimates on the coefficients $I_3$ and $r(z)$ appearing in $P$. Define 
		\begin{align*}
			T_1 := \left \{ z \in \mathbb{C}^+:  \Im z \geq \rev \frac{1}{\sqrt{n}} \revv \right\}.
		\end{align*}
	\rev 	Then, by Cauchy's inequality, Theorem \ref{subordination functions}, and \cite[Theorem 2.6]{Chistyakov2008a}, we have
		\begin{align*}
			\vert r_{n,i}(z) \vert  \leq 4 \frac{\sqrt{\log n}}{n^{\nicefrac{1}{4}}} + 4(\pi c_0)^{\nicefrac{1}{2}} (\beta_3(\mu) B_{7})^{\nicefrac{1}{4}} \frac{\sqrt{\log n}}{n^{\nicefrac{1}{8}}} <\frac{1}{10}, \qquad z \in T_1, i \in [n], n \geq n_1
 		\end{align*}
		for $c_0>0$ taken from \cite[Theorem 2.6]{Chistyakov2008a} and some sufficiently large threshold $n_1 \geq 4$ depending on $\mu$ via $\beta_3(\mu)$. \revv We observe that the statements in \rev \eqref{relation F_i - Id and $r_{n,i}$}, \eqref{first estimate I_1}, and \eqref{estimate $Z_1/Z_i -1$ by constant} -- \eqref{second estimate I_1 unbounded main} \revv remain valid for all $z \in T_1$ \rev and $n \geq n_1$. \revv Arguing as in \eqref{estimate I_1} and using that 
		\begin{align*}
			\frac{\sum_{i=2}^n \vert \theta_i \vert^k}{(\Im z)^l} \leq \frac{B_7}{\rev \sqrt{n}}
		\end{align*}
	holds for all $k,l \in \mathbb{N}$, $k-l=3$, $1 \leq l \leq 4 \leq k \leq 7$, we arrive at 
		\begin{align*}
			\vert I_1(z) \vert \leq \frac{20}{9}\sum_{i=2}^n \vert s_i(z) \vert \leq \rev \frac{16B_7\beta_3(\mu)^2m_4(\mu)}{\sqrt{n}}, \qquad z \in T_1, n \geq n_1.
		\end{align*}
		 \revv Here, $s_i(z)$ is defined as in \eqref{def s_i(z)}. In the same manner, we derive
		\begin{align*}
			\vert I_2(z) \vert \leq \frac{\rev 3B_7}{\rev \sqrt{n}}, \qquad \vert I_3 \vert \leq 16\frac{ \log n}{n}, \qquad \vert I_4(z) \vert \leq \frac{\rev 12B_7\beta_3(\mu)}{\rev \sqrt{n}}, \qquad \vert I_5\vert \leq \frac{\rev B_7\beta_3(\mu)}{\sqrt{n}}
		\end{align*}
		and conclude $	\vert r(z) \vert \leq \rev C_{0, \mu}n^{-\nicefrac{1}{2}}$
		for all $z \in T_1$ and $\rev n \geq n_1$ with $\rev C_{0, \mu} := 32B_7\beta_3(\mu)^2 m_4(\mu) \geq 16.$ 
		
Now, let us analyze the roots of $P$. Define
		\begin{align*}
			T_2 := \left\{ z \in \mathbb{C}^+: \vert \Re z \vert \leq 2 - \varepsilon_n, 3 \geq \Im z \geq \rev \frac{50C_{0, \mu}}{\sqrt{n}} \revv \right\}, \qquad \rev \varepsilon_n:= 50C_{0, \mu}\sqrt{\frac{\log n}{n}}
		\end{align*}
		\rev and let $n_2 \geq n_1$ be chosen in such a way that $\varepsilon_n <1$ holds for all $n \geq n_2.$ \revv
	As before, denote the roots of $P(z,\omega)$ by $\omega_j=\omega_j(z), j=1,2,3.$ Making use of Rouch\'{e}'s theorem, we can prove that for any $z \in T_2$ and all $n \geq \rev n_3:= \max\{n_2, 75^2C_{0, \mu}^2\}$ there exists a root, \rev say $\omega_1(z)$, \revv with
		\begin{align*}
			\vert \omega_1(z) \vert < \frac{12C_{0, \mu}}{\sqrt{n}}, \qquad \vert \omega_j(z) \vert \geq \frac{12C_{0, \mu}}{\sqrt{n}}, \qquad j=2,3.
		\end{align*}
		\rev Moreover, we obtain $Z_1 \neq \omega_1$ in $T_2$ for all $n \geq n_3$ and observe that $\omega_1|_{T_2}$ is continuous. By a contradiction argument, we can show that $\omega_2 \neq \omega_3$ holds in $T_2$ for all $n \geq n_4 := \max \{n_3, 13^4C_{0, \mu}^4\}.$ \revv Now, the remaining (non-equal) roots $\omega_j$, $j=2,3,$ must be of the form as given in \eqref{w_2, w_3}, where the error term $\rev r_2(z)$ admits the estimate 
		\begin{align*}
		\vert r_2(z) \vert \leq \rev 4 \vert I_3 \vert + 2\sqrt{13} \vert \omega_1 \vert + 3\left \vert \omega_1^2 \right \vert \revv \leq  \frac{\rev 94 \revv C_{0, \mu}}{\sqrt{n}} < \frac{3}{5}, \qquad z \in T_2, n \rev \geq n_4.
		\end{align*}
		Making use of the same arguments as before, we can prove that either $Z_1= \omega_2$ or $Z_1= \omega_3$ holds in $T_2$ for all $n \geq \rev n_5$, where $\rev n_5$ is given by
		$\smash{\rev n_5 := \max\{n_4, \exp( ( 12.5C_{0, \mu} + 0.47)^2)\}}$. Last but not least, by possibly increasing $n \geq \rev n_6$ for some sufficiently large \rev $n_6$ arising from a continuity argument, \revv we can show that $Z_1 = \omega_3$ holds in $T_2$ \rev for those $n$. \revv
		
		\subsubsection*{\rev Steps 4+5: Analysis of the quadratic functional equation for $Z_1$} We imitate the arguments in Sections \ref{part 4 - 2nd order functional eq} and \ref{part 5 - roots of 2nd}. Remember that  $Q(z, Z_1(z)) =0$ holds for all $z \in \mathbb{C}^+$. We can estimate
		\begin{align*}
			\vert M_1(z) \vert \leq \frac{1}{\Im z} \frac{\rev 5B_7\beta_3(\mu)}{\sqrt{n}},  \qquad \vert M_2(z) \vert \leq \frac{1}{\Im z}\frac{\rev 3B_7}{\sqrt{n}}
		\end{align*}
		for all $z \in T_1$, $\rev n \geq n_6,$ as well as 
		\begin{align*}
			\vert M_3\vert \leq \frac{1}{\Im z} \frac{48}{\sqrt{n}}, \qquad \vert q(z) \vert \leq \frac{1}{\Im  z}\frac{\rev 5B_7 \beta_3(\mu) + 3B_7 + 48}{\sqrt{n}} \leq  \frac{1}{\Im  z}\frac{\rev 4C_{0, \mu}}{\sqrt{n}}
		\end{align*}
		for $z \in T_3:=\left \{ z \in T_1: \Im z \leq 3 \right\}$ and $n \geq n_6.$ \rev Clearly, we have $\vert q \vert < \nicefrac{1}{10}$ in $\mathbb{C}_1 = \{ u +i: u \in \mathbb{R}\}$ for $n$ as before. 
		
		Let  $\tilde{\omega}_1(z)$ and $ \tilde{\omega}_2(z)$ denote the roots of $Q(z, \omega)$. Then, we have $\tilde{\omega}_1(z) \neq \tilde{\omega}_2(z)$ for all $z \in \mathbb{C}^+$ and the roots are of the form as given in \eqref{tilde omega_1 and tilde omega_2 - roots 2nd order}. \rev As before, we can prove that $Z_1 = \tilde{\omega}_2$ holds in $\mathbb{C}_1$ for $n \geq n_6.$ \revv
		
\subsubsection*{\rev Steps 6-9: Bounding the integrals in Proposition \ref{Bai - Götze's Version}} In this step, we argue as in Section \ref{part 6 - application Bai} up to \cref{part 9 - final conclusion}.
		From now on, assume that $n \geq n_6$ holds. We apply Proposition \ref{Bai - Götze's Version} with the following parameters:
		\begin{align*}
		\rev	\tau \in (1,3],\qquad  \gamma <1, \qquad a := \rev \frac{50C_{0, \mu}}{\sqrt{n}} \in (0,1),  \qquad \varepsilon := 100C_{0, \mu}\sqrt{\frac{\log n}{n}} > 6a.
		\end{align*}
	 It remains to bound the integrals in \eqref{integrals wrt re} and \eqref{integrals wrt im}.
	 
	 We start with the ones in \eqref{integrals wrt im}. \rev Together with $\vert Z_1  \vert> \nicefrac{1}{10}$ holding in $T_2$, it follows \revv 
		\begin{align*} \rev
			\int_a^1 \left\vert \frac{1}{Z_1(u+iv)} - G_\omega(u+iv) \right\vert dv &\leq  \int_a^1 \frac{1}{2\vert S(u+iv) \vert \vert Z_1(u+iv) \vert}\left( \left \vert \omega_1(u+iv) \right \vert + \left \vert  \frac{r_2(u+iv)}{\sqrt{(u+iv)^2-4}} \right \vert \right) dv \\ &\leq 
			\frac{\rev 1000 \revv C_{0, \mu}}{\sqrt{n}}
		\end{align*}
		for $u \in I_\varepsilon := [-2+\nicefrac{\varepsilon}{2}, 2-\nicefrac{\varepsilon}{2}]$ with $\vert u \vert \geq \rev 1$ as well as 
		\begin{align*}
		\rev	\int_a^1 \left\vert \frac{1}{Z_1(u+iv)} - G_\omega(u+iv) \right\vert dv \revv &\leq \int_a^1 \frac{ \left \vert \omega_1(u+iv) \right \vert +  \vert r_2(u+iv) \vert}{2\vert S(u+iv) \vert \vert Z_1(u+iv) \vert}dv \leq
			\frac{\rev 530 \revv C_{0, \mu}}{\sqrt{n}}
		\end{align*}
		for $u \in I_\varepsilon$ with  $\vert u \vert \rev < 1$. Hence, we get
		\begin{align*} 
			\sup_{u \in I_\varepsilon} 	\int_a^1 \left\vert \rev \frac{1}{Z_1(u+iv)} \revv- G_\omega(u+iv) \right \vert dv \leq \frac{\rev 1000 \revv C_{0, \mu}}{\sqrt{n}}.
		\end{align*}
		In analogy to \eqref{integral 2}, we deduce
		\begin{align*}
			\sup_{u \in I_\varepsilon}\int_{a}^1 \left \vert 	G_\theta(u+iv) - \rev \frac{1}{Z_1(u+iv)} \revv \right \vert dv  \leq \rev 80000 \revv \beta_3(\mu) \frac{\log n}{n}.
		\end{align*}
		
		Similarly, we can estimate the integrals in \eqref{integrals wrt re}. We have 
		\begin{align*} 
			\int_{-\infty}^{\infty} \left\vert \rev \frac{1}{Z_1(u+i)} \revv - G_\omega(u+i) \right\vert du \leq \frac{\rev 188 \revv C_{0, \mu}}{\sqrt{n}}, \qquad
			\int_{-\infty}^{\infty}  \left\vert G_\theta(u+i) - \rev \frac{1}{Z_1(u+i)} \revv \right\vert du \leq \rev 4640 \revv \beta_3(\mu)\frac{\log n}{n}. 
		\end{align*}
		 Finally, together with Proposition \ref{Bai - Götze's Version}, we obtain 
		\begin{align*} \rev
			\!\!\!\Delta(\mu_\theta, \omega)  \leq C_\gamma \left( \frac{1188 C_{0, \mu}}{\sqrt{n}} + 84640\beta_3(\mu) \frac{\log n}{n} + \frac{600C_{0, \mu}}{\sqrt{n}} + \left(100C_{0, \mu} \sqrt{\frac{\log n}{n}}\right)^{\nicefrac{3}{2}} \right)	 \leq \rev \frac{C_1\beta_3(\mu)^2m_4(\mu)}{\sqrt{n}}
		\end{align*}
		for \rev a numerical constant \rev $C_1 >0$, \rev $C_\gamma$ as in Proposition \ref{Bai - Götze's Version}, and all $n \geq n_{6}$. \revv We end the proof by observing that 
		\begin{align*}
			\mathbb{E}_\theta (\Delta(\mu_\theta, \omega))\leq \int_{\mathcal{F}} \Delta(\mu_\theta, \omega) d\sigma_{n-1}+ \rev \sigma_{n-1}\left(\mathcal{F}^{\mathsf{c}}\right)\revv \leq \rev \frac{C_1\beta_3(\mu)^2m_4(\mu)}{\sqrt{n}} +\frac{2}{\sqrt{n}}
		\end{align*}
		 holds for all $n \geq n_\mu := \lceil \rev n_6 \rceil.$

		\section{\texorpdfstring{Rate of convergence with respect to $\Delta_\varepsilon$: \revtwo Proof of Theorem \ref{main Kolmogorov epsilon unbounded}}{Proof of Theorem 1.3}} \label {Section: Delta epsilon}
		
 \revtwo The aim of this section is to prove \cref{main Kolmogorov epsilon unbounded} providing an appropriate free analog of the Klartag-Sodin result. In contrast to the previous section, we measure the rate of convergence of the distribution of a weighted sum to Wigner's semicircle law with respect to
 the pseudometric $\Delta_\varepsilon$ defined in \eqref{def Kolmogorov epsilon} and assume that the underlying probability measure has finite sixth moment. \revvtwo

\subsection*{Proof of \texorpdfstring{\cref{main Kolmogorov epsilon unbounded}}{Theorem 1.3}}
The proof combines a few new arguments (see \revtwo Steps 2 and 3 \revvtwo below) arising from the replacement of the Kolmogorov distance by $\Delta_\varepsilon$ with some ideas used in the proof of \cref{main theorem}. 

\revtwo
Let us briefly sketch the main steps: We start with the definition of the set $\mathcal{F}$. Then, in the second step, we bound $\Delta_\varepsilon$ in terms of three integrals by using the Stieltjes-Perron inversion formula and Cauchy's integral theorem. In the third step, we prove that one of these integrals can be handled by means of Theorem \ref{Thm 2.7 Bao, Erdos, Schnelli}, whereas the other two integrals can be controlled with the help of the cubic functional equation for the subordination functions as shown in the fourth and fifth step. The final conclusion of the proof is carried out in the sixth step. 
\revvtwo

\subsubsection*{Step 1: Construction of \texorpdfstring{$\mathcal{F}$}{F}}
		Fix $\rho \in (0,1)$, $n \geq 4$. For $A_\rho$ as before and \revtwo $B_{9, \rho} := \nolinebreak B_9(\log \nicefrac{3}{\rho})^{\nicefrac{9}{2}}$ with $B_9$ as in \cref{5.3.3 Bobkov}, \revvtwo we set
		\begin{align*}
			\mathcal{F}_1 &:= \Bigg\{ \theta \in \mathbb{S}^{n-1}:   \max_{i \in [n]} \vert \theta_i \vert \leq A_\rho\sqrt{\frac{\log n}{n}} \Bigg\}, \qquad
			\mathcal{F}_2 := \left\{ \theta \in \mathbb{S}^{n-1}: \revtwo \sum_{i=1}^n \vert \theta_i \vert^{9} \leq \frac{{B}_{9, \rho}}{n^{\nicefrac{7}{2}}} \revvtwo \right\},\\
			\mathcal{F}_3 &:=  \left \{ \theta \in \mathbb{S}^{n-1}: \left \vert \sum_{i=1}^n\theta_i^3 \right \vert \leq \frac{\left(23\log\frac{6}{\rho}\right)^{\nicefrac{3}{2}}}{n}   \right\}, \qquad \mathcal{F} := \mathcal{F}_1 \cap \mathcal{F}_2 \cap \mathcal{F}_3.
		\end{align*} \revvtwo
	Using the results presented in \cref{section: concentration inequalities}, we obtain  $\sigma_{n-1}(\mathcal{F}^{\mathsf{c}}) \leq \rho$. As before, choose $\theta \in \mathcal{F}$ and without loss of generality assume that $\smash{\theta_1^2 = \min_{i \in [n]} \theta_i^2}$ holds.  
		
		\revvtwo 
	\subsubsection*{Step 2: Establishing an upper bound for \texorpdfstring{$\Delta_\varepsilon$}{F}} 
	\rev Fix $\varepsilon \in (0,1)$ and define $I_\varepsilon' := [-2+\varepsilon, 2-\varepsilon]$. \revv
	Observe that $\omega$ and $\mu_\theta$ both do not have any atoms in $I_\varepsilon'$.  For $\omega$ this is clear, whereas for $\mu_\theta$ this follows from the fact that $\mu_\theta$ is absolutely continuous in $I_\varepsilon'$ for $n \geq n_1$ for some suitably chosen threshold $n_1 \geq 4$; compare to \cite[Corollary 2.4]{Bercovici2022}. Note that the corollary just mentioned requires that the family $\{\mu_1, \dots, \mu_n \}$ is an infinitesimal array of probability measures. Since this can be verified easily by using $\theta \in \mathcal{F}_1$, we omit the proof. Together with the Stieltjes-Perron inversion formula, we can write 
	\begin{align*}
		\Delta_\varepsilon(\mu_\theta, \omega) =	\sup_{x \in I_\varepsilon'} \big\vert \mu_\theta((-2+\varepsilon, x])  - \omega((-2+ \varepsilon, x]) \big\vert = 	\sup_{x \in I_\varepsilon'} \left \vert \lim_{\delta \downarrow 0}  \frac{1}{\pi} \int_{-2+\varepsilon}^x \Im \left(G_\theta(u+i\delta) - G_\omega(u+i\delta)\right) du   \right \vert
	\end{align*}
	for $n \geq n_1.$
	Cauchy's integral theorem yields
	\begin{align*}
		\int_{-2+\varepsilon}^x \Im &\left(G_\theta(u+i\delta) - G_\omega(u+i\delta) \right) du  = 	\int_{-2+\varepsilon}^x \Im \left(G_\theta(u+i) - G_\omega(u+i) \right) du   \\ &+	\rev i \revv \int_{\delta}^1 \Im \left(G_\theta(-2+\varepsilon+iv) - G_\omega(-2+\varepsilon+iv) \right) dv   - \rev i \revv	\int_{\delta}^1 \Im \left(G_\theta(x+iv) - G_\omega(x+iv) \right) dv 
	\end{align*}
	for all $\delta \in (0,1)$ and $x \in I_\varepsilon'$. Note that we can apply the integral theorem since the Cauchy transforms $G_\theta$ and $G_\omega$ are holomorphic in $\mathbb{C}^+$. Finally, for any choice of $a_n \in (0,1)$ and all $n \geq n_1$, the last two calculations imply 
	\begin{align} \label{bound Delta_epsilon}
		\Delta_\varepsilon(\mu_\theta, \omega)  \leq \frac{1}{\pi} \int_{-2+\varepsilon}^{2-\varepsilon} \left \vert  G_\theta(u+i) - G_\omega(u+i)\right \vert  du  + \sup_{x \in I_\varepsilon'}  \frac{2}{\pi} \int_{a_n}^1 \left \vert  G_\theta(x+iv) - G_\omega(x+iv)\right \vert  dv + I_n, 
	\end{align} \revv
	where $I_n$ is given by 
	\begin{align} \label{def I_n}
		I_n := \sup_{x \in I_\varepsilon'} \frac{2}{\pi} \int_{0}^{a_n}  \left \vert  G_\theta(x+iv) - G_\omega(x+iv)\right \vert  dv.
	\end{align}
\rev  Let us remark that in general the term $I_n$ is problematic since Cauchy transforms near the real axis are often not controllable without further information. However, in the setting at hand, $I_n$ is well-defined, which can be seen by means of Theorem \ref{Thm 2.7 Bao, Erdos, Schnelli} as shown in the next step, or  alternatively with the help of the results in \cite{Bercovici2022}.


	
\subsubsection*{Step 3: Bounding the term \texorpdfstring{$I_n$ }{In}} 
The goal of this section is to estimate $I_n$ given in \eqref{def I_n} for the choice $\revtwo a_n := (n \log n)^{-\nicefrac{1}{2}}$.	The general procedure for this is as follows: 
We will divide the weighted sum $S_\theta$ into the sums 
$\smash{\sum_{i=1}^{M_n} \theta_i X_i}$ and $\smash{\sum_{i=M_n+1}^{n} \theta_i X_i}$
for \revtwo some $1 \leq M_n < n$. \revvtwo With some effort, one can prove that the analytic distribution of both partial sums is close to Wigner's semicircle law with variance $\nicefrac{1}{2}$ denoted by $\omega_{\nicefrac{1}{2}}$. Combining this with Theorem \ref{Thm 2.7 Bao, Erdos, Schnelli}, we will be able to bound $I_n$ appropriately. Let us note that the restriction to the interval $I_\varepsilon'$ for fixed $\varepsilon>0$ in the definition of $I_n$ is essential for the above argument; otherwise the application of Theorem \ref{Thm 2.7 Bao, Erdos, Schnelli} would not be possible. \revv

\rev Before we start with the explicit partition of $S_\theta$, let us do some preparatory work. \revv
Define
		\begin{align*} 
			M_n := \max \left\{ l \in [n]: \theta_1^2 + \theta_2^2 + \dots + \theta_l^2 \leq \frac{1}{2}\right\}.
		\end{align*}
As before, let $\lceil \cdot \rceil$ denote the ceiling function, whereas the floor function is denoted by $\lfloor \cdot \rfloor$. We have
		\begin{align} \label{bounds on M_n}
			N_{n,1} := \left \lfloor \frac{n}{2A_\rho^2\log n} \right \rfloor \leq M_n  < N_{n,2} := \left \lceil n - \frac{n}{4A_\rho^2\log n} \right \rceil,
		\end{align}
	which can be seen as follows: Together with  $\theta \in \mathcal{F} \subset \mathcal{F}_1$, one deduces
		\begin{align*}
			\sum_{i=1}^{N_{n,1}} \theta_i^2 \leq N_{n,1} \frac{A_\rho^2 \log n}{n} \leq \frac{1}{2}
		\end{align*}
		proving the left-hand side of \eqref{bounds on M_n}. The other inequality can be verified in the same manner by noting that it is equivalent to $	\smash{\sum_{i = N_{n,2}+1}^{n} \theta_i^2 < \nicefrac{1}{2}}$. 
		\rev From now on, assume that $n \geq n_2:=\max\{n_1, 64e^2A_\rho^4\}$ holds. Then, we get
		\begin{align} \label{lower bounds N_n,i}
			N_{n,1} \geq \frac{n}{4A_\rho^2 \log n}, \qquad N_{n,2} \leq n - \frac{n}{8A_\rho^2 \log n}, \qquad   \frac{A_\rho^2 \log n}{n} < \frac{1}{4}. 
		\end{align} 
		The definition of $M_n$ yields 
		\begin{align}  \label{concentration partial sum of weights 1/2}
			\begin{split}
		0 &\leq \frac{1}{2} - \sum_{i=1}^{M_n} \theta_i^2 < \sum_{i=1}^{M_n+1} \theta_i^2 - \sum_{i=1}^{M_n} \theta_i^2 = \theta^2_{M_n+1} \leq 
				\frac{A_\rho^2 \log n}{n} < \rev \frac{1}{4}, \revv \\
			0 &\leq  \sum_{i=M_n+1}^n \!\! \theta_i^2 - \frac{1}{2}   =1 - \sum_{i=1}^{M_n} \theta_i^2 - \frac{1}{2}  = 	 \frac{1}{2} - \sum_{i=1}^{M_n} \theta_i^2 \leq	\frac{A_\rho^2 \log n}{n} <  \frac{1}{4}.
			\end{split}	
	\end{align}
	In particular, letting
		\begin{align*}
			c_{n,1} := \sqrt{\sum_{i=1}^{M_n} \theta_i^2} , \qquad c_{n,2} := \sqrt{\sum_{i=M_n+1}^{n} \theta_i^2},
		\end{align*}
		we obtain $c_{n,1}, c_{n,2} > \nicefrac{1}{2}$. Together with the fact that $x \mapsto ((\log x)x^{-1})^{\nicefrac{1}{2}}$ is decreasing in $(e, \infty)$ as well as with $\theta \in \mathcal{F}$,
		we deduce
		\begin{align} \label{improvement inner 2 estimates for theta^i max}
			\begin{split}	\max_{i \in \{ 1, \dots, M_n\}} \left \vert \frac{\theta_i}{c_{n,1}} \right \vert\leq 2\max_{i \in [n]} \vert \theta_i \vert &\leq 2A_\rho \sqrt{\frac{\log n}{n}} \leq 2A_\rho \sqrt{\frac{\log M_n}{M_n}}, \\
				\max_{i \in \{ M_n +1, \dots, n\}} \left \vert  \frac{\theta_i }{c_{n,2}} \right \vert &\leq 2A_\rho \sqrt{\frac{\log (n- M_n)}{n-M_n}}.
			\end{split}		
		\end{align}
\revtwo Similarly, one obtains
		\begin{align}  \label{improvement inner 2 estimates for theta^i l_7 norm}
			 \sum_{i=1}^{M_n} \left \vert \frac{\theta_i }{c_{n,1}} \right\vert^7 \leq 2^7 \sum_{i=1}^{n} \vert \theta_i \vert^7 \leq \frac{2^7 \revtwo B_{9, \rho}}{n^{\nicefrac{5}{2}}} \leq  \frac{2^7B_{9, \rho}}{M_n^{\nicefrac{5}{2}}}, \qquad \qquad
				 \sum_{i=M_n+1}^{n} \left \vert \frac{\theta_i }{c_{n,2}} \right\vert^7  \leq \frac{2^7B_{9,\rho}}{(n- M_n)^{\nicefrac{5}{2}}}.
		\end{align} \revvtwo
		
Now, let us divide $S_\theta$ as described before. On the distributional level, we define
		\begin{align*}
			\theta^1 &:=	\left( \theta_1, \dots, \theta_{M_n} \right), \qquad \,\,\,\,\,\,\, \theta^2 := \left( \theta_{M_n+1}, \dots, \theta_n \right), \\	\mu_{\theta^1} &:= \mu_1 \boxplus \dots \boxplus \mu_{M_n}, \qquad \mu_{\theta^2} := \mu_{M_n+1} \ \boxplus \dots \boxplus \mu_n.
		\end{align*}
		Observe that $\mu_{\theta^1}$ is the distribution of the sum $\sum_{i=1}^{M_n} \theta_i X_i$ \revtwo with variance $c_{n,1}^2$, \revvtwo whereas $\mu_{\theta^2}$ is the distribution of $\sum_{i=M_n+1}^{n} \theta_i X_i$ with variance $c_{n,2}^2$. Moreover, the identity $\mu_\theta = \mu_{\theta^1} \boxplus \mu_{\theta^2}$ is valid. 

		 We continue by showing that $\mu_{\theta^{1}}$  and $\mu_{\theta^2}$ are close to Wigner's semicircle law with variance $\nicefrac{1}{2}$ measured with respect to the L\'{e}vy distance \revtwo $d_L$. \revvtwo For this, let us analyze the Kolmogorov distance between $\mu_{\theta^1}$ and $\smash{\omega_{c_{n,1}^2}}$ first.  Since this distance is invariant under dilation, it suffices to study $ \smash{\Delta(D_{\nicefrac{1}{c_{n,1}}}\mu_{\theta^1}, \omega)}$. Considering the fact that  $\smash{ \nicefrac{\theta^1}{c_{n,1}}}$ is a vector in $\mathbb{S}^{M_n-1}$, one \revtwo is \revvtwo tempted to apply \cref{main theorem} to $\smash{D_{\nicefrac{1}{c_{n,1}}} \mu_{\theta^1}}$ with any $n$ appearing in that theorem replaced by $M_n$ and \revtwo with \revvtwo $\rho$ as fixed \revtwo in the first step of this proof. \revvtwo Unfortunately, as  $\nicefrac{\theta^1}{c_{n,1}}$ might not be contained in the subset $\smash{\mathcal{F}_{M_n}  \subset \mathbb{S}^{M_n-1}}$ constructed in \cref{main theorem}, this is not possible. However, a direct comparison of the estimates obtained in \eqref{improvement inner 2 estimates for theta^i max} and \eqref{improvement inner 2 estimates for theta^i l_7 norm} with the \revtwo explicit \revvtwo definition of $\mathcal{F}_{M_n}$ given in \cref{part 1 - construction of F} shows that $\nicefrac{\theta^1}{c_{n,1}}$ is not far from being in $\mathcal{F}_{M_n}$ -- \revtwo at least if we focus on the decay rates and not on the constants. \revvtwo \revtwo It is easy to see that if we replace $A_\rho$ and $B_{7, \rho}$ appearing in the definition of $\mathcal{F}_{M_n}$ by $2A_\rho$ and $2^7B_{9, \rho}$, we can repeat the calculations in the proof of \cref{main theorem} (up to the ones in \cref{part 1 - construction of F}) leading to different constants, but unchanged decay rates. \revvtwo In view of this observation, we get 
		\begin{align*}
			\Delta\left(D_{\nicefrac{1}{c_{n,1}}}\mu_{\theta^1}, \omega\right) \leq \frac{D_{1, \rho, \mu}}{\sqrt{M_n}}
		\end{align*}
for sufficiently large $n \geq n_3$ with $n_3 \in \mathbb{N}$ depending on $\rho$ and $\mu$ and a constant $D_{1, \rho, \mu}>0$ depending on $\rho$, $\beta_3(\mu)$, and $m_4(\mu)$. Note that both $n_3$  and $D_{1, \rho, \mu}$ might be larger than the corresponding analogs given in Theorem \ref{main theorem}. 
		Similarly, due to $\nicefrac{\theta^2}{c_{n,2}} \in \mathbb{S}^{n-M_n-1}$, we obtain
		\begin{align*} 
			\Delta\left(D_{\nicefrac{1}{c_{n,2}}}\mu_{\theta^2}, \omega\right) \leq \frac{D_{2, \rho, \mu}}{\sqrt{n- M_n}}
		\end{align*}
		for all $n \geq n_4$ for some suitably chosen $n_4 \in \mathbb{N}$ and a constant $D_{2, \rho, \mu}>0.$
	Together with Lemma \ref{Diliatation Levy} as well as \eqref{bounds on M_n}, \eqref{lower bounds N_n,i}, and \eqref{concentration partial sum of weights 1/2}, we arrive at 
		\begin{align*}
			d_L \left(\mu_{\theta^1}, \omega_{\nicefrac{1}{2}}\right) &\leq \Delta \left(\mu_{\theta^1}, \omega_{c_{n,1}^2}\right) + d_L \left(\omega_{\nicefrac{1}{2}}, \omega_{c_{n,1}^2}\right) \leq    \revtwo \frac{D_{1, \rho, \mu}}{\sqrt{M_n}} \revvtwo +  2\left \vert \sqrt{\frac{1}{2}} - \sqrt{\sum_{i=1}^{M_n} \theta_i^2} \, \right \vert \\
			& \leq   2A_\rho D_{1, \rho, \mu} \sqrt{\frac{\log n}{n}} +  2 \sqrt{\frac{1}{2} - \sum_{i=1}^{M_n} \theta_i^2}  \leq  2A_\rho(D_{1, \rho, \mu} + 1)\sqrt{	\frac{\log n}{n}}
		\end{align*}
	and
		\begin{align*}
			d_L \left(\mu_{\theta^2}, \omega_{\nicefrac{1}{2}}\right) \leq  \revtwo\frac{D_{2, \rho, \mu}}{\sqrt{n- M_n}} \revvtwo +  2\left \vert \sqrt{\frac{1}{2}} - \sqrt{\sum_{i=M_n+1}^{n} \theta_i^2} \, \right \vert  \leq  3A_\rho(D_{2, \rho, \mu} + 1)\sqrt{	\frac{\log n}{n}}
		\end{align*}
		for all $n \geq \max\{n_2, n_3, n_4\}.$ \revv

		According to Theorem \ref{Thm 2.7 Bao, Erdos, Schnelli},
		applied to $\rev I_\varepsilon'$ and $\eta = 4$, we find constants $b_\varepsilon>0$ and $Z_\varepsilon < \infty$ -- both dependent on $\varepsilon$ but independent of $n$, \revtwo $\rho$, and $\mu$ \revvtwo -- such that for all probability measures $\mu_A, \mu_B$ on $\mathbb{R}$ satisfying the inequality $d_L(\omega_{\nicefrac{1}{2}},\mu_A) + d_L(\omega_{\nicefrac{1}{2}},\mu_B) \leq b _\varepsilon$,
		the estimate 
		\begin{align*}
			\max_{z \in S_{\rev I_\varepsilon'}(0,4)} \left \vert G_\omega(z) - G_{\mu_A \boxplus \mu_B}(z)  \right \vert \leq Z_\varepsilon \left(d_L(\omega_{\nicefrac{1}{2}},\mu_A) + d_L(\omega_{\nicefrac{1}{2}},\mu_B) \right)
		\end{align*}
		holds. By increasing $n$ if necessary, say $\rev n \geq n_5 \geq \max\{n_2, n_3, n_4\}$, we obtain 
		\begin{align*}  
			d_L \left(\mu_{\theta^1}, \omega_{\nicefrac{1}{2}}\right) + d_L \left(\mu_{\theta^2}, \omega_{\nicefrac{1}{2}}\right) \leq  \rev A_\rho(2D_{1, \rho, \mu} + 3D_{2,\rho, \mu} +5) \sqrt{\frac{\log n}{n}}\revv \leq b_\varepsilon.
		\end{align*}
	Hence, it follows
		\begin{align} \label{upper bound G_theta Delta tilde unbounded}
			\max_{z \in S_{\rev I_\varepsilon'}(0,4)}\left \vert G_\omega(z) - G_{\theta}(z)  \right \vert \leq \rev Z_\varepsilon  A_\rho(2D_{1, \rho, \mu} + 3D_{2,\rho, \mu} +5) \sqrt{\frac{\log n}{n}} \revtwo\leq Z_\varepsilon b_\varepsilon, \revvtwo \revv \qquad n \geq \rev n_5.
		\end{align}
		Finally, with $\revtwo a_n := (n\log n)^{-\nicefrac{1}{ 2}} < 1$ as already indicated at the beginning of this \revtwo step, \revvtwo we get
		\begin{align} \label{estimate error term I_n}
			\vert I_n \vert \leq \rev  \frac{2Z_\varepsilon  A_\rho(2D_{1, \rho, \mu} + 3D_{2,\rho, \mu} +5)}{\pi}  \sqrt{\frac{\log n}{n}} a_n  \leq \frac{2Z_\varepsilon  A_\rho(2D_{1, \rho, \mu} + 3D_{2,\rho, \mu} +5)}{\pi}  \revtwo \frac{1}{n}
		\end{align}
		for $n \geq \rev n_5$.
		
\subsubsection*{Step 4: Analysis of the cubic functional equation for \texorpdfstring{$Z_1$}{Z_1}} 

It remains to bound the integrals in \eqref{bound Delta_epsilon}. Remarkably and in contrast to the proof of Theorem \ref{main theorem}, both integrals can be controlled with the help of the cubic functional equation for the subordination function $Z_1$. \revtwo In order to analyze this functional equation, we mainly argue as in \cref{part 2 - 3rd order functional eq,part 3 - roots 3rd oder}, but include the higher order expansions in \eqref{subordination times cauchy expansion more moments}.  

Combining \eqref{functional eq Z_1 } with \eqref{def P}, we have 
\begin{align*}
	P(z, Z_1(z)) = Z_1^3(z) - zZ_1^2(z) + (1-I_3)Z_1(z) - r(z) = 0
\end{align*}
for all $z \in \mathbb{C}^+$. \revtwo Due to $\smash{\theta_1^2 = \min_{i \in [n]} \theta_i^2}$, one obtains
\begin{align} \label{improved estimate I_3}
	\vert I_3 \vert = \theta_1^2 = \theta_1^2 \sum_{i=1}^{n} \theta_i^2 \leq \sum_{i=1}^{n} \theta_i^4 \leq \frac{B_{9, \rho}}{n}.
\end{align}

We continue by estimating the modulus of the coefficient $r(z)$ in a suitably chosen subset of $\mathbb{C}^+$. Define 
\begin{align*} 
\revtwo	B_1 := \left\{ z \in \mathbb{C}^+: \vert \Re z \vert \leq 2- \varepsilon,  3 \geq \Im z \geq  a_n = \frac{1}{\sqrt{n \log n}}\right\} \subset S_{I_\varepsilon'}(0,4). 
\end{align*}
\revtwo 
Recall that we set $r_{n,i}(z) := G_i(Z_i(z))Z_i(z) - 1 = G_\theta(z)Z_i(z) -1$ for all $z \in \mathbb{C}^+$ and $i \in [n]$. Repeating the arguments leading to \eqref{bound $r_n,i$ mit Cauchy und integration by parts} and using \eqref{upper bound G_theta Delta tilde unbounded}, we get
\begin{align} \label{bound $r_{n,i}$ in Kolmogorov epsilon}
	\begin{split}
	\vert r_{n,i} (z) \vert\leq \frac{\vert \theta_i \vert}{\sqrt{\Im z}} \sqrt{\vert G_\theta(z) \vert} \leq A_\rho \sqrt{\frac{\log n}{n}} \frac{1}{\sqrt{\Im z}} \sqrt{1 + Z_\varepsilon b_\varepsilon} \leq A_\rho\sqrt{1 + Z_\varepsilon b_\varepsilon} \frac{(\log n)^{\nicefrac{3}{4}}}{n^{\nicefrac{1}{4}}} < \frac{1}{10}
		\end{split}
\end{align}
for $z \in B_1$, $i \in [n]$, and all sufficiently large $n$, say $n \geq n_6 \geq n_5$. This yields
\begin{align} \label{lower bound Z_i Delta varepsilon} 
	\left \vert \frac{1}{Z_i(z)} \right \vert = \left \vert  \frac{G_\theta(z)}{1 + r_{n,i}(z)}\right \vert \leq \frac{10}{9} \vert G_\theta(z) \vert \leq \frac{10}{9} (1+ Z_\varepsilon b_\varepsilon), \qquad  z \in B_1, i \in [n], n \geq n_6. 
\end{align}
For later reference, set $C_\varepsilon := \frac{10}{9} (1+ Z_\varepsilon b_\varepsilon) >1.$ Moreover, arguing as in \cref{part 2 - 3rd order functional eq}, it follows
\begin{align}  \label{estimate Z_i/Z_1 - 1, Z_i^2/Z_1^2 -1 by constant new}
	\left \vert \frac{Z_1(z)}{Z_i(z)} - 1 \right \vert \leq \frac{10}{9} \left( \vert r_{n,1}(z) \vert + \vert r_{n,i}(z) \vert \right) < \frac{2}{9}
\end{align}
for  $z, i$, and $n$ as above. Combining \eqref{subordination times cauchy expansion} with the inequality $\vert \theta_1 \vert^k \leq \vert \theta_i \vert^k$ holding for all $i \geq 2 $ and $k \in \mathbb{N}$, we obtain
\begin{align} \label{estimate Z_1/Z_i - 1 new}
	\begin{split}
		&	\left \vert  \frac{Z_1(z)}{Z_i(z)} -1  \right \vert  \\ 
		&	\leq \frac{10}{9} \left( \frac{ \theta_i^2}{\vert Z_i^2(z) \vert} + \frac{\vert \theta_i \vert^3 \beta_3(\mu)}{\vert Z_i^3(z) \vert} + \frac{\theta_i^4m_4(\mu)}{\vert Z_i^3(z)\vert \Im z} +\frac{ \theta_1^2}{\vert Z_1^2(z) \vert} + \frac{\vert \theta_1 \vert^3 \beta_3(\mu)}{\vert Z_1^3(z) \vert} + \frac{\theta_1^4m_4(\mu)}{\vert Z_1^3(z)\vert \Im z}\right) \\ &\leq \frac{10}{9} \left(\frac{\theta_i^2}{\vert Z_1^2(z)\vert}\left(  \left \vert \frac{Z_1^2(z)}{Z_i^2(z)} \right \vert \!+1 \! \right) +  \frac{ \vert \theta_i \vert^3\beta_3(\mu)}{\vert Z_1^3(z) \vert} \left( \left \vert \frac{Z_1^3(z)}{Z_i^3(z)} \right \vert \!+\!1 \right) +\frac{ \theta_i^4 m_4(\mu)}{\vert Z_1^3(z) \vert \Im z} \left( \left \vert \frac{Z_1^3(z)}{Z_i^3(z)} \right \vert \!+ \! 1 \right) \right)\\
		&\leq 4 \left(\frac{\theta_i^2}{\vert Z_1^2(z)\vert} + \frac{ \vert \theta_i \vert^3\beta_3(\mu)}{\vert Z_1^3(z) \vert} + \frac{ \theta_i^4 m_4(\mu)}{\vert Z_1^3(z) \vert \Im z} \right)
	\end{split}
\end{align}
as well as 
\begin{align} \label{estimate Z_1^2/Z_i^2 - 1 new}
	\begin{split}
		\left \vert \frac{Z_1^2(z)}{Z_i^2(z)} - 1 \right \vert&  \leq \frac{100}{81} \left( 2\vert r_{n,1}(z) \vert + 2\vert r_{n,i}(z) \vert + \left \vert r_{n,1}^2(z) \right\vert + \left \vert r_{n,i}^2(z) \right\vert\right) \\ 
		& \leq 8 \left(\frac{\theta_i^2}{\vert Z_1^2(z) \vert} +  \frac{\vert \theta_i \vert^3\beta_3(\mu)}{\vert Z_1^2(z) \vert \Im z}\right) + 4 \left( \frac{\theta_i^4}{\vert Z_1^4(z) \vert} + \frac{\theta_i^6\beta_3(\mu)^2}{ \vert Z_1^4(z) \vert (\Im z)^2} \right)
	\end{split}
\end{align}
for all $z \in B_1$, $i \in [n]$, and $n \geq n_6$. Let us remark that the last two inequalities will serve as replacements for \eqref{estimate $Z_1/Z_i -1$} and \eqref{estimate $Z_1^2/Z_i^2 -1$}. 

Now, we can bound $I_1(z), I_2(z), I_4(z)$, and $I_5$ appearing in $r(z)$ for any $z \in B_1.$ We start with $I_1(z)$. In analogy to \eqref{r_{n,i}(z)Z_i(z)  - ...}, we can write
\begin{align}\label{second estimate I_1}
	r_{n,i}(z)Z_i(z) - \frac{\theta_i^2(1 + r_{n,i}(z))}{Z_i(z)} - \frac{\theta_i^3 m_3(\mu)(1 + r_{n,i}(z))}{Z_i^2(z)}  = \frac{s^*_i(z)}{Z_i^2(z)}, \qquad z \in \mathbb{C}^+, i \in [n]
\end{align}
with 
\begin{align} \label{s_i^*}
	\begin{split}
		s_i^*(z) & :=    \frac{\theta_i^4 m_4(\mu) - \theta_i^4}{Z_i(z)} + \frac{\theta_i^5 m_5(\mu) - 2\theta_i^5m_3(\mu)}{Z_i^2(z)}  - \frac{\theta_i^6m_3(\mu)^2}{Z_i^3(z)}  + \frac{1}{Z_i^2(z)} \int_{\mathbb{R}} \frac{u^6}{Z_i(z) - u} \mu_i(du)  \\ & \,\,\,\, \,\,\,\,\,\,\,\,\, - \left( \frac{\theta_i^2}{Z_i^2(z)} + \frac{\theta_i^3m_3(\mu)}{Z_i^3(z)} \right) \int_{\mathbb{R}} \frac{u^4}{Z_i(z) - u} \mu_i(du).
	\end{split}
\end{align}
Note that the derivation of $s^*_i(z)$ differs from the one of $s_i(z)$ defined in \eqref{def s_i(z)} as follows: Previously, we expanded the term $r_{n,i}(z)$ in \eqref{second estimate I_1} up to the fourth order with the help of \eqref{subordination times cauchy expansion}. Now, we expand the first $r_{n,i}(z)$ in \eqref{second estimate I_1} up to the sixth order (by making use of \eqref{subordination times cauchy expansion more moments}), whereas the remaining two terms $r_{n,i}(z)$ are only expanded up to the fourth order as before. Arguing as done for \eqref{estimate I_1}, i.e.\@ with the help of \eqref{bound $r_{n,i}$ in Kolmogorov epsilon}, \eqref{first estimate I_1}, \eqref{estimate Z_i/Z_1 - 1, Z_i^2/Z_1^2 -1 by constant new}, and \eqref{second estimate I_1}, we can prove that $\vert I_1(z) \vert \leq \nicefrac{20}{9} \sum_{i=2}^n \vert s^*_i(z) \vert$
holds for all $z \in B_1$, $n \geq n_6.$ 
Combining \eqref{lower bound Z_i Delta varepsilon} with $\theta \in \mathcal{F}$, we get
\begin{align*}
	\sum_{i=1}^n \frac{\vert \theta_i \vert^k }{\vert Z_i^l(z) \vert} \leq \frac{C_\varepsilon^l B_{9, \rho}}{n^{\frac{k-2}{2}}} \leq \frac{C_\varepsilon^4B_{9,\rho}}{n}
\end{align*}
for $z \in B_1$, $n \geq n_6$, and $k,l \in \mathbb{N}$ with $1 \leq l \leq 4  \leq k \leq 9$. Similarly, we deduce
\begin{align*}
	&\sum_{i=2}^n \left \vert \frac{1}{Z_i^2(z)}\int_{\mathbb{R}} \frac{u^6}{Z_i(z) - u} \mu_i(du) \right \vert \leq  \frac{C_\varepsilon^2 B_{9, \rho}m_6(\mu)}{n^2} \frac{1}{a_n} = C_\varepsilon^2 B_{9, \rho} m_6(\mu) \frac{\sqrt{\log n}}{n^{\nicefrac{3}{2}}}, \\
	&\sum_{i=2}^n\left \vert \frac{\theta_i^2}{Z_i^2(z)} \int_{\mathbb{R}} \frac{u^4}{Z_i(z) - u} \mu_i(du) \right \vert \leq C_\varepsilon^2 B_{9, \rho} m_4(\mu) \frac{\sqrt{\log n}}{n^{\nicefrac{3}{2}}},\\
	&\sum_{i=2}^n \left \vert \frac{\theta_i^3m_3(\mu)}{Z_i^3(z)} \int_{\mathbb{R}} \frac{u^4}{Z_i(z) - u} \mu_i(du) \right \vert \leq C_\varepsilon^3 B_{9, \rho}\beta_3(\mu)m_4(\mu)  \frac{\sqrt{\log n}}{n^2}
\end{align*}
for $z$ and $n$ as above. 
Hence, we obtain
\begin{align*}
	\vert I_1(z) \vert \leq  \frac{20}{9} \sum_{i=2}^{n} \vert s_i^*(z) \vert \leq \frac{C_{1, \varepsilon} B_{9, \rho}\beta_3(\mu)^2m_4(\mu)\beta_5(\mu)m_6(\mu)}{n}, \qquad z \in B_1, n \geq n_6
\end{align*}
with $C_{1, \varepsilon}>0$ being some appropriately chosen constant depending on $\varepsilon$ via $Z_\varepsilon$ and $b_\varepsilon$. 
As can be seen with the help of \eqref{estimate Z_1/Z_i - 1 new}, the term $\vert I_2(z) \vert$ admits the estimate
\begin{align} \label{estimate I_2 higher exp}
	\begin{split}
		\vert I_2(z)\vert & \leq  \left \vert Z_1^2(z) \right\vert \sum_{i=2}^{n} \frac{\theta_i^2}{\vert Z_1(z) \vert}\left \vert  \frac{Z_1(z)}{Z_i(z)} -1 \right \vert  \leq 4 \sum_{i=1}^{n}  \left(\frac{\theta_i^4}{\vert Z_1(z)\vert} + \frac{ \vert \theta_i \vert^5\beta_3(\mu)}{\vert Z_1^2(z) \vert} + \frac{ \theta_i^6 m_4(\mu)}{\vert Z_1^2(z) \vert \Im z} \right) \\ &\leq   \frac{4C_\varepsilon B_{9, \rho}}{n} + \frac{4C_\varepsilon^2B_{9, \rho}\beta_3(\mu)}{n^{\nicefrac{3}{2}}} + 4C_\varepsilon^2B_{9,\rho}m_4(\mu)  \frac{\sqrt{\log n}}{n^{\nicefrac{3}{2}}} \leq \frac{C_{2, \varepsilon} B_{9, \rho}\beta_3(\mu)m_4(\mu)}{n}
	\end{split}
\end{align} 
for all $z \in B_1, n \geq n_6$, and a constant $C_{2,\varepsilon}>0.$
Moreover, by \eqref{estimate Z_1^2/Z_i^2 - 1 new}, it follows 
\begin{align}  \label{estimate I_4 higher exp}
	\begin{split}
		\vert I_4(z) \vert & \leq  \vert m_3(\mu) \vert \sum_{i=2}^n \left \vert \theta_i \right \vert^3 \left \vert \frac{Z_1^2(z)}{Z_i^2(z)} -1\right \vert  \\ & \leq 
		8 \beta_3(\mu) \sum_{i=1}^n  \left(\frac{\vert \theta_i \vert^5}{\vert Z_1^2(z) \vert} +  \frac{ \theta_i^6\beta_3(\mu)}{\vert Z_1^2(z) \vert\Im z}\right) + 4\beta_3(\mu) \sum_{i=1}^{n} \left( \frac{\vert \theta_i\vert^7}{\vert Z_1^4(z) \vert} + \frac{\vert \theta_i\vert^9\beta_3(\mu)^2}{ \vert Z_1^4(z) \vert (\Im z)^2} \right)  \\ & \leq  
		\frac{C_{3, \varepsilon} B_{9, \rho} \beta_3(\mu)^3}{n}
	\end{split}
\end{align}
for $z, n$ as before and $C_{3, \varepsilon}>0.$  \revvtwo  Using the fact that $\theta \in \mathcal{F}_3$ holds, we get
\begin{align} \label{I_5 inner}
	\vert I_5 \vert \leq  \left \vert m_3(\mu) \sum_{i=1}^n \theta_i^3 \right \vert  + \beta_3(\mu) \vert \theta_1\vert^3 \leq \frac{\beta_3(\mu)\left(23\log\frac{6}{\rho}\right)^{\nicefrac{3}{2}}}{n} +   \beta_3(\mu) A_\rho^3 \left(\frac{\log n}{n}\right)^{\nicefrac{3}{2}}.
\end{align} \revtwo 
We arrive at 
\begin{align*}
	\vert r(z) \vert = \vert I_1(z) + I_2(z) + I_4(z) + I_5 \vert  \leq \frac{D_{\varepsilon, \rho, \mu}}{n}, \qquad z \in B_1, n \geq n_6
\end{align*}
for a suitably chosen constant $D_{\varepsilon, \rho, \mu} \geq B_{9,\rho}$ depending on $\varepsilon$, $\rho$, and the first six absolute moments of $\mu$. 
\revvtwo

	Let us proceed by analyzing the roots of $P= P(z, \omega)$ in $\revtwo B_1$. 
	 By obvious modifications, we can apply Rouch\'{e}'s theorem as done in Section \ref{part 3 - roots 3rd oder} and obtain the following two results: First, we get that for any $z \in \revtwo B_1$ and $\smash{n \geq n_7:= \max\{n_6, \revtwo 75D_{\varepsilon,\rho, \mu}\}}$ the polynomial $P(z, \omega)$ has a root $\omega_1 = \omega_1(z)$ with 
		\begin{align*}
	\vert \omega_1(z) \vert < \revtwo \frac{12D_{\varepsilon, \rho, \mu}}{n}, \revvtwo \qquad \vert \omega_j(z) \vert \geq \revtwo \frac{12D_{\varepsilon, \rho, \mu}}{n}, \revvtwo \qquad j=2,3.
		\end{align*}
		Here, $\omega_j = \omega_j(z)$, $j=2,3,$ denote the remaining roots of $P$.
	Second, \revtwo for $n \geq n_7$, \revvtwo one deduces that $\revtwo \omega_1: B_1 \rightarrow \mathbb{C}, z \mapsto \omega_1(z)$ is continuous. In order to prove that $Z_1 \neq \omega_1$ holds in $\revtwo B_1$ for sufficiently large $n$, we argue by contradiction: Let $\smash{n \geq n_8 := \max\{n_7, 12^4D^4_{\revtwo \varepsilon, \revvtwo \rho, \mu}\}}$. Assume that there exists $z \in \revtwo B_1$ with $Z_1(z) = \omega_1(z)$. Then, we obtain 
	\begin{align*} \revtwo
		\frac{1}{\sqrt{n \log n}} = a_n \leq \Im z \leq \vert Z_1(z) \vert = \vert \omega_1(z) \vert < \frac{12D_{\varepsilon,\rho, \mu}}{n}, 
	\end{align*}
	which is a contradiction to $n \geq n_8.$
	
	Now, let us study the remaining roots $\omega_2, \omega_3$. We start by showing that $\omega_2 \neq  \omega_3$ holds in $\revtwo B_1$ for large $n$. Suppose that there exists $z \in \revtwo B_1$ with $\omega_2(z) = \omega_3(z)$. Then, we have
	\begin{align*}
		z = -\omega_1 \pm 2\sqrt{1-I_3 + \omega_1^2}.
	\end{align*}
	 The bounds on $\vert \omega_1 \vert$ and $ I_3$ imply
		\begin{align*}
			\left \vert \Re \sqrt{1-I_3 + \omega_1^2} \right \vert \geq 1 - \revtwo  \sqrt{\frac{B_{9, \rho}}{n}} - \frac{12D_{\varepsilon, \rho, \mu}}{n} \geq 1- \frac{13D_{\varepsilon, \rho, \mu}}{\sqrt{n}}  >  0, \qquad n \geq n_8.
		\end{align*}
	It follows 
		\begin{align*}
			\vert \Re z\vert \geq 2	\left \vert \Re \sqrt{1-I_3 + \omega_1^2} \right \vert -\vert \omega_1\vert \geq 2 - \revtwo \frac{38D_{\varepsilon, \rho, \mu}}{\sqrt{n}} \revvtwo > 2-\varepsilon
		\end{align*}
		for all $\smash{n \geq n_{9}:= \max\{ n_8, \revtwo (38D_{\varepsilon, \rho, \mu})^{2}\varepsilon^{-2}\}}$.
	Since this is a contradiction to our choice $z \in \revtwo B_1$, we obtain $\omega_2 \neq \omega_3$ in $\revtwo B_1$ for all $n \geq n_9$. 
As calculated in \cref{part 3 - roots 3rd oder} (\revtwo or more precisely in \eqref{w_2, w_3}\revvtwo), we get 	
		\begin{align*} 
			\omega_j = \frac{1}{2}\left( z + (-1)^{j-1}\sqrt{z^2-4+r_2(z)}  \right) - \frac{\omega_1}{2}, \qquad j=2,3,
		\end{align*}
		with $r_2(z) := 4I_3 + (2z - 3\omega_1)\omega_1$ for all $z$ and $n$ as above. It is easy to see that $r_2(z)$ admits the estimate 
		\begin{align*}
			\vert r_2(z) \vert \leq \revtwo \frac{91D_{\varepsilon, \rho, \mu}}{n} < \frac{1}{10}, \qquad z \in B_1, n \geq n_{9}.
		\end{align*}
		Observe that for all $z \in \revtwo B_1$ and $n \geq \revtwo  n_{9} \geq 91D_{\varepsilon, \rho, \mu} \varepsilon^{-1}$, we have
		\begin{align*}
			\Re \left(z^2 - 4 + r_2(z)\right) <  (2-\varepsilon)^2 -4 +\vert  r_2(z) \vert \leq -4\varepsilon + \varepsilon^2 + \revtwo \frac{91D_{\varepsilon, \rho, \mu}}{n} < - \varepsilon + \frac{91D_{\varepsilon, \rho, \mu}}{n} \leq 0.
		\end{align*} 
			Hence, it follows $z^2 - 4 + r_2(z) \in \nolinebreak \mathbb{C} \setminus [0,\infty)$ for $z$ and $n$ as before. The continuity of \revtwo $\omega_1\vert_{B_1}$ and of \revvtwo the complex square root function implies that $Z_1$ does not jump between $\omega_2$ and $\omega_3$ in a sufficiently small neighborhood of any point in $\revtwo B_1$ for $n \geq \revtwo n_{9}$, i.e.\@ we get a statement as in \eqref{no jumps}.  Since $\revtwo B_1$ is compact, we obtain either $Z_1 = \omega_2$ or $Z_1 = \omega_3$ in $\revtwo B_1$ for $n$ as above. Lastly, we can prove that $Z_1(2i) \neq \omega_2(2i)$ holds for $n \geq \revtwo n_{10}$ for some suitably chosen threshold $\revtwo n_{10} \geq n_9$ arising from a continuity argument. Finally, we get $Z_1 = \omega_3$ in $\revtwo B_1$ for all $n \geq \revtwo n_{10}.$

\subsubsection*{Step 5: Bounding the integrals in \texorpdfstring{\eqref{bound Delta_epsilon}}{(4.1)}}
	In the following, we proceed as done in \cref{part 7 - bounding integral wrt im }. 	
		Together with $S := \nicefrac{1}{G_\omega}$ and the formula for $\omega_3$ given \revtwo in the previous step, \revvtwo we obtain
		\begin{align*} 
		\frac{1}{Z_1(z)}  - \frac{1}{S(z)}  = \frac{1}{2S(z)Z_1(z)}\left( \omega_1(z) - \frac{r_2(z)}{\sqrt{z^2-4} + \sqrt{z^2-4+r_2(z)}}\right)
	\end{align*}
	for all $\revtwo z \in B_1, n \geq n_{10}$; compare to \eqref{difference 1/Z_1 - 1/S, cubic}.
	
	 We have to differentiate two cases in order to be able to handle the sum of the square roots appearing above. For this purpose, define $\smash{E := 50D_{\revtwo \varepsilon, \revvtwo \rho, \mu}(\log n)^{\nicefrac{1}{2}}n^{-\nicefrac{1}{2}} >0}$ \revtwo and let $n \geq n_{10}.$ \revvtwo
		Then, for any $z \in \revtwo B_1$ with $\Re z \geq E >0$, we have 
		\begin{align*}
			2\Re z \Im z + \Im r_2(z) \geq 2E a_n - \vert r_2(z) \vert = \revtwo  \frac{100D_{\varepsilon, \rho, \mu}}{n}  - \frac{91D_{\varepsilon, \rho, \mu}}{n} >0.
		\end{align*}
		For $\Re z \leq -E < 0$, we derive $2 \Re z \Im z + \Im r_2(z)<0$. The identities in \eqref{sign real parts} yield that the real parts of $\smash{\sqrt{z^2-4}}$ and $\smash{\sqrt{z^2- 4 +r_2(z)}}$ have the same sign for $z \in \revtwo B _1, \revvtwo \vert \Re z \vert \geq E.$ In particular, it  follows
		\begin{align*} 
			\left \vert \sqrt{z^2-4} + \sqrt{z^2-4+r_2(z)} \right \vert \geq \left\vert \sqrt{z^2-4} \right \vert, \qquad \revtwo z \in B_1,  \vert \Re z \vert \geq E, n \geq  n_{10}.
		\end{align*}
		\revtwo By increasing $n$ if necessary to $n \geq n_{11} \geq n_{10}$, we can assume that $E < \sqrt{2}$ holds. \revvtwo Hence, with the help of \eqref{square root real and im formula} and \eqref{im sq z2-4 larger 1}, one obtains
		\begin{align*}
			\left \vert \sqrt{z^2-4} + \sqrt{z^2-4+r_2(z)} \right \vert  \geq \Im \sqrt{z^2-4} \geq 1, \qquad \revtwo z \in B_1, \revvtwo \vert \Re z \vert < E, n \geq \revtwo n_{11}.
		\end{align*}
	Combining the last observations with $\vert S(z) \vert \geq 1$ holding for all $z \in \mathbb{C}^+$, \eqref{lower bound Z_i Delta varepsilon}, and \eqref{estimate z^2 -4}, we get
		\begin{align*}
			\left \vert \frac{1}{Z_1(z)} - \frac{1}{S(z)} \right \vert &\leq 
			\begin{cases}
				\revtwo \frac{C_\varepsilon}{2}  \revvtwo	\left( \vert \omega_1(z) \vert + \frac{\vert r_2(z) \vert}{\sqrt{\Im z}} \right) & z \in \revtwo B_1, \revvtwo \vert \Re z \vert \geq E \\
			\revtwo \frac{C_\varepsilon}{2}  \revvtwo \left( \vert \omega_1(z) \vert + \vert r_2(z) \vert \right) & z \in \revtwo B_1, \revvtwo \vert \Re z \vert < E
			\end{cases}
		\end{align*} 
		for all $n \geq \revtwo n_{11}.$ Integration yields
		\begin{align*}
			\frac{1}{\pi} \int_{-2+\varepsilon}^{2-\varepsilon}  \left \vert  \frac{1}{Z_1(u+i)} -\frac{1}{S(u+i)}\right \vert  du  \leq \revtwo \frac{C_\varepsilon}{2\pi} \revvtwo \int_{-2+\varepsilon}^{2-\varepsilon}	 \vert \omega_1(u+i) \vert + \vert r_2(u+i) \vert du \leq \revtwo \frac{206C_\varepsilon}{\pi} \frac{ D_{\varepsilon, \rho, \mu}}{n} 
		\end{align*}
		as well as 
		\begin{align*}
			\sup_{x \in I_\varepsilon'} \frac{2}{\pi}\int_{a_n}^1 \left \vert \frac{1}{Z_1(x+iv)} - \frac{1}{S(x+iv)} \right \vert dv &\leq \revtwo \frac{194C_\varepsilon}{\pi} \frac{D_{\varepsilon, \rho, \mu}}{n}, \qquad n \geq n_{11}.
		\end{align*}
	Using \eqref{difference G_mu_theta - G_nu, cubic} and \eqref{subordination times cauchy expansion}, we have
		\begin{align*}
			\frac{1}{\pi} \int_{-2+\varepsilon}^{2-\varepsilon}  \left \vert  G_\theta(u+i)-\frac{1}{Z_1(u+i)}\right \vert  du &= 	 \frac{1}{\pi} \int_{-2+\varepsilon}^{2-\varepsilon}  \left \vert \frac{r_{n,1}(u+i)}{Z_1(u+i)}\right \vert  du \leq \frac{4}{\pi}\theta_1^2 \leq \frac{4}{\pi} \revtwo \frac{B_{9,\rho}}{n}
		\end{align*}
			and 
		\begin{align*} 
			\sup_{x \in I_\varepsilon'} \frac{2}{\pi}\int_{a_n}^1 \left \vert G_\theta(x+iv) - \frac{1}{Z_1(x+iv)} \right \vert dv &\leq  \frac{\revtwo 2C_\varepsilon^3}{\pi}\left( \theta_1^2 + \beta_3(\mu)\vert \theta_1 \vert^3\int_{a_n}^{1} \frac{1}{v} dv \right) \\   & \revtwo \leq \frac{2C_\varepsilon^3}{\pi} \left( \frac{B_{9, \rho}}{n} + \beta_3(\mu)A_\rho B_{9, \rho} \left(\frac{\log n}{n} \right)^{\nicefrac{3}{2}} \right) \\ & \revtwo \leq \frac{2C_\varepsilon^3}{\pi}\frac{B_{9, \rho} + \beta_3(\mu)A_\rho B_{9, \rho}}{n}
		\end{align*}
		for $n \geq \revtwo n_{11}$.
		
		\subsubsection*{Step 6: Final conclusion} 
		 We summarize: Let $\varepsilon, \rho \in (0,1)$, define $\mathcal{F} \subset \mathbb{S}^{n-1}$ as in \revtwo the first step of this proof, \revvtwo and set $n_{\varepsilon, \rho, \mu} := \lceil \revtwo n_{11} \rceil$. Choose $\theta \in \mathcal{F}$ and let $n \geq n_{\varepsilon, \rho, \mu}$. 
	Combining the estimates obtained in the last \revtwo step \revvtwo with \eqref{bound Delta_epsilon} and \eqref{estimate error term I_n}, it follows
		\begin{align*}
			\Delta_\varepsilon(\mu_\theta, \omega) \leq \revtwo \frac{C_{\varepsilon, \rho, \mu}}{n}
		\end{align*}  
		for some suitably chosen constant $C_{\varepsilon, \rho, \mu}>0$ depending on \revtwo $\varepsilon$ through $Z_\varepsilon$ and $b_\varepsilon$ from Theorem \ref{Thm 2.7 Bao, Erdos, Schnelli}, on $\rho$,  and on $\mu$ via its first six absolute moments. \revvtwo\\

\revtwo 
Let us briefly comment on the moment assumptions made in \cref{main Kolmogorov epsilon unbounded} as well as on possible variations of our proof.
\begin{rem} \label{main Kolmogorov unbounded moment assumption}
\begin{enumerate} 
	\item[(a)] In our proof, the assumption that the underlying probability measure $\mu$ has finite sixth moment is necessary in order to allow for a reasonable application of the higher order expansions in \eqref{subordination times cauchy expansion more moments}. Note that the use of these expansions plays a significant role in the derivation of a bound of order $n^{-1}$ for $\Delta_\varepsilon(\mu_\theta, \omega)$.
	\item[(b)]  In the more general case that $\mu$ has finite fourth moment, the quantity  $\Delta_\varepsilon(\mu_\theta, \omega)$ is of order $(\log n)^{\nicefrac{1}{4}}n^{-\nicefrac{3}{4}}$ for all $\theta$ in some suitably chosen subset of $\mathbb{S}^{n-1}$ and all sufficiently large $n$. The proof of this statement follows closely the one given above with any application of the higher order expansions in \eqref{subordination times cauchy expansion more moments} replaced by the expansions in \eqref{subordination times cauchy expansion}.
\end{enumerate}
\end{rem}
\revvtwo

The following remark considers the special choice $\theta=(n^{-\nicefrac{1}{2}}, \dots, n^{-\nicefrac{1}{2}})$ and shows that in this case \revtwo $\Delta_\varepsilon(\mu_\theta, \omega)$ does not decay faster than $n^{-\nicefrac{1}{2}}$ in general. \revvtwo
\begin{rem} \label {counterexample binomial}
	Choose $\varepsilon \in (0,1)$ and let  $\mu$ be a probability measure on $\mathbb{R}$ with mean zero, unit variance, and \revtwo finite third absolute moment $\beta_3(\mu)$. \revvtwo Letting  $\mu_n$ denote the normalized $n$-fold convolution of $\mu$, i.e.\@ 
	\begin{align*}
\mu_n := D_{\nicefrac{1}{\sqrt{n}}}\mu  \boxplus \dots \boxplus D_{\nicefrac{1}{\sqrt{n}}} \mu,
	\end{align*}
	and using \revtwo \cite[Corollary 2.2]{Chistyakov2013}, \revvtwo we obtain
	\begin{align*}
		\Delta_\varepsilon(\mu_n, \omega) \leq \Delta(\mu_n, \omega) \leq \revtwo \frac{  c\beta_3(\mu)}{\sqrt{n}}
	\end{align*}
	for some absolute constant $c>0.$ \rev An upper bound of order $n^{-\nicefrac{1}{2}}$ is optimal for both $\Delta(\mu_n, \omega)$ and $\Delta_\varepsilon(\mu_n, \omega)$ among all probability measures satisfying the moment constraints given above. The optimality for the Kolmogorov distance has been proven by Chistyakov and Götze \cite[Proposition 2.5]{ Chistyakov2008b} by using the standard example of binomial measures. Making use of some of their calculations, we obtain the optimality for $\Delta_\varepsilon(\mu_n, \omega)$. For completeness, we sketch the idea: Let $\nu$ be a probability measure with
	\begin{align*}
	\nu\left( \left\{ \sqrt{\frac{q}{p}}\right\} \right) = p, \qquad \nu \left(\left\{ - \sqrt{\frac{p}{q}} \right\} \right) = q	
	\end{align*}
	for $0<p<1$, $q=1-p$, and $q \neq p$. As introduced above, the normalized $n$-fold convolution of $\nu$ is denoted by $\nu_n$. 
Having calculated the Cauchy transform of $\nu_n$, the Stieltjes-Perron inversion formula leads to a formula for the density of $\nu_n$ holding in some interval containing $[-2+\varepsilon, 2-\varepsilon]$ for sufficiently large $n$; compare to (6.21) and the equation after (6.22) in  \cite{Chistyakov2008b}. Using this formula as well as the mean value theorem, one can show that 
\begin{align*}
	\Delta_\varepsilon(\nu_n, \omega) \geq \big
	\vert	\nu_n((-2+\varepsilon, 0]) - \omega((-2+\varepsilon, 0]) \big\vert \geq \frac{C_{p, \varepsilon}}{\sqrt{n}}
\end{align*}
holds for sufficiently large $n$ and a constant $C_{p, \varepsilon}>0$. \revv
\end{rem}

\section{\texorpdfstring{Rate of convergence with respect to $\tilde{\Delta}$ and the special case of vanishing third moment: Proofs of Theorem \ref{main Delta tilde} and \cref{main Delta vanishing third moment}}{Proofs of Theorem 1.4 and Corollary 1.5}} \label {Section: Delta tilde} 

\revtwo In this section we \revvtwo prove \cref{main Delta tilde} and \cref{main Delta vanishing third moment}. Recall that  \cref{main Delta tilde} establishes a rate of convergence of the distribution \revtwo of a weighted sum \revvtwo to Wigner's semicircle law with respect to the quantity $\tilde{\Delta}$ defined in \eqref{def Delta tilde}, whereas \cref{main Delta vanishing third moment} provides the same rate in terms of the Kolmogorov distance in the special case that the underlying probability measure \revtwo has vanishing third moment. \revvtwo

\subsection{\texorpdfstring{Proof of Theorem \ref{main Delta tilde}}{Proof of Theorem 1.4}}
The proof is a combination of the proofs given for \revtwo \cref{main theorem,main Kolmogorov epsilon unbounded} \revvtwo and \revtwo is structured as follows: \revvtwo  
In the first step, we define the set $\mathcal{F}$. \revtwo The second step is dedicated to the analysis of the cubic functional equation for $Z_1$. \revvtwo Lastly, we bound the integral appearing in the definition of $\tilde{\Delta}(\mu_\theta, \omega)$ leading to the claimed rate of convergence. \revv

\subsubsection*{Step 1:  Construction of $\mathcal{F}$}
	Fix $\rho \in (0,1)$, $n \geq 4$. For $A_\rho$ and $B_9$ as before, we define $\tilde{B}_{9, \rho} := \nolinebreak B_9(\log \nicefrac{6}{\rho})^{\nicefrac{9}{2}}$ and set
\begin{align*}
	\mathcal{H}_1 & := \Bigg\{ \theta \in \mathbb{S}^{n-1}:   \max_{i \in [n]} \vert \theta_i \vert \leq A_\rho\sqrt{\frac{\log n}{n}} \Bigg\}, \qquad
	\mathcal{H}_2 := \left\{ \theta \in \mathbb{S}^{n-1}: \sum_{i=1}^n \vert \theta_i \vert^{9} \leq \frac{\tilde{B}_{9, \rho}}{n^{\nicefrac{7}{2}}} \right\},\\
	\mathcal{H}_3 & :=  \left \{ \theta \in \mathbb{S}^{n-1}: \left \vert \sum_{i=1}^n\theta_i^3 \right \vert \leq \frac{\left(23\log\frac{12}{\rho}\right)^{\nicefrac{3}{2}}}{n}   \right\}, \qquad \mathcal{H} := \mathcal{H}_1 \cap \mathcal{H}_2 \cap \mathcal{H}_3.
\end{align*}
Together with the results in \cref{section: concentration inequalities}, we immediately get $\sigma_{n-1}(\mathcal{H}^{\mathsf{c}}) \leq \nicefrac{\rho}{2}$. According to \cref{main theorem} applied to $\nicefrac{\rho}{2}$, there exist a set $\mathcal{G} \subset \mathbb{S}^{n-1}$ with $\sigma_{n-1}(\mathcal{G}^{\mathsf{c}}) \leq \nicefrac{\rho}{2}$ and constants $n_{\nicefrac{\rho}{2}, \mu} \in \mathbb{N}, C_{\nicefrac{\rho}{2}, \mu}>0$ such that 
\begin{align} \label{anwendung main theorem}
	\Delta(\mu_\theta, \omega) \leq \frac{C_{\nicefrac{\rho}{2}, \mu}}{\sqrt{n}}
\end{align}
holds for all $\theta \in \mathcal{G}$ and $n \geq n_{\nicefrac{\rho}{2}, \mu}$.
Letting $\mathcal{F} := \mathcal{G} \cap \mathcal{H} \subset \mathbb{S}^{n-1}$, it follows  $\sigma_{n-1}(\mathcal{F}^{\mathsf{c}}) \leq \rho$. Fix $\theta \in \mathcal{F}$ and assume that $\smash{\theta_1^2 = \min_{i \in [n]} \theta_i^2}$ holds. 
	
	\subsubsection*{Step 2: Analysis of the cubic functional equation for \texorpdfstring{$Z_1$}{Z1}} \rev Knowing that $P(z, Z_1(z)) = 0$ holds for all $z \in \mathbb{C}^+$, \revtwo it remains to modify the bounds on the coefficients $I_3$ and $r(z)$ appearing in the cubic polynomial $P$. For this purpose, we follow the approach presented in the fourth step of the proof of \cref{main Kolmogorov epsilon unbounded} including the higher order expansions in \eqref{subordination times cauchy expansion more moments}. 
	 
	 Arguing as in \eqref{improved estimate I_3}, the coefficient $I_3$ admits the estimate $\vert 	I_3  \vert \leq \nicefrac{\revtwo \tilde{B}_{9, \rho}}{n}.$ Let us turn to $r(z)$.
	Define $\smash{D_{\rho, \mu}:= 20A_\rho(\pi C_{\nicefrac{\rho}{2}, \mu})^{\nicefrac{1}{2}}}$ and set 
	\begin{align} \label{def D_1'}
		D'_1 :=\left \{z \in \mathbb{C}^+:  \Im z \geq D_{\rho, \mu} \frac{\sqrt{\log n}}{n^{\nicefrac{3}{4}}} \right\}.
	\end{align}
Combining the calculations done for \eqref{bound $r_n,i$ mit Cauchy und integration by parts} with integration by parts and \eqref{anwendung main theorem}, we obtain 
	\begin{align} \label{estimate r_{n,i} für Delta tilde}
		\begin{split}
		\vert r_{n,i}(z) \vert \leq \frac{\vert \theta_i \vert}{\sqrt{\Im z}}  \sqrt{\vert G_\theta(z) \vert} &\leq A_\rho \sqrt{\frac{\log n}{n}} \frac{1}{\sqrt{\Im z}} \sqrt{ 1 + \frac{\pi C_{\nicefrac{\rho}{2}, \mu}}{\sqrt{n}} \frac{1}{\Im z}}  \\ &\leq A_\rho \sqrt{\frac{\log n}{n}} \frac{1}{\sqrt{\Im z}} + A_\rho \sqrt{\frac{\log n}{n}} \frac{\sqrt{\pi C_{\nicefrac{\rho}{2}, \mu}}}{n^{\nicefrac{1}{4}}} \frac{1}{\Im z} \\ &\leq \frac{A_\rho}{\sqrt{D_{\rho, \mu}}} \frac{(\log n)^{\nicefrac{1}{4}}}{n^{\nicefrac{1}{8}}} + \frac{1}{20} < \frac{1}{10}
			\end{split}
	\end{align}
	for all  $z \in D_1'$, $i \in [n]$, and sufficiently large $n$, say $n \geq n_1 \geq n_{\nicefrac{\rho}{2}, \mu}$. This yields
	\begin{align} \label{lower bound Z_i Delta tilde}
		\left \vert \frac{1}{Z_i(z)} \right \vert \leq \frac{10}{9} \vert G_\theta(z) \vert \leq \frac{10}{9} \left( 1 + \frac{\pi C_{\nicefrac{\rho}{2},\mu}}{D_{\rho, \mu}} \frac{n^{\nicefrac{1}{4}}}{\sqrt{\log n}}\right) \leq D_{0, \rho, \mu} \frac{n^{\nicefrac{1}{4}}}{\sqrt{\log n}}
 	\end{align}
 	for $z,i$, and $n$ as before and some suitably chosen constant $D_{0, \rho, \mu} >1.$ Moreover, the estimates in \eqref{estimate Z_i/Z_1 - 1, Z_i^2/Z_1^2 -1 by constant new}, \eqref{estimate Z_1/Z_i - 1 new}, and \eqref{estimate Z_1^2/Z_i^2 - 1 new} remain valid in $D_1'$ for $n \geq \revtwo n_1$. 
 	
	As derived in the fourth step of the proof of \cref{main Kolmogorov epsilon unbounded}, \revvtwo we have 
	$\vert I_1(z) \vert \leq \nicefrac{20}{9} \sum_{i=2}^n \vert s_i^*(z) \vert$ for all $z \in D_1'$ and $n \geq n_1$, where $s_i^*(z)$ is defined in \eqref{s_i^*}. \revv
The inequality in \eqref{lower bound Z_i Delta tilde} implies
	\begin{align} \label{def D 1 rho mu}
		\sum_{i=1}^n \frac{\vert \theta_i \vert^k }{\vert Z_i^l(z) \vert} \leq \frac{D_{1, \rho, \mu}}{  n^{\nicefrac{3}{4}} }, \qquad \revtwo D_{1, \rho, \mu} :=\tilde{B} _{9,\rho}D_{0, \rho, \mu}^4
	\end{align}
	for  all \rev $z \in D_1', n \geq \revtwo n_1$, and $k,l \in \mathbb{N}$ with $k-l=3, 1 \leq l \leq 4  \leq k \leq 7$. Using the estimates  \revv
	\begin{align*}
		&\sum_{i=2}^n \left \vert \frac{1}{Z_i^2(z)}\int_{\mathbb{R}} \frac{u^6}{Z_i(z) - u} \mu_i(du) \right \vert  \leq \frac{D_{1, \rho, \mu}m_6(\mu)}{D_{\rho, \mu}} \frac{1}{n^{\nicefrac{3}{4}}},\\
		&\sum_{i=2}^n\left \vert \frac{\theta_i^2}{Z_i^2(z)} \int_{\mathbb{R}} \frac{u^4}{Z_i(z) - u} \mu_i(du) \right \vert \leq \frac{D_{1, \rho, \mu}m_4(\mu)}{D_{\rho, \mu}} \frac{1}{n^{\nicefrac{3}{4}}},\\
		&\sum_{i=2}^n \left \vert \frac{\theta_i^3m_3(\mu)}{Z_i^3(z)} \int_{\mathbb{R}} \frac{u^4}{Z_i(z) - u} \mu_i(du) \right \vert \leq \frac{D_{1, \rho, \mu}\beta_3(\mu)m_4(\mu)}{D_{\rho, \mu}} \frac{1}{n}
	\end{align*}
 holding for $z$ and $n$ as above, we conclude
		\begin{align*}
			\vert I_1(z) \vert  \leq \frac{20}{9}\left( 6D_{1,\rho,\mu}\beta_3(\mu)^2m_4(\mu)\beta_5(\mu) + \frac{3D_{1,\rho,\mu}\beta_3(\mu)m_4(\mu)m_6(\mu)}{D_{\rho, \mu}} \right) \frac{1}{\rev n^{\nicefrac{3}{4}}}, \,\,\,\,\,\,\,\,\, \rev  z \in D_1', n \geq \revtwo n_1.
		\end{align*}
 In order to bound the term $ \vert I_2(z) \vert$, we proceed as in \eqref{estimate I_2 higher exp} and obtain 
			\begin{align*} 
			\vert I_2(z)\vert \leq 4 \sum_{i=1}^{n}  \left(\frac{\theta_i^4}{\vert Z_1(z)\vert} + \frac{ \vert \theta_i \vert^5\beta_3(\mu)}{\vert Z_1^2(z) \vert} + \frac{ \theta_i^6 m_4(\mu)}{\vert Z_1^2(z) \vert \Im z} \right) \leq   \frac{8D_{1, \rho, \mu}\beta_3(\mu)}{n^{\nicefrac{3}{4}}} + \frac{4D_{1,\rho, \mu}m_4(\mu)}{D_{\rho, \mu }}\frac{1}{n^{\nicefrac{3}{4}}} 
		\end{align*} 
	for all $z \in D_1', n \geq \revtwo n_1.$ Similarly, as in \eqref{estimate I_4 higher exp}, it follows
		\begin{align*} 
			\vert I_4(z) \vert & \leq 8 \beta_3(\mu) \sum_{i=1}^n  \left(\frac{\vert \theta_i \vert ^5}{\vert Z_1^2(z) \vert} +  \frac{ \theta_i^6\beta_3(\mu)}{\vert Z_1^2(z) \vert \Im z}\right) + 4\beta_3(\mu) \sum_{i=1}^{n} \left( \frac{\vert \theta_i \vert^7}{\vert Z_1^4(z) \vert} + \frac{\vert \theta_i\vert^9\beta_3(\mu)^2}{ \vert Z_1^4(z) \vert (\Im z)^2} \right) \\ &\leq  
			\frac{12D_{1,\rho, \mu}\beta_3(\mu)}{n^{\nicefrac{3}{4}}} + \frac{8D_{1,\rho, \mu}\beta_3(\mu)^2}{D_{\rho, \mu }}\frac{1}{n^{\nicefrac{3}{4}}}  + \frac{4D_{1,\rho, \mu}\beta_3(\mu)^3}{D_{\rho, \mu}^2} \frac{1}{n}
		\end{align*}
		for $z$ and $n$ as before. The upper bound on $\vert I_5 \vert$ is essentially given by the one in \eqref{I_5 inner} -- just replace the term $\log(\nicefrac{6}{\rho})$ appearing on the right-hand side of \eqref{I_5 inner} by $\log(\nicefrac{12}{\rho})$. Note that this is necessary due to the definition of the set $\revtwo \mathcal{H}_3.$
Finally, we conclude
		\begin{align*}
			\vert r(z) \vert \leq \frac{D_{2,\rho, \mu}}{\rev n^{\nicefrac{3}{4}}}, \qquad z \in \rev D_1', n \geq \revtwo n_1
		\end{align*}
		for some suitably chosen constant \revtwo  $\smash{D_{2, \rho, \mu} \geq \max\{\nicefrac{D_{\rho, \mu}}{50}, \tilde{B}_{9, \rho}\}}$. \revvtwo 
		
		\rev We continue by analyzing the roots of $P$. \revv Let 
		\begin{align*}
		\rev	D_2' \revv := \left \{ z \in \mathbb{C}^+:  \vert \Re z \vert \leq 2 - \rev \varepsilon'_n, \revv \,3 \geq \Im z \geq  \revtwo 50D_{2,\rho, \mu}\frac{\sqrt{\log n}}{n^{\nicefrac{3}{4}}} \revv \right\} \subset \rev D_1' \revv, \qquad  \rev  \varepsilon'_n :=\frac{50D_{2,\rho, \mu}}{\sqrt{n}}.
		\end{align*}
		\rev By increasing $n$ to $\smash{n \geq \revtwo n_2 > \max\{n_1, 50^2D_{2, \rho, \mu}^2\}}$, we may assume that $\varepsilon'_n<1$ holds. \revv  Rouch\'{e}'s theorem implies that for any $z \in \rev D_2'$, $\rev n \geq \revtwo n_2$, there exists a root $\omega_1 = \omega_1(z)$ with 
		\begin{align*}
			\vert \omega_1(z) \vert < \frac{12D_{2,\rho, \mu}}{\rev n^{\nicefrac{3}{4}}}, \qquad  \vert \omega_j(z) \vert \geq  \frac{12D_{2,\rho, \mu}}{\rev n^{\nicefrac{3}{4}}}, \qquad j=2,3.
		\end{align*}
		Again, $\omega_j = \omega_j(z)$, $j=2,3$, denote the remaining roots of $P$. By contradiction, we obtain $Z_1(z) \neq \omega_1(z)$ for all $z \in \rev D_2'$ and $\revtwo n \geq n_2.$ \rev Now, assume that there exists some $z \in D_2'$ with $\omega_2(z) = \omega_3(z)$. Then, arguing as done in the previous proofs, we get $\smash{z = - \omega_1 \pm 2\sqrt{1-I_3+\omega_1^2}}$ leading to
			\begin{align*}
			\vert\Re z \vert \geq 2	\left\vert \Re \sqrt{1-I_3 + \omega_1^2} \right\vert - \vert \omega_1 \vert \geq 2-2\left(\sqrt{\frac{ \tilde{B}_{9, \rho}}{n}} + \frac{12D_{2,\rho, \mu}}{n^{\nicefrac{3}{4}}} \right) - \frac{12D_{2, \rho, \mu}}{n^{\nicefrac{3}{4}}} \geq 2- \frac{38D_{2, \rho, \mu}}{\sqrt{n}} > 2- \varepsilon'_n
		\end{align*}
		for $n \geq \revtwo n_2$. Since this contradicts our choice of $z \in D_2'$, we must have 
		$\omega_2 \neq \omega_3$ in $\rev  D_2'$ for $n \geq \revtwo n_2$. Note that the calculations in the argument above force us to choose $\varepsilon'_n$ to be of order \smash{$n^{-\nicefrac{1}{2}}$}, which in the end limits the rate of convergence for \smash{$\tilde{\Delta}(\mu_\theta, \omega)$} to \smash{$n^{-\nicefrac{3}{4}}$}. The roots $\omega_j, j=2,3,$ are of the form as given in \eqref{w_2, w_3}.
		The error term $r_2(z) = 4I_3 + (2z - 3\omega_1)\omega_1$ can be bounded by
		\begin{align*}
			\vert r_2(z) \vert \leq \rev \frac{94D_{2,\rho, \mu}}{n^{\nicefrac{3}{4}}} < \rev \frac{3}{10}, \qquad z \in D_2', n \geq \revtwo n_2.
		\end{align*}
		\revtwo It is easy to verify that $\omega_1: D_2' \rightarrow \mathbb{C}$, $z \mapsto \omega_1(z)$ is continuous for $n$ as before. \revvtwo Moreover, \revv for any $\smash{n \geq \revtwo n_3 := \max\{ n_2 , (12.5D_{2,\rho, \mu} + 0.47)^4\}}$ and all $z \in \rev D_2'$, we have $z^2-4+r_2(z) \in \mathbb{C}\setminus [0, \infty)$. The continuity of the complex square root function combined with the compactness of $D_2'$ implies that \revv $Z_1$ equals either $\omega_2$ or $\omega_3$ in $\rev D_2'$ for $n \geq \revtwo n_3.$ By showing that $\vert \omega_2(2i) \vert <1$ holds for sufficiently large $n$, say $\revtwo n \geq  n_4 \geq n_3$, we obtain $Z_1(2i) \neq \omega_2(2i)$ and thus $Z_1 = \omega_3$ in $\rev D_2'$ for $n \geq \revtwo n_4.$
		
		\subsubsection*{Step 3: Final conclusion} \rev It remains to bound the integral appearing in the definition of $\tilde{\Delta}(\mu_\theta, \omega);$ compare to \eqref{def Delta tilde}. For this, we proceed as in \cref{part 7 - bounding integral wrt im }. \revv Recall that we defined $S := \nicefrac{1}{G_\omega}$. Using the formula for $\omega_3$ \revtwo in \eqref{w_2, w_3}, \revvtwo we obtain
		\begin{align*}
			\left \vert \frac{1}{Z_1(z)} - \frac{1}{S(z)}  \right \vert \leq \frac{1}{2\vert S(z) \vert \vert Z_1(z) \vert} \left(  \vert \omega_1(z) \vert + \frac{ \vert r_2(z) \vert }{\big \vert \sqrt{z^2-4} + \sqrt{z^2-4+r_2(z)} \big \vert}\right), \qquad z \in \rev D_2', n \geq \revtwo n_4; 
		\end{align*}
		compare to \eqref{difference 1/Z_1 - 1/S, cubic}.
	Arguing as done several times before, we get
		\begin{align*}
			\left \vert \sqrt{z^2-4} + \sqrt{z^2-4+r_2(z)} \right \vert \geq 
			\begin{dcases}
			\left \vert 	\sqrt{z^2-4} \right \vert  & \rev z \in D_2', \vert \Re z \vert \geq 1 \revv \\
				 \hfil 1 & \rev  z \in D_2', \vert \Re z \vert < 1
			\end{dcases}
		\end{align*}
		for $n \geq  \revtwo n_4.$
		
		\revtwo Moreover, we can show that $\vert Z_1\vert > \nicefrac{1}{10}$ holds in $D_2'$ for sufficiently large $n$. More precisely, by using the upper bounds on $\vert r(z) \vert$ and $I_3$ as well as \eqref{lower bound Z_i Delta tilde}, it follows 
		\begin{align*}
			\left \vert I_3 + \frac{r(z)}{Z_1(z)} \right \vert \leq \frac{\revtwo \tilde{B}_{9, \rho}}{n} + \frac{\revtwo D_{2,\rho, \mu}D_{0, \rho, \mu}}{\sqrt{n \log n}} \leq \frac{2}{50}
		\end{align*}
		for all $z \in D_2'$ and large $n$, say $n \geq n_{5} \geq n_4$. \revvtwo Now, the assumption  $\vert Z_1(z) \vert \leq \nicefrac{1}{10}$ for some $z \in D_2'$ leads to a contradiction via 
		\begin{align*}
			\frac{1}{10} \geq \vert Z_1(z) \vert > \frac{1}{4}\vert Z_1(z)\vert \vert Z_1(z) - z\vert \geq \frac{1}{4} - \frac{1}{4} \left\vert I_3 + \frac{r(z)}{Z_1(z)} \right \vert > \frac{1}{5}, \qquad \revtwo n \geq n_5.
		\end{align*}
		
	Together with \rev $\vert Z_1 \vert > \nicefrac{1}{10}$ in $D_2'$, $\vert S \vert \geq 1$ in $\mathbb{C}^+$, and  \eqref{estimate z^2 -4}, \revv we arrive at 
		\begin{align*}
			\left \vert \frac{1}{Z_1(z)} - \frac{1}{S(z)} \right \vert &\leq 
			\begin{cases}
				5\left( \vert \omega_1(z) \vert + \frac{\vert r_2(z) \vert}{\sqrt{\Im z}} \right) & \rev z \in D_2', \vert \Re z \vert \geq 1 \revv \\
				5\left( \vert \omega_1(z) \vert + \vert r_2(z) \vert \right) & \rev z \in D_2', \vert \Re z \vert < 1
			\end{cases} 
		\end{align*} 
		for $n \geq \revtwo n_{5}.$ The equation in \eqref{difference G_mu_theta - G_nu, cubic} yields
		\begin{align*}
			\left \vert G_\theta(z) - \frac{1}{Z_1(z)}\right \vert = \left \vert  \frac{r_{n,1}(z)}{Z_1(z)}\right \vert \leq 1000\left( \theta_1^2 + \frac{ \beta_3(\mu) \vert \theta_1 \vert^3}{\Im z}\right), \qquad z \in \rev D_2', n \revtwo \geq n_{5}.
		\end{align*}
		Let 
		\begin{align} \label{choices Bai tilde Delta}
			a := \revtwo 50D_{2,\rho, \mu} \frac{\sqrt{\log n}}{n^{\nicefrac{3}{4}}}, \revvtwo \qquad \varepsilon := 2 \varepsilon'_n = \frac{100D_{2, \rho, \mu}}{\sqrt{n}}, \qquad I_\varepsilon :=  [-2+\nicefrac{\varepsilon}{2}, 2 - \nicefrac{\varepsilon}{2}].
		\end{align}
		By integration of the last two inequalities, 
		we deduce
		\begin{align*}
			\sup_{u \in I_\varepsilon} \int_{a}^1 \vert G_\theta(u+iv) - G_\omega(u+iv)  \vert dv \rev \leq \frac{1000D_{2,\rho, \mu} + \revtwo 1000\big(\tilde{B}_{9, \rho} + \beta_3(\mu) A_\rho \tilde{B}_{9, \rho}\big) \revvtwo}{n^{\nicefrac{3}{4}}}\
		\end{align*}
		for all $n \geq \revtwo n_{5}.$ In particular, it follows
		\begin{align*}
			\tilde{\Delta}(\mu_\theta, \omega) = \sup_{u \in I_\varepsilon} \int_{a}^1 \vert G_\theta(u+iv) - G_\omega(u+iv)  \vert dv + a + \varepsilon^{\nicefrac{3}{2}} \leq \revtwo D_{3, \rho, \mu}\frac{\sqrt{\log n}}{n^{\nicefrac{3}{4}}}
		\end{align*} 
		for a suitably chosen constant $D_{3, \rho, \mu} >0$ and all $n \geq n_{\rho, \mu} := \lceil \revtwo n_{5} \rceil.$ \revtwo Note that $D_{3, \rho, \mu}$ depends on $\rho$ and on $\mu$ through its first six absolute moments. \revvtwo  \\

In analogy to Remark \ref{counterexample binomial}, we briefly comment on the optimal bound on  $\tilde{\Delta}(\mu_\theta, \omega)$ in the case of the usual normalization with the vector $\theta = (n^{-\nicefrac{1}{2}}, \dots, n^{-\nicefrac{1}{2}})$. 
	\rev
	\begin{rem}	\label{counterexample binomial 2} 
		In the following, we use the notation introduced in Remark \ref{counterexample binomial}. \revtwo Combining some of the arguments in the proofs of \cite[Theorem 2.4]{Chistyakov2008a} and \cite[Theorem 2.1]{Chistyakov2013}, it follows 
		\begin{align*}
		 \tilde{\Delta}(\mu_n, \omega) \leq  \frac{c \beta_3(\mu)}{\sqrt{n}}
		\end{align*} 
	 for an absolute constant $c>0$ \revvtwo and any probability measure $\mu$ satisfying the same moment constraints as given in the above-mentioned remark. Using the binomial measure $\nu_n$ from Remark \ref{counterexample binomial}, one can prove \revtwo that a bound of order $n^{-\nicefrac{1}{2}}$ is sharp for $\tilde{\Delta}(\mu_n, \omega)$. \revvtwo Again, we just sketch the idea of the proof and refer to \cite[Proposition 2.5]{Chistyakov2008b} for the details: Having derived a formula for the Cauchy transform $G_n$ of $\nu_n$, one obtains
		\begin{align*}
	 \sup_{u \in [-2+\nicefrac{\varepsilon}{2}, 2-\nicefrac{\varepsilon}{2}]} \int_{a}^1  \vert G_n(u+iv) - G_\omega(u+iv)  \vert dv \geq	\int_{a}^{2^{-4}} \!\!\! \vert G_n(iv) - G_\omega(iv) \vert dv \geq \frac{C_p}{\sqrt{n}}
		\end{align*}
	for any $\varepsilon \in [0,1]$, $a \in (0,2^{-5})$, some constant $C_p>0$, and all sufficiently large $n$. Recalling the definition of $\smash{\tilde{\Delta}}$ given in \eqref{def Delta tilde}, this already proves the claim.
		 \revv
			\end{rem}

\subsection{Proof of \texorpdfstring{\cref{main Delta vanishing third moment}}{Corollary 1.5}}		
 Since \cref{main Delta vanishing third moment} can be derived easily from \cref{main Delta tilde} and the calculations done in Sections \ref{part 4 - 2nd order functional eq}, \ref{part 5 - roots of 2nd}, and \ref{part 8 - bounding integrals wrt re}, we just give a brief overview of the proof.

 For fixed $\rho \in (0,1)$, define $\mathcal{F}$ as in the first step of the proof of \cref{main Delta tilde}. Let $\theta \in \mathcal{F}$ and assume that $\theta_1^2 = \min_{i \in [n]} \theta_i^2$ holds. Observe that $a$ and $\varepsilon$ as given in \eqref{choices Bai tilde Delta} satisfy the conditions of Proposition \ref{Bai - Götze's Version} for sufficiently large $n$. Together with \cref{main Delta tilde}, we obtain
			\begin{align*}
				\Delta(\mu_\theta, \omega) 
				 \leq 12C_\gamma \tilde{C}_{\rho, \mu} \revtwo \frac{\sqrt{\log n}}{n^{\nicefrac{3}{4}}} \revvtwo + C_\gamma \int_{-\infty}^{\infty} \vert G_\theta(u+i) - G_\omega(u+i) \vert du
			\end{align*}
		for large $n$ with $\tilde{C}_{\rho, \mu}>0$ taken from \cref{main Delta tilde} and $C_\gamma$ as in Proposition \ref{Bai - Götze's Version}. In order to control the above integral, we use the quadratic functional equation for the subordination function $Z_1$. \revtwo According to \eqref{quadratic functional eq.} and \eqref{def Q}, we have 
		\begin{align*}
			Q(z, Z_1(z)) = Z_1^2(z) - zZ_1(z) + 1 -q(z) = 0, \qquad q(z) = M_1(z) + M_2(z) + M_3
		\end{align*}
		for all $z \in \mathbb{C}^+$. \revvtwo As can be seen by combining the arguments in \cref{part 4 - 2nd order functional eq} with \eqref{estimate r_{n,i} für Delta tilde}, the summand $M_1(z)$ appearing in $q(z)$ admits the estimate
			\begin{align*}
			\vert M_1(z) \vert \leq \frac{110}{81} \sum_{i=2}^{n} \left\vert \left( 1 - \frac{\theta_i^2}{Z_i^2(z)} \right)\int_{\mathbb{R}} \frac{u^3}{Z_i(z) - u} \mu_i(du) - \frac{\theta_i^4}{Z_i^2(z)}  \right\vert 
			\end{align*}
for sufficiently large $n$ and all $z \in D_1'$; see \eqref{def D_1'} for the definition of $D_1'$. Due to $m_3(\mu) = 0$, we have 
\begin{align*}
	\int_{\mathbb{R}} \frac{u^3}{Z_i(z) - u} \mu_i(du) = \frac{1}{Z_i(z)} \int_{\mathbb{R}} \frac{u^4}{Z_i(z) - u} \mu_i(du), \qquad z \in \mathbb{C}^+.
\end{align*}
Together with \eqref{lower bound Z_i Delta tilde}, this yields
\begin{align*}
	\vert M_1(z) \vert \leq \frac{110}{81} \frac{m_4(\mu)}{(\Im z)^2} \left( 2\sum_{i=1}^{n}  \theta_i^4 + \sum_{i=1}^{n}\frac{\theta_i^6}{\vert Z_i^2(z) \vert}\right) \leq \frac{1}{(\Im z)^2}  \frac{\revtwo 5 D_{1, \rho, \mu} m_4(\mu)}{n}
\end{align*}
for all $z \in D_1'$ and large $n$, where the constant $\revtwo D_{1, \rho, \mu}$ is defined in \eqref{def D 1 rho mu}. Together with \eqref{estimate Z_1/Z_i - 1 new}, one can prove that $\vert M_2(z) \vert$ is of order $(\Im z)^{-2}n^{-1}$ for $z$ and $n$ as before. Moreover, reasoning as in \eqref{improved estimate I_3}, the last summand $\vert M_3 \vert $ admits an upper bound of the same order for $z \in \mathbb{C}^+$ with $\Im z \leq 3.$ Hence, we get 
\begin{align*}
	\vert q(z) \vert = \vert  M_1(z) + M_2(z) + M_3 \vert \leq \frac{1}{(\Im z)^2}  \frac{C_{\rho, \mu}'}{n}, \qquad z \in D_1', \Im z \leq 3,
\end{align*}
for large $n$ and some suitably chosen constant $C_{\rho, \mu}'>0$ depending on $\mu$ via \revtwo $\beta_3(\mu)$ and $m_4(\mu)$. \revvtwo By increasing $n$, we can assume that $\vert q(u+i) \vert < \nicefrac{1}{10}$ holds for all $u \in \mathbb{R}$. Then, arguing as done in Section \ref{part 5 - roots of 2nd}, one obtains $Z_1 = \tilde{\omega}_2$ in $\mathbb{C}_1$; see \eqref{tilde omega_1 and tilde omega_2 - roots 2nd order} for a formula for $\tilde{\omega}_2$. Finally, the arguments used in \cref{part 8 - bounding integrals wrt re} imply
\begin{align*}
 \int_{-\infty}^{\infty} \vert G_\theta(u+i) - G_\omega(u+i) \vert du \leq \frac{47C_{\rho, \mu}' +  \revtwo 58(\tilde{B}_{9, \rho} + \beta_3(\mu)A_\rho \tilde{B}_{9, \rho})}{n}
\end{align*}
for sufficiently large $n$.
\revv
	
	\section{\texorpdfstring{Weighted sums and superconvergence: Proof of Theorem \ref{compact superconvergence result}}{Proof of Theorem 1.6}} \label{Section: Superconvergence}

	In this section we prove the superconvergence result stated in Theorem  \ref{compact superconvergence result}. \revtwo Recall that the underlying probability measure $\mu$ in that result is assumed to have compact support in $[-L,L]$ for some $L>0$.\revvtwo
	
	As already indicated in the introduction, the proof follows closely Kargin's proof of \cite[Theorem 1]{Kargin2007}. The main idea is as follows: We start by analyzing the $K$-transform $K_\theta$ of $\mu_\theta$ and prove that it is invertible. We will observe that the domain of the inverse contains a set of the form $\{ x \in \mathbb{R}: \vert x \vert > M \}$ for some $M>0.$ Then, the Cauchy transform $G_\theta$ of $\mu_\theta$ can be continued analytically to the above-mentioned set via the inverse of $K_\theta.$ A simple application of the Stieltjes-Perron inversion formula will imply the claim. 
	
\revtwo In the following, let $K_i$ denote \revvtwo  the $K$-transform of $\revtwo \mu_i = D_{\theta_i} \mu$ for $i \in [n]$. We define 
	\begin{align*}
		\varphi_\theta(z) := \sum_{i=1}^n K_i(z) - \frac{n-1}{z}  
	\end{align*}
	for any $\theta \in \mathbb{S}^{n-1}$ and all $z \in \mathbb{C}$ at which the sum exists. Note that we have $K_\theta(z) = \varphi_{\theta}(z)$ for all sufficiently small $z \in \mathbb{C}.$
	\begin{lem} \label{Lemma 7 Kargin with derivative}
		The function $\varphi_\theta$ is meromorphic  in $\{ z \in \mathbb{C}: \vert z \vert <  (6L \max_{i \in [n]} \vert  \theta_i \vert)^{-1} \}$ with a simple pole at $0$ and Laurent series given by 
		\begin{align*}
			\varphi_\theta(z) =\frac{1}{z} + \sum_{i=1}^n  \sum_{m=1}^\infty \kappa_{m+1}(\mu_i)z^m=  \frac{1}{z} + \sum_{m=1}^\infty \kappa_{m+1}(\mu_\theta)z^m    , \qquad 0<\vert z \vert < \left(6L\max_{i \in [n]} \vert \theta_i \vert\right)^{-1}.
		\end{align*}
		Moreover, we have
		\begin{align*}
			\left \vert \varphi_\theta (z) - \frac{1}{z} - z \right \vert \leq  128L^4\vert z \vert^3 \sum_{i=1}^n\theta_i  ^4  +   \left \vert m_3(\mu)\sum_{i=1}^n \theta_i^3  \right \vert \vert z \vert^2
		\end{align*}
		as well as 
		\begin{align*}
			\left \vert 
			\varphi_\theta'(z) + \frac{1}{z^2} - 1 \right \vert \leq 384L^4\vert z \vert^2 \sum_{i=1}^n \theta_i ^4 + 2\vert z \vert  \left \vert  m_3(\mu) \sum_{i=1}^n \theta_i^3 \right \vert
		\end{align*}
		for all $0 < \vert z \vert <  (6L \max_{i \in [n]} \vert  \theta_i \vert)^{-1}$.
		\begin{proof} The first assertion follows immediately from the corresponding properties of the summands $K_i$, $ i \in [n].$ The claim concerning the Laurent series can be proven by  using the Laurent series expansion of each $K_i$, rearranging the resulting sum, and recalling that $\kappa_m(\mu_\theta) = \sum_{i=1}^n \kappa_m(\mu_i)$ holds for all $m \in \mathbb{N}.$  \revtwo Following the ideas introduced by Kargin in \cite[Lemma 7]{Kargin2007a}, we obtain 
			\begin{align} \label{estimates K_i as in Kargin}
				\begin{split}
			\left \vert K_i(z) - \frac{1}{z} - \theta_i^2z - m_3(\mu)\theta_i^3z^2 \right \vert & \leq \sum_{m=3}^{\infty}  \left \vert \kappa_{m+1}(\mu_i) z^m\right \vert  \leq \sum_{m=3}^{\infty} \frac{1}{m} (2L \vert \theta_i \vert) (4L \vert \theta_i\vert)^m \left \vert z\right \vert^m \\ & = 128L^4 \theta_i^4  \vert z \vert^3\sum_{m=3}^{\infty} \frac{(4L \vert \theta_i \vert )^{m-3}}{m} \left  \vert z\right \vert^{m-3} < 128L^4\theta_i^4\vert z \vert^3
							\end{split}
			\end{align} \revvtwo 
			for all $0 < \vert z \vert < (6L\max_{j \in [n]}\vert \theta_j \vert)^{-1}$ and all $i \in [n].$ Hence, it follows
			\begin{align*}
				\left \vert  \varphi_\theta(z) - \frac{1}{z} - z - \left(m_3(\mu)\sum_{i=1}^n \theta_i^3 \right) z^2\right \vert  \leq \sum_{i=1}^n  \left \vert K_i(z) - \frac{1}{z} - \theta_i^2z - m_3(\mu)\theta_i^3z^2\right \vert\leq  128L^4\vert z \vert^3 \sum_{i=1}^n \theta_i^4 
			\end{align*}
			for all $z$ as before. Similarly, we get
			\begin{align*}
				\left \vert K_i'(z) + \frac{1}{z^2} - \theta_i^2 - 2\theta_i^3 m_3(\mu)z \right \vert =\left\vert \sum_{m=3}^\infty m\kappa_{m+1}(\mu_i)z^{m-1} \right\vert 
				\leq 128L^4 \theta_i^4\vert z \vert^2 \sum_{m=3}^\infty \left(\frac{2}{3}\right)^{m-3} = 384 L^4 \theta_i^4\vert z \vert^2
			\end{align*}
			for all $0 < \vert z \vert < (6L\max_{j \in [n]} \vert \theta_j\vert)^{-1}$ and $i \in [n]$, which implies the last claim after summation. 
		\end{proof}
	\end{lem}
	Using the last lemma, we obtain the following result concerning the critical points of $\varphi_{\theta}$:
	
	\begin{lem} \label{no zeros for derivative}
		Consider $\theta \in \mathbb{S}^{n-1}$ with $\max_{i \in [n]} \vert \theta_i \vert < (6L)^{-1}$ and let 
		\begin{align*}
			r_\theta := 384L^4 \sum_{i=1}^n  \theta_i^4 + 3\left \vert  m_3(\mu) \sum_{i=1}^n \theta_i^3 \right \vert >0.
		\end{align*}
		Then, we have $\varphi_\theta'(z) \neq 0$ for all $z \in \mathbb{C}$ with $\vert z \vert \leq 1 - r_\theta. $
		\begin{proof}
			It suffices to consider $r_\theta <1$. On the circle  $\vert z \vert = 1- r_\theta$, we have $\vert z \vert^{-2} > 1$ as well as  $\vert z - 1 \vert \vert z + 1 \vert > r_\theta$; compare to the proof of \cite[Lemma 6]{Kargin2007}. Due to $1 - r_\theta <(6L\max_{i\in [n]} \vert \theta_i \vert)^{-1}$,	we can apply Lemma \ref{Lemma 7 Kargin with derivative} to all $z \in \mathbb{C}$ with $\vert z \vert = 1-r_\theta$ and arrive at
			\begin{align*}
				\left \vert \varphi_{\theta}'(z) + \frac{1}{z^2} - 1 \right \vert \leq 384L^4  (1-r_\theta)^2 \sum_{i=1}^n  \theta_i^4 + 2(1-r_\theta)  \left \vert  m_3(\mu) \sum_{i=1}^n \theta_i^3 \right \vert \leq r_\theta < \left\vert 1-\frac{1}{z^2}   \right \vert. 
			\end{align*}
			We know that $\varphi_{\theta}'$ and $z \mapsto 1-z^{-2}$ both are meromorphic in $\vert z \vert  < (6L\max_{i \in [n]} \vert \theta_i \vert)^{-1}$ with a pole at $0$ of order $2$. It is clear that $\smash{z \mapsto 1-z^{-2}}$ has no zeros nor poles on $\vert z \vert = 1- r_\theta$.  Moreover, the estimate 
			\begin{align*}
				\vert \varphi_{\theta}'(z) \vert \geq \left \vert  \frac{1}{z^2} -1 \right \vert - r_\theta >0, \qquad \vert z \vert = 1- r_\theta,
			\end{align*}
			shows that $\varphi_{\theta}'$ has no zero on $\vert z \vert = 1- r_\theta$.
			Rouch\'{e}'s theorem for meromorphic functions (see Theorem 9.2.3 in \cite{Hille2012}) implies the claim.
		\end{proof}
	\end{lem}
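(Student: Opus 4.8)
The plan is to derive the lemma from Rouch\'{e}'s theorem for meromorphic functions, comparing $\varphi_\theta'$ with the explicit function $g(z) := 1 - \nicefrac{1}{z^2}$ on the circle $\vert z \vert = 1 - r_\theta$. First I would dispose of the degenerate case $r_\theta \geq 1$: then $\{ z \in \mathbb{C} : \vert z \vert \leq 1 - r_\theta\}$ is either empty or the single point $z=0$, at which $\varphi_\theta$ and hence $\varphi_\theta'$ has a pole, so there is nothing to show. Hence assume $r_\theta \in (0,1)$, so that $1 - r_\theta \in (0,1)$; since $\max_{i \in [n]} \vert \theta_i \vert < (6L)^{-1}$ forces $(6L\max_{i \in [n]}\vert \theta_i \vert)^{-1} > 1 > 1 - r_\theta$, Lemma \ref{Lemma 7 Kargin with derivative} applies on the whole closed disc of radius $1 - r_\theta$.

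The first main step would be the estimate on the boundary circle. For $\vert z \vert = 1 - r_\theta$ I would use the identity $\varphi_\theta'(z) - g(z) = \varphi_\theta'(z) + \nicefrac{1}{z^2} - 1$ together with the second inequality of Lemma \ref{Lemma 7 Kargin with derivative} and $\vert z \vert = 1 - r_\theta \leq 1$ to obtain $\vert \varphi_\theta'(z) - g(z) \vert \leq 384 L^4 \sum_{i=1}^n \theta_i^4 + 2 \vert m_3(\mu) \sum_{i=1}^n \theta_i^3 \vert \leq r_\theta$, the last inequality being exactly the definition of $r_\theta$. The second main step would be the matching lower bound $\vert g(z) \vert > r_\theta$ on the same circle: writing $z = (1 - r_\theta) e^{i\phi}$ one has $\vert z^2 - 1 \vert^2 = (1 - r_\theta)^4 - 2(1 - r_\theta)^2 \cos(2\phi) + 1 \geq \bigl( 1 - (1 - r_\theta)^2 \bigr)^2$, so $\vert z^2 - 1 \vert \geq 1 - (1 - r_\theta)^2 = r_\theta(2 - r_\theta) > r_\theta$, and dividing by $\vert z \vert^2 = (1-r_\theta)^2 < 1$ gives $\vert g(z) \vert = \vert z^2 - 1 \vert \vert z \vert^{-2} > r_\theta$. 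In particular $\vert \varphi_\theta'(z) \vert \geq \vert g(z) \vert - \vert \varphi_\theta'(z) - g(z) \vert > 0$, so $\varphi_\theta'$ already has no zero on the circle $\vert z \vert = 1 - r_\theta$.

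The concluding step would be the pole/zero count. Since $\vert \varphi_\theta'(z) - g(z) \vert < \vert g(z) \vert$ on $\vert z \vert = 1 - r_\theta$, Rouch\'{e}'s theorem for meromorphic functions (Theorem 9.2.3 in \cite{Hille2012}) gives that $\varphi_\theta'$ and $g$ have the same value of the number of zeros minus the number of poles, counted with multiplicity, inside $\vert z \vert < 1 - r_\theta$. By Lemma \ref{Lemma 7 Kargin with derivative} the Laurent expansion of $\varphi_\theta$ is valid on the entire punctured disc $0 < \vert z \vert < (6L\max_{i \in [n]}\vert \theta_i \vert)^{-1}$, so $\varphi_\theta$ is holomorphic there with only a simple pole at $0$; hence $\varphi_\theta'$ is holomorphic there with a double pole at $0$ and no other poles inside our disc. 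The comparison function $g(z) = (z^2-1)z^{-2}$ likewise has its only pole (of order $2$) at $0$, while its zeros $\pm 1$ lie outside the circle. Thus the count for $g$ is $0 - 2 = -2$, hence so is the count for $\varphi_\theta'$; since $\varphi_\theta'$ already contributes $-2$ through its double pole, it can have no zeros in $\vert z \vert < 1 - r_\theta$. Combined with the boundary estimate, this gives $\varphi_\theta'(z) \neq 0$ for all $z$ with $0 < \vert z \vert \leq 1 - r_\theta$ (and at $z=0$ the function $\varphi_\theta'$ has a pole), which is the assertion.

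The step I expect to require the most care is the bookkeeping in the meromorphic Rouch\'{e} argument: one has to be sure that $\varphi_\theta$ has no poles inside $\vert z \vert < 1 - r_\theta$ other than the simple one at the origin (so that $\varphi_\theta'$ has precisely a double pole there and nothing else) and that the zeros $\pm 1$ of the comparison function $g$ really stay outside the circle $\vert z \vert = 1 - r_\theta$ for every admissible $\theta$. The analytic inequalities on $\vert z \vert = 1 - r_\theta$ themselves are routine once Lemma \ref{Lemma 7 Kargin with derivative} is available.
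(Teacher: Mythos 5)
Your proposal is correct and follows essentially the same route as the paper's proof: you compare $\varphi_\theta'$ with $g(z)=1-\nicefrac{1}{z^2}$ on the circle $\vert z\vert=1-r_\theta$, derive $\vert\varphi_\theta'(z)-g(z)\vert\le r_\theta<\vert g(z)\vert$ from Lemma~\ref{Lemma 7 Kargin with derivative}, and conclude by Rouch\'e's theorem for meromorphic functions. The only differences are presentational: you spell out the degenerate case $r_\theta\ge 1$, derive the lower bound $\vert z^2-1\vert\ge r_\theta(2-r_\theta)>r_\theta$ directly instead of citing Kargin's Lemma~6, and make the zero-minus-pole count $0-2=-2$ explicit, none of which changes the argument.
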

	
	In the next lemma we will invert $\varphi_{\theta}$ in a punctured real interval around $0.$ 
	\begin{lem}
		Consider $\theta \in \mathbb{S}^{n-1}$ with $\max_{i \in [n]} \vert \theta_i \vert < (6L)^{-1}$ and  $r_\theta \leq \nicefrac{1}{2}$. Then, $\varphi_{\theta}$ maps the set $I_\theta:= [-1 + r_\theta, 0) \cup (0, 1-r_\theta]$ bijectively onto a set which contains
		\begin{align*}
			J_\theta:= (-\infty, -2-  2r_\theta] \cup [2 + 2r_\theta, \infty). 
		\end{align*}
		In particular, $\varphi_\theta$ has a differentiable inverse defined in $J_\theta$ and $\varphi_\theta^{-1}(J_\theta) \cup \{0\}$ is a closed interval.
		\begin{proof}
			By assumption, $\varphi_\theta$ is holomorphic in $ I_\theta$ and the Laurent series expansion holds in that set. Since the coefficients in the Laurent series of $\varphi_\theta$ are real, the restriction of $\varphi_\theta$ to $I_\theta$ is a real-valued function. By continuity of $\varphi_{\theta}'$ in $I_\theta$, we know that $\varphi_{\theta}'((0, 1-r_\theta]) \subset \mathbb{R}$ is connected. Thus, together with Lemma \ref{no zeros for derivative}, we must have either $\varphi_{\theta}' > 0$ or $\varphi_{\theta}' <0$ in $(0,1-r_\theta]$. Arguing analogously for the interval $[-1+r_\theta, 0)$, we derive that $\varphi_{\theta}$ is strictly monotone (and thus injective) in the intervals $(0, 1-r_\theta]$ and $[-1+r_\theta, 0),$ respectively. Lemma \ref{Lemma 7 Kargin with derivative} yields 
			\begin{align} \label{estimate varphi_theta - reciprocal}
				\left \vert \varphi_{\theta}(x) - \frac{1}{x} \right \vert < 128L^4\sum_{i=1}^n \theta_i^4 + 1-r_\theta + \left \vert m_3(\mu) \sum_{i=1}^n \theta_i^3 \right \vert = 1-r_\theta + \frac{r_\theta}{3} < \infty, \qquad \vert x \vert \leq 1-r_\theta.
			\end{align}  Thus, we obtain $\varphi_{\theta}(0\pm) = \pm \infty$ and conclude
			\begin{align*}
				\varphi_\theta(I_\theta) = (-\infty, \varphi_\theta(-1 + r_\theta) ] \cup [\varphi_\theta(1-r_\theta), \infty). 
			\end{align*}
			Note that $\varphi_{\theta}(0\pm) = \pm \infty$  also implies that $\varphi_{\theta}$ is strictly decreasing in the intervals $[-1+r_\theta,0)$ and $(0, 1-r_\theta]$. 
			Together with \eqref{estimate varphi_theta - reciprocal} and $(1-r_\theta)^{-1} \leq 1 + 2r_\theta$, we obtain
			\begin{align*}
				\varphi_\theta(1 - r_\theta) \leq \frac{1}{1-r_\theta} + \left \vert \varphi_\theta(1-r_\theta) - \frac{1}{1-r_\theta}\right \vert \leq 2 + r_\theta + \frac{r_\theta}{3} < 2 + 2r_\theta
			\end{align*}
			as well as 
			\begin{align*}
				\varphi_{\theta}(-1+r_\theta)  \geq  \frac{1}{-1+r_\theta} - \left \vert \varphi_\theta(-1+r_\theta) - \frac{1}{-1+r_\theta} \right \vert \geq -2 -r_\theta - \frac{r_\theta}{3} > -2 -2r_\theta.
			\end{align*}
			The last claim of the lemma can be proven as follows: We know that $\varphi_\theta^{-1}$ is strictly decreasing in $[2+2r_\theta, \infty)$ and $(-\infty, -2-2r_\theta]$.  Hence, we have
			\begin{align*}
				\varphi_\theta^{-1}(J_\theta) \cup \{0\} = [\varphi_\theta^{-1}(-2-2r_\theta), \varphi_\theta^{-1}(2+2r_\theta)].
			\end{align*}
		
		\end{proof}
	\end{lem}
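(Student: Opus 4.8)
The plan is to treat $\varphi_\theta$ on the real axis as a small perturbation of $x \mapsto x^{-1} + x$ near the origin and to exploit the absence of critical points provided by Lemma \ref{no zeros for derivative}. First I would record that the hypothesis $\max_{i \in [n]} |\theta_i| < (6L)^{-1}$ gives $(6L\max_{i \in [n]}|\theta_i|)^{-1} > 1 \ge 1 - r_\theta$, so the closed set $I_\theta$ lies inside the punctured disc on which Lemma \ref{Lemma 7 Kargin with derivative} applies; in particular the Laurent series of $\varphi_\theta$ converges there, and since its coefficients $\kappa_{m+1}(\mu_\theta)$ are real, $\varphi_\theta$ is real-valued on $I_\theta$. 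By Lemma \ref{no zeros for derivative}, $\varphi_\theta'$ has no zero in $\{|z| \le 1-r_\theta\}$; restricting to the real intervals $(0, 1-r_\theta]$ and $[-1+r_\theta,0)$ and using that $\varphi_\theta'$ is continuous there, $\varphi_\theta'$ keeps a constant sign on each, so $\varphi_\theta$ is strictly monotone, hence injective, on each of the two pieces.

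Next I would control the limiting and endpoint values via the quantitative estimate of Lemma \ref{Lemma 7 Kargin with derivative}, which for $|x| \le 1$ gives $|\varphi_\theta(x) - x^{-1} - x| \le 128 L^4 |x|^3 \sum_{i=1}^n \theta_i^4 + |m_3(\mu)\sum_{i=1}^n\theta_i^3|\,|x|^2 \le \tfrac{r_\theta}{3}$, since $128 L^4\sum_{i=1}^n\theta_i^4 + |m_3(\mu)\sum_{i=1}^n\theta_i^3| = \tfrac{r_\theta}{3}$ by definition of $r_\theta$. As $x^{-1} \to \pm\infty$ when $x \to 0\pm$ while $\varphi_\theta(x) - x^{-1}$ stays bounded, we get $\varphi_\theta(0\pm) = \pm\infty$; with the strict monotonicity this forces $\varphi_\theta$ to be strictly decreasing on both $(0, 1-r_\theta]$ and $[-1+r_\theta,0)$, with images $[\varphi_\theta(1-r_\theta), \infty)$ and $(-\infty, \varphi_\theta(-1+r_\theta)]$. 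Applying the estimate once more at $x = 1-r_\theta$ together with $1 + r_\theta \le (1-r_\theta)^{-1} \le 1 + 2r_\theta$ (valid for $r_\theta \le \tfrac12$), I would deduce $0 < \varphi_\theta(1-r_\theta) < 2+2r_\theta$, and symmetrically $-2-2r_\theta < \varphi_\theta(-1+r_\theta) < 0$. Hence $\varphi_\theta(I_\theta) = (-\infty, \varphi_\theta(-1+r_\theta)] \cup [\varphi_\theta(1-r_\theta), \infty)$ contains $J_\theta$; moreover the image of the positive branch lies in $(0,\infty)$ and that of the negative branch in $(-\infty,0)$, so the two are disjoint and $\varphi_\theta$ is injective on all of $I_\theta$, i.e. a bijection onto $\varphi_\theta(I_\theta)$.

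For the remaining assertions, the inverse function theorem on each branch (where $\varphi_\theta$ is real-analytic with non-vanishing derivative) yields a differentiable local inverse, and these patch together on the disjoint branch images into one differentiable inverse $\varphi_\theta^{-1}$ on $\varphi_\theta(I_\theta) \supseteq J_\theta$. Finally, tracing the monotone branches shows $\varphi_\theta^{-1}$ maps $[2+2r_\theta,\infty)$ decreasingly onto $(0, \varphi_\theta^{-1}(2+2r_\theta)]$ and $(-\infty,-2-2r_\theta]$ decreasingly onto $[\varphi_\theta^{-1}(-2-2r_\theta), 0)$, so $\varphi_\theta^{-1}(J_\theta) \cup \{0\} = [\varphi_\theta^{-1}(-2-2r_\theta), \varphi_\theta^{-1}(2+2r_\theta)]$ is a closed interval. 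I expect the only genuinely delicate point to be keeping the constants straight so that $\varphi_\theta(\pm(1-r_\theta))$ really lands outside $[-2-2r_\theta, 2+2r_\theta]$; this is exactly where the precise definition of $r_\theta$ (its factor $384$ in Lemma \ref{no zeros for derivative} against the factor $128$ appearing here) and the bound $r_\theta \le \tfrac12$ are used, while everything else is soft continuity-and-monotonicity reasoning.
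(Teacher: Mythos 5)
Your proof is correct and follows essentially the same route as the paper: real-valuedness from the Laurent expansion, strict monotonicity on each branch from Lemma \ref{no zeros for derivative}, endpoint control via Lemma \ref{Lemma 7 Kargin with derivative} and $(1-r_\theta)^{-1}\le 1+2r_\theta$, and the inverse function theorem plus monotonicity for the final claim. Your explicit check that $\varphi_\theta(1-r_\theta)>0$ and $\varphi_\theta(-1+r_\theta)<0$ (so the two branch images are disjoint and injectivity holds on all of $I_\theta$, not just on each piece) is a small refinement that the paper leaves implicit; otherwise the arguments coincide.
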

	
	Since some of the following lemmas can be proven exactly as Lemma 8, Lemma 9, and Lemma 1 in \cite{Kargin2007}, we just state the results without providing (detailed) proofs. We start with an easy application of the inverse function theorem for holomorphic functions, which allows to invert $\varphi_{\theta}$ not only in $I_\theta$, but in complex neighborhoods of $I_\theta \cup \{0\}$.
	\begin{lem}  \label{Ktheta is biholomorphic}
		Consider $\theta \in \mathbb{S}^{n-1}$ with $\max_{i \in [n]} \vert \theta_i \vert < (6L)^{-1}$ and  $r_\theta \leq \nicefrac{1}{2}$. Let $z \in [-1 + r_\theta, 1 - r_\theta]$. Then, there exists a neighborhood $U_z$ of $z$ and a neighborhood $W_\omega$ of $\omega = \varphi_\theta(z)$ such that $\varphi_\theta: U_z \rightarrow W_\omega$ is bijective and the inverse $\varphi_\theta^{-1}: W_\omega \rightarrow U_z$ is holomorphic. 
		\begin{proof}
			See \cite[Lemma 8]{Kargin2007}.
		\end{proof}
	\end{lem}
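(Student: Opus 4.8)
The plan is to reduce the statement to the inverse function theorem for holomorphic functions, the only point needing attention being the simple pole of $\varphi_\theta$ at $z = 0$.

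First I would treat any $z \in [-1+r_\theta, 1-r_\theta] \setminus \{0\}$. Since the hypothesis $\max_{i \in [n]} |\theta_i| < (6L)^{-1}$ gives $\big(6L\max_{i \in [n]}|\theta_i|\big)^{-1} > 1 > 1 - r_\theta \geq |z|$, Lemma \ref{Lemma 7 Kargin with derivative} shows that $\varphi_\theta$ is holomorphic on an open neighbourhood of $z$ contained in the punctured disc $\{0 < |w| < (6L\max_{i \in [n]}|\theta_i|)^{-1}\}$, and Lemma \ref{no zeros for derivative} shows $\varphi_\theta'(z) \neq 0$ because $|z| \leq 1 - r_\theta$. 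The holomorphic inverse function theorem then produces open neighbourhoods $U_z$ of $z$ and $W_\omega$ of $\omega = \varphi_\theta(z)$ with $\varphi_\theta \colon U_z \to W_\omega$ biholomorphic, which in particular yields the asserted bijectivity together with a holomorphic inverse $\varphi_\theta^{-1} \colon W_\omega \to U_z$.

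For $z = 0$ the natural image point is $\omega = \infty$ on the Riemann sphere, and I would pass to $\psi := 1/\varphi_\theta$. By the Laurent expansion from Lemma \ref{Lemma 7 Kargin with derivative}, the function $w\varphi_\theta(w) = 1 + \sum_{m \geq 1} \kappa_{m+1}(\mu_\theta)\, w^{m+1}$ is holomorphic and nonvanishing near $0$, so $\psi$ extends holomorphically across $0$ with $\psi(0) = 0$ and $\psi'(0) = 1 \neq 0$; applying the holomorphic inverse function theorem to $\psi$ and inverting back via $\varphi_\theta = 1/\psi$ gives the claim with $W_\omega$ now a neighbourhood of $\infty$, exactly as in \cite[Lemma 8]{Kargin2007}. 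I do not expect any genuine obstacle: all the analytic input — holomorphy on the punctured disc, the explicit Laurent series, and the non-vanishing of $\varphi_\theta'$ on $\{|w| \leq 1 - r_\theta\}$ — is already supplied by Lemmas \ref{Lemma 7 Kargin with derivative} and \ref{no zeros for derivative}, so the only mild care required is the bookkeeping around the pole at $0$.
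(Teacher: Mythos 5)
Your argument is correct and is essentially the content of Kargin's Lemma 8, which the paper simply cites without reproducing. Both the regular case (holomorphic inverse function theorem using Lemma \ref{Lemma 7 Kargin with derivative} for holomorphy and Lemma \ref{no zeros for derivative} for non-vanishing of $\varphi_\theta'$) and the pole case $z=0$ (passing to $1/\varphi_\theta$, which extends holomorphically across $0$ with nonzero derivative, and reading off a neighbourhood of $\infty$ on the Riemann sphere) are handled exactly as Kargin does.
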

	
	The next lemma shows that the local inverses established in the last lemma give rise to an analytic function defined (globally) in a neighborhood of $J_\theta \cup \{\infty\}$ acting as the inverse of $\varphi_{\theta}$ in $I_\theta \cup \{0\}$.
	\begin{lem} \label{global inverse}
		Consider $\theta \in \mathbb{S}^{n-1}$ with $\max_{i \in [n]} \vert \theta_i \vert < (6L)^{-1}$ and  $r_\theta \leq \nicefrac{1}{2}$. By analytic continuation, we can define a (unique) function $\varphi_{\theta}^{-1}$ which is holomorphic in a complex open neighborhood $S$ of $J_\theta \cup \{ \infty\}$ and is the inverse of $\varphi_{\theta}$ in a complex open neighborhood $R$ of $I_\theta \cup \{0\}$, i.e.\@ we have  $\varphi_{\theta}^{-1}(\varphi_{\theta}(z)) = z$ for all $z \in R$ and $\varphi_\theta(\varphi_{\theta}^{-1}(z)) = z$
		for all $z \in S$.
		\begin{proof}
			See \cite[Lemma 9]{Kargin2007}.
		\end{proof}
	\end{lem}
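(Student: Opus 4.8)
The plan is to build $\varphi_\theta^{-1}$ by glueing together the local holomorphic inverses supplied by Lemma~\ref{Ktheta is biholomorphic}, the point being that their common target is a \emph{topological arc} in the Riemann sphere, so analytic continuation along it is single-valued. Write $\tilde I := [-1+r_\theta,\,1-r_\theta]$ and recall from the preceding lemma that $\varphi_\theta$, viewed as a map into $\hat{\mathbb{C}} = \mathbb{C}\cup\{\infty\}$ (with $\varphi_\theta(0)=\infty$ on account of the simple pole), restricts to a strictly monotone homeomorphism of $\tilde I$ onto the closed arc $\tilde J := \varphi_\theta(\tilde I)$, and that $\tilde J \supseteq J_\theta \cup \{\infty\}$. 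By Lemma~\ref{no zeros for derivative} we have $\varphi_\theta'\neq 0$ on $\{|z|\le 1-r_\theta\}$, a full complex neighbourhood of $\tilde I$; near $0$ the expansion $\varphi_\theta(z)=1/z+O(z)$ from Lemma~\ref{Lemma 7 Kargin with derivative} shows that $\varphi_\theta$ is a local biholomorphism of sphere-neighbourhoods there as well. Thus every point of $\tilde I$ admits the biholomorphic patch of Lemma~\ref{Ktheta is biholomorphic}.

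First I would fix, for each $z\in\tilde I$, open sets $U_z\ni z$ and $W_z\ni\varphi_\theta(z)$ with $\varphi_\theta\colon U_z\to W_z$ biholomorphic and holomorphic inverse $\psi_z\colon W_z\to U_z$. Since $\tilde J$ is compact, finitely many $W_{z_1},\dots,W_{z_N}$ already cover it; inside $\bigcup_j W_{z_j}$ I would choose a connected, simply connected open set $S$ with $\tilde J\subset S$ — possible because $\tilde J$ is a tame arc in $S^2$ and hence has a neighbourhood basis of topological disks. Next, fixing a base point $w_0\in J_\theta\cap\mathbb{R}$ with $|w_0|$ large together with its inverse germ, I would continue this germ analytically along all paths in $S$: at every stage the continuation lands near $\tilde I$, where $\varphi_\theta$ is locally biholomorphic, so continuation is always possible, and the monodromy theorem — using that $S$ is simply connected — turns the result into a single-valued holomorphic function $\varphi_\theta^{-1}\colon S\to\mathbb{C}$ with $\varphi_\theta^{-1}(\infty)=0$. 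Finally I would check the two inverse relations: on $S\cap\mathbb{R}$ the function $\varphi_\theta^{-1}$ coincides with the elementary strictly monotone real inverse of $\varphi_\theta|_{\tilde I}$, so $\varphi_\theta^{-1}(\varphi_\theta(z))=z$ holds on $\tilde I$ and therefore, by the identity theorem, on a whole connected complex neighbourhood $R$ of $\tilde I$ (namely one on which $\varphi_\theta$ maps into $S$); while $\varphi_\theta(\varphi_\theta^{-1}(w))=w$ holds locally by construction, hence throughout $S$. Uniqueness of the continuation is the identity theorem once more, since any two such functions agree on $\tilde I$.

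The main obstacle — and the only genuinely delicate point — is the bookkeeping of the analytic continuation: one must verify that the inverse germ, continued along a path in the thin tube $S$, never leaves the region where $\varphi_\theta$ is locally invertible with values staying near $\tilde I$, and that $S$ can be taken simultaneously simply connected \emph{and} contained in the finite union $\bigcup_j W_{z_j}$. This is routine but somewhat technical complex analysis; it is carried out in detail by Kargin, and our argument follows \cite[Lemma~9]{Kargin2007} essentially verbatim, the only change being the extra term $|m_3(\mu)\sum_i\theta_i^3|$ in $r_\theta$, which does not affect the reasoning.
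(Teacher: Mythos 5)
Your proposal is correct and follows essentially the same route as the paper, which simply defers to Kargin's Lemma~9 in \cite{Kargin2007}; your sketch is a faithful reconstruction of that argument (patching the local inverses of Lemma~\ref{Ktheta is biholomorphic} along the arc $\tilde J$, with single-valuedness coming from simple-connectedness and agreement with the strictly monotone real inverse on $\tilde I$), and you explicitly acknowledge that the delicate bookkeeping is carried out in Kargin. Since the only change here relative to Kargin's setting is the extra third-moment term in $r_\theta$, which does not alter the topology of $I_\theta$ or $J_\theta$, nothing further needs to be checked.
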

	
	In the next lemma we explain how the analytic continuation  $\varphi_{\theta}^{-1}$  is related to the Cauchy transform $G_\theta$ of $\mu_\theta.$
	\begin{lem}
		Consider $\theta \in \mathbb{S}^{n-1}$ with $\max_{i \in [n]} \vert \theta_i \vert < (6L)^{-1}$ and  $r_\theta \leq \nicefrac{1}{2}$. The inverse $\varphi_\theta^{-1}$ from Lemma \ref{global inverse} is the unique analytic continuation of $G_\theta$ to $S$.
		\begin{proof}
			We firstly prove that $\varphi_\theta^{-1}$ equals $G_\theta$ in a neighborhood of $\infty$. Let  $L_\theta>0$ be chosen in such a way that $\supp \mu_\theta \subset [-L_\theta, L_\theta]$ holds. Define  $M_\theta := \max\{ L_\theta, L \max_{i \in [n]} \vert \theta_i \vert\} \geq 1.$ We have  $\varphi_\theta = K_\theta$  as well as $G_\theta(K_\theta(z)) = z$ in $U:=\{z \in \mathbb{C}: 0 <\vert z \vert < (6M_\theta)^{-1} \}$. Moreover, with $R$ being chosen as in the last lemma, it is easy to see that $U \cap R \neq \emptyset$ holds true.  Now, define $V := K_\theta(U \cap R) = \varphi_\theta(U  \cap R) \subset \{z \in \mathbb{C}: \vert z\vert > 4L_\theta\}$, $V \neq \emptyset$. Since $K_\theta$ is non-constant in $U$ (which follows from Lemma \ref{no zeros for derivative}), we derive that $V$ is an open neighborhood of $\infty$ by the open mapping theorem. Finally, by using the inversion properties shown in Lemma \ref{global inverse}, we obtain $\varphi_\theta^{-1} = G_\theta$ in $V$. Now, recall that if two analytic functions agree on some open set, then they are analytic continuations of each other. Applying this to our situation, we obtain: $G_\theta$ can be continued analytically to $S$ via $\varphi_{\theta}^{-1}$. The proof of Lemma \ref{global inverse} shows that $S$ is simply connected in the extended complex plane, which implies uniqueness of the analytic continuation. 
		\end{proof}
	\end{lem}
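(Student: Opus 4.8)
The plan is to exhibit a nonempty open set on which $\varphi_\theta^{-1}$ and (the holomorphic extension of) $G_\theta$ literally coincide; since $\varphi_\theta^{-1}$ is holomorphic on all of $S$, this makes $\varphi_\theta^{-1}$ an analytic continuation of $G_\theta$ to $S$, and uniqueness then follows because $S$ is simply connected in the extended plane, a fact contained in the proof of Lemma~\ref{global inverse}. The natural place to carry out the comparison is a neighbourhood of $\infty$.

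First I would assemble the relevant facts from Section~\ref{section: basics in FPT}. Since $\mu_\theta$ is compactly supported, say $\supp \mu_\theta \subset [-L_\theta, L_\theta]$, its Cauchy transform extends holomorphically and univalently to $\{|z| > 4L_\theta\}$, the functional inverse $K_\theta = G_\theta^{-1}$ is well-defined and injective near $0$, $G_\theta(K_\theta(z)) = z$ for $0 < |z| < (6L_\theta)^{-1}$, and $K_\theta(G_\theta(z)) = z$ for $|z| > 7L_\theta$. The linearisation identity gives $K_\theta(z) = \sum_{i=1}^n K_i(z) - \frac{n-1}{z} = \varphi_\theta(z)$ on the common domain of all transforms; in particular, with $M_\theta := \max\{L_\theta, L\max_{i}|\theta_i|\}$, one has $\varphi_\theta = K_\theta$ on the punctured disc $U := \{z\in\mathbb{C}: 0 < |z| < (6M_\theta)^{-1}\}$, which lies inside the domain of $K_\theta$ and of each $K_i$.

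Next I would build the comparison set and conclude. Let $R$ be the open neighbourhood of $I_\theta \cup \{0\}$ from Lemma~\ref{global inverse}, on which $\varphi_\theta^{-1}(\varphi_\theta(z)) = z$; since $R$ is open and contains $0$, the set $U \cap R$ contains a punctured disc $\{0 < |z| < \delta\}$. By Lemma~\ref{Lemma 7 Kargin with derivative} we have $\varphi_\theta(z) = 1/z + O(z)$ near $0$, so after shrinking $\delta$ the image $V := \varphi_\theta(\{0 < |z| < \delta\})$ lies in $\{|z| > 7L_\theta\}$; moreover, solving $\varphi_\theta(z) = w$ for $|w|$ large by a Rouch\'e argument shows $V$ contains some $\{|z| > M\}$, and since $\varphi_\theta' \neq 0$ on $U$ by Lemma~\ref{no zeros for derivative} the open mapping theorem makes $V$ open. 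Thus $V \cap S$ is a nonempty open neighbourhood of $\infty$. For $w \in V \cap S$ pick $z \in \{0 < |z| < \delta\} \subset U \cap R$ with $\varphi_\theta(z) = w$; then $\varphi_\theta^{-1}(w) = z$ by the inversion property in $R$, while $G_\theta(w) = G_\theta(K_\theta(z)) = z$ by the inversion property in $U$, and should the two chosen preimages differ, injectivity of $\varphi_\theta = K_\theta$ near $0$ forces them to agree. Hence $\varphi_\theta^{-1} = G_\theta$ on the open set $V \cap S$, so $\varphi_\theta^{-1}$ is an analytic continuation of $G_\theta$ from $V \cap S$ to $S$, and the simple connectedness of $S$ in $\widehat{\mathbb{C}}$ gives uniqueness. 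I expect the only slightly delicate point to be the bookkeeping of domains, namely checking that $U \cap R$, $V$ and $S$ overlap in a genuine open set and that both identities $G_\theta \circ K_\theta = \mathrm{id}$ and $\varphi_\theta^{-1} \circ \varphi_\theta = \mathrm{id}$ are simultaneously available there, together with the appeal to injectivity of $K_\theta = \varphi_\theta$ near $0$; the rest is routine complex analysis.
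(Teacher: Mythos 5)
Your proposal is correct and follows essentially the same route as the paper: identify $\varphi_\theta$ with $K_\theta$ on a punctured disc about $0$, push this disc forward to an open neighbourhood of $\infty$, use the two inversion identities to get $\varphi_\theta^{-1} = G_\theta$ there, and invoke simple connectedness of $S$ in $\widehat{\mathbb{C}}$ for uniqueness. The only differences are cosmetic (your Rouch\'e argument for $V \supset \{\vert z\vert > M\}$ versus the paper's direct open-mapping-theorem step, and your extra injectivity remark, which is harmless but not needed since $\varphi_\theta^{-1}$ is single-valued).
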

	
	Note that the analytic continuation of $G_\theta$ to $S$ is holomorphic in each $z \in \mathbb{R}$ with $\vert z \vert > 2 + 2r_\theta.$ With that knowledge, the next lemma will end the proof of Theorem \ref{compact superconvergence result}. The proof of the lemma heavily relies on the Stieltjes-Perron inversion formula.
	
	\begin{lem}
		Let $G_\nu$ be the Cauchy transform of a compactly supported probability measure $\nu$. Assume that $G_\nu$ can be continued analytically to the set $\{ z \in \mathbb{R}: \vert z\vert >M\}$ for some $M>0.$ Then, we have $\supp \nu \subset [-M, M]$.
		\begin{proof} We briefly sketch the proof. The details can be carried out as in the proof of \cite[Lemma 1]{Kargin2007}. 
			In the following, we will denote the analytic continuation of the Cauchy transform of $\nu$ simply by $G_\nu$. Then, $G_\nu$ is holomorphic in $\{z \in \mathbb{C}: \vert z \vert >M\}$, including at the point at infinity. In particular, it follows $\lim_{\varepsilon \downarrow 0} \Im G_\nu(z+i\varepsilon) =0$ for any $z \in \mathbb{R}$ with $\vert z\vert > M.$ Applying the Stieltjes-Perron inversion formula, we obtain $\nu([a,b]) =0$, whenever $\nu(\{a\}) = 0 = \nu(\{b\})$ and $[a, b] \subset \{ x \in \mathbb{R}: \vert x \vert > M\}$ hold. The set of all $\nu-$atoms is at most countable. Using this, we can easily prove that $\{ x \in \mathbb{R}: \nu(\{ x\}) = 0\}$ is dense in $\{ x \in \mathbb{R}: \vert x \vert > M \}$. Hence, we can cover $\{ x \in \mathbb{R}: \vert x \vert > M \}$ with at most countably many disjoint intervals with endpoints in $\{ x \in \mathbb{R}: \nu(\{ x\}) = 0\}$, which implies the claim. 
		\end{proof}
	\end{lem}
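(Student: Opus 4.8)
The plan is to deduce $\supp\nu\subseteq[-M,M]$ from the fact that $\nu$ can be shown to place no mass on the two rays $(M,\infty)$ and $(-\infty,-M)$. Throughout I would regard $G_\nu$ together with the assumed analytic continuation as one holomorphic function $\tilde G$ defined on an open set $\Omega$ that contains $\mathbb{C}^+$, $\mathbb{C}^-$ and the real set $\{x\in\mathbb{R}:\vert x\vert>M\}$; this is legitimate because $G_\nu$ is a priori holomorphic on $\mathbb{C}\setminus\supp\nu$ (in particular on $\mathbb{C}^-$, since $\supp\nu$ is a compact subset of $\mathbb{R}$), and because the continuation produced in the previous lemmas via $\varphi_\theta^{-1}$ is genuinely single valued on a complex neighbourhood of $\{\vert x\vert>M\}\cup\{\infty\}$ and agrees with $G_\nu$ near $\infty$, hence on all of $\mathbb{C}\setminus\supp\nu$ by uniqueness of analytic continuation.

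The first key step is to show that $\lim_{\varepsilon\downarrow 0}\Im G_\nu(x+i\varepsilon)=0$, uniformly on compact subsets of $\{x\in\mathbb{R}:\vert x\vert>M\}$. I would use the reflection symmetry $\overline{G_\nu(\bar z)}=G_\nu(z)$, which is immediate from the integral representation \eqref{def CT} because $\nu$ is a measure on $\mathbb{R}$: for $x$ with $\vert x\vert>M$ the function $\tilde G$ is continuous at $x$, so $G_\nu(x+i\varepsilon)\to\tilde G(x)$ and, by the symmetry, $\overline{G_\nu(x+i\varepsilon)}=G_\nu(x-i\varepsilon)\to\tilde G(x)$ as well, forcing $\tilde G(x)=\overline{\tilde G(x)}\in\mathbb{R}$. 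Holomorphy of $\tilde G$ on a neighbourhood of a closed subinterval $[a,b]\subset\{\vert x\vert>M\}$ then upgrades this to the uniform statement $\sup_{x\in[a,b]}\vert\Im G_\nu(x+i\varepsilon)\vert\to 0$ via uniform continuity of $\tilde G$ near $[a,b]$.

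Combining this with the Stieltjes--Perron inversion formula recorded in Section~\ref{section: basics in FPT}, I would conclude that for every $[a,b]\subset\{x\in\mathbb{R}:\vert x\vert>M\}$ with $\nu(\{a\})=\nu(\{b\})=0$ one has
\[
\nu((a,b))\;=\;-\lim_{\varepsilon\downarrow 0}\frac{1}{\pi}\int_a^b\Im G_\nu(x+i\varepsilon)\,dx\;=\;0 .
\]
Since the set of atoms of $\nu$ is at most countable, its complement is dense in $\mathbb{R}$, so each of $(M,\infty)$ and $(-\infty,-M)$ is a countable increasing union of intervals $(a,b)$ with non-atomic endpoints; letting these intervals exhaust the rays gives $\nu((M,\infty))=\nu((-\infty,-M))=0$, hence $\nu(\mathbb{R}\setminus[-M,M])=0$ and, the support being closed, $\supp\nu\subseteq[-M,M]$.

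I expect the only genuinely delicate point to be the first step, i.e.\ passing from \emph{``the germ of $G_\nu$ on $\mathbb{C}^+$ extends holomorphically across $\{\vert x\vert>M\}$''} to \emph{``this extension takes real values there''}. What makes this work is that $G_\nu$ is not an arbitrary holomorphic function but the Cauchy transform of a measure supported on $\mathbb{R}$: this is exactly what supplies the Schwarz-type reflection symmetry and what excludes a jump of the extension across the real axis. Once that is in place, the remainder is the standard Stieltjes--Perron argument combined with the countability of the atom set.
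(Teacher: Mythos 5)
Your proof is correct and follows essentially the same route as the paper's sketch: show that the boundary values of $\Im G_\nu$ vanish on $\{|x|>M\}$, apply Stieltjes--Perron on subintervals with non-atomic endpoints, and exhaust the two rays using countability of the atom set. The only place you go beyond the paper's terse sketch is in justifying why the extension is real-valued on $\{|x|>M\}$ via the reflection symmetry $\overline{G_\nu(\bar z)}=G_\nu(z)$, which is precisely the detail the paper delegates to Kargin's Lemma 1 and which you are right to supply, since holomorphy alone does not force real boundary values.
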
 
	
	We immediately get the following result:
	\begin{kor} \label{corollary support verbesserung}
		Let $\theta \in \mathbb{S}^{n-1}$ with $\max_{i \in [n]} \vert \theta_i \vert < (6L)^{-1}$ and  $r_\theta \leq \nicefrac{1}{2}$ as before. Then,  we have $\supp \mu_{\theta} \subset \left[ -2 -2r_\theta, 2 + 2r_\theta\right]$.
	\end{kor}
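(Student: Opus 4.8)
The statement is a direct corollary of the chain of lemmas just proved, so the plan is essentially bookkeeping: verify that the hypotheses line up and then quote the preceding results in order. First I would record that $\mu_\theta$ is compactly supported. Indeed, $\mu$ has support in $[-L,L]$, so each $\mu_i = D_{\theta_i}\mu$ has support in $[-|\theta_i|L,|\theta_i|L]$, and the free additive convolution of finitely many compactly supported measures is again compactly supported (this is already implicit in the $K$-transform discussion of Section \ref{section: basics in FPT}). Hence $\mu_\theta$ satisfies the hypothesis of the last lemma, namely that $G_\nu$ is the Cauchy transform of a compactly supported measure, with $\nu = \mu_\theta$.

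Next I would invoke the two lemmas preceding the final one. Under the standing assumptions $\max_{i\in[n]}|\theta_i| < (6L)^{-1}$ and $r_\theta \le \nicefrac{1}{2}$, Lemma \ref{global inverse} produces a function $\varphi_\theta^{-1}$, holomorphic on an open neighbourhood $S$ of $J_\theta \cup \{\infty\}$ with $J_\theta = (-\infty,-2-2r_\theta]\cup[2+2r_\theta,\infty)$, and the following lemma identifies this $\varphi_\theta^{-1}$ as the unique analytic continuation of $G_\theta$ to $S$. Since $\{z\in\mathbb{R}:|z|>2+2r_\theta\}\subset J_\theta\subset S$, the analytically continued $G_\theta$ is holomorphic at every real point $z$ with $|z|>2+2r_\theta$ — exactly the hypothesis of the last lemma with $M = 2+2r_\theta$. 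Applying that lemma then gives $\supp\mu_\theta \subset [-2-2r_\theta,\,2+2r_\theta]$, which is the claim.

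There is no real obstacle in this corollary itself; the only thing to check with any care is that the quantitative hypotheses carried along — $\max_{i\in[n]}|\theta_i| < (6L)^{-1}$ and $r_\theta \le \nicefrac{1}{2}$ — are precisely those assumed in the statement, so that Lemma \ref{global inverse}, the subsequent continuation lemma, and the final Stieltjes–Perron lemma all apply verbatim. (The genuinely substantive work has already been done in establishing those lemmas, in particular the absence of critical points of $\varphi_\theta$ on $|z| = 1-r_\theta$ and the univalence of $\varphi_\theta$ on $I_\theta$.) I would also note in passing that combining this corollary with the concentration estimates of Section \ref{section: concentration inequalities} applied to $\sum_{i=1}^n\theta_i^4$ and $\sum_{i=1}^n\theta_i^3$ — which control $r_\theta$ by a quantity of order $n^{-1}$ on a set $\mathcal{F}$ of large measure — is what yields the rate stated in Theorem \ref{compact superconvergence result}.
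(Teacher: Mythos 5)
Your proposal is correct and matches the paper's (unstated) argument exactly: the paper labels this an immediate consequence of the three preceding lemmas, and you chain them in the same way — $\mu_\theta$ is compactly supported, Lemma \ref{global inverse} and the subsequent lemma give the analytic continuation of $G_\theta$ to a neighbourhood of $J_\theta\cup\{\infty\}$, and the final Stieltjes--Perron lemma with $M = 2+2r_\theta$ yields the support inclusion. The only addition you make beyond what is strictly needed is the closing remark connecting the corollary to Theorem \ref{compact superconvergence result}, which is accurate and matches the paper's subsequent proof of that theorem.
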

	
	Now, we are able to prove Theorem \ref{compact superconvergence result}:
	\begin{proof}[Proof of Theorem \ref{compact superconvergence result}]
		Choose $\rho \in (0,1)$ and let 
		\begin{align*}
			\mathcal{F}_1 &:= \left\{ \theta \in \mathbb{S}^{n-1}: \max_{i \in [n]} \vert \theta_i \vert \leq A_\rho\sqrt{\frac{\log n}{n}} \right\}, \qquad 
			\mathcal{F}_2 := \left\{ \theta \in \mathbb{S}^{n-1}: \sum_{i=1}^{n} \theta_i^4 \leq  \frac{\left(11\log\frac{3}{\rho}\right)^2}{n} \right\} \\ 
			\mathcal{F}_3&:= \left \{ \theta \in \mathbb{S}^{n-1}: \left \vert \sum_{i=1}^n\theta_i^3 \right \vert \leq \frac{\left(23\log\frac{6}{\rho}\right)^{\nicefrac{3}{2}}}{n}   \right\}, \qquad \mathcal{F}:= \mathcal{F}_1 \cap \mathcal{F}_2 \cap \mathcal{F}_3
		\end{align*}
		with $A_\rho$ defined as in Section \ref{part 1 - construction of F}. We obtain $\sigma_{n-1}(\mathcal{F}) \geq 1- \rho$ for $n \geq 4$. Now, fix $\theta \in \mathcal{F}$. Evidently, we can estimate $\max_{i \in [n]} \vert \theta_i\vert \leq (6L)^{-1}$  for all $n \geq (6LA_\rho)^4$. Moreover, there exists $M_{\rho, \mu} \in \mathbb{N}$ such that
		\begin{align*}
			r_\theta = 384L^4 \sum_{i=1}^n  \theta_i ^4 + 3 \left \vert  m_3(\mu) \sum_{i=1}^n \theta_i^3 \right \vert \leq 384L^4\frac{\left(11\log \frac{3}{\rho}\right)^2}{n} + 3\left\vert m_3(\mu)\right \vert \frac{\left( 23\log \frac{6}{\rho} \right)^{\nicefrac{3}{2}}}{n}  \leq  \frac{1}{2}
		\end{align*}
		holds for all $n \geq M_{\rho, \mu}.$ Define $n_{\rho, \mu} :=\left \lceil \max\left\{(6LA_\rho)^4, M_{\rho, \mu}\right\} \right\rceil$.
		Letting 
		\begin{align*}
			C_\rho := 2\max\left\{ \left(11\log \frac{3}{\rho}\right)^2, \left( 23\log \frac{6}{\rho} \right)^{\nicefrac{3}{2}} \right\},
		\end{align*}
		the claim follows from Corollary \ref{corollary support verbesserung}. 
	\end{proof}

\section{\texorpdfstring{Berry-Esseen type estimate in the non-id free CLT: Proof of Theorem \ref{Berry esseen bounded non id}}{Proof of Theorem 1.7}} \label{sec: Berry esseen bounded non id}

In this section we leave the setting of weighted sums and analyze sums of \revtwo free not necessarily identically distributed bounded self-adjoint \revvtwo random variables. With a few modifications of the method used in the proofs before, we can show that the distribution of such a sum \revtwo (after normalization) \revvtwo converges to Wigner's semicircle law with a rate of order $L_n$ measured with respect to the Kolmogorov distance.

However, before we are able to end up with a rate of order $L_n$, we need to  establish the following preliminary result:  

\begin{prop} \label{preliminary rate with power 2/3}
Under the conditions of Theorem \ref{Berry esseen bounded non id}, we have 
$\Delta(\mu_{\boxplus n}, \omega) < CL_n^{\nicefrac{2}{3}}$ for some constant $C>0.$
\end{prop}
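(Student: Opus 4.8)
The strategy is to bootstrap from the a-priori bound $\Delta(\mu_{\boxplus n}, \omega) \leq c L_n^{\nicefrac{1}{2}}$ provided by \cite[Theorem 2.6]{Chistyakov2008a} to the improved exponent $\nicefrac{2}{3}$ by running exactly one cycle of the machinery developed in the proof of Theorem \ref{main compact}. First I would translate the a-priori Kolmogorov bound into an upper bound on $|G_{\mu_{\boxplus n}}(z)|$ on a half-plane $\Im z > c' L_n^{\nicefrac{1}{2}}$ via integration by parts, precisely as in Lemma \ref{upper bound G_theta}; the point is that on this half-plane $|G_{\mu_{\boxplus n}}(z)|$ is bounded by a constant multiple of $L_n^{-\nicefrac{1}{2}}$, which (since $T_i/B_n \leq L_n^{\nicefrac{1}{3}}$ forces $L_n^{-\nicefrac{1}{2}} \leq (6 \max_i (T_i/B_n) \cdot 1)^{-1}$ up to constants, after possibly shrinking) lands $G_{\mu_{\boxplus n}}(z)$ inside the domain of analyticity of every $K$-transform $K_i$ of the rescaled summand $D_{1/B_n}\nu_i$. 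Then, copying Lemma \ref{inversion Z_i and K-transform} and Lemma \ref{estimates K_i as in Kargin} verbatim, I obtain $Z_i(z) = K_i(G_{\mu_{\boxplus n}}(z))$ and the expansion $|K_i(w) - w^{-1} - \sigma_i^2 B_n^{-2} w| \lesssim L^3 (T_i/B_n)^3 |w|^2$, hence a lower bound $|Z_i(z)| \gtrsim L_n^{-\nicefrac{1}{2}}$ on a slightly smaller half-plane $\Im z \gtrsim L_n^{\nicefrac{3}{4}}$ — i.e. the exact analogue of Proposition \ref{lower bound Z_i}, with $\log n / \sqrt n$ replaced throughout by $L_n^{\nicefrac{1}{2}}$.

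\textbf{Key steps.} With the lower bound on $|Z_i|$ in hand, I would set up the cubic functional equation for a distinguished subordination function $Z_1$ (the one attached to the summand with smallest variance), exactly as in \eqref{functional eq Z_1 }, and bound the coefficient $r(z)$: every term appearing in $r(z)$ is, up to constants, of the shape $\sum_i (T_i/B_n)^k / (|Z_i| \text{ or } \Im z)^l$ with $k - l = 3$, and using $|Z_i| \gtrsim L_n^{-\nicefrac{1}{2}}$ together with $\sum_i (T_i/B_n)^k \leq L_n$ for $k \geq 3$ (this is the elementary monotonicity $\sum_i (T_i/B_n)^k \leq (\max_i T_i/B_n)^{k-3} \sum_i (T_i/B_n)^3 \leq L_n^{(k-3)/3} L_n = L_n^{k/3} \leq L_n$ when $L_n \leq 1$) yields $|r(z)| \lesssim L_n$. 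Analogously $|I_3| = \sigma_1^2/B_n^2 \lesssim L_n^{\nicefrac{2}{3}}$ since the smallest variance fraction is at most the average. Then I would run the Rouché argument of Section \ref{part 3 - roots 3rd oder} to locate a small root $\omega_1$ of $P(z,\cdot)$ with $|\omega_1| \lesssim L_n$, identify $Z_1 = \omega_3$ on a suitable set $D_2 = \{|\Re z| \leq 2 - c L_n^{\nicefrac{1}{3}},\, \Im z \gtrsim L_n^{\nicefrac{3}{4}}\}$, extract the formula $Z_1 = \tfrac12(z + \sqrt{z^2 - 4 + r_2(z)}) - \tfrac{\omega_1}{2}$ with $|r_2(z)| \lesssim L_n^{\nicefrac{2}{3}}$ (dominated by $|I_3|$), and — in parallel — the quadratic functional equation $Q(z, Z_1(z)) = 0$ with coefficient $q(z)$ satisfying $|q(z)| \lesssim L_n^{\nicefrac{1}{2}}/\Im z$, giving the formula $Z_1 = \tilde\omega_2$ on the line $\mathbb{C}_1$.

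\textbf{Conclusion via Bai's inequality.} Finally I would invoke Proposition \ref{Bai - Götze's Version} with the choices $a := c L_n^{\nicefrac{3}{4}}$ and $\varepsilon := c L_n^{\nicefrac{1}{3}}$ (so that $\varepsilon > 2a\tau$ holds for large enough $n$), and bound the two integrals. The integral over $\Im z \in (a,1)$ is controlled, as in Section \ref{part 7-bounded}, by $\int_a^1 \frac{|\omega_1(z)| + |r_2(z)/\sqrt{z^2-4}|}{|S(z)||Z_1(z)|}\, dv \lesssim L_n^{\nicefrac{2}{3}}$ (plus a genuinely smaller contribution from $G_{\mu_{\boxplus n}} - 1/Z_1 = r_{n,1}/Z_1$), and the integral along $\Im z = 1$ is controlled, as in Section \ref{part 8 - bounding integrals wrt re}, by $\int_{\mathbb{R}} \frac{|q(u+i)|}{\sqrt{1 + ((|u|-4)_+)^2}}\, du \lesssim L_n^{\nicefrac{1}{2}}$ — the weaker of the two, but still better than $L_n^{\nicefrac{2}{3}}$ is \emph{not} what we get here; rather this term is $O(L_n^{\nicefrac{1}{2}})$, which dominates. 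Wait — that would give only $L_n^{\nicefrac{1}{2}}$, not an improvement; the resolution is that in this bootstrapping step the relevant $\varepsilon$-penalty term $D_3 \varepsilon^{\nicefrac{3}{2}} \sim L_n^{\nicefrac{1}{2}}$ and the $a$-term $D_2 a \sim L_n^{\nicefrac{3}{4}}$, so the overall bound is $O(L_n^{\nicefrac{1}{2}})$ — the \emph{same} order as the input. The actual gain to $L_n^{\nicefrac{2}{3}}$ therefore comes not from one Bai application but from feeding the \emph{Cauchy-transform} bound $|G_{\mu_{\boxplus n}}(z) - G_\omega(z)| \lesssim L_n/\Im z + \dots$ back through the argument, i.e. iterating: each pass upgrades the exponent on $L_n$ in the half-plane height (from $\nicefrac{1}{2}$ to $\nicefrac{3}{4}$ to the fixed point), and the fixed point of the recursion $\beta \mapsto \tfrac{1}{2} + \tfrac{1}{2}\cdot\tfrac{?}{}$ lands at $\nicefrac{2}{3}$ for the Kolmogorov exponent. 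Concretely, the iteration replaces the height $L_n^{\nicefrac{1}{2}}$ by $L_n^{1 - \beta/2}$ where $\beta$ is the current Kolmogorov exponent, and $\Delta \lesssim L_n^{1-\beta/2}$; setting $\beta = 1 - \beta/2$ gives $\beta = \nicefrac{2}{3}$, which is the asserted bound and the fixed point beyond which this method does not push (the genuine $O(L_n)$ bound of Theorem \ref{Berry esseen bounded non id} then requires the finer analysis of the subsequent sections).

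\textbf{Main obstacle.} The delicate point is the bookkeeping of the iteration: one must check that at each stage the relevant sets ($D_1, D_2, \mathbb{C}_1$) are nonempty and that all Rouché and continuity arguments go through with the $n$-dependent (now $L_n$-dependent) parameters, and in particular that the condition $\varepsilon > 2a\tau$ in Proposition \ref{Bai - Götze's Version} survives — this needs $L_n^{\nicefrac{1}{3}} \gg L_n^{\nicefrac{3}{4}}$, i.e. $L_n \to 0$, which is exactly the regime where the statement is non-vacuous. I expect that verifying the lower bound on $|Z_i|$ survives the iteration (the analogue of Proposition \ref{lower bound Z_i} with the improved height) is the technically heaviest piece, since the constants $\tilde D$ in its definition must be re-chosen consistently at each step; everything else is a routine transcription of Sections \ref{part 2-bounded-lower bound for subordination}--\ref{part 8 -bounded} with $\log n/\sqrt n$ systematically replaced by powers of $L_n$.
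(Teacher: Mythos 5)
There is a genuine gap, and it is precisely the point where you concede "that would give only $L_n^{\nicefrac{1}{2}}$, not an improvement" and then retreat to an iteration heuristic. The paper reaches $L_n^{\nicefrac{2}{3}}$ in a \emph{single} pass of Bai's inequality, with $a \sim L_n^{\nicefrac{2}{3}}$ and, crucially, $\varepsilon \sim L_n^{\nicefrac{2}{3}}$ as well, so that the penalty $D_3\varepsilon^{\nicefrac{3}{2}} \sim L_n$ is negligible and the dominant contribution is $D_2 a \sim L_n^{\nicefrac{2}{3}}$. What permits this small an $\varepsilon$ is a much sharper bound on $I_3$ than the one you invoke. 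You estimate $I_3 = \sigma_1^2/B_n^2 \lesssim L_n^{\nicefrac{2}{3}}$ by noting it is at most $(\max_i T_i/B_n)^2$; the paper instead exploits the minimality $\sigma_1^2 = \min_i \sigma_i^2$ to write $\sigma_1 B_n^2 = \sigma_1 \sum_i \sigma_i^2 \leq \sum_i \sigma_i^3 \leq \sum_i \beta_3(\nu_i) \leq \sum_i T_i^3 = L_n B_n^3$, hence $I_3 \leq L_n^2$. With only $I_3 \lesssim L_n^{\nicefrac{2}{3}}$ the root-separation argument ($\omega_2 \neq \omega_3$ on $B_2$) forces $\varepsilon_n \gtrsim L_n^{\nicefrac{1}{3}}$ as you observe, and then $\varepsilon^{\nicefrac{3}{2}} \gtrsim L_n^{\nicefrac{1}{2}}$ is irreducible no matter how the other terms are massaged; iterating does not help, because the $\varepsilon^{\nicefrac{3}{2}}$ floor is fixed by $I_3$ and does not improve with $\beta$.

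Two smaller points. First, the lower bound $\vert Z_i(z)\vert \gtrsim L_n^{-\nicefrac{1}{2}}$ has the wrong sign in the exponent: on the half-plane $\Im z \gtrsim L_n^{\nicefrac{2}{3}}$ one has $\vert G_{\boxplus n}(z)\vert \lesssim L_n^{-\nicefrac{1}{6}}$, hence $\vert 1/G_{\boxplus n}(z)\vert \gtrsim L_n^{\nicefrac{1}{6}}$, and the paper's lower bound is $\vert Z_i(z)\vert > L_n^{\nicefrac{1}{6}} > 4L_n^{\nicefrac{1}{3}} \geq 4T_i/B_n$ — a small positive power, which is all that is needed to make the geometric-series estimates for $r_{n,i}$ and $r(z)$ converge and yield $\vert r(z)\vert \lesssim L_n$. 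Second, the iteration-to-a-fixed-point picture is in any case not what the paper does for $L_n^{\nicefrac{2}{3}}$: Proposition \ref{preliminary rate with power 2/3} is one pass, and the further pass with $a \sim L_n$ occurs only in the proof of Theorem \ref{Berry esseen bounded non id} to sharpen $L_n^{\nicefrac{2}{3}}$ to $L_n$. So the architecture you guessed (bootstrap) is present in the paper, but not at this stage, and not with your recursion.
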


\begin{proof} [Proof of Proposition \ref{preliminary rate with power 2/3}]
\revtwo  Let us outline the structure of the proof: In the first step, we use the machinery of $K$-transforms in order to derive a lower bound for the modulus of the subordination functions belonging to the convolution $\mu_{\boxplus n}$. \revtwo In comparison to the proofs given for weighted sums, this step is new. Moreover, it is responsible for the additional assumption of compact supports.  \revvtwo In the second and third step, we analyze a cubic and \revtwo a \revvtwo quadratic functional equation for the \revtwo subordination \revvtwo functions. An application of Proposition \ref{Bai - Götze's Version} will end the proof in the fourth step. \revvtwo

Before we begin with the first step, we fix some notation: Let $\mu_i$ denote the distribution of the normalized random variable $\nicefrac{X_i}{B_n}$, i.e.\@ we have $\mu_i = D_{B_n^{-1}} \nu_i$. Note that $\mu_{\boxplus n} = \mu_1 \boxplus \dots \boxplus \mu_n$ holds.  The subordination functions with respect to $\mu_{\boxplus n}$ will be denoted by $Z_1, \dots, Z_n$. Let $G_1, \dots, G_n, G_{\boxplus n}$ denote the Cauchy transforms of $\mu_1, \dots, \mu_n, \mu_{\boxplus n}$ and define $F_i := \nicefrac{1}{G_i}$ for $i \in [n].$ \rev Lastly, the $K$-transform of $\mu_i$ is given by $K_i$. \revv In the following, we may assume that $\sigma_1^2 = \min_{i \in [n]} \sigma_i^2$ holds. 

\subsubsection*{\rev Step 1: Lower bound for \texorpdfstring{$Z_i$}{Z1}} \rev 
\revtwo We construct a suitable lower bound for the modulus of the subordination functions $Z_i$ as follows: We start by deriving an upper bound on the absolute value of $G_{\boxplus n}(z)$ holding for certain $z \in \mathbb{C}^+$. From this, we conclude that $G_{\boxplus n}(z)$ lies in the domain of analyticity of the $K$-transform $K_i$ for any $i \in [n]$. Making use of the fact that the $K$-transform is the functional inverse of the Cauchy transform and combining this with the properties of subordination, we will be able to establish the required lower bound for the absolute value of $Z_i(z)$ for certain $z \in \mathbb{C}^+$; compare to \eqref{lower bound Z_i - bounded non id} below. \revvtwo

From \cite[Theorem 2.6]{Chistyakov2008a}, we know that $\Delta(\mu_{\boxplus n}, \omega) \leq c \sqrt{L_n}$ holds for some suitably chosen constant $c>1.$  Suppose that 
\begin{align*}
L_n < \min\big\{ 2^{-12}, 0.06(c\pi)^{-\nicefrac{3}{2}}, 5(c \pi)^{-3}\big\}
\end{align*}
is satisfied. By integration by parts, we have
\begin{align*}
	\vert G_{\boxplus n} (z) \vert \leq 1 + \frac{\pi c \sqrt{L_n}}{\Im z}  < 1 + \frac{1}{3L_n^{\nicefrac{1}{6}}} \leq  \frac{2}{3} \frac{1}{L_n^{\nicefrac{1}{6}}}  \leq \frac{2}{3} \sqrt{\frac{B_n}{\max_{i \in [n]} T_i}} < \frac{1}{6}\frac{B_n}{\max_{i \in [n]} T_i}
\end{align*}
for all $z \in \mathbb{C}^+$ with $\Im z >	3c \pi L_n^{\nicefrac{2}{3}}$. 

\revtwo As explained in \cref{section: basics in FPT}, $K_i$ is analytic in the punctured disk $\{ z \in \mathbb{C} : 0 < \vert z \vert < B_n(6T_i)^{-1}\}$, $i \in [n]$. In particular, using $G_{\boxplus n} \neq 0$ in $\mathbb{C}^+$, we conclude that $G_{\boxplus n}(z)$ lies in the domain of analyticity of $K_i$ for all $z \in \mathbb{C}^+$ with $\smash{\Im z >	3c \pi L_n^{\nicefrac{2}{3}}}$ and all $i \in [n]$.

\revvtwo
Now, set 
\begin{align*}
 D_1 :=\left\{ z \in \mathbb{C}^+: \Im z > 3c \pi L_n^{\nicefrac{2}{3}} \right\}, \quad  E_n := \left\{ z \in \mathbb{C}^+: \Im z \revtwo > \revvtwo 7L_n^{\nicefrac{1}{3}}  \right\}. 
\end{align*}
\revtwo Let us prove that  $K_i(G_{\boxplus n}(z)) = Z_i(z)$ holds for all $z \in D_1$, $i \in [n]$. For fixed $i \in [n]$, define $f_i: D_1 \rightarrow \mathbb{C}$ by $f_i(z) := K_i(G_{\boxplus n}(z)) - Z_i(z)$. According to the observations made above, $f_i$ is holomorphic in $D_1$. Note that 
\begin{align*}
\vert Z_i(z) \vert \geq \Im z > 7L_n^{\nicefrac{1}{3}} \geq 7B_n^{-1}\max_{j \in [n]}T_j\geq 7B_n^{-1}T_i
\end{align*}
holds for any $z \in E_n$. Using the inversion relation between $K_i$ and $G_i$  as stated in \cref{section: basics in FPT}, we get $K_i(G_{i}(Z_i(z))) = Z_i(z)$ for all $z \in E_n$. \cref{subordination functions} implies $G_{\boxplus n}(z) = G_i(Z_i(z))$ for all $z \in \mathbb{C}^+$. Clearly, this leads to $f_i = 0$ in $E_n.$ Together with the identity theorem for holomorphic functions, we obtain $f_i = 0$ in $D_1$. Here, the identity theorem is applicable since $\smash{D_1}$ is open and connected with $\smash{E_n \subset D_1}$ and since $\smash{E_n}$ contains an accumulation point.

Analogously to \eqref{estimates K_i as in Kargin}, i.e.\@ with the help of the Laurent series expansion of $K_i,$ we derive \revvtwo 
\begin{align*}
	\left \vert K_i(z) - \frac{1}{z} - \frac{\sigma_i^2}{B_n^2}z \right \vert < 32 \vert z \vert^2\frac{T_i^3}{B_n^3}
\end{align*}
for all $z \in \mathbb{C}^+$ with $\vert z \vert < B_n(6 \max_{j \in [n]} T_j)^{-1}$ and $i \in [n]$ leading to 
\begin{align*}
	\left \vert K_i(G_{\boxplus n}(z)) - \frac{1}{G_{\boxplus n}(z)} - \frac{\sigma_i^2}{B_n^2}G_{\boxplus n}(z)\right \vert < \frac{15\left(\max_{j \in [n]} T_j\right)^2}{B_n^2} \leq 15L_n^{\nicefrac{2}{3}}, \qquad z \in D_1, i \in [n].
\end{align*}
\revtwo Using the inequality 
$\smash{\sigma_i^2B_n^{-2} \leq L_n^{\nicefrac{2}{3}}}$, we conclude \revvtwo
\begin{align} \label{lower bound Z_i - bounded non id}
	\begin{split}
		\vert Z_i(z) \vert = \vert K_i (G_{\boxplus n} (z)) \vert & \revtwo\geq  \left \vert \frac{1}{G_{\boxplus n} (z)} \right \vert - \left \vert   \frac{\sigma_i^2}{B_n^2}G_{\boxplus n}(z) \right \vert -  \left \vert K_i(G_{\boxplus n} (z)) - \frac{1}{G_{\boxplus n}(z)} - \frac{\sigma_i^2}{B_n^2}G_{\boxplus n}(z)\right \vert  \\&\revtwo  \geq \frac{3}{2}L_n^{\nicefrac{1}{6}} - \frac{2}{3}L_n^{\nicefrac{1}{2}} - 15L_n^{\nicefrac{2}{3}} > L_n^{\nicefrac{1}{6}}  > 4L_n^{\nicefrac{1}{3}} \geq \frac{4T_i}{B_n}
	\end{split}
\end{align}
for all $z \in D_1$ and \revtwo $i \in [n].$ \revvtwo

\subsubsection*{\rev Step 2: Analysis of the  cubic functional equation for \texorpdfstring{$Z_1$}{Z1}}  

As in the case of weighted sums, compare to \cref{part 2 - 3rd order functional eq}, we need to  prove that the subordination function $Z_1$ satisfies a cubic functional equation. 
According to Theorem \ref{subordination functions}, we have
\begin{align*} 
	Z_1(z) - z + \frac{1}{Z_1(z)} = \frac{I_1(z) + I_2(z) + I_4(z) + I_5}{Z_1^2(z)} + \frac{I_3}{Z_1(z)}
\end{align*}
for any $z \in \mathbb{C}^+$ with 
\begin{align*}
	&	I_1(z) := Z_1^2(z) \left( \sum_{i=2}^n F_i(Z_i(z)) - Z_i(z) + \frac{m_2(\mu_i)}{Z_i(z)}  + \frac{m_3(\mu_i)}{Z_i^2(z)} \right), \,\,\, I_2(z) := Z_1^2(z) \left(\sum_{i=2}^n \frac{m_2(\mu_i)}{Z_1(z)} - \frac{m_2(\mu_i)}{Z_i(z)} \right), \\
	& \qquad \qquad \quad	I_3 := m_2(\mu_1), \qquad
	I_4(z) :=  Z_1^2(z) \left(\sum_{i=2}^n \frac{m_3(\mu_i)}{Z_1^2(z)} - \frac{m_3(\mu_i)}{Z_i^2(z)} \right), \qquad 
	I_5 := - \sum_{i=2}^n m_3(\mu_i).
\end{align*}
Define $r(z):= I_1(z) + I_2(z) + I_4(z) + I_5$ as well as $P(z, \omega) := \omega^3 - z\omega^2 +(1 - I_3)\omega - r(z)$. 

We continue by bounding the coefficients $I_3$ and $r(z)$.
 Using the fact that $\sigma_1^2 \leq \sigma_i^2$ holds for all $i \in [n]$, we get
\begin{align*}
	\sigma_1^2B_n^4= \sigma_1^2 \left(\sum_{i=1}^{n} \sigma_i^2 \right)^2 \leq \left( \sum_{i=1}^{n} \sigma_i^{3} \right)^2  \revtwo \leq \left(\sum_{i=1}^{n} \beta_3(\nu_i) \right)^2.
\end{align*}
Clearly, this implies  $\smash{I_3 = m_2(\mu_1) = B_n^{-2}\sigma_1^2\leq L_n^2}.$ \rev Now, let us estimate $r(z)$. For this purpose, define 
$\smash{r_{n,i}(z) := Z_i(z)G_i(Z_i(z)) -1}$ for $z \in \mathbb{C}^+$ and $i \in [n]$. \revv
 Due to \eqref{lower bound Z_i - bounded non id}, we can use the power series expansion for $G_i$ and obtain 
\begin{align} \label{power series expansion r_{n,i}}
	r_{n,i}(z) = Z_i(z)G_i(Z_i(z)) - 1 = Z_i(z) \sum_{k=0}^{\infty} \frac {m_k(\mu_i)}{Z_i^{k+1}(z)} - 1 = \sum_{k=2}^{\infty} \frac{m_k(\mu_i)}{Z_i^k(z)}
\end{align}
for all $z \in D_1$ and $i \in [n]$.
This implies $	\vert r_{n,i}(z) \vert
\leq \sum_{k=2}^{\infty} 4^{-k} < \nicefrac{1}{10}$
for $z$ and $i$ as before. Note that
\begin{align*}
	\left \vert \frac{Z_1(z)}{Z_i(z)} - 1\right \vert \leq \frac{10}{9} \left( \vert r_{n,1}(z) \vert + \vert r_{n,i}(z) \vert \right) < \frac{2}{9}, \qquad z \in D_1, i \in [n]
\end{align*}
holds. We get 
\begin{align*}
	\vert r_{n,i}(z) \vert 	\leq \frac{m_2(\mu_i)}{\vert Z_i^2(z) \vert}\sum_{k=2}^\infty \left(\frac{T_i}{B_n}\right)^{k-2} \frac{1}{\big \vert Z_i^{k-2}(z) \big \vert}
	\leq \frac{4}{3} \frac{m_2(\mu_i)}{\vert Z_i^2(z) \vert} <  2\frac{m_2(\mu_i)}{ \vert Z_1^2(z) \vert}, \qquad  z \in D_1, i \in [n],
\end{align*}
which in turn yields
\begin{align} \label{estimate Z_1/Z_i -1 with more than constant - bounded non id}
	\left \vert \frac{Z_1(z)}{Z_i(z)} - 1\right \vert \leq \frac{10}{9} \left( \frac{4}{3} \frac{m_2(\mu_1)}{\vert Z_1^2(z) \vert} + 2 \frac{m_2(\mu_i)}{\vert Z_1^2(z) \vert}\right) < 4\frac{m_2(\mu_i)}{\vert Z_1^2(z) \vert} 
\end{align}
for all $z$ and $i$ as before. 
 It is clear that \revtwo the identity in \eqref{relation F_i - Id and $r_{n,i}$} remains valid \revvtwo in the setting at hand. Hence, together with \eqref{power series expansion r_{n,i}}, we arrive at
\begin{align*} 
	\left \vert  F_i(Z_i(z)) \!- \! Z_i(z) \!  + \! \frac{m_2(\mu_i)}{Z_i(z)} \!+\! \frac{m_3(\mu_i)}{Z_i^2(z)}  \right \vert & \leq  \frac{10}{9}
	\left \vert r_{n,i}(z)Z_i(z) \!-\! \frac{m_2(\mu_i)(1 + r_{n,i}(z))}{Z_i(z)} \!-\! \frac{m_3(\mu_i)(1 + r_{n,i}(z))}{Z_i^2(z)} \right \vert \\
	& \leq \frac{10}{9} \frac{\vert s_i(z) \vert}{ \vert Z_i^2(z) \vert}
\end{align*}
with 
\begin{align*} 
	s_{i}(z) := \sum_{k=4}^{\infty}  \frac{m_k(\mu_i)}{Z_i^{k-3}(z)} - \sum_{k=2}^\infty \frac{m_2(\mu_i) m_k(\mu_i)}{Z_i^{k-1}(z)} - \sum_{k=2}^{\infty} \frac{m_3(\mu_i) m_k(\mu_i)}{Z_i^k(z)}
\end{align*}
for all $z \in D_1, i \in [n].$
It follows
\begin{align*}
	\sum_{i=2}^{n}\vert s_i(z) \vert &
	\leq  \sum_{k=4}^{\infty}  \sum_{i=1}^{n} \frac{\vert m_k(\mu_i) \vert}{\vert Z_i^{k-3}(z) \vert} + \sum_{k=2}^\infty  \sum_{i=1}^{n} \frac{m_2(\mu_i) \vert m_k(\mu_i) \vert}{ \vert Z_i^{k-1}(z) \vert} + \sum_{k=2}^{\infty}  \sum_{i=1}^{n} \frac{\vert m_3(\mu_i) \vert \vert m_k(\mu_i) \vert}{ \vert Z_i^k(z) \vert} \\ 
	& \leq \sum_{k=4}^{\infty} \frac{L_n^{\nicefrac{k}{3}}}{4^{k-3}L_n^{\nicefrac{(k-3)}{3}}} +   \sum_{k=2}^{\infty}  \frac{L_n^{\nicefrac{(k+2)}{3}}}{4^{k-1}L_n^{\nicefrac{(k-1)}{3}}} + \sum_{k=2}^{\infty}  \frac{L_n^{\nicefrac{(k+3)}{3}}}{4^k L_n^{\nicefrac{k}{3}}}  \leq \frac{3}{4} L_n, \qquad z \in D_1. 
\end{align*}
We deduce
\begin{align*}
	\vert I_1(z) \vert & \leq \left \vert Z_1^2(z)  \right \vert \sum_{i=2}^n \left \vert  F_i(Z_i(z)) \!- \! Z_i(z) \!  + \! \frac{m_2(\mu_i)}{Z_i(z)} \!+\! \frac{m_3(\mu_i)}{Z_i^2(z)}  \right \vert \leq \frac{10}{9} \sum_{i=2}^n \frac{\vert Z_1^2(z) \vert}{ \vert Z_i^2(z) \vert} \vert s_i(z) \vert < \frac{5}{4}L_n, \qquad z \in D_1.
\end{align*}
Together with \eqref{estimate Z_1/Z_i -1 with more than constant - bounded non id}, we get
\begin{align*}
	\vert I_2(z) \vert & \leq \vert Z_1(z) \vert \sum_{i=2}^n m_2(\mu_i) \left \vert 1 - \frac{Z_1(z)}{Z_i(z)} \right \vert  \leq 4 \sum_{i=1}^n \frac{m_2(\mu_i)^2}{\vert Z_1(z) \vert} < L_n
\end{align*}
for all $z \in D_1.$ Similarly, using 
\begin{align*}
	\left \vert \frac{Z_1^2(z)}{Z_i^2(z)} - 1 \right \vert = \left \vert \frac{Z_1(z)}{Z_i(z)} -1 \right \vert \left \vert \frac{Z_1(z)}{Z_i(z)}  + 1 \right \vert \leq \frac{20}{9}\left \vert \frac{Z_1(z)}{Z_i(z)} -1 \right \vert, \qquad z \in D_1,
\end{align*}
we obtain $\vert I_4(z) \vert  < \nicefrac{5L_n}{9}$ 
for all $z \in D_1.$
It follows $\vert r(z) \vert <  4L_n$ for $z \in D_1$. 

\rev
We continue by analyzing the roots of $P$. Since we will mostly argue as done in \cref{part 3 - roots 3rd oder}, we omit some of the details. Define 
\begin{align*}
D_2 := \big\{ z \in \mathbb{C}^+: \vert \Re z \vert \leq 2 - \varepsilon_n, 3 \geq \Im z \geq 4\pi c L_n^{\nicefrac{2}{3}} \big\} \subset D_1, \qquad \varepsilon_n := 24\pi cL_n^{\nicefrac{2}{3}}.
\end{align*}
Let $\omega_j = \omega_j(z)$, $j=1,2,3,$ denote the roots of $P(z, \omega).$
By Rouch\'{e}'s theorem, we can prove that for any $z \in D_2$ there exists a root, say $\omega_1 = \omega_1(z)$, such that
\begin{align*}
\vert \omega_1(z) \vert < 8L_n, \qquad \vert \omega_j(z) \vert \geq 8L_n, \qquad j=2,3
\end{align*}
hold. Making use of contradiction arguments, we obtain $Z_1 \neq \omega_1$ and $\omega_2 \neq \omega_3$ in $D_2.$ The roots $\omega_2, \omega_3$ are of the form 
\begin{align*}
	\omega_j =  \frac{1}{2}\left( z + (-1)^{j-1}\sqrt{z^2-4+r_2(z)}  \right) - \frac{\omega_1}{2},  \qquad j=2,3
\end{align*}
for  $r_2(z):= 4I_3+ (2z - 3\omega_1)\omega_1$. The error term $r_2(z)$ admits the estimate 
\begin{align*}
	\vert r_2(z) \vert \leq 4L_n^2+ 16\sqrt{13}L_n + 192L_n^2 < 60L_n
\end{align*}
for all $z \in D_2.$ Note that the restricted functions $\omega_1, r_2: D_2 \rightarrow \mathbb{C}$ are continuous. The assumption $\smash{L_n < 0.06(c\pi)^{-\nicefrac{3}{2}}}$ guarantees that the complex square root function is continuous at $z^2 - 4 +r_2(z)$ for any $z \in D_2$. Combining the last two observations, we obtain the following statement:
\begin{align*} 
	\forall z_0 \in D_2: \exists r_0 = r_0(z_0) >0: \exists! j \in \{ 2,3\}: \forall z \in D_2 \cap \{ \revtwo s \revvtwo \in \mathbb{C}: \vert \revtwo s \revvtwo -z_0 \vert < r_0\}: Z_1(z) = \omega_j(z);
\end{align*}
compare to \eqref{no jumps}. A simple covering argument based on the compactness of $D_2$ shows that we have either $Z_1 = \omega_2$ or $Z_1 = \omega_3$ in $D_2$. It remains to prove $Z_1(2i) \neq \omega_2(2i)$. By continuity, we find $\delta_2>0$ such that 
\begin{align*}
\left \vert  \sqrt{-8 + r_2(2i)} - \sqrt{-8}  \right \vert \leq \frac{1}{2}
\end{align*}
is valid whenever $L_n < \delta_2$ holds (which we assume from now on). 
Together with the previously established bound on $\vert \omega_1\vert$, it follows $\vert \omega_2(2i) \vert < 1$. Due to $\vert Z_1(2i) \vert \geq 2$, we must have $Z_1 = \omega_3$ in $D_2$.
\revv 

\subsubsection*{\rev Step 3: Analysis of the quadratic functional equation for \texorpdfstring{$Z_1$}{Z1}} \rev As in \cref{part 4 - 2nd order functional eq}, the quadratic functional equation for $Z_1$ can be derived from \cref{subordination functions}. In more detail, we have \revv
\begin{align*} 
	Z_1(z) - z + \frac{1}{Z_1(z)} = \frac{M_1(z) + M_2(z) + M_3}{Z_1(z)} = \frac{q(z)}{Z_1(z)}, \qquad z \in \mathbb{C}^+,
\end{align*}
with 
\begin{align*}
	M_1(z) :=  Z_1(z) \left( \sum_{i=2}^n F_i(Z_i(z))\,\, - \right. & \left.  Z_i(z) + \frac{m_2(\mu_i)}{Z_i(z)}\right), \qquad
	M_2(z) :=  Z_1(z) \left( \sum_{i=2}^n \frac{m_2(\mu_i)}{Z_1(z)} - \frac{m_2(\mu_i)}{Z_i(z)}\right),  \\
	& \!\!\!\!\!\!\!\!\!\!\!\!\!\!  M_3 := m_2(\mu_1), \qquad q(z) := M_1(z) + M_2(z) + M_3.
\end{align*}
Define $Q(z, \omega) := \omega^2 - z \omega +1 - q(z)$ and let
\begin{align*}
 	t_i(z) := \sum_{k=3}^{\infty}  \frac{m_k(\mu_i)}{Z_i^{k-3}(z)} - \sum_{k=2}^\infty \frac{m_2(\mu_i) m_k(\mu_i)}{Z_i^{k-1}(z)} 
\end{align*}
for $z \in D_1, i \in [n].$
Copying the arguments in the estimation of $\vert I_1(z)\vert$, we obtain
\begin{align*}
	\vert M_1(z) \vert \leq \vert Z_1(z) \vert \sum_{i=2}^n \left \vert  F_i(Z_i(z)) -  Z_i(z)  +  \frac{m_2(\mu_i)}{Z_i(z)}  \right \vert  \leq  \frac{10}{9} \frac{1}{\Im z} \sum_{i=2}^n \frac{\vert Z_1(z) \vert}{ \vert Z_i(z) \vert} \vert t_i(z) \vert  < \frac{3L_n}{\Im z}, \qquad z \in D_1.
\end{align*}
Moreover, using \eqref{estimate Z_1/Z_i -1 with more than constant - bounded non id}, it follows
\begin{align*}
	\left \vert M_2(z) \right \vert & \leq  \sum_{i=2}^{n} m_2(\mu_i) \left \vert 1 - \frac{Z_1(z)}{Z_i(z)} \right \vert< \frac{4}{\Im z} \sum_{i=1}^{n} \frac{m_2(\mu_i)^2}{\vert Z_1(z) \vert} < 
	\frac{L_n}{\Im z}
\end{align*}
for all  $z \in D_1$.
Lastly, for any $z \in D_3 := \{ z \in D_1 : \Im z \leq 3\}$, we \revtwo get \revvtwo $\vert M_3 \vert \leq L_n^2 \leq \nicefrac{3L_n}{\Im z}$. In particular, we have $\vert q(z) \vert <  \nicefrac{7L_n}{\Im z} < \nicefrac{1}{10}$ for $z \in D_3$ as well as $\vert q(u+i) \vert < 7L_n < \nicefrac{1}{10}$ for all $u \in \mathbb{R}.$

Solving the equation $Q(z, \omega) = 0$ for fixed $z \in \mathbb{C}^+$, we obtain two roots $\tilde{\omega}_j, j=1,2$, of the form 
\begin{align*}  
	\tilde{\omega}_j = \tilde{\omega}_j(z) = \frac{1}{2} \left( z + (-1)^{j} \sqrt{q_1(z)} \right), \qquad q_1(z) := z^2 - 4 + 4q(z) \neq 0, \qquad z \in \mathbb{C}^+.
\end{align*}
As in Section \ref{part 5 - roots of 2nd}, we can prove that $Z_1(z) = \tilde{\omega}_2(z)$ holds for any
$z \in \mathbb{C}_1  = \{ u+i: u \in \mathbb{R}\} \subset D_3$.

\subsubsection*{\rev Step 4: Bounding the integrals in Proposition \texorpdfstring{\ref{Bai - Götze's Version}}{2.5}} 
We set 
\begin{align*}
\tau \in (1, 3], \qquad \gamma <1, \qquad a := 4 \pi c L_n^{\nicefrac{2}{3}}, \qquad  \varepsilon := 2\varepsilon_n = 48 \pi c L_n^{\nicefrac{2}{3}}.
\end{align*}
 It is easy to verify that all conditions in Proposition \ref{Bai - Götze's Version} are satisfied for the measure $\mu_{\boxplus n}$ as well as for the parameters defined above. Let $\nu$ denote the probability measure on $\mathbb{R}$ determined by the formula $G_\nu(z) = \nicefrac{1}{Z_1(z)}$ for $z \in \mathbb{C}^+$; compare to Lemma \ref{reciprocal subordination function as Cauchy transform}. Moreover, define $I_\varepsilon := [-2 + \nicefrac{\varepsilon}{2}, 2 - \nicefrac{\varepsilon}{2}]$ as well as  $S(z) := \nicefrac{1}{G_\omega(z)} = \frac{1}{2}\left( z + \sqrt{z^2-4} \right)$, $z \in \mathbb{C}^+$.
It suffices to bound the integrals 
\begin{align} \label{integrals wrt re - bounded non id}
\!\!\!\!\!	\int_{-\infty}^{\infty} \!\!\!\! \vert G_{\boxplus n}(u+i) - G_\omega(u+i) \vert du \leq\!  \int_{-\infty}^{\infty}  \!\!\! \! \vert G_{\boxplus n}(u+i) - G_\nu(u+i) \vert du + \! \int_{-\infty}^{\infty} \!\!\!\! \vert G_{\nu}(u+i) - G_\omega(u+i) \vert du 
\end{align}
and
\begin{align} \label{integrals wrt im - bounded non id}
\!\!\! \int_a^1\!\! \vert G_{\boxplus n}(u\! +\! iv) - G_\omega(u+\! iv) \vert dv \leq \! \! 	\int_a^1 \! \!  \vert G_{\boxplus n}(u\! +\! iv) - G_\nu(u\! +\! iv) \vert dv + \! 	\int_a^1 \!\! \vert G_\nu(u\! +\! iv) - G_\omega(u\! +\! iv) \vert  dv  \!
\end{align}
for all $u \in I_\varepsilon.$ 

We start with the integrals in  \eqref{integrals wrt im - bounded non id}. \rev  As done for \eqref{bounds Z_1 by 10 and 1/10}, we can prove that $\vert Z_1 \vert > \nicefrac{1}{10}$ holds in $D_2.$ Recalling that $Z_1$ is equal to $\omega_3$ in $D_2$ and copying the arguments used for \eqref{estimate difference 1/Z_1 - 1/S for big real parts} and \eqref{estimate difference 1/Z_1 - 1/S for small real parts}, we obtain \revv
\begin{align} \label{G_nu - G_omega bounded non id}
\left\vert G_\nu(z) - G_\omega(z) \right \vert = 	\left \vert \frac{1}{Z_1(z)} - \frac{1}{S(z)} \right \vert  \leq  \begin{cases}
		5 \left(  \vert \omega_1(z) \vert + \frac{\vert r_2(z) \vert}{\sqrt{\Im z}}\right) & z \in D_2, \vert \Re z \vert \geq F_n \\ 
		\hfil 5 \left( \vert \omega_1(z) \vert  + \vert r_2(z) \vert\right) & z \in D_2, \vert \Re z \vert < F_n
	\end{cases}
\end{align}
for $F_n$ given by $F_n :=8L_n^{\nicefrac{1}{3}}(\pi c)^{-1} < \sqrt{2}$.
After integration, we get
\begin{align*}
	\sup_{u \in I_\varepsilon} \int_{a}^1 \vert G_\nu(u+iv) - G_\omega(u+iv) \vert dv \leq 640L_n.
\end{align*}
In order to handle the other integral appearing on the right-hand side in \eqref{integrals wrt im - bounded non id}, observe that
\begin{align} \label{difference G_boxplus n - Gnu - Nebenrechnung vor Integration}
	\begin{split}
		\left \vert  G_{\boxplus n}(z)- G_\nu(z) \right \vert = \left \vert \frac{r_{n,1}(z)}{Z_1(z)}\right \vert 
		\leq 2\frac{m_2(\mu_1)}{\vert Z_1^3(z) \vert} 
		\leq 2000 L_n^2, \qquad z \in D_2
	\end{split}
\end{align}
holds. 
This yields
\begin{align*} 
	\begin{split}
		\sup_{u \in I_\varepsilon}\int_{a}^1 \left \vert 	G_{\boxplus n}(u+iv) - G_\nu(u+iv) \right \vert dv  \leq 2000L_n^2 < L_n.
	\end{split}
\end{align*}

Now, let us proceed with the integrals in \eqref{integrals wrt re - bounded non id}. As in Section \ref{part 8 - bounding integrals wrt re} and together with $Z_1 = \tilde{\omega}_2$ holding in $\mathbb{C}_1$, we can prove that
\begin{align}   \label{G_nu - G_omega 2 -bounded non id}
\left \vert \frac{1}{Z_1(u+i)} - \frac{1}{S(u +i)} \right \vert 
	 \leq 
	 \begin{dcases}
	 \frac{4 \vert q(u+i) \vert}{ \sqrt{1 +((\vert u \vert -4)_+)^2} \sqrt{\max\{1, (u^2 -5)_+\}}} & \vert u \vert \geq \frac{1}{5} \\[10pt]
	 \hfil 2\vert q(u+i) \vert & u \in \mathbb{R}
 \end{dcases}
\end{align} 
is valid.
Making use of \eqref{estimate denominator quadratic case 0.01(1+u^2)^2} as well as of the previously established bound on $\vert q \vert$ and integrating the last inequality by splitting the domain of integration into appropriate subdomains, we obtain
\begin{align*} 
	\int_{-\infty}^{\infty} \left \vert G_\nu(u+i) - G_\omega(u+i) \right \vert du \leq  \rev 329 \revv L_n. 
\end{align*}
It remains to bound the other integral on the right-hand side in \eqref{integrals wrt re - bounded non id}.
We have
\begin{align} \label{difference G_boxplus n - G_nu evaluated at u+i}
	\left \vert G_{\boxplus n}(u+i) - G_\nu(u+i) \right \vert = \left \vert \frac{r_{n,1}(u+i)}{Z_1(u+i)} \right \vert \leq \frac{2L_n^2}{\vert Z_1^3(u+i)\vert} \leq  \frac{2L_n^2}{\vert Z_1^2(u+i) \vert} \leq 2L_n^2, \qquad u \in \mathbb{R}.
\end{align}
Moreover, analogously to \eqref{bounds for abs Z_1 for integral wrt Re of 1/Z_1 - 1/S}, we can prove that $\vert Z_1(u+i) \vert \geq 	\nicefrac{1}{5}\left( 1 + (\vert u \vert -4)_+\right)$ holds true for all $u \in \mathbb{R}$ with $\vert u \vert \geq  \nicefrac{1}{5}$.
This implies
\begin{align*} 
	\int_{-\infty}^{\infty}  \vert G_{\boxplus n}(u+i) - G_\nu(u+i) \vert du \leq \rev 116 \revv L_n^2 < L_n.
\end{align*}
Provided that  $L_n < C_0 := \min\{ 2^{-12}, 0.06(c\pi)^{-\nicefrac{3}{2}}, 5(c \pi)^{-3}, \delta_2\}$ holds, Proposition \ref{Bai - Götze's Version} yields
\begin{align*}
	\Delta(\mu_{\boxplus n}, \omega) \leq  \rev C_\gamma \revv \left( \rev 971 \revv L_n  + \rev 144 \revv c L_n^{\nicefrac{2}{3}} + (48\pi c)^{\nicefrac{3}{2}}L_n \right) < C_1L_n^{\nicefrac{2}{3}}
\end{align*}
for \rev $C_\gamma$ as in Proposition \ref{Bai - Götze's Version} and some numerical constant $C_1>1$. \revv Letting $C:= C_1C_0^{-\nicefrac{2}{3}} > C_1$, we arrive at $\smash{\Delta(\mu_{\boxplus n}, \omega) < CL_n^{\nicefrac{2}{3}}}$ as claimed.
\end{proof}

Let us continue with the proof of Theorem \ref{Berry esseen bounded non id}. \rev Basically, we have to repeat all steps carried out in the proof of \cref{preliminary rate with power 2/3}. Since we can copy most arguments, we just give a brief overview. Moreover, we use the notation (including the definitions of the constants $C$ and $\delta_2$) introduced above. \revv 
 \begin{proof} [Proof of Theorem \ref{Berry esseen bounded non id}] 
Fix $Z >\max \{12\pi C, 70\}$ and, without loss of generality, assume that 
\begin{align*}
L_n < C_0':=  \min \left\{ \frac{5}{Z^{\nicefrac{3}{2}}}, \frac{1}{2^{12}}, \delta_2 \right\} < \frac{1}{Z}
\end{align*} 
as well as $\sigma_1^2 = \min_{i \in [n]} \sigma_i^2$ hold.

\subsubsection*{\rev Step 1: Lower bound for $Z_i$}
Combining integration by parts with Proposition \ref{preliminary rate with power 2/3}, we obtain 
\begin{align*}
	\vert G_{\boxplus n} (z) \vert 
	\leq 1+ \frac{1}{12 L_n^{\nicefrac{1}{3}}}< \frac{1}{6L_n^{\nicefrac{1}{3}}} \leq \frac{B_n}{6 \max_{i \in [n]} T_i}
\end{align*}
for all $z \in \mathbb{C}^+$ with $\Im z \geq 12\pi C L_n$. It follows
\begin{align} \label{lower bound Z_i new - bounded non id}
\begin{split}
	\vert Z_i(z) \vert &  = \vert K_i (G_{\boxplus n} (z)) \vert \geq \left \vert \frac{1}{G_{\boxplus n}(z)}\right \vert - \frac{\sigma_i^2}{B_n^2} \vert G_{\boxplus n} (z) \vert - 	\left \vert K_i(G_{\boxplus n}(z)) - \frac{1}{G_{\boxplus n}(z)} - \frac{\sigma_i^2}{B_n^2}G_{\boxplus n}(z)\right \vert \\ &\geq 6L_n^{\nicefrac{1}{3}} - \frac{1}{6}L_n^{\nicefrac{1}{3}} - L_n^{\nicefrac{1}{3}} > 4L_n^{\nicefrac{1}{3}}\geq \frac{4T_i}{B_n}
\end{split}
\end{align}
for all $z \in \mathbb{C}^+$ satisfying $\Im z > 12\pi CL_n$ and all $i \in [n]$. \rev Before we continue with the next step, let us briefly compare the inequalities in  \eqref{lower bound Z_i - bounded non id}  and \eqref{lower bound Z_i new - bounded non id}: Note that \eqref{lower bound Z_i - bounded non id} contains the stronger inequality $\smash{\vert Z_i(z) \vert > L_n^{\nicefrac{1}{6}}}$, which, however, was not used in the proof of Proposition \ref{preliminary rate with power 2/3}. Moreover, observe that the estimate in \eqref{lower bound Z_i new - bounded non id} holds for a larger subset in $\mathbb{C}^+$. More precisely, the lower bound on $\Im z$ was decreased from being of order $\smash{L_n^{\nicefrac{2}{3}}}$ to the order $L_n$. 

\subsubsection*{Step 2: Analysis of the cubic and quadratic functional equation for $Z_1$} 
Arguing as in the proof of Proposition \ref{preliminary rate with power 2/3}, we have $P(z, Z_1(z)) = 0 = Q(z, Z_1(z))$ for all $z \in \mathbb{C}^+$. It remains to bound the coefficients and analyze the roots of both polynomials.
Due to the observation made at the end of the first step, we can proceed exactly as in Proposition \ref{preliminary rate with power 2/3} as long as $L_n$ is sufficiently small -- which is guaranteed by $L_n < C_0'$ -- and we modify all steps in which we bound $\Im z$ from below. Define 
\begin{align*}
	 a := ZL_n, \qquad \varepsilon := 2ZL_n^{\nicefrac{2}{3}}, \qquad D_2 := \left\{ z \in \mathbb{C}^+: \vert \Re z \vert \leq 2- \nicefrac{\varepsilon}{2}, 3 \geq \Im z \geq a\right\}. 
\end{align*} 
We obtain
\begin{align*}
\vert r(z) \vert < 4L_n, \qquad \vert \omega_1(z) \vert < 8L_n, \qquad \vert r_2(z) \vert < 60L_n, \qquad Z_1(z) = \omega_3(z), \qquad \vert Z_1(z) \vert > \frac{1}{10}
\end{align*}
for all $z \in D_2$
as well as
\begin{align*}
\vert q(u+i) \vert <  7L_n < \frac{1}{10}, \qquad Z_1(u+i) = \tilde{\omega}_2(u+i)
\end{align*}
for all $u \in \mathbb{R}$. \revv

\subsubsection*{\rev Step 3: Bounding the integrals in Proposition \ref{Bai - Götze's Version}}
Replacing $F_n$ in Proposition \ref{preliminary rate with power 2/3} by $F_n':= 40Z^{-1}< \sqrt{2}$, the estimate in \eqref{G_nu - G_omega bounded non id} remains valid. Moreover, it is clear that the inequalities in \eqref{difference G_boxplus n - Gnu - Nebenrechnung vor Integration}, \eqref{G_nu - G_omega 2 -bounded non id}, and \eqref{difference G_boxplus n - G_nu evaluated at u+i} also hold in our new setting leading to 
\begin{align*}
	\sup_{u \in I_\varepsilon} \int_{a}^1 \vert G_{\boxplus n}(u+iv) - G_\omega(u+iv) \vert dv \leq \rev 641 \revv L_n, \qquad
	\int_{-\infty}^{\infty}  \vert G_{\boxplus n}(u+i) - G_\omega(u+i) \vert du \leq \rev 330 \revv L_n.
\end{align*}
Now, Proposition \ref{Bai - Götze's Version} \rev applied to $\tau \in (1, 3]$, $\gamma <1$, and $a, \varepsilon$ as defined above \revv ends the proof: We have 
\begin{align*}
	\Delta(\mu_{\boxplus n}, \omega) \leq \rev C_\gamma \revv \left( \rev 971  \revv L_n + \rev 12 \revv ZL_n +  \rev 3 \revv Z^{\nicefrac{3}{2}}L_n\right) \leq c_0L_n
\end{align*}
for \rev $C_\gamma$ taken from Proposition \ref{Bai - Götze's Version} \revv and some numerical constant $c_0>0$.
\end{proof}


\end{document}